\numberwithin{equation}{section}
\theoremstyle{plain}
\newtheorem{thm}[equation]{Theorem}
\newtheorem{lem}[equation]{Lemma}
\newtheorem{cor}[equation]{Corollary}
\newtheorem{prop}[equation]{Proposition}
\theoremstyle{definition}
\newtheorem{df}[equation]{Definition}
\theoremstyle{remark}
\newtheorem{rem}[equation]{Remark}
\newtheorem{eg}[equation]{Example}
\crefname{thm}{Theorem}{Theorems}
\crefname{lem}{Lemma}{Lemmas}
\crefname{cor}{Corollary}{Corollaries}
\crefname{prop}{Proposition}{Propositions}
\crefname{df}{Definition}{Definitions}
\crefname{rem}{Remark}{Remarks}
\crefname{eg}{Example}{Examples}
\crefname{equation}{}{}
\newcommand{\cA}{\mathcal{A}}
\newcommand{\bC}{\mathbb{C}}
\newcommand{\bK}{\mathbb{K}}
\newcommand{\bM}{\mathbb{M}}
\newcommand{\bN}{\mathbb{N}}
\newcommand{\bQ}{\mathbb{Q}}
\newcommand{\bR}{\mathbb{R}}
\newcommand{\bT}{\mathbb{T}}
\newcommand{\bU}{\mathbb{U}}
\newcommand{\bV}{\mathbb{V}}
\newcommand{\bZ}{\mathbb{Z}}
\newcommand{\cB}{\mathcal{B}}
\newcommand{\cC}{\mathcal{C}}
\newcommand{\cD}{\mathcal{D}}
\newcommand{\cF}{\mathcal{F}}
\newcommand{\cH}{\mathcal{H}}
\newcommand{\cK}{\mathcal{K}}
\newcommand{\cL}{\mathcal{L}}
\newcommand{\cM}{\mathcal{M}}
\newcommand{\cO}{\mathcal{O}}
\newcommand{\cT}{\mathcal{T}}
\newcommand{\cU}{\mathcal{U}}
\newcommand{\cZ}{\mathcal{Z}}
\newcommand{\fp}{\mathfrak{p}}
\newcommand{\fs}{\mathfrak{s}}
\newcommand{\fu}{\mathfrak{u}}
\newcommand{\fv}{\mathfrak{v}}
\newcommand{\fw}{\mathfrak{w}}
\newcommand{\bu}{\mathbbm{u}}
\newcommand{\bv}{\mathbbm{v}}
\newcommand{\bw}{\mathbbm{w}}
\newcommand{\e}{\varepsilon}
\providecommand{\xto}[1]{\xrightarrow{#1}}
\newcommand{\wt}{\widetilde}
\newcommand{\wh}{\widehat}
\newcommand{\bra}{\langle}
\newcommand{\ket}{\rangle}
\newcommand{\oS}{S}
\newcommand{\tS}{\widetilde{S}}
\newcommand{\id}{\mathrm{id}}
\DeclareMathOperator{\Ker}{Ker}
\DeclareMathOperator{\Coker}{Coker}
\DeclareMathOperator{\Hom}{Hom}
\DeclareMathOperator{\Aut}{Aut}
\DeclareMathOperator{\Rep}{Rep}
\DeclareMathOperator{\Irr}{Irr}
\newcommand{\Ad}{\mathrm{Ad}} 
\DeclareMathOperator{\Cone}{Cone}
\DeclareMathOperator{\cspan}{\overline{span}}
\newcommand{\rev}{\mathrm{rev}} 
\newcommand{\ass}{\mathrm{ass}}
\newcommand{\br}{\mathrm{br}}
\newcommand{\tw}{\mathrm{tw}}
\newcommand{\st}{\mathrm{st}}
\renewcommand{\det}{\mathrm{det}}
\DeclareMathOperator{\KK}{KK} 
\DeclareMathOperator{\K}{K} 
\DeclareMathOperator{\Ho}{H} 
\newcommand{\Cor}{\mathrm{Corr}} 
\newcommand{\Hilb}{\mathrm{Hilb}}
\begin{document}
\title{Actions of tensor categories on Kirchberg algebras
\\\vspace{1em}
Actions de cat{\'e}gories tensorielles sur les alg{\`e}bres de Kirchberg
}
\author{Kan Kitamura}
\address{RIKEN iTHEMS, 2-1 Hirosawa, Wako, Saitama 351-0198 Japan}
\email{kan.kitamura@riken.jp}
\subjclass[2020]{Primary 18M40, Secondary 46L80, 46L37, 18D25}
\keywords{tensor category; Kirchberg algebra; outer action}
\maketitle
\selectlanguage{english}
\begin{abstract}
	We characterize the simplicity of Pimsner algebras for non-proper C*-correspondences. 
	With the aid of this criterion, we give a systematic strategy to produce outer 
	actions of unitary tensor categories on Kirchberg algebras. 
	In particular, every countable unitary tensor category admits an outer action on the Cuntz algebra $\mathcal{O}_2$. 
	We also study the realizability of modules over fusion rings as K-groups of Kirchberg algebras acted on by unitary tensor categories, which turns out to be generically true for every unitary fusion category. 
	Several new examples are provided, among which actions on Cuntz algebras of 3-cocycle twists of cyclic groups are constructed for all possible 3-cohomological classes, thereby answering a question asked by Izumi. 
\end{abstract}
\selectlanguage{french}
\begin{abstract}
	Nous caract{\'e}risons la simplicit{\'e} des alg{\`e}bres de Pimsner pour les C*-correspondances non propres. En utilisant ce crit{\`e}re, nous proposons une strat{\'e}gie syst{\'e}matique pour produire des actions ext{\'e}rieures de cat{\'e}gories tensorielles unitaires sur les alg{\`e}bres de Kirchberg. En particulier, chaque cat{\'e}gorie tensorielle unitaire d{\'e}nombrable admet une action ext{\'e}rieure sur l'alg{\`e}bre de Cuntz $\mathcal{O}_2$.~Nous {\'e}tudions {\'e}galement la r{\'e}alisabilit{\'e} de modules sur des anneaux de fusion en tant que K-groupes d'alg{\`e}bres de Kirchberg soumises {\`a} des actions de cat{\'e}gories tensorielles unitaires, ce qui s'av{\`e}re g{\'e}n{\'e}riquement possible pour chaque cat{\'e}gorie de fusion unitaire. Plusieurs nouveaux exemples sont obtenus, parmi lesquels des actions sur les alg{\`e}bres de Cuntz par des twists de 3-cocycles de groupes cycliques sont construites pour toutes les classes de 3-cohomologie possibles, ce qui r{\'e}sout une question pos{\'e}e par Izumi.
\end{abstract}
\selectlanguage{english}
\setcounter{tocdepth}{1}
\tableofcontents
\section{Introduction}\label{sec:intro}
In the theory of subfactors, initiated by Jones~\cite{Jones1983:index}, it was found that inclusions of operator algebras can give rise to new types of symmetries beyond groups, sometimes called quantum symmetries. 
This discovery led to unexpected connections between operator algebras and other branches of mathematics and physics, such as low-dimensional topology and quantum field theory. 
As concrete realizations of quantum symmetries, the construction and classification of subfactors have been attractive themes of operator algebras. 
Beyond Ocneanu's classification~\cite{Ocneanu:book} of actions of amenable discrete groups on AFD II$_1$ factors, 
a celebrated result of Popa~\cite{Popa1990:classification,Popa1994:classification} shows that amenable inclusions of AFD II$_1$ factors are classified by their standard invariants. 
One of the equivalent ways to formulate this standard invariant uses the notions of actions of tensor categories and Q-systems~\cite{Longo1994:duality,Muger2003:subfactorsI}. 

Recently, Gabe--Szab{\'o}~\cite{Gabe-Szabo:dynamical} made a breakthrough in the classification of amenable actions of groups on purely infinite C*-algebras. 
Their main result is the equivariant generalization of the Kirchberg--Phillips classification theorem \cite{Kirchberg:manuscript,Phillips:Kirchberg-Phillips}, stating that amenable actions of groups on Kirchberg algebras are classified by their equivariant $\KK$-theory. 
Since then, there has been increasing attention to the classification of actions of tensor categories on C*-algebras. 
For example, see \cite{Chen-HernandezPalomares-Jones2024:K-theoretic} in the approximately finite setting and \cite{Arano-Kitamura-Kubota:tensorKK,GironPacheco-Neagu2023:Elliott} for some further attempts. 

In the C*-algebraic case, new obstructions to the existence of certain quantum symmetries have been observed in \cite{Jones2021:remarks,Evington-GironPacheco2023:anomalous,Izumi2023:Gkernels}. 
These obstructions have cohomological or $\K$-theoretic origins, suggesting that C*-algebras can reflect different subtleties of quantum symmetries from those appearing in subfactor theory. 
There are several known constructions of actions of tensor categories on simple nuclear C*-algebras~\cite{Izumi:subalgebrasI,Izumi:subalgebrasII,Evans-Jones2025:quantum} besides those on AF algebras obtained from Ocneanu's compactness argument~\cite{Ocneanu:quantized}. 
However, these constructions often impose severe restrictions on the shapes of resulting C*-algebras, and not so much has been known about the existence of quantum symmetries on given C*-algebras. 

In this paper, we initiate a systematic study of their existence on C*-algebras in the purely infinite case. 
We establish a new strategy to construct actions of tensor categories on Kirchberg algebras based on the following machinery. 
\begin{thm}[\cref{thm:tensorKirch}]\label{main:1}
	For a given action of a countable unitary tensor category $\cC$ on a separable nuclear C*-algebra $A$, there is an outer $\cC$-action on a unital Kirchberg algebra $B$ with a good control of $\K$-theoretic data: more precisely, $B$ is $\cC$-equivariantly $\KK$-equivalent to $A$ in the sense of \cite{Arano-Kitamura-Kubota:tensorKK}. 
\end{thm}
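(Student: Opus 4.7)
The plan is to realize $B$ as a Cuntz--Pimsner algebra $\mathcal{O}_E$ attached to a carefully engineered $\cC$-equivariant C*-correspondence $E$ over a $\cC$-equivariantly KK-harmless enlargement $\wt{A}$ of $A$, e.g.\ $\wt A=A\otimes\cO_\infty\otimes\cK$ endowed with the original action tensored with the trivial action on $\cO_\infty\otimes\cK$. Kirchberg absorption then gives a $\cC$-equivariant $\KK$-equivalence $\wt A\sim_{\KK^\cC} A$, so it suffices to produce an outer $\cC$-action on a unital Kirchberg algebra $B$ with $B\sim_{\KK^\cC}\wt A$; the unital version can be obtained by cutting down by a $\cC$-fixed projection at the very end.

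\textbf{Building the correspondence.} Enumerate representatives $\{X_n\}_{n\ge1}$ of the simple objects of the countable unitary tensor category $\cC$. Each $X_n$ supplies a $\cC$-equivariant $\wt A$-$\wt A$-correspondence $\alpha_{X_n}$ via the given action, and equivariance is preserved by direct sums, left- and right-stabilization, and multiplicities. I would take
\[
E \;=\; \bigoplus_{n\ge 1}\bigl(\alpha_{X_n}\bigr)^{\oplus \infty}
\]
as a full Hilbert $\wt A$-module, with a non-proper left action chosen so that the creation operators from every $X_n$ appear with infinite multiplicity. The non-properness is deliberate: it is exactly what allows the simplicity criterion advertised in the abstract to apply, and it also forces $\mathcal{O}_E$ to absorb many isometries, delivering pure infiniteness. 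By $\cC$-equivariance of each summand, $E$ is a $\cC$-equivariant correspondence, and consequently $B:=\mathcal{O}_E$ carries a canonical $\cC$-action.

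\textbf{Verifying the Kirchberg conditions and outerness.} Nuclearity of $B$ is inherited from $\wt A$ by the standard Pimsner argument. Simplicity and pure infiniteness of $B$ are established by the paper's new simplicity criterion for Pimsner algebras of non-proper C*-correspondences (the earlier main technical theorem): the fact that every simple object of $\cC$ appears in $E$ is precisely the genericity/fullness hypothesis one needs to rule out invariant ideals and to produce infinitely many non-unitary isometries in every hereditary subalgebra. Outerness of the $\cC$-action on $B$ is then checked by exhibiting the $X_n$-component of $E$ inside $B$ as an honest $\cC$-equivariant Hilbert bimodule, and arguing that a putative inner implementer would contradict the non-degeneracy of the Fock-space realization of $\mathcal{O}_E$; equivalently, the categorical trace of the action composed with a would-be unitary implementer is forced to vanish by the same isometry abundance used for pure infiniteness.

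\textbf{Controlling $\KK^\cC$-theory.} This is the step I expect to be the main obstacle. The Pimsner exact triangle in equivariant KK reads
\[
\wt A \xrightarrow{\,1-[E]\,} \wt A \longrightarrow B \longrightarrow \Sigma\wt A,
\]
and a single choice of $E$ as above typically shifts the K-theory. I would therefore tune the multiplicities and summands in $E$ so that $[E]=0$ in $\KK^\cC(\wt A,\wt A)$, using the infinite-multiplicity ``Eilenberg swindle'' to kill the classes of the $[\alpha_{X_n}]$ in the equivariant sense; this makes $1-[E]$ the identity, so the induced map $\wt A\to B$ is a $\cC$-equivariant $\KK$-equivalence. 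The genuine technical issue is simultaneously arranging $[E]=0$ and the fullness needed for simplicity, since adding more summands helps simplicity but disturbs KK; I would resolve this by pairing each added summand with a Cuntz-algebra-type partner whose KK-class cancels it (this is where tensoring with $\cO_\infty$ in $\wt A$ earns its keep). Finally, a $\cC$-fixed full projection in $B\otimes\cK$ provides the unital Kirchberg algebra of the statement, and the corner carries the induced outer $\cC$-action $\KK^\cC$-equivalent to $A$.
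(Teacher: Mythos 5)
Your overall architecture (Pimsner algebra over a KK-harmless enlargement of $A$, simplicity via the non-proper criterion, then outerness) points in the right direction, but there are three concrete gaps. The most serious one is at the very start: $\alpha(X_n)$ is \emph{not} a $\cC$-equivariant $(A,A)$-correspondence. Equivariance in this setting means a coherent natural family of unitaries $\bv_\pi\colon\alpha(\pi)\otimes_A E\to E\otimes_A\alpha(\pi)$, i.e.\ a half-braiding, and a single object $\alpha(X_n)$ carries no such structure unless $X_n$ lifts to the Drinfeld centre. Direct sums and multiplicities do not create one. So your $E=\bigoplus_n\alpha(X_n)^{\oplus\infty}$ is just an $(A,A)$-correspondence, and $\cO_E$ does not inherit a $\cC$-action. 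The paper's fix is the \emph{regular} half-braiding: $E=\ell^2(\cC;M)=\bigoplus_{\pi\in\Irr\cC}\alpha(\pi)\otimes_A M\otimes_A\alpha(\overline{\pi})$ for a suitable $(A,A)$-correspondence $M$ (here $M=\cH\otimes_\bC A$ for a faithful representation with $\phi(A)\cap\cK(\cH)=0$), which carries canonical coherent unitaries $\bU_\tau$ by the Neshveyev--Yamashita construction. This is not a cosmetic difference; the explicit form of $\bU$ is also what makes the outerness computation go through.

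The second gap is the $\KK^\cC$ step. You invoke a Pimsner triangle $\wt A\xrightarrow{1-[E]}\wt A\to B$ and propose to tune $[E]=0$ by a swindle; but that triangle is the one for \emph{proper} correspondences, whereas your $E$ is deliberately non-proper, and in any case no tuning is needed. The correct mechanism is that the equivariant inclusion $A\to\cT_E$ into the \emph{Toeplitz}--Pimsner algebra is always a $\KK^\cC$-equivalence (the Fock module gives the inverse), and choosing $M$ so that $\phi(A)\cap\cK(E)=0$ forces $\cT_E=\cO_E$. Third, outerness: your argument ("a putative inner implementer contradicts non-degeneracy", "the categorical trace vanishes by isometry abundance") is not a proof, and one pass of the construction does not suffice. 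The paper first runs the Toeplitz construction (plus $\cO_\infty$-tensoring) to make the algebra simple and purely infinite, and only then runs it \emph{again} over the now simple unital algebra; the irreducibility/injectivity of the resulting action is established by a bimodule analogue of the simplicity criterion whose proof crucially uses that the coefficient algebra is unital with no finite-dimensional quotients, together with an explicit computation with the regular half-braiding. Finally, "cutting down by a $\cC$-fixed projection" is not meaningful here; corners carry induced actions only via full projections and imprimitivity bimodules, and in the non-unital case one needs Zhang's dichotomy to reduce to a unital corner before the second Pimsner step.
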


Thanks to the Kirchberg--Phillips theorem~\cite{Kirchberg:manuscript,Phillips:Kirchberg-Phillips}, this machinery allows us to resort to topological or homologically algebraic methods to obtain plenty of quantum symmetries on given Kirchberg algebras. 
We first describe a few consequences for C*-algebraic analogues of questions in subfactor theory. 
\\

In his seminal work~\cite{Jones1983:index}, Jones proved that the indices of inclusions of II$_1$ factors $N\subset M$ must be contained in $\{ 4\cos^2\frac{\pi}{k+2} \mid k\in\bN \}\cup [4,\infty]$, and all of these values can be realized by some $N\subset M$. Moreover, $M$ and $N$ can be taken as AFD factors satisfying $N'\cap M=\bC1_M$ when the index is of the form $4\cos^2\frac{\pi}{k+2}$. In addition, since \cite[Problem 1]{Jones1983:index}, it is a long-standing open problem which indices in $(4,\infty)$ can be realized in this way. 

More generally, it is natural to ask which tensor categories $\cC$ admit an outer action on some AFD factor \cite[Question 1.2]{Brothier-Hartglass-Penneys2012:rigid}, \cite[6.3.1]{Popa2023:W*-representations}. 
(Here, note that any outer $\cC$-action on an AFD factor gives rise to some outer $\cC$-action on the AFD II$_1$ factor by tensoring with the AFD type III$_1$ factor and then passing to its continuous core using \cite{Izumi2003:canonical}, which is of type II$_\infty$ and thus Moria equivalent to the AFD II$_1$ factor.) 
A deep result of Popa and Shlyakhtenko~\cite{Popa1995:axiomatization,Popa-Shlyakhtenko2003:universal} shows that finite index inclusions of $L(\mathbb{F}_\infty)$ can give rise to arbitrary finitely generated tensor categories $\cC$. 
Nevertheless, when it comes to AFD factors, this question is already open even when $\cC$ is the representation category of $\operatorname{SU}_q(2)$ for $0<q<1$, whose affirmative answer would immediately imply the affirmative answer to \cite[Problem 1]{Jones1983:index} above at the index $(q+q^{-1})^2\in (4,\infty)$. 
We consider the analogues of these problems for simple nuclear C*-algebras, which turn out to have affirmative answers. 

\begin{thm}[\cref{thm:outensorO2}, \cref{prop:irrindex}]\label{main:1.5}
	The following hold. 
	\begin{enumerate}
		\item
		Every countable unitary tensor category admits an outer action on the Cuntz algebra $\cO_2$. 
		\item
		For any unital Kirchberg algebra $A$, every value in $[4,\infty)$ can be realized as the Watatani index of some unital $*$-endomorphism $\iota\colon A\to A$ with $\iota(A)'\cap A= \bC1_A$. 
	\end{enumerate}
\end{thm}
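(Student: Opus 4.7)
My plan for both parts is to feed \cref{main:1} a carefully chosen $\cC$-action on a separable nuclear C*-algebra, then identify the resulting Kirchberg algebra using the Kirchberg--Phillips classification and read off the desired endomorphism from the $\cC$-action on it.

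For part (1), let $\cC$ be any countable unitary tensor category. I would first produce some $\cC$-action on a separable nuclear C*-algebra $A_0$; such a starting point is available via a standard regular-module construction producing an action on a direct sum of the form $\bigoplus_{X\in\Irr(\cC)}\cK(H_X)$. Next, I would pass to $A:=A_0\otimes\cO_2$, with the $\cC$-action acting trivially on the $\cO_2$ factor; by Kirchberg's $\cO_2$-absorption this $A$ is $\KK$-equivalent to $0$. Applying \cref{main:1} then produces an outer $\cC$-action on a unital Kirchberg algebra $B$ that is $\cC$-equivariantly $\KK$-equivalent to $A$, so in particular $\K_*(B)=0$, and the Kirchberg--Phillips theorem forces $B\cong\cO_2$.

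For part (2), given a unital Kirchberg algebra $A$ and $t\in[4,\infty)$, I would set $d:=\sqrt{t}\in[2,\infty)$ and choose the unique $q\in(0,1]$ with $d=q+q^{-1}$. Take $\cC:=\Rep(\operatorname{SU}_q(2))$, a countable unitary tensor category equipped with its standard fiber functor; via this fiber functor the trivial $\cC$-action on $A$ itself is defined. Feeding this into \cref{main:1} yields an outer $\cC$-action on a unital Kirchberg algebra $B$ that is $\cC$-equivariantly $\KK$-equivalent to $A$, so $B\cong A$ by Kirchberg--Phillips. Let $X$ denote the defining $2$-dimensional simple object of $\cC$, of categorical dimension $d$. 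The resulting unital $*$-endomorphism $\iota:=\iota_X\colon A\to A$ will have Watatani index $\dim(X)^2=d^2=t$, and outerness combined with the simplicity of $X$ will give $\iota(A)'\cap A\cong\operatorname{End}_\cC(X)=\bC\cdot 1_A$.

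The substantive work---producing an outer $\cC$-action on a unital Kirchberg algebra without disturbing the input's $\cC$-equivariant $\KK$-class---has already been done inside \cref{main:1}, so the delicate step of my plan is supplying the right starting $\cC$-action in each case. For part (1) the main thing to know is that every countable unitary tensor category admits \emph{some} action on a separable nuclear C*-algebra, after which the $\cO_2$-tensoring trick collapses all of $\K$-theory. For part (2) the fiber functor on $\Rep(\operatorname{SU}_q(2))$ conveniently supplies the trivial action on $A$ itself as the input, and the formulas for the Watatani index and for $\iota(A)'\cap A$ then follow from the standard dictionary between outer unitary tensor category actions and their associated endomorphisms.
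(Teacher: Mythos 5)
Part (1) of your proposal is correct and is essentially the paper's argument: both start from the regular/translation action on (something Morita equivalent to) $c_0(\Irr\cC)$, tensor with $\cO_2$ to kill $\K$-theory, and invoke \cref{main:1} together with Kirchberg--Phillips. One small point: for infinite $\Irr\cC$ your input algebra is non-unital, so the output of \cref{thm:tensorKirch} is a stable Kirchberg algebra $\KK$-equivalent to $0$, i.e.\ $\cO_2\otimes\bK$ rather than $\cO_2$ itself; you then land on $\cO_2$ by passing through $\Cor(\cO_2\otimes\bK)\simeq\Cor(\cO_2)$, exactly as the paper does.

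Part (2) has a genuine gap. Feeding the \emph{trivial} action of $\cC=\Rep\operatorname{SU}_q(2)$ on $A$ (the one factoring through the fiber functor, $\alpha(\pi)=H_\pi\otimes A$) into \cref{main:1} fixes the $\bZ[\cC]$-module structure on $\K_0(B)\cong\K_0(A)$, because $\KK^\cC$-equivalences preserve the induced module structures (\cref{rem:fusionmodule}): the class $[\pi_1]$ of the fundamental object acts by multiplication by $\dim_\bC H_{\pi_1}=2$, not by $1$. Hence $[\beta(\pi_1)]=2[1_B]$ in $\K_0(B)$, so the unital inclusion $\beta_{\pi_1}\colon B\to\cK(\beta(\pi_1))$ lands in a unital Kirchberg algebra whose unit has class $2[1_A]$. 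Whenever $2[1_A]\neq[1_A]$ --- e.g.\ $A=\cO_\infty$, where this reads $2\neq 1$ in $\bZ$ --- Kirchberg--Phillips gives $\cK(\beta(\pi_1))\not\cong A$ unitally, and you obtain a unital embedding of $A$ into something like $\bM_2\otimes A$ rather than an endomorphism of $A$; your construction only works when $[1_A]=0$. The paper avoids this by \emph{not} using the trivial action: it first reduces to $A=\cO_\infty$ via $\cO_\infty$-absorption (tensoring the endomorphism with $\id_A$), and then exploits $\bZ[\Rep\operatorname{SU}_q(2)]\cong\bZ[X]$ together with \cref{thm:homologyKirch} (see \cref{eg:repCQG}) to manufacture an outer action on $\cO_\infty$ for which $[\pi_1]$ acts on $\K_0(\cO_\infty)=\bZ$ by the \emph{identity}; only then does $\cK(\alpha(\pi_1))\cong\cO_\infty$ with matching unit classes, yielding the desired unital endomorphism of index $(q+q^{-1})^2=t$. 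The freedom to decouple the $\K$-theoretic action of $[\pi_1]$ from the vector-space dimension $2$ is precisely the point of the $\K$-theoretic realization machinery, and it is what a fiber-functor input cannot provide.
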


In (2), we can choose the larger and smaller C*-algebras to be $*$-isomorphic because of the particular flexibility of the $\K$-theoretic data due to the homological well-behaviour of the representation ring of $\operatorname{SU}_q(2)$. 
However, we cannot hope that (2) holds unconditionally when the index is smaller than $4$. 
This is because the quantum symmetries arising from the inclusions of such indices are closely related to the unitary fusion category of $\operatorname{SU}(2)$ at level $k\in\bN$, denoted by $\cC(\mathfrak{sl}_2,k)$, 
and its fusion ring is homologically more complicated than the representation ring of $\operatorname{SU}_q(2)$. 
This situation imposes some $\K$-theoretic restriction on $A$ for (2) in this case. 
Similarly, (1) is no longer valid for arbitrary Kirchberg algebras due to $\K$-theoretic obstructions. 
To understand these situations, we next investigate the possible $K$-theoretic data coming from actions of fusion categories. 
\\

The first constraint for a tensor category $\cC$ to act on a C*-algebra $A$ is that it must induce $\bZ[\cC]$-module structures on $\K_0(A)$ and $\K_1(A)$. 
Conversely, it is natural to ask when the existence of $\bZ[\cC]$-module structures on the $\K$-groups assures the existence of $\cC$-actions on $A$ or, more elaborately, when given $\bZ[\cC]$-module structures on them lift to some $\cC$-action on $A$. 
Affirmative answers to this question will provide sufficient conditions in algebraic terms of K-theory for the existence of $\cC$-actions on $A$, which is an analytic situation. 
A similar question has been considered in the group case~\cite{Spielberg2007:non-cyclotomic,Katsura2008:construction}, and Katsura~\cite{Benson-Kumjian-Phillips2003:symmetries} shows that this is always true for certain finite groups including all finite cyclic groups. 
Unfortunately, typically $\bZ[\cC]$-modules do not lift to $\cC$-actions on Kirchberg algebras, even when $\cC$ is a non-trivial $3$-cocycle twist of a finite group~\cite{GironPacheco:thesis}. 
Yet, using homologically algebraic methods with the aid of \cref{main:1}, we can still show that this liftability is generically true for arbitrary fusion categories. 

\begin{thm}[\cref{cor:localfusion}]\label{main:3}
	For every unitary fusion category $\cC$, there exists $d\in\bN$ such that, for all countable $\bZ[\cC]$-modules $M_0$ and $M_1$ on which $d$ acts invertibly, there is a Kirchberg algebra $A$ in the UCT class acted on by $\cC$ satisfying $M_0\cong \K_0(A)$ and $M_1\cong \K_1(A)$ as $\bZ[\cC]$-modules. 
\end{thm}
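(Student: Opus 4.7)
The approach is to reduce, via \cref{main:1}, to the problem of realising prescribed $\bZ[\cC]$-module data as equivariant $\K$-groups of a $\cC$-action on some separable nuclear UCT C*-algebra. Since \cref{main:1} produces a unital Kirchberg algebra $\cC$-equivariantly $\KK$-equivalent to any such input, the UCT property, nuclearity, and the $\bZ[\cC]$-module structure on $\K$-theory are all preserved. The problem is thereby reduced to a purely $\cC$-equivariant $\KK$-theoretic construction.

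The integer $d$ is dictated by the homological algebra of $R:=\bZ[\cC]$. Because $\cC$ is unitary, $R\otimes_\bZ\bC$ is a finite-dimensional semisimple $\bC$-algebra, so $R_\bQ:=R\otimes_\bZ\bQ$ is semisimple and $R$ sits inside it as a $\bZ$-order. By the theory of orders in semisimple algebras, there is a $d\in\bN$ (essentially a multiple of the discriminant of $R$) for which the localisation $R[1/d]$ is a hereditary order; every $R[1/d]$-module then admits a projective resolution of length $\le 1$. For the given countable $R$-modules $M_i$ with $d$ invertible we fix short resolutions
\[
0\longrightarrow P_1^{(i)}\longrightarrow P_0^{(i)}\longrightarrow M_i\longrightarrow 0,\qquad i=0,1,
\]
by countable projective $R[1/d]$-modules.

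To realise these resolutions inside $\KK^\cC$ we need a building block: a separable nuclear $\cC$-C*-algebra $A_{\mathrm{reg}}$ in the UCT class with $\K_0(A_{\mathrm{reg}})\cong R$ and $\K_1(A_{\mathrm{reg}})=0$ as $R$-modules. A natural candidate is a $\cC$-equivariant AF algebra whose Bratteli diagram encodes the fusion matrices of $\cC$, i.e.\ the left regular action of $\cC$ on itself. Tensoring with the UHF algebra of type $d^\infty$ inverts $d$ in $\K$-theory, and countable $c_0$-direct sums together with passage to $\cC$-equivariant $\KK$-idempotents realise every countable projective $R[1/d]$-module as $\K_0$ of some UCT-class nuclear separable $\cC$-C*-algebra with vanishing $\K_1$. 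Lifting each boundary $P_1^{(i)}\to P_0^{(i)}$ to a $\cC$-equivariant $\KK$-morphism between the corresponding blocks and forming mapping cones—plus a suspension to place the groups in the correct degrees—yields a UCT-class nuclear separable $\cC$-C*-algebra $A$ whose Puppe exact sequence computes $\K_0(A)\cong M_0$ and $\K_1(A)\cong M_1$ as $R$-modules. Applying \cref{main:1} to $A$ produces the desired Kirchberg algebra.

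The principal obstacle is the lifting step: one must identify $\KK^\cC$ between the building blocks with $\Hom_{R[1/d]}$ on their $\K$-groups. This amounts to an equivariant universal coefficient theorem in the $\cC$-equivariant $\KK$-framework of \cite{Arano-Kitamura-Kubota:tensorKK}, whose obstructing $\mathrm{Ext}^1$ term vanishes precisely because the $\K_0$-groups of the blocks are projective over $R[1/d]$; hereditarity of $R[1/d]$ is what keeps this projective regime available for arbitrary $M_i$. Setting up this UCT for the chosen blocks is the core technical work of the proof.
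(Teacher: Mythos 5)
Your overall skeleton matches the paper's: reduce via \cref{main:1} to realizing the modules on some separable nuclear $\cC$-C*-algebra, use a homological regularity statement for the localized fusion ring to get short resolutions, realize the resolution by equivariant mapping cones, and finish with the Kirchberg--Phillips machinery. Your route to the constant $d$ (discriminant of the order $\bZ[\cC]\subset\bZ[\cC]\otimes\bQ$, maximal hence hereditary after localization) is a legitimate alternative to the paper's more self-contained Maschke-type argument (\cref{thm:localfusion}), which instead inverts the explicit central element $Z_\cC=\sum_\pi[\pi][\overline{\pi}]$ and takes $d=|\det Z_\cC|$; the paper's choice has the advantage of being computable (cf.\ \cref{eg:Temperley-Lieb-Jones}).

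However, the core of your argument has a genuine gap that you yourself flag: you defer the lifting of the boundary maps $P_1^{(i)}\to P_0^{(i)}$ to an equivariant universal coefficient theorem in $\KK^\cC$, which is not available in \cite{Arano-Kitamura-Kubota:tensorKK} or elsewhere, and whose development would be a substantial project in its own right. The same goes for realizing arbitrary countable projective $R[1/d]$-modules by ``passage to $\cC$-equivariant $\KK$-idempotents'': idempotent-splitting at the level of $\cC$-C*-algebras is not justified. The paper avoids both issues by a more elementary device. First, \cref{lem:freersl} upgrades finite projective dimension to a \emph{free} resolution of length $n$ by $\bZ[d^{-1}][\cC]^{\oplus\infty}$ (an Eilenberg-swindle absorption, \cref{rem:absorption}), so no non-free projectives ever appear. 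Second, every $\bZ[\cC]$-linear map between such free modules is a matrix whose entries are left multiplications by elements $\sum_\pi c_\pi[\pi]$ of $\bZ[d^{-1}][\cC]$, and each such entry is lifted \emph{by hand}: $[\pi]$ is implemented by the translation correspondence $\lambda(\pi)$ over $c_0(\Irr\cC)$ (not an AF algebra --- $\K_0(c_0(\Irr\cC))\cong\bZ[\cC]$ on the nose, \cref{eg:LRtranslation}, \cref{rem:LRtranslation}), and the scalar $c_\pi\in\bZ[d^{-1}]$ by a projection in $\bM_{d^\infty}\otimes\cO_\infty$. The iterated mapping cone construction (\cref{prop:inductiveMC}) then computes the K-theory without any UCT input. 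To complete your proof you would need to replace the appeal to an equivariant UCT by such an explicit lifting, which in turn forces you to work with free rather than projective resolutions.
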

We will also give a computable choice of this constant $d\in\bN$. 
However, this choice is often far from optimal, and determining the smallest value of $d$ is much more difficult in general. 
What we can do at present is the optimization of such $d$ when the fusion category is $\cC(\mathfrak{sl}_2,k)$ or its variants for a certain series of $k$. 
\begin{thm}[\cref{thm:Temperley-Lieb-Jones}, \cref{eg:Temperley-Lieb-Jones2}]\label{main:3.5}
	Let $p$ be an odd prime. 
	Then, we can take $d=1$ in \cref{main:3} when $\cC$ is the even part of $\cC(\mathfrak{sl}_2,p-2)$ or the non-trivial twist of $\cC(\mathfrak{sl}_2,p-2)$. 
	Also, we can take $d=2$ (but cannot take $d=1$) when $\cC$ is $\cC(\mathfrak{sl}_2,p-2)$ itself. 
\end{thm}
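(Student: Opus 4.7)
The plan is to apply the generic liftability result \cref{main:3} to each of the three fusion categories in question and to optimize the constant $d$ by exploiting the structure of their Grothendieck rings together with additional categorical data. For $\cC(\mathfrak{sl}_2, p-2)$, the Grothendieck ring is $R = \bZ[X]/(U_{p-1}(X))$, where $U_{p-1}$ is the Chebyshev-type polynomial whose roots are $2\cos(j\pi/p)$, $1\le j\le p-1$. For $p$ an odd prime, these roots split into two Galois orbits under $(\bZ/p)^\times/\{\pm 1\}$, each orbit generating the maximal real subfield $K := \bQ(\zeta_p)^+$, so $R\otimes_{\bZ}\bQ \cong K\oplus K$. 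The Grothendieck ring of the even (adjoint) subcategory equals the ring of integers $\cO_K$, a Dedekind domain, whereas $R$ itself is a non-maximal order in $\cO_K\oplus\cO_K$ whose conductor is a power of~$2$.

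For the $d=1$ statement concerning the even part, the ring $\cO_K$ has global dimension one, so every countable $\cO_K$-module has projective dimension at most one, and the homological obstructions of $\mathrm{Ext}^{\ge 2}$ or $\mathrm{Tor}_{\ge 2}$ type appearing in the proof of \cref{main:3} (living in a universal-coefficient-style spectral sequence within the equivariant $\KK$-theory of \cite{Arano-Kitamura-Kubota:tensorKK}) vanish. For $\cC(\mathfrak{sl}_2, p-2)$ itself, the same argument applies after inverting $2$, since $R[1/2] \cong \cO_K[1/2]\oplus\cO_K[1/2]$ is a product of Dedekind domains, giving $d = 2$. For the non-trivial twist, the Grothendieck ring is unchanged, but the non-trivial $3$-cocycle defining the twist modifies the associators in such a way that the lifting obstruction is absorbed: one shows that actions of the twisted category on a Kirchberg algebra are controlled by a module problem over the even part (e.g.\ via a Morita-type equivalence between the twist and a category built over the even part), and the $d=1$ conclusion transfers from the even-part case.

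The genuinely delicate step is proving $d=1$ does \emph{not} suffice for $\cC(\mathfrak{sl}_2, p-2)$. I would exhibit a countable $R$-module $M$ -- a natural candidate being a cyclic quotient $R/\fm$ at a maximal ideal $\fm$ of residue characteristic~$2$ -- and show that it cannot arise as the $\K$-theory of any Kirchberg algebra with a $\cC$-action. The obstruction stems from non-trivial $2$-torsion in the higher $\mathrm{Ext}^*_R$-groups coming from the non-maximality of the order $R\subset\cO_K\oplus\cO_K$ at~$2$; this torsion must vanish for any equivariant $\K$-theory coming from a $\cC$-action, and $M$ is engineered to violate that vanishing. The principal difficulty will be making the non-realizability explicit and detectable: while the positive half of the theorem flows from \cref{main:3} together with classical Dedekind theory, the negative half requires identifying a genuinely new $\K$-theoretic invariant, likely modelled on the anomalous obstructions in the works of Evington--Gir{\'o}n~Pacheco and Izumi cited in the introduction.
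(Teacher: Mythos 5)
Your treatment of the even part and of the upper bound $d\le 2$ for $\cC(\mathfrak{sl}_2,p-2)$ matches the paper: $\bZ[\cC_{p-2,0}]\cong\bZ[2\cos\frac{2\pi}{p}]$ is a Dedekind domain, and $\bZ[2^{-1}][\cC_{p-2}]\cong\bZ[2\cos\frac{2\pi}{p},2^{-1}][\bZ/2\bZ]$ has global dimension $\le 1$, so \cref{thm:homologyKirch} applies. The gap is in the twist case. The fusion ring of $\cC_{p-2}^{\tw}$ is the \emph{same} as that of $\cC_{p-2}$, namely $R[\bZ/2\bZ]$ with $R=\bZ[2\cos\frac{2\pi}{p}]$, and this ring has infinite global dimension, so the homological machinery behind \cref{main:3} does not give $d=1$ directly; moreover $d_\cC$ is defined via modules over $\bZ[\cC]$ and is not a weak-Morita invariant, so your proposed ``transfer from the even-part case via a Morita-type equivalence'' does not establish anything without substantial extra argument. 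What the paper actually does is: (i) prove an algebraic resolution theorem (\cref{thm:Z/2module}) showing every countable $R[\bZ/2\bZ]$-module admits a length-$3$ resolution by direct sums of $R[\bZ/2\bZ]$ and the non-projective module $R_+$, using crucially that $\bQ(2\cos\frac{2\pi}{p})/\bQ$ is \emph{unramified at $2$} (this is the precise number-theoretic input, not merely that the conductor is a power of $2$); and (ii) realize $R_+$ categorically as $\bZ[\cD_{p-2}]$ for the module category $\cD_{p-2}$ coming from the Q-system on $\pi_0\oplus\pi_{p-2}$ -- a Q-system which exists in $\cC_{p-2}^{\tw}$ because there the subcategory generated by $\pi_0,\pi_{p-2}$ is $\Hilb_{\bZ/2\bZ}$ with trivial associator, but does \emph{not} exist in $\cC_{p-2}$ (where it is the nontrivially twisted $\Hilb_{\bZ/2\bZ,\omega}$). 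This Q-system dichotomy is the structural reason the twist has $d=1$ while the untwisted category does not, and it is absent from your sketch.

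For the negative result ($d=1$ fails for $\cC_{p-2}$), the mechanism you propose -- $2$-torsion in $\mathrm{Ext}^*_R$ of a cyclic module $R/\fm$ that ``must vanish for equivariant $\K$-theory'' -- is not established and is not how the obstruction works; purely ring-theoretically nothing prevents $R/\fm$ from being a $\K_0$-group. The paper's argument takes $M=\bZ[2\cos\frac{2\pi}{p}]$ with trivial $\bZ/2\bZ$-action and supposes $\K_0(A)\cong M/2M$, $\K_1(A)=0$ for some Kirchberg algebra $A$ with a $\cC_{p-2}$-action. Restricting to $\Hilb_{\bZ/2\bZ,\omega}\subset\cC_{p-2}$ ($\omega$ the nontrivial $3$-cocycle, present because $p-2$ is odd) and cutting to a suitable full corner via \cref{lem:corneranomalous} produces a genuine right $(\bZ/2\bZ,\omega)$-action by $*$-automorphisms on a unital Kirchberg algebra $B$ with $2$-torsion $\K_0$; Izumi's $\K_0^\#$ obstruction then forces $[\omega]$ to lie in the image of $\Ho^3(\bZ/2\bZ;\K_0^\#(B))\to\Ho^3(\bZ/2\bZ;\bT)$, which is zero here, a contradiction. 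Your closing sentence correctly points at the Izumi/Evington--Gir{\'o}n~Pacheco anomaly circle of ideas, but the concrete invariant is $\K_0^\#$ applied after a corner reduction, not an $\mathrm{Ext}$ computation over the fusion ring, and identifying the right module ($M/2M$ with \emph{trivial} $\bZ/2\bZ$-action, so that the invertibility-of-$d$ hypothesis is the only thing that could save \cref{main:3}) is part of the work.
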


The key observation in the proof is that $\bZ[\cC(\mathfrak{sl}_2,p-2)]$ is the group ring of $\bZ/2\bZ$ with the coefficient ring $\bZ[2\cos\frac{2\pi}{p}]$. 
We can obtain the result using certain regularity of the group $\bZ/2\bZ$ with respect to the coefficient ring, which happens to be available thanks to the number-theoretic fact that the Galois extension $\bQ(2\cos\frac{2\pi}{p})/\bQ$ is unramified at $2$. 
Consequently, \cref{main:3,main:3.5} yield many irreducible inclusions of unital Kirchberg algebras whose Watatani indices are $4\cos^2\frac{\pi}{k+2}$ for each $k\in\bN$. 
\\

Next, we consider the problem of $\K$-theoretic liftability in the case when \cref{main:3} might not be applicable. 
In such a case, this problem becomes much more delicate. 
As a first step, we focus on specific tensor categories and C*-algebras and investigate actions of $3$-cocycle twists $(\bZ/m\bZ,\omega)$ of a finite cyclic group on the Cuntz algebra $\cO_{n+1}$. 
Recently, Izumi~\cite{Izumi2023:Gkernels} gave an obstruction for $3$-cocycle twists $(\Gamma,\omega)$ of groups to act on unital Kirchberg algebras $A$ by introducing a new variation of $\K$-groups called $\K_0^\#$ therein. 
More precisely, he observed that if $(\Gamma,\omega)$ admits an action on $A$ by $*$-automorphisms, $\omega$ must lift to a cohomological class in $\Ho^3(\Gamma;\K_0^\#(A))$, which gives some constraint on $\omega$. 
After discussing this constraint in the case of $\Gamma=\bZ/m\bZ$ and $A=\cO_{n+1}$ in \cite[Theorem 3.5]{Izumi2023:Gkernels}, he asked which classes in $\Ho^3(\bZ/m\bZ;\K_0^\#(\cO_{n+1}))$ indeed come from some $(\bZ/m\bZ,\omega)$-actions on $\cO_{n+1}$. 
Analyzing $(\bZ/m\bZ,\omega)$-actions on certain continuous fields of Cuntz algebras combined with the machinery of \cref{main:1}, we answer this question affirmatively. 

\begin{thm}[\cref{thm:cyclicCuntz}, \cref{thm:cyclicCuntzbimod}]\label{main:4}
	For $m,n\geq 1$, any class in $\Ho^3(\bZ/m\bZ;\K_0^\#(\cO_{n+1}))$ lifts to some action of $(\bZ/m\bZ,\omega)$ on $\cO_{n+1}$ by $*$-automorphisms for some $3$-cocycle $\omega$, answering \cite[Problem 3.5]{Izumi2023:Gkernels}. 
	Moreover, for a $3$-cocycle $\omega$ on $\bZ/m\bZ$, the following hold. 
	\begin{enumerate}
		\item
		$(\bZ/m\bZ,\omega)$ admits an action on $\cO_{n+1}$ by $*$-automorphisms if (and only if thanks to \cite[Theorem 3.4]{Izumi2023:Gkernels}) 
		$[\omega]\in n\Ho^3(\bZ/m\bZ;\bR/\bZ)$. 
		\item
		$(\bZ/m\bZ,\omega)$ admits an action on $\cO_{n+1}$ in the tensor categorical sense (i.e., by Morita equivalences) if and only if 
		$[\omega]\in n\Ho^3(\bZ/m\bZ;\bR/\bZ) + a \Ho^3(\bZ/m\bZ;\bR/\bZ)$. 
		Here, $l\in\bN$ is the radical of $m$ (i.e., the product of all prime divisors of $m$), and $a,b\in\bN$ are coprime numbers with $\frac{a}{b}=\frac{2lm}{n}$. 
	\end{enumerate}
\end{thm}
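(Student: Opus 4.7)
Necessity in~(1) is \cite[Theorem~3.4]{Izumi2023:Gkernels}, and necessity in~(2) follows from an obstruction argument parallel to Izumi's but adapted to the Morita-implemented setting, in which the bimodules implementing the $\bZ/m\bZ$-action are tracked through their induced endomorphisms of $\K_0^\#(\cO_{n+1})$. The main content is thus the sufficiency and the surjectivity statement onto $\Ho^3(\bZ/m\bZ;\K_0^\#(\cO_{n+1}))$.

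The plan is to construct model actions on auxiliary separable nuclear C*-algebras and transfer them via Theorem~\ref{main:1}. For each prescribed class $[\omega]$, build a $(\bZ/m\bZ,\omega)$-equivariant Hilbert C*-bimodule $E$ of rank $n+1$ over a coefficient C*-algebra $A_0$---for instance $C(X)$ for a finite $\bZ/m\bZ$-CW space $X$ in the automorphism case, or a noncommutative analogue such as a twisted group C*-algebra in the Morita case---whose equivariant twisting data realize $[\omega]$. The Pimsner algebra $\cO_E$ is then a continuous field of copies of $\cO_{n+1}$ carrying an induced $(\bZ/m\bZ,\omega)$-action, and its associated invariant in $\Ho^3(\bZ/m\bZ;\K_0^\#)$ matches $[\omega]$ by construction. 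Pimsner's six-term exact sequence computes the equivariant $\K$-theory of $\cO_E$; by choosing $A_0$ and $E$ so that $\id-[E]$ acts in the desired manner on $\K_*(A_0)$, one arranges that $\cO_E$ is $(\bZ/m\bZ,\omega)$-equivariantly $\KK$-equivalent to $\cO_{n+1}$. Theorem~\ref{main:1} then produces an outer $(\bZ/m\bZ,\omega)$-action on a unital Kirchberg algebra $B$ with the same equivariant $\KK$-class, and Kirchberg--Phillips identifies $B\cong\cO_{n+1}$.

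The main obstacle is delimiting the range of realized classes separating~(1) and~(2)(a) from~(2)(b). Demanding honest $*$-automorphisms constrains the equivariant bimodule $E$ to admit compatible fiberwise bases, which forces the realizable twist into $n\Ho^3(\bZ/m\bZ;\bR/\bZ)$; an explicit model here can be furnished by a projective representation of dimension $n+1$ of a central extension of $\bZ/m\bZ$. Morita-implemented actions drop this rigidity and permit additional twists captured by the extra subgroup $a\Ho^3(\bZ/m\bZ;\bR/\bZ)$ with $a/b=2lm/n$. The appearance of the radical $l$ of $m$ and the factor $2$ requires careful cohomological bookkeeping via Bockstein connecting homomorphisms relating $\Ho^2$ and $\Ho^3$ of $\bZ/m\bZ$ with coefficients in the cyclic module $\K_0^\#(\cO_{n+1})$, combined with a K\"unneth-type splitting of $\Ho^*(\bZ/m\bZ;-)$ by the prime factors of $m$; matching the construction against the refined obstruction theory then yields both directions of~(2)(b).
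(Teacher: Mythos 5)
Your high-level transfer strategy (build a model action on an auxiliary separable nuclear C*-algebra that is $\KK$-equivalent to $\cO_{n+1}$, then push it to a Kirchberg algebra via Theorem~\ref{main:1} and identify the result with $\cO_{n+1}$ by Kirchberg--Phillips) is exactly the paper's strategy, and your choice of a continuous field over a $\bZ/m\bZ$-space built from $\bT$ is also the right starting point. However, the three steps you compress into single sentences are precisely where the content lies, and as written they do not go through. First, the surjectivity onto $\Ho^3(\bZ/m\bZ;\K_0^\#(\cO_{n+1}))$ cannot be read off ``by construction'': the class $[\wt\omega]$ is defined via paths of unitaries in a \emph{unital} algebra whose unit generates $\K_0$, whereas the natural model $C_0(X_n)$ is non-unital and the implementing unitary $u(x)=x^{-nk}$ does not live in it. The paper has to unitalize (adjoining $\cO_2$ and $\cO_\infty$ legs), pass to a carefully chosen full corner $pA_np$ with $[p]$ a generator of $\K_0\cong\bZ/n\bZ$, and exhibit explicit homotopies $\wt V(s,t)$ witnessing that the resulting class in $\Ho^3(\bZ/m\bZ;\K_0^\#)$ is $k$; nothing in your sketch produces this computation, and your alternative model via a projective representation of a central extension only yields $2$-cocycle data, not the $3$-cocycle with $\K_0^\#$ coefficients.

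Second, the necessity direction of (2) and the exact exponents behind the subgroup $a\Ho^3(\bZ/m\bZ;\bR/\bZ)$ do not come from Bockstein maps or a K\"unneth splitting. After reducing to prime powers $m=p^r$, $n=p^s$, the point is that a categorical action induces a homomorphism $\bZ/p^r\bZ\to\Aut(\K_0(\cO_{p^s+1}))\cong(\bZ/p^s\bZ)^\times$; the structure of this unit group (cyclic of order $p^{s-1}(p-1)$ for odd $p$, and $\bZ/2^{s-2}\times\{\pm1\}$ for $p=2$) forces a subgroup of index at most $p^{s-\sigma}$ (with $\sigma=1$ or $2$) to act trivially on $\K_0$, and Izumi's obstruction applied to that subgroup yields the constraint $k\in p^{\min\{r,s,r-s+\sigma\}}\bZ$. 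This is where the factor $2$ and the radical of $m$ enter. Third, the sufficiency in the range $k\in p^{r-s+\sigma}\bZ\setminus p^{\min\{r,s\}}\bZ$ requires an explicit equivariant resolution realizing $\bZ/p^s\bZ$ as a cokernel of multiplication by $Y-g$ on $\bZ[d^{-1}][\bZ/p^{s-\sigma}\bZ]$, built from a shift action on $C(\bZ/p^{s-\sigma}\bZ)$ with nontrivially chosen scalar cocycle constants; making this work depends on an elementary but genuinely nontrivial number-theoretic choice of $g$ with $(1-g^{p^{s-\sigma}})/p^s$ coprime to $p$. None of these ingredients is supplied by your outline, so the proof as proposed has essential gaps in both directions of (2) and in the surjectivity claim.
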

In particular, the latter dichotomy states that when we let $p$ be an odd prime or $2$, fix its power $m$, and increase $n=1,p,p^2,\cdots$, the constraint on $[\omega]$ to admit an action on $\cO_{n+1}$ increases at first but starts to decrease since $n^2\geq pm$ or $4m$ and finally vanishes after $n\geq pm$ or $4m$, respectively. 
Also, the difference of (1) and (2) says that for some triple $(m,n,\omega)$, the trivial $\bZ[\bZ/m\bZ]$-module structure on $\K_0(\cO_{n+1})$ cannot lift to a tensor-categorical action of $(\bZ/m\bZ,\omega)$ (cf.~\cref{lem:corneranomalous}), but some $\bZ[\bZ/m\bZ]$-module structure on $\K_0(\cO_{n+1})$ does. Thus, we cannot apply \cref{main:3} to this setting (since the invertibility of $d$ only refers to the $\bZ$-module structure of $\K$-groups). 
\\

We explain the techniques to show \cref{main:1}. The desired C*-algebra $B$ in the statement is obtained from the original $\cC$-action on $A$ using the Pimsner construction \cite{Pimsner:class,Kumjian:certianCP,Benson-Kumjian-Phillips2003:symmetries,Meyer:classificationgroup}. We need to apply this construction twice to ensure the outerness of the $\cC$-action in addition to the simplicity. 
To show the simplicity of the resulting C*-algebra, we give the following criteria generalizing the particular case due to Kumjian~\cite{Kumjian:certianCP}, which are interesting in themselves. 
\begin{thm}[\cref{thm:simpleCPalg}, \cref{cor:simpleCPalg}]\label{main:2}
	For a non-zero C*-algebra $A$ and a non-degenerate faithful $(A,A)$-correspondence $(E,\phi)$, the following hold. 
	\begin{enumerate}
		\item
		The Toeplitz--Pimsner algebra $\cT_E$ is simple if and only if $\phi(A)\cap\cK(E)=0$ and there is no non-trivial closed ideal $I\subset A$ with $\bra E, \phi(I)E\ket \subset I$. 
		\item
		When $\phi(A)\not\subset\cK(E)$, then the Cuntz--Pimsner algebra $\cO_E$ is simple if and only if there is no non-trivial closed ideal $I\subset A$ with $\bra E, \phi(I)E\ket \subset I \supset \phi^{-1}(\cK(EI))$. 
	\end{enumerate}
\end{thm}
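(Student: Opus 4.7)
The approach uses the gauge-invariant ideal calculus for Pimsner algebras (Fowler--Muhly--Raeburn for $\cT_E$, Katsura for $\cO_E$) together with an averaging argument that upgrades statements about gauge-invariant ideals to statements about arbitrary closed ideals.

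First, I deduce (1) from (2). Since $\phi$ is faithful, Katsura's ideal is $J_E=\phi^{-1}(\cK(E))$, so $\phi(A)\cap\cK(E)=0$ is equivalent to $J_E=0$, which in turn forces $\cT_E=\cO_E$ canonically (the defining Cuntz--Pimsner redundancy relations become trivial). When this holds, the second condition in (2) is vacuous because $\phi^{-1}(\cK(EI))\subset J_E=0\subset I$ for every ideal $I\subset A$, so (2) delivers the desired equivalence. Conversely, if $\phi(A)\cap\cK(E)\neq 0$ then $J_E\neq 0$, and the canonical surjection $\cT_E\twoheadrightarrow\cO_E$ has proper non-zero kernel, so $\cT_E$ is not simple.

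For (2), necessity is direct: an ideal $I$ with $\bra E,\phi(I)E\ket\subset I\supset\phi^{-1}(\cK(EI))$ is $E$-invariant and $E$-saturated in Katsura's sense (saturation meaning that the Katsura ideal of the quotient correspondence over $A/I$ vanishes). Hence $(E/EI,\phi)$ defines a non-zero Cuntz--Pimsner algebra $\cO_{E/I}$, and the induced surjection $\cO_E\twoheadrightarrow\cO_{E/I}$ has proper non-zero kernel. For sufficiency, let $0\neq J\subset\cO_E$ be a closed ideal, and consider the faithful conditional expectation $E_\gamma\colon\cO_E\to\cO_E^\gamma$ onto the core obtained by averaging over the gauge $\bT$-action. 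The crucial step is to show $J\cap\cO_E^\gamma\neq 0$: given a positive $0\neq a\in J$, the hypothesis $\phi(A)\not\subset\cK(E)$ supplies isometries in $\cO_E$ built from non-compact elements of $\phi(A)$ whose successive compressions absorb the non-zero Fourier components of $a$, producing a non-zero element in $J\cap\cO_E^\gamma$. Since $\cO_E^\gamma$ is an inductive limit built from $\cK(E^{\otimes n})$-type ideals over $A$, we deduce $J\cap A\neq 0$; the structure of Katsura's construction then forces $I:=J\cap A$ to satisfy the invariance and saturation conditions, contradicting the hypothesis.

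The main obstacle is the absorption argument: one must leverage $\phi(A)\not\subset\cK(E)$ to produce enough ``$\cO_\infty$-like'' isometries inside $\cO_E$ so that conjugation by them can annihilate the higher Fourier components of arbitrary ideal elements while preserving a non-zero zeroth component. This is the substantive technical extension beyond Kumjian's special case, where the stronger assumption $\phi(A)\cap\cK(E)=0$ (i.e., the $J_E=0$ regime of part (1)) made the interplay between ideals of $A$ and the core automatic.
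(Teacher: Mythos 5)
Your reduction of (1) to (2) is correct and matches the paper's own deduction (non-properness is automatic from $\phi(A)\cap\cK(E)=0$ with $A\neq 0$ and $\phi$ faithful, and the saturation condition becomes vacuous), and your necessity argument for (2) via invariant--saturated ideals is sound. The problem is that the entire substance of the theorem lies in the step you defer: you state that one must produce ``isometries in $\cO_E$ built from non-compact elements of $\phi(A)$ whose successive compressions absorb the non-zero Fourier components'' of an ideal element, and you yourself label this the main obstacle. As written this is not a proof but a restatement of the difficulty, and the proposed mechanism is doubtful: the hypothesis $\phi(A)\not\subset\cK(E)$ asserts only that some $\phi(a)$ fails to be compact, and there is no evident way to manufacture from this the $\cO_\infty$-like isometries your absorption scheme needs (in the graph/Cuntz--Krieger setting such isometries come from concrete combinatorial data that has no analogue here). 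Your subsequent step ``$J\cap\cO_E^\gamma\neq 0$ implies $J\cap A\neq 0$'' also needs an argument (it uses that $A_n$ is essential in $A_{\leq n}$ and the two-sided invariance of $J\cap A_\infty$ under $X_1$), though that part is comparatively routine.

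The paper resolves the crux by an entirely different device, and in the logically opposite direction: for a \emph{proper} ideal $J$ one first shows $J\cap A_\infty=0$ (a nonzero invariant ideal of the core would be all of $A_\infty$), and then proves that the gauge expectation $\Phi\colon\cO_E\to A_\infty$ factors through $\cO_E/J$, which kills $J$ by faithfulness of $\Phi$. The factorization rests on a Kishimoto-type norm estimate: for any hereditary $B\subset A_\infty$ not contained in $\bigcap_n A_{\geq n}$ and any spectral element $\xi$ of nonzero degree, $\inf\{\|x\xi x\| : 0\leq x\in B,\ \|x\|=1\}=0$. This is proved not by exhibiting isometries but by Kishimoto's pure-state/GNS argument applied to the corner embedding $A_\infty\cong e_0(\cO_E\rtimes_\gamma\bT)e_0$ and the dual $\bZ$-action, deriving a contradiction between $\wh{\gamma}^{-m}$-invariance and non-invariance of the kernel of a GNS representation. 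Non-properness of $E$ enters only once, to show $\bigcap_n A_{\geq n}=0$ (an element of $A$ landing in $A_{\geq 1}$ must lie in $\phi^{-1}(\cK(E))$). If you want to complete your proposal, you should either supply a genuine absorption construction (which I do not believe exists at this level of generality) or switch to this pure-state argument; as it stands the proof is incomplete at its decisive step.
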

Simplicity of Pimsner algebras has been studied in many settings; see \cite{Kajiwara-Pinzari-Watatani1998:ideal,Muhly-Solel1998:simplicity,Exel-Laca1999:Cuntz-Krieger,Katsura2006:classIII} and references therein. 
Our criteria are obtained using Kishimoto's argument from \cite{Kishimoto:1981outer}, where he gave a simplicity criterion for reduced crossed products of group actions. 
In \cite{Kishimoto:1981outer}, it was crucial to assume that the $*$-automorphisms in question satisfy proper outerness, a technical strengthening of (pointwise) outerness. 
Likewise, the critical step in proving \cref{main:2} is to establish the analogue of this proper outerness. 
In \cite{Kumjian:certianCP}, Kumjian indirectly verified it for a particular form of Pimsner algebras using a result from \cite{Olesen-Pedersen:ConnesII}. 
Alternatively, we do this by a more direct approach starting from an observation that an argument in \cite{Kishimoto:1981outer} applies to our setting. 

Another key ingredient of \cref{main:1} is the outerness of the $\cC$-action. 
We ensure this by giving a criterion for the irreducibility of bimodules similar to \cref{main:2} (see \cref{thm:simpleTPcorr}). 
However, when the action is not given by $*$-automorphisms, there are further difficulties in this step, which causes the strategy above for the simplicity criteria to fail. 
Instead, we use another trick that crucially relies on the unitality and simplicity of the input C*-algebra. 
To apply this trick, we take the Toeplitz--Pimsner construction once again with the aid of Zhang's dichotomy~\cite{Zhang1992:certainI}. 
\\

This paper is organized as follows. 
In \cref{sec:simpleCPalg}, we prove \cref{main:2} on the simplicity of Pimsner algebras. The proof is purely C*-algebraic, and no definitions or results concerning tensor categories will appear in that section. 
In \cref{sec:equivTPalg}, we recall notions about actions of tensor categories and show \cref{main:1} except for the outerness. In \cref{sec:outer}, we establish the remaining part and deduce \cref{main:1.5} (1). 
Then, we focus on the problem of $\K$-theoretic liftability for modules over fusion rings. We first prove \cref{main:1.5} (2) and \cref{main:3} in \cref{sec:fusionmod}. 
Next, we investigate several examples in detail. 
In \cref{sec:TLJ}, we obtain the essential part of \cref{main:3.5} for the variants of $\cC(\mathfrak{sl}_2,k)$. 
The number-theoretic argument required there is separated into \cref{sec:numbertheory}. 
Finally, in \cref{sec:cyclicCuntz}, we study actions of $3$-cocycle twists of $\bZ/m\bZ$ on $\cO_{n+1}$ to show \cref{main:4}. 

\subsection*{Acknowledgments}
This work was supported by RIKEN Special Postdoctoral Researcher Program. 
The author would like to thank Masaki Izumi for bringing his attention to 
\cite[Problem 3.5]{Izumi2023:Gkernels}, which partly motivated this work. Also, he would like to thank Yasuyuki Kawahigashi, Reiji Tomatsu, and Stefaan Vaes for answering his questions on subfactors. He is grateful to Yuki Arano, Sergio Gir{\'o}n Pacheco, and Yosuke Kubota for fruitful discussions on anomalous actions. 
He is grateful to Yuhei Suzuki for pointing out the error in the proof of (i) $\Rightarrow$ (ii) of \cref{thm:simpleCPalg} in the former version of the manuscript. 
He would like to thank the anonymous referees for valuable suggestions that improved the exposition. 

\section{Simplicity of Pimsner algebras}\label{sec:simpleCPalg}
First, we note the general conventions throughout the paper. 
We write the multiplicative group $\bT:=\{ z\in\bC \mid |z|=1 \}$. 
For $n\in\bZ_{\geq 1}$, we let $\bM_n$ be the C*-algebra of $n\times n$-matrices, $\bM_{n^{\infty}}$ be the UHF algebra of type $n^{\infty}$ when $n\geq 2$, and $\bK:=\cK(\ell^2(\bN))$. 
We promise $A^{\otimes 0}:=\bC$ for a C*-algebra $A$. 
For an associative ring $R$, we write $\cZ R$ for the centre of $R$. 
We let $\cspan$ denote the norm closure of the $\bC$-linear span inside a Banach space. 
Abusing notation, when $(X_i)_{i\in I}$ is a family of C*-algebras or Hilbert C*-modules, we write $\bigoplus_{i\in I}X_i$ for the $c_0$- or $\ell^2$-completion of the direct sum, respectively, unless clarified otherwise. 
For an abelian group $X$, we indicate the direct sum of its countably infinitely many copies by $X^{\oplus\infty}$, where we again implicitly take its $c_0$- or $\ell^2$-completion when $X$ is a C*-algebra or a Hilbert C*-module, respectively. 

\subsection{Conventions on C*-correspondences}\label{ssec:C*-corr}
We basically follow \cite{Lance:book} for notation on C*-correspondences. 
For C*-algebras $A$ and $B$, we say an $(A,B)$-correspondence $E=(E,\phi)$, i.e., a pair of a (right) Hilbert $B$-module $E$ and a $*$-homomorphism $\phi\colon A\to \cL(E)$, is 
\begin{itemize}
	\item
	\emph{non-degenerate} if $\cspan{\phi(A)E}=E$, 
	\item
	\emph{faithful} if $\phi\colon A\to \cL(E)$ is injective, 
	\item
	\emph{irreducible} if $\phi(A)'\cap\cL(E)=\bC1_E$, where we often write $1_E$ for $\id_E\in\cL(E)$, 
	\item
	\emph{proper} if $\phi(A)\subset \cK(E)$, and \emph{non-proper} if $E$ is not proper. 
\end{itemize}
For C*-algebras $A,B$, Hilbert $A$-module $E_1$, an $(A,B)$-correspondence $E_2=(E_2,\phi)$, and an element $\xi\in E_1$, we write $T_\xi:=\xi\otimes_{\phi} (-)\in \cL(E_2,E_1\otimes_{\phi} E_2)$. We sometimes write $T_\xi^\phi:=T_\xi$ to make the dependence on $\phi$ explicit. 
When no confusion is likely to arise, we sometimes abuse notation to drop $\phi$ or $A$ from the symbol of interior tensor product $\otimes_{\phi}$ or $\otimes_A$ for readability. 
For an $(A,A)$-correspondence $E$, we promise $E^{\otimes_{A} 0}:=A$. 

We recall some fundamental facts of (augmented) Pimsner algebras from \cite[Section 4.6]{Brown-Ozawa:book}. 
For a non-degenerate faithful $(A,A)$-correspondence $E=(E,\phi)$, 
we define the full Fock space $\cF_E:=\bigoplus_{n=0}^{\infty}E^{\otimes_A n}$, which is a faithful non-degenerate $(A,A)$-correspondence with the left action denoted by $\phi^{(\infty)}\colon A\to\cL(\cF_E)$. 
We define the Toeplitz--Pimsner algebra by $\cT_E:=\mathrm{C}^*( a, T_\xi^{\phi^{(\infty)}} \mid a\in A, \xi\in E )\subset \cL(\cF_E)$. 
Note that $T_\xi^{\phi^{(\infty)}}\in\cT_E$ for all $\xi\in E^{\otimes_A n}$ with $n\geq 0$. 
We usually write $\tS_\xi:=T_\xi^{\phi^{(\infty)}}\in\cT_E$ and regard $A\subset \cL(\cF_E)$ by abusing notation. 
For the closed ideal $I_E:=\phi^{-1}(\cK(E))\subset A$, it holds $\cK(\cF_{E}I_E)\subset\cT_E$ (see \cite[Lemma 4.6.7]{Brown-Ozawa:book}), 
and we define the Cuntz--Pimsner algebra by $\cO_E:=\cT_E/\cK(\cF_EI_E)$. 
We write $\sigma\colon\cT_E\to \cO_E$ for the quotient map and $\oS_\xi:=\sigma(\tS_\xi)\in\cO_E$ for $n\geq 0$ and $\xi\in E^{\otimes_A n}$. 
We have the following. 
\begin{itemize}
	\item
	If $\phi(A)\cap \cK(E)=0$, then $\cT_E=\cO_E$ since $\cK(\cF_EI_E)=0$ by definition. 
	\item
	If $A$ is nuclear, then $\cT_E$ and $\cO_E$ are nuclear. See \cite[Theorem 4.6.25]{Brown-Ozawa:book}. 
\end{itemize}
We write $\gamma$ for the gauge action of $\bT$ on $\cO_E$, which is determined by $\gamma_z(a)=a$ and $\gamma_z(\oS_\xi)=z\oS_\xi$ for $a\in A$, $\xi\in E$, and $z\in\bT$. 

\begin{rem}\label{notn:simpleCPalg}
	For a non-zero C*-algebra $A$ and a non-degenerate faithful $(A,A)$-correspondence $E=(E,\phi)$, 
	we will use the following conventions during the proof of the simplicity criterion. 
	\begin{enumerate}
		\item
		We put $X_n:=\{ x\in\cO_E \mid \gamma_z(x)=z^nx,\, \forall z\in\bT \}$ for $n\in\bZ$ and $A_\infty:=X_0=\cO_E^\gamma$. 
		Note that $X_n^*=X_{-n}$ and that $X_n=\cspan \{ \oS_\xi A_\infty \,|\, \xi\in E^{\otimes_A n} \} \subset \cO_E$ for $n\geq 0$. 
		We regard each $X_n$ as a $(A_\infty,A_\infty)$-correspondence with the inner product $\bra x,y\ket:=x^*y$, where the left and right actions of $A_\infty$ are given by the multiplication of $\cO_E$. 
		Since $E$ is non-degenerate, $X_n$ is also non-degenerate. If $E$ is full, then so is $X_n$ for $n\geq 0$. 
		\item
		We define the C*-subalgebras $A_n:=\cspan \{ \oS_\eta \oS_\xi^* \mid \xi,\eta\in E^{\otimes_A n} \}\subset A_\infty$ for $n\geq 0$. 
		We put $A_{\leq n}:=\sum_{k=0}^{n}A_k\subset A_\infty$ and $A_{\geq n}:=\cspan\bigcup_{k=n}^{\infty}A_k\subset A_\infty$. 
		Then $(A_{\leq n})_{n\geq 0}$ is an increasing sequence of non-degenerate C*-subalgebras of $A_\infty$ such that $\bigcup_{n=0}^{\infty}A_{\leq n}\subset A_\infty$ is a dense $*$-subalgebra, and $(A_{\geq n})_{n\geq 0}$ is a decreasing sequence of closed ideals of $A_\infty$. 
		\item
		We write $e_n$ for the minimal projection of $c_0(\bZ)$ corresponding to each $n\in\bZ$. 
		By slightly abusing notation, we regard $\cO_E$ and $c_0(\bZ)$ as C*-subalgebras of $\cM(\cO_E\rtimes_\gamma\bT)$ such that $e_n x = x e_{m+n}$ for $m,n\in\bZ$, $x\in X_m$. 
		The dual action of $\bZ$ on $\cO_E\rtimes_\gamma\bT$ is generated by $\wh{\gamma}\in\Aut(\cO_E\rtimes_\gamma\bT)$ such that $\wh{\gamma}(x)=x$ and $\wh{\gamma}(e_n)=e_{n+1}$ for $n\in\bZ$, $x\in \cO_E$. 
		\item
		We write $P_0\in\cL(\cF_E)$ for the projection onto the summand $A=E^{\otimes_A 0}$. Then the map $P_0(-)P_0$ induces a conditional expectation for the inclusion $A\subset\cT_E$. 
	\end{enumerate}
\end{rem}

We record the following observations for convenience of the reader. 

\begin{lem}\label{lem:faithfulX}
	In the setting of \cref{notn:simpleCPalg}, we have the following. 
	\begin{enumerate}
		\item
		For $n\in \bZ_{\geq 0}$, there is a well-defined $*$-isomorphism 
		$\sigma_n\colon\cK(E^{\otimes_A n})\to A_n$ such that 
		$\sigma_n(\eta T_\xi^*):=\oS_\eta \oS_\xi^*$ for all $\xi,\eta\in E^{\otimes_A n}$. 
		In particular, $A\cong A_0$. 
		\item
		For $n\in \bZ_{\geq 0}$, the closed ideal $A_n\subset A_{\leq n}$ is essential. 
		Moreover, $X_1$ is a faithful $(A_\infty,A_\infty)$-correspondence. 
	\end{enumerate}
\end{lem}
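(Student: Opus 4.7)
For part (1), I would construct $\sigma_n$ using the universal property of the compacts. The map $(\eta, \xi) \mapsto \oS_\eta \oS_\xi^*$ is sesquilinear and $A$-balanced (via $\oS_{\eta \cdot a} = \oS_\eta a$), and the composition identity $(\oS_\eta \oS_\xi^*)(\oS_{\eta'}\oS_{\xi'}^*) = \oS_{\eta \cdot \bra \xi, \eta'\ket} \oS_{\xi'}^*$ matches that of rank-one operators $\eta T_\xi^*$ via the fundamental Pimsner relation $\oS_\xi^* \oS_\eta = \bra \xi, \eta\ket$ at level $n$. Extending by continuity yields a $*$-algebra map surjecting onto a dense subalgebra of $A_n$. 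For boundedness and injectivity I would factor through $\cT_E$: the Fock picture provides an isometric $*$-embedding $\cK(E^{\otimes n}) \hookrightarrow \cT_E$ via $\eta T_\xi^* \mapsto \tS_\eta \tS_\xi^*$, and composing with $\sigma$ yields $\sigma_n$. Injectivity of the composition reduces, via gauge-invariant uniqueness, to the faithfulness of $A \hookrightarrow \cO_E$ — Pimsner's theorem, which applies since $\phi$ is faithful.

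For (2), the ideal property $A_k \cdot A_n \subset A_n$ for $k \leq n$ is the direct calculation $\oS_{\eta'} \oS_{\xi'}^* \cdot \oS_\eta \oS_\xi^* = \oS_{\eta' \otimes \zeta} \oS_\xi^*$, where one decomposes $\eta = \eta_1 \otimes \eta_2 \in E^{\otimes k} \otimes_A E^{\otimes (n-k)}$ and sets $\zeta := \bra \xi', \eta_1\ket \cdot \eta_2 \in E^{\otimes(n-k)}$. For essentiality, consider the left-multiplication representation $\rho\colon A_{\leq n} \to \cL(X_n)$. Using the identification $X_n \cong E^{\otimes n} \otimes_A A_\infty$ as a right Hilbert $A_\infty$-module, part (1) realizes $\rho(A_n)$ as $\cK(E^{\otimes n}) \otimes 1$, which is essential in $\cL(X_n)$ by the standard argument (a $T \in \cL(X_n)$ annihilating $|\xi\ket\bra\eta| \otimes 1$ for all $\xi, \eta$ must vanish on the dense subspace spanned by elementary tensors). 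Essentiality of $A_n$ in $A_{\leq n}$ then follows once $\rho$ is known to be injective; the latter is the technical step, handled by a filtration-degree induction dovetailing with the faithfulness argument for $X_1$ below.

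For the faithfulness of $X_1$, suppose $a \in A_\infty$ satisfies $a X_1 = 0$. Because $X_n = \overline{X_1 \cdots X_1}$ (from the multiplicativity $\oS_{\xi \otimes \eta} = \oS_\xi \oS_\eta$), $a$ annihilates every $X_n$ for $n \geq 1$; taking products with $X_n^*$ and using $A_n \subset \overline{X_n X_n^*}$, we obtain $a \cdot A_{\geq 1} = 0$. In the proper case $\phi(A) \subset \cK(E)$, the Cuntz--Pimsner relation already forces $A \subset A_{\geq 1}$ inside $\cO_E$, so $a$ annihilates all of $A_\infty$ and must vanish. The non-proper case is the main obstacle: there I would first observe that the restriction of the left action to $A \subset A_\infty$ is directly faithful (as $a \oS_\xi = \oS_{\phi(a)\xi}$ and $\phi$ is faithful), and then lift this to $A_\infty$ by a degree-reducing induction using the gauge-invariant conditional expectation together with multiplications by $\oS_\xi^*$ on appropriate sides, inductively reducing every approximation $a_N \in A_{\leq N}$ modulo $A_{\geq 1}$ to an $A$-level element whose vanishing is already secured.
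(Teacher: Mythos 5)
Part (1) of your proposal is fine and is essentially the paper's argument: both reduce the injectivity of $\sigma_n$ to the injectivity of $A\to\cO_E$ via the standard ``linear map with $\oS_\xi^*\oS_\eta=\sigma_0(\bra\xi,\eta\ket)$'' device (Brown--Ozawa, Prop.~4.6.3). The overall architecture of your part (2) is also sound: identifying $X_n\cong E^{\otimes_A n}\otimes_A A_\infty$, noting $\rho(A_n)=\cK(E^{\otimes_A n})\otimes 1$ acts non-degenerately, and reducing both essentiality and the faithfulness of $X_n$ to the injectivity of the left multiplication $A_{\leq n}\to\cL(X_n)$ is exactly how the paper proceeds (its $\zeta_l$-iteration bootstraps the $n=1$ case to general $n$).

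The genuine gap is in the one claim everything rests on: that $x\in A_\infty$ with $x\oS_E=0$ must vanish. You dispose of the degree-zero part by saying its ``vanishing is already secured'' because $a\oS_\xi=\oS_{\phi(a)\xi}$ and $\phi$ is faithful — but this only shows that $\sigma_0(a)\oS_E=0$ forces $a=0$, whereas for a general $x=\sigma_0(a)+b$ with $b\in A_{\geq 1}$ the hypothesis $x\oS_E=0$ does \emph{not} let you annihilate the two summands separately; the cross terms are the whole difficulty. Indeed the correct conclusion is not $a=0$ but $a\in I_E$: the paper shows that $x\oS_\zeta=0$ for all $\zeta\in E$ forces the net $\bigl(\phi(a)e_\lambda\bigr)_\lambda$ (for an approximate unit $(e_\lambda)$ of $\cK(E)$) to be norm-Cauchy in $\cK(E)$ with limit $-b$ under $\sigma_1$, while it converges strictly to $\phi(a)$; hence $\phi(a)\in\cK(E)$, the Cuntz--Pimsner relation absorbs $\sigma_0(a)=\sigma_1(\phi(a))$ into $A_{\geq 1}$, and then $x=\lim_\lambda x\sigma_1(e_\lambda)=0$. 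Your proposed ``degree-reducing induction with the conditional expectation and multiplications by $\oS_\xi^*$'' does not contain this mechanism, and as stated the induction has no way to see that the degree-zero coefficient merges into $A_{\geq 1}$ rather than being zero. Without this step the faithfulness of $X_1$ (and hence the injectivity of $\rho$ and the essentiality claim) is unproved in the case $\phi(A)\cap\cK(E)\neq 0$, which the lemma must cover.
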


\begin{proof}
	When $n=0$, (1) holds because the map $\sigma_0\colon A\to A_0$ coincides with the restriction of $\sigma\colon \cT_E\to \cO_E$ to $A$, which is injective by $A\cap \cK(\cF_E)=0$. 
	For $n\geq 1$, (1) follows from \cite[Proposition 4.6.3]{Brown-Ozawa:book} applied to the $*$-isomorphism $\sigma_0\colon A\xto{\sim} A_0$ and the linear map $E^{\otimes_A n}\ni \xi\mapsto \oS_\xi\in\cO_E$ satisfying $\oS_\xi^*\oS_\eta=\sigma_0(\bra\xi,\eta\ket)$. 
	
	As for (2), 
	we first show that for $x\in A_{\infty}$, it occurs that $x\oS_E=0$ only when $x=0$. 
	There are $a\in A$ and $b\in A_{\geq 1}$ such that $x=\sigma_0(a)+b$. 
	If $x\oS_\zeta=0$ for all $\zeta\in E$, then using an approximate unit $(e_\lambda)_{\lambda\in\Lambda}$ of $\cK(E)$, we obtain the norm convergence 
	\begin{align*}
		\sigma_1(\phi(a)e_\lambda)
		=
		(\sigma_0(a)+b)\sigma_1(e_\lambda) - b\sigma_1(e_\lambda)
		\xto{\lambda\in \Lambda}
		-b , 
	\end{align*}
	since $\sigma_0(c)\sigma_1(y)=\sigma_1(\phi(c)y)$ for $c\in A$, $y\in\cK(E)$ by the definition of $\sigma_n$. 
	In particular, $( \phi(a)e_\lambda )_{\lambda\in\Lambda}$ is a Cauchy net in $\cK(E)$, 
	while $\phi(a)e_\lambda\in\cL(E)$ should strictly converge to $\phi(a)$. 
	Therefore $\phi(a)\in\cK(E)$, or equivalently $a\in I_E$, and $x=\sigma_0(a)+b\in A_{\geq 1}$, from which it follows that $x=\lim_{\lambda\in\Lambda}x\sigma_1(e_\lambda)=0$ as claimed. 
	
	This claim readily shows the faithfulness of the left action $A_\infty\to \cL(X_1)$ as $X_1=\cspan \oS_E A_\infty$. 
	Moreover, it follows that for any $0\neq x\in A_{\leq n}$, there is $\zeta(x)\in E$ with $x\oS_{\zeta(x)}\neq 0$. 
	We put $\zeta_1:=\zeta(x)$ 
	and, inductively on $l=1,2,\cdots,n-1$, let $\zeta_{l+1}:=\zeta_l\otimes \zeta(\oS_{\zeta_l}^*x^*x\oS_{\zeta_l})\in E^{\otimes_A l+1}$, which satisfies 
	\begin{align*}
		\oS_{\zeta_{l+1}}^* x^*x \oS_{\zeta_{l+1}} 
		=
		\oS_{\zeta(\oS_{\zeta_l}^*x^*x\oS_{\zeta_l})}^*\oS_{\zeta_l}^* x^*x \oS_{\zeta_l}\oS_{\zeta(\oS_{\zeta_l}^*x^*x\oS_{\zeta_l})}
		\neq 0.
	\end{align*}
	Since $0\neq x\oS_{\zeta_{n}}\oS_{\zeta_{n}}^*x^*\in xA_{n}$, we see that $A_n\subset A_{\leq n}$ is essential. 
\end{proof}

\subsection{Simplicity criterion}\label{ssec:simpleCPalg}
In this subsection, we show the following characterization of the simplicity of Cuntz--Pimsner algebras for non-proper C*-correspondences. 

\begin{thm}\label{thm:simpleCPalg}
	For a C*-algebra $A$ and a non-proper non-degenerate faithful $(A,A)$-correspondence $E=(E,\phi)$, 
	the following are equivalent. 
	\begin{enumerate}
		\item[(i)]
		The C*-algebra $\cO_E$ is simple. 
		\item[(ii)]
		There is no closed ideal $I\subset A$ with $\bra E, \phi(I)E\ket \subset I \supset \phi^{-1}(\cK(EI))$ except $I=0$ and $A$. 
	\end{enumerate}
\end{thm}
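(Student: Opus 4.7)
The plan is to prove the two implications separately. Direction (i) $\Rightarrow$ (ii) is structural, while (ii) $\Rightarrow$ (i) is the substantive direction, carried out via a Kishimoto-style argument adapted to the correspondence setting.

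For (i) $\Rightarrow$ (ii), suppose a closed ideal $I \subset A$ satisfying both invariance conditions exists with $0 \neq I \neq A$. The condition $\bra E, \phi(I)E\ket \subset I$ is equivalent (using non-degeneracy and polarization) to $\phi(I)E \subset EI$, so that $EI$ is an $(I,I)$-subcorrespondence and the quotient $(E/EI, \bar\phi)$ is a well-defined $(A/I, A/I)$-correspondence. The condition $\phi^{-1}(\cK(EI)) \subset I$ is exactly Katsura's compatibility ensuring that the Cuntz--Pimsner covariance on $E$ descends to $E/EI$, so that the canonical map $\cO_E \to \cO_{E/EI}$ is a well-defined surjection. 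Its kernel contains $I \subset A \subset \cO_E$ and hence is non-zero, while the target $\cO_{E/EI}$ is non-zero because $A/I \neq 0$ and $E/EI$ is non-degenerate; hence the kernel is proper, contradicting simplicity.

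For (ii) $\Rightarrow$ (i), let $0 \neq J \subset \cO_E$ be a closed ideal. The gauge conditional expectation $E_\gamma(x) := \int_{\bT} \gamma_z(x)\,dz$ is faithful, so some positive $x \in J$ has $E_\gamma(x) \neq 0$. Writing the formal Fourier expansion $x \sim \sum_n x_n$ with $x_n \in X_n$, the first goal is to produce a non-zero element of $J \cap A_\infty$. Kishimoto's trick, transposed here, is to find a positive contraction $a$ that approximately fixes $x_0 = E_\gamma(x)$ while forcing $\|a x_n a\|$ small for each $n \neq 0$; this reduces to a \emph{proper-outerness} statement for the shifts: for any $\xi \in E^{\otimes_A n}$ with $n \geq 1$ and any non-zero hereditary subalgebra $B \subset A_\infty$, one finds $a \in B_+$ with $\|a\| = 1$ and $a\oS_\xi a$ arbitrarily small. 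The hypothesis $\phi(A) \not\subset \cK(E)$ (non-properness) is decisive: it provides room inside $A$ to produce such cut-downs. Once $K := J \cap A_\infty \neq 0$ is obtained, invariance $\tS_\xi K \tS_\xi^* \subset K$ and $\tS_\xi^* K \tS_\xi \subset K$ hold automatically. A second iteration of the same trick, now exploiting the filtration $A_\infty = \overline{\bigcup_n A_{\leq n}}$ and the essentiality $A_n \subset A_{\leq n}$ from \cref{lem:faithfulX}(2), yields a non-zero ideal $I := K \cap A$ of $A$. Translating the invariance of $K$ through the isomorphisms $\sigma_n\colon \cK(E^{\otimes_A n}) \xto{\sim} A_n$ and the relations $a \oS_\xi = \oS_{\phi(a)\xi}$, one checks that $I$ satisfies both invariance conditions of (ii). By hypothesis, $I = A$, so $J \supset A$, and non-degeneracy of $E$ then propagates this to $J \supset X_n$ for all $n$, giving $J = \cO_E$.

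The main obstacle is the proper-outerness input underlying both Kishimoto reductions. In the original group-action setting of \cite{Kishimoto:1981outer}, proper outerness of a single $*$-automorphism is extracted from spectral-subspace analysis and Powers-type arguments; here no ambient group is available, and the content must be realized directly from the module structure of $E$. The essential observation is that non-properness allows us to find, inside any prescribed non-zero hereditary subalgebra of $A$, positive elements whose action on $E$ is sufficiently non-compact to admit contractive cut-downs killing any specified $\oS_\xi$. Combining this with the conditional expectation $E_\gamma$ and with the inner structure of $A_\infty$ supplied by \cref{lem:faithfulX} — in particular, iterating the trick down the filtration — constitutes the technical core of the argument.
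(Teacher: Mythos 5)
Your outline of (ii) $\Rightarrow$ (i) is structurally close to the paper's, but both directions have concrete gaps. For (i) $\Rightarrow$ (ii): the inference that $\cO_{E/\overline{EI}}\neq 0$ ``because $A/I\neq 0$ and $E/\overline{EI}$ is non-degenerate'' is false, and with it the properness of the kernel of $\cO_E\to\cO_{E/\overline{EI}}$. Since $E$ is not assumed full, one can have $\overline{EI}=E$ for a non-trivial invariant $I$: take $A=\bC^2$, $E=H_1\oplus H_2$ a direct sum of two infinite-dimensional Hilbert spaces with right $A$-action through the second coordinate, inner product $\bra\xi,\eta\ket=(0,\langle\xi,\eta\rangle)$, left action $\phi(s,t)=sP_1+tP_2$ for the orthogonal projections $P_i$ onto $H_i$, and $I=0\oplus\bC$. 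Then $E$ is faithful, non-degenerate and non-proper, and $\bra E,\phi(I)E\ket\subset I\supset\phi^{-1}(\cK(EI))=0$, yet $E/\overline{EI}=0$ and $\cO_{E/\overline{EI}}=0$, so your kernel is all of $\cO_E$ and produces no proper ideal. (Note also that positive invariance alone already makes the quotient map well defined; the actual role of $\phi^{-1}(\cK(EI))\subset I$ is to guarantee that the closed ideal of $\cO_E$ generated by $\sigma_0(I)$ meets $A_0$ in exactly $\sigma_0(I)$ --- this is \cref{prop:CPideal} in the paper --- and that computation is what yields properness in every case, including the degenerate one above.)

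For (ii) $\Rightarrow$ (i), the gap is in the ``proper outerness'' input, which you assert for \emph{every} non-zero hereditary $B\subset A_\infty$ and attribute to non-properness alone. That is not correct: the cut-down statement $\inf\{\|x\xi x\|:0\le x\in B,\ \|x\|=1\}=0$ for $\xi\in X_m$, $m\ge1$, can only be proved for $B\not\subset\bigcap_nA_{\geq n}$, and one must separately establish $\bigcap_nA_{\geq n}=0$. That vanishing needs condition (ii), not just non-properness: one first shows, using (ii), that every non-zero closed ideal of $A_\infty$ invariant under $X_1^*(-)X_1$ and $X_1(-)X_1^*$ equals $A_\infty$, and only then does non-properness (via $A_0\cap\bigcap_nA_{\geq n}\subset\sigma_0(I_E)\subsetneq A_0$) exclude the alternative. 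Non-properness alone does not ``provide room'' for the cut-downs: if $E$ contains as a direct summand the identity correspondence over a unital ideal $A_1\subset A$ (which is non-proper as soon as the complementary summand is), then on the corresponding corner $A_1$ of the core one has $x\xi x=x^2u$ for the canonical generator $\xi=u\in X_1$, so the infimum equals $1$; such $E$ of course violates (ii). Since your cut-downs are applied to hereditary subalgebras of the form $\overline{f_\e(a)A_\infty f_\e(a)}$ with no control on their position relative to $\bigcap_nA_{\geq n}$, this lemma is genuinely missing from your plan. The remaining steps --- passing from $J\cap A_\infty\neq0$ to $J\cap A_0\neq0$ via the essentiality of $A_n\subset A_{\le n}$ and conjugation by $\oS_{E^{\otimes n}}$ (an algebraic argument, not another cut-down), and verifying negative invariance of $\sigma_0^{-1}(J\cap A_0)$ from the covariance relation $\sigma_0(a)=\sigma_1(\phi(a))$ for $a\in I_E$ --- are sound in outline and correspond to \cref{lem:simpleCPalg} in the paper.
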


We use the following argument by Kishimoto extracted from the proof of \cite[Lemma 1.1]{Kishimoto:1981outer}, which actually did not require the non-triviality of the strong Connes spectrum of $\alpha$. 
Recall that for a C*-algebra $A$, any pure state $\phi$ on a hereditary C*-subalgebra of $A$ uniquely extends to a pure state on $A$, which shall be still denoted by $\phi$. 

\begin{lem}[{Kishimoto~\cite{Kishimoto:1981outer}}]\label{lem:Kishimotoprf}
	Let $A$ be a C*-algebra, $\alpha\in\Aut(A)$ be a $*$-automorphism, $B\subset A$ be a non-zero hereditary C*-subalgebra, and $\phi$ be a pure state on $B$. 
	We take the GNS $*$-representation $\pi_\phi\colon A\to\cB(\cH_\phi)$ associated with the unique pure state on $A$ extending $\phi$. 
	If there is $a\in A$ such that
	\begin{align*}
		&\inf\{ \|xa\alpha(x)\| \mid 0\leq x\in B, \|x\|=1 \}>0, 
	\end{align*}
	then there is a unitary $V\in\cU(\cH_\phi)$ such that $\pi_\phi\alpha=\Ad V\pi_\phi$. 
\end{lem}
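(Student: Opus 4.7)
The plan is to show that the irreducible representations $\pi_\phi$ and $\pi_\phi\circ\alpha$ of $A$ on $\cH_\phi$ are unitarily equivalent, so that the intertwining unitary provides the desired $V$. By Schur's lemma it suffices to exhibit any non-zero bounded intertwiner $T\in\cB(\cH_\phi)$ satisfying $T\pi_\phi(y)=\pi_\phi(\alpha(y))T$ for $y\in A$; both $T^*T$ and $TT^*$ then commute with $\pi_\phi(A)$ and are positive scalars $\lambda>0$ by irreducibility, so $V:=T/\sqrt{\lambda}$ is the desired unitary.

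To construct $T$, I would pass to the crossed product $C:=A\rtimes_\alpha\bZ$ with canonical unitary $\lambda_\alpha\in\cM(C)$ implementing $\alpha$. Since $\|xa\alpha(x)\|_A=\|x(a\lambda_\alpha)x\|_C$, the hypothesis becomes a quantitative lower bound $\|x(a\lambda_\alpha)x\|_C\geq c>0$ uniform over $x\in B_+$ of norm one. From this I want to extract a state $\psi$ on $C$ extending $\tilde\phi$ with $\psi(a\lambda_\alpha)\neq 0$. The strategy is to start from a Glimm--Akemann net $(x_\lambda)$ in $B_+$ witnessing the purity of $\tilde\phi$ (so $\tilde\phi(x_\lambda^2)\to 1$ and $\|x_\lambda z x_\lambda-\tilde\phi(z)x_\lambda^2\|\to 0$ on prescribed finite subsets of $A$); select for each $\lambda$ a state $\omega_\lambda$ on $C$ with $|\omega_\lambda(x_\lambda(a\lambda_\alpha)x_\lambda)|$ bounded below by a positive fraction of $c$ (obtained by choosing $\omega_\lambda$ to approximately attain the norm on the positive element $(x_\lambda(a\lambda_\alpha)x_\lambda)^*(x_\lambda(a\lambda_\alpha)x_\lambda)$ and applying a Cauchy--Schwarz estimate); renormalize to $\omega_\lambda'(\cdot):=\omega_\lambda(x_\lambda\cdot x_\lambda)/\omega_\lambda(x_\lambda^2)$ so that $\omega_\lambda'|_B\to\phi$; and take a weak-$*$ limit point $\psi$, which extends $\tilde\phi$ by uniqueness of the pure state extension of $\phi$ to $A$.

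With $\psi$ in hand, form its GNS triple $(\pi_\psi,\cH_\psi,\xi_\psi)$. The closed subspace $\cH:=\overline{\pi_\psi(A)\xi_\psi}$ is $\pi_\psi(A)$-invariant, and by uniqueness of the GNS construction for $\tilde\phi$ the restricted subrepresentation on $\cH$ is identified with $\pi_\phi$ via $\xi_\psi\leftrightarrow\xi_\phi$. Let $P\in\cB(\cH_\psi)$ denote the projection onto $\cH$; since $\cH^\perp$ is also $\pi_\psi(A)$-invariant, $P$ commutes with $\pi_\psi(A)$. Setting $V_0:=\pi_\psi(\lambda_\alpha)\in\cU(\cH_\psi)$, the covariance $V_0\pi_\psi(y)V_0^*=\pi_\psi(\alpha(y))$ together with $[P,\pi_\psi(A)]=0$ shows that $T:=PV_0|_{\cH}\in\cB(\cH_\phi)$ satisfies $T\pi_\phi(y)=\pi_\phi(\alpha(y))T$ on $\cH$. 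If $T=0$, then $V_0\xi_\psi\in\cH^\perp$; by $\pi_\psi(A)$-invariance of $\cH^\perp$ we would then have $\pi_\psi(a)V_0\xi_\psi\in\cH^\perp$, forcing $\psi(a\lambda_\alpha)=\langle\pi_\psi(a)V_0\xi_\psi,\xi_\psi\rangle=0$, contradicting the construction of $\psi$.

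The main obstacle is the middle step. Because non-self-adjoint elements need not attain their norm at a state, one cannot directly apply Hahn--Banach to produce a state witnessing the lower bound $\|x(a\lambda_\alpha)x\|_C\geq c$; instead, the Glimm--Akemann diagonal technique must be combined with the Cauchy--Schwarz renormalization to simultaneously enforce the restriction to $B$ converging to $\phi$ and keep the value at $a\lambda_\alpha$ bounded below along the net.
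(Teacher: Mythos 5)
The paper does not actually reprove this lemma; it cites Kishimoto, whose argument is a direct two-step excision estimate proving the contrapositive (assuming $\pi_\phi$ and $\pi_\phi\alpha$ disjoint, one first chooses $y\in B_+$ making $\phi(a\alpha(y^2)a^*)$ small via disjointness, then an excising $x$ with $yx=x$, and bounds $\|xa\alpha(x)\|\leq\|xa\alpha(y)\|$). Your route through $C:=A\rtimes_\alpha\bZ$ is genuinely different. Its outer layers are sound: the reduction to a non-zero intertwiner via Schur's lemma is correct, and given a state $\psi$ on $C$ with $\psi|_A=\phi$ and $\psi(a\lambda_\alpha)\neq0$, the operator $T=P\pi_\psi(\lambda_\alpha)|_{\cH}$ does intertwine $\pi_\phi$ and $\pi_\phi\alpha$ and is non-zero, exactly as you argue.

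The gap is where you place it, and the repair you sketch does not work. Writing $y_\lambda:=x_\lambda(a\lambda_\alpha)x_\lambda$, a state $\omega_\lambda$ that approximately attains the norm of $y_\lambda^*y_\lambda$ gives, via Cauchy--Schwarz, only the \emph{upper} bound $|\omega_\lambda(y_\lambda)|^2\leq\omega_\lambda(y_\lambda^*y_\lambda)$; it cannot bound $|\omega_\lambda(y_\lambda)|$ from below. This failure is not hypothetical here: $y_\lambda$ lies in $A\lambda_\alpha$, the degree-one spectral subspace for the dual circle action, so every state of the form $\omega_0\circ E$ with $E\colon C\to A$ the canonical conditional expectation kills $y_\lambda$ while $y_\lambda^*y_\lambda\in A$ can still have its norm attained by such a state (already in $\bM_2$, the state $z\mapsto z_{22}$ attains the norm of $y^*y$ for $y=e_{12}$ yet vanishes on $y$). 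The step is nevertheless fillable by a standard tool you did not invoke: the numerical-radius inequality. Writing $y_\lambda=h+\sqrt{-1}\,k$ with $h,k$ self-adjoint, one of $\|h\|,\|k\|$ is at least $\|y_\lambda\|/2\geq c/2$, and a state attaining the norm of that self-adjoint part satisfies $|\omega_\lambda(y_\lambda)|\geq c/2$. Once this lower bound is secured, your Cauchy--Schwarz estimate $|\omega_\lambda(y_\lambda)|^2\leq\omega_\lambda(x_\lambda^2)\|a\|^2$ gives $\omega_\lambda(x_\lambda^2)\geq c^2/(4\|a\|^2)$, which is exactly what keeps the renormalization $\omega_\lambda(x_\lambda\cdot x_\lambda)/\omega_\lambda(x_\lambda^2)$ under control, lets excision force $\omega_\lambda'|_A\to\phi$, and preserves $|\omega_\lambda'(a\lambda_\alpha)|\geq c/2$ in the weak-$*$ limit. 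With that one substitution your proof closes.
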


Then, we obtain a condition analogous to the conclusion of \cite[Lemma 1.1]{Kishimoto:1981outer}, sometimes referred to as Kishimoto's condition, if $\bigcap_n A_{\geq n}$ is small enough. 

\begin{lem}\label{lem:Kishimoto1.1}
	In the setting of \cref{notn:simpleCPalg}, let $B\subset A_\infty$ be a hereditary C*-subalgebra that is not contained in $\bigcap_{n=0}^{\infty}A_{\geq n}$. 
	Then for any $\xi\in X_m$ with $m\geq 1$, it holds 
	\begin{align*}
		&\inf\{ \| x\xi x \| \mid 0\leq x\in B, \|x\|=1 \}=0. 
	\end{align*}
\end{lem}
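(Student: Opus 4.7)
The plan is to adapt the direct construction from Kishimoto's proof to the $\bZ$-graded setting of $\cO_E$, with the gauge-induced grading playing the role of the group action in the classical argument.

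First, I would reduce $\xi$ to a tractable finite expression. Using $X_m = \cspan\{\oS_\eta a \mid \eta\in E^{\otimes m},\, a\in A_\infty\}$ together with the density of $\bigcup_n A_{\leq n}$ in $A_\infty$, for any $\e>0$, $\xi$ can be approximated within $\e$ by a finite sum
\[
\xi' = \sum_{j=1}^{k}\oS_{\mu_j}\oS_{\beta_j}^*,\qquad \mu_j\in E^{\otimes k_j+m},\ \beta_j\in E^{\otimes k_j},\ k_j\leq N,
\]
for some fixed $N$. Since $\|x\xi x - x\xi' x\|\leq\|\xi-\xi'\|$ whenever $\|x\|\leq 1$, it suffices to produce $x\in B$ of norm one with $\|x\xi' x\|$ small. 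Meanwhile, since $(A_{\geq n})_n$ is decreasing, the hypothesis $B\not\subset\bigcap_n A_{\geq n}$ yields some $n_0\geq N+1$ with $B\not\subset A_{\geq n_0}$, and hence a positive $b\in B$ of norm one whose image in the quotient $A_\infty/A_{\geq n_0}$ is nonzero.

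The core of the proof is to construct $x\in B$ of norm one from $b$ such that $\|x\xi' x\|$ is small. Using
\[
\|x\xi' x\|^2=\|x(\xi'^* x^2\xi')x\|_{A_\infty}
\]
one reduces matters to controlling the positive element $x(\xi'^* x^2\xi')x\in A_\infty$. The intuition is that $\xi'$ shifts by $m\neq 0$ levels in the $\bZ$-grading, so $\xi'^*x^2\xi'$ captures a ``shifted'' analogue of $x^2$; combining the non-containment of $B$ in $A_{\geq n_0}$ with the essentialness $A_n\subset A_{\leq n}$ of \cref{lem:faithfulX} should permit a construction of $x$, for instance via functional calculus $x=f(b)$ for a suitable $f\in C([0,1])$ with $f(0)=0$, that is approximately transverse to this shift on both sides.

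The main obstacle is exactly this last step: producing $x$ that simultaneously controls $\|x\oS_{\mu_j}\oS_{\beta_j}^* x\|$ for every summand. The self-referential dependence of $\xi'^* x^2\xi'$ on $x$ precludes naive orthogonalization, and the argument must balance the hereditary support of $B$ against the shifting action of $\xi'$ across all summands. A clean resolution likely passes through either a pure state argument---using a pure state on $A_\infty$ vanishing on $A_{\geq n_0}$ and concentrated near the ``top of the spectrum'' of $b$---or an iterative refinement across the nested ideals $A_{\geq n}$ combined with functional calculus on $b$.
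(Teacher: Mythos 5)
Your proposal sets up the problem correctly (finite approximation of $\xi$, and extraction of $n_0$ with $B\not\subset A_{\geq n_0}$), but it does not prove the lemma: you explicitly leave the decisive step --- producing a norm-one $x\in B$ with $\|x\xi' x\|$ small --- as an unresolved ``obstacle,'' and the two possible resolutions you sketch are not carried out. As written, the argument has a genuine gap exactly where the content of the lemma lies. Moreover, the direct-construction route you pursue (functional calculus on a single $b\in B$, ``transversality'' to the shift) is not obviously salvageable: the self-referential dependence of $\xi'^*x^2\xi'$ on $x$ that you identify is precisely why Kishimoto's original argument does not try to build $x$ by hand.

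The paper's proof runs in the opposite direction: it argues by \emph{contradiction} from the assumption that the infimum is positive, using \cref{lem:Kishimotoprf}. The key move is to pass to the crossed product $\cO_E\rtimes_\gamma\bT$, where $A_\infty$ sits as the corner $e_0(\cO_E\rtimes_\gamma\bT)e_0$ and the relation $e_n x = x e_{m+n}$ for $x\in X_m$ converts the bimodule shift by $\xi$ into the honest automorphism $\wh{\gamma}^m$; one has $\|x\xi x\| = \|e_0xe_0\,\xi\,\wh{\gamma}^m(e_0xe_0)\|$. One then chooses $n\in m\bZ_{\geq 1}$ with $B\not\subset A_{\geq n}$, takes a pure state $\psi$ on $B/(B\cap A_{\geq n})$, and extends it to a pure state $\psi_0$ on the crossed product. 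A direct computation using $\psi_\infty(A_{\geq n})=0$ shows $e_0Be_0\subset \wh{\gamma}^{-n}(\Ker\pi_{\psi_0})$ while $e_0Be_0\not\subset\Ker\pi_{\psi_0}$. If the infimum were positive, \cref{lem:Kishimotoprf} would force $\wh{\gamma}^{-m}$ (hence $\wh{\gamma}^{-n}$, as $m\mid n$) to preserve $\Ker\pi_{\psi_0}$ --- a contradiction. So the ``pure state argument'' you allude to is indeed the right idea, but it must be organized as: assume failure, deduce unitary implementability of $\wh{\gamma}^m$ in a GNS representation, and contradict a non-invariance of the GNS kernel built from the hypothesis $B\not\subset\bigcap_n A_{\geq n}$. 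Note also that your finite approximation of $\xi$ is unnecessary on this route, since \cref{lem:Kishimotoprf} applies to an arbitrary element of the ambient algebra.
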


\begin{proof}
	For $n\in\bZ$, we consider the injective $*$-homomorphism onto the corner 
	\begin{align*}
		A_\infty = \cO_E^{\gamma} &\xto{\sim} e_n(\cO_E\rtimes_\gamma\bT)e_n \subset \cO_E\rtimes_\gamma\bT \\
		a &\mapsto a e_n = e_n a
	\end{align*}
	and observe that $e_nBe_n \subset \cO_E\rtimes_\gamma\bT$ is a hereditary C*-subalgebra. 
	It suffices to deduce a contradiction from the existence of $\xi\in X_m$ such that 
	\begin{align}\label{eq:lem:Kishimoto1.1}
		&\inf\{ \| e_0xe_0\xi \wh{\gamma}^m(e_0xe_0) \| \mid 0\leq x\in B, \|x\|=1 \}>0, 
	\end{align}
	because for all $x\in A_\infty$, we have 
	\begin{align*}
		&
		\| e_0xe_0\xi \wh{\gamma}^m(e_0xe_0) \| 
		= \| e_0xe_0 \xi e_mxe_m \| 
		= \| x\xi x e_m \| = \| x\xi x \| . 
	\end{align*}
	
	By assumption, there is $n\geq 1$ such that $B$ is not contained in $A_{\geq n}$. 
	We may fix such $n\geq 1$ with $n\in m\bZ$ and a pure state $\psi$ on $B/(B\cap A_{\geq n})$. Then $\psi$ uniquely extends to a pure state on $A_\infty/A_{\geq n}$ via the hereditary inclusion 
	$B/(B\cap A_{\geq n})\subset A_\infty/A_{\geq n}$, which induces a pure state on $A_\infty$ denoted by $\psi_\infty:=\psi((-)+A_{\geq n})$. 
	Via the embedding as a corner $A_\infty\cong e_0(\cO_E\rtimes_\gamma\bT)e_0\subset \cO_E\rtimes_\gamma\bT$, we extend $\psi_\infty$ to a pure state on $\cO_E\rtimes_\gamma\bT$ denoted by $\psi_0:=\psi_\infty(e_0(-)e_0)$. 
	Note that $\psi_0|_{e_0Be_0}$ is a pure state on $B$ because it is nothing but the pullback of the pure state $\psi$ on $B/(B\cap A_{\geq n})$ along $B\twoheadrightarrow B/(B\cap A_{\geq n})$. 
	Thus, $\Ker\pi_{\psi_0}\not\supset e_0Be_0$ for the GNS $*$-representation $\pi_{\psi_0}\colon \cO_E\rtimes_\gamma\bT\to \cB(\cH_{\psi_0})$ associated with $\psi_0$. 
	Meanwhile, we have $\wh{\gamma}^{-n}(\Ker\pi_{\psi_0}) \supset e_0Be_0$ because of the following inclusion up to norm closures, 
	\begin{align*}
		&
		\bra \cH_{\psi_0}| \pi_{\psi_0}\wh{\gamma}^{n}(e_0 Be_0) \cH_{\psi_0}\ket 
		=
		\bra \cH_{\psi_0}| \pi_{\psi_0}(e_n Be_n) \cH_{\psi_0}\ket 
		\\={}&
		\psi_0( e_0 (\cO_E\rtimes_\gamma\bT) e_n B e_n (\cO_E\rtimes_\gamma\bT) e_0 )
		=
		\psi_0(e_0 X_n B X_n^* e_0)
		\\\subset{}& 
		\psi_\infty(A_{\geq n})
		=
		0. 
	\end{align*}
	
	Now assume \cref{eq:lem:Kishimoto1.1}. Then $\wh{\gamma}^{-m}(\Ker\pi_{\psi_0})=\Ker\pi_{\psi_0}$ by applying \cref{lem:Kishimotoprf} to the hereditary inclusion $e_0Be_0\subset \cO_E\rtimes_\gamma\bT$ with $\wh{\gamma}^m\in\Aut(\cO_E\rtimes_\gamma\bT)$ and the pure state $\psi_0|_{e_0Be_0}$ on $e_0Be_0$. 
	This contradicts $\Ker\pi_{\psi_0} \not\supset e_0 Be_0\subset \wh{\gamma}^{-n}(\Ker\pi_{\psi_0})$ since we chose $n\in m\bZ_{\geq 1}$. 
\end{proof}

The following lemma completes the substitution for the non-triviality of the strong Connes spectrum with the non-properness of the C*-correspondence. 

\begin{lem}\label{lem:simpleCPalg}
	In the setting of \cref{notn:simpleCPalg}, if $E$ is non-proper and satisfies (ii) of \cref{thm:simpleCPalg}, we have the following. 
	\begin{enumerate}
		\item
		Any non-zero closed ideal $J\subset A_\infty$ with $X_1^*JX_1\subset J \supset X_1JX_1^*$ is $A_\infty$ itself. 
		\item
		It holds $\bigcap_{n=0}^{\infty}A_{\geq n}=0$. 
	\end{enumerate}
\end{lem}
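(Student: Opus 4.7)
The plan is to prove Part (1) by reducing to hypothesis (ii) of \cref{thm:simpleCPalg}, then deduce Part (2) as a consequence of Part (1) combined with the non-properness of $E$.

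For Part (1), I would set $I := J \cap A$ and verify that $I$ satisfies the conditions in (ii) of \cref{thm:simpleCPalg}. The inclusion $\bra E, \phi(I) E \ket \subset I$ is immediate from $\bra \xi, \phi(i) \eta \ket = \oS_\xi^* i \oS_\eta \in X_1^* J X_1 \cap A \subset J \cap A = I$ for $i \in I$. The inclusion $\phi^{-1}(\cK(EI)) \subset I$ uses the Cuntz--Pimsner relation $a = \sigma_1(\phi(a))$ for $a \in I_E$, combined with $\sigma_1(\cK(EI)) \subset X_1 I X_1^* \subset J$. Hence $I \in \{0, A\}$. In the easy case $I = A$, the inclusion $A \subset J$ propagates to $A_n \subset X_n A X_n^* \subset J$ for every $n$ (using right non-degeneracy $\cspan(E^{\otimes n} \cdot A) = E^{\otimes n}$), so $J \supset \bigcup_n A_{\leq n}$, and taking closures gives $J = A_\infty$.

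The main obstacle is ruling out $I = 0$, where I must contradict $J \neq 0$ by showing $J = 0$. My approach is to show $J \cap A_{\leq n} = 0$ for every $n$: the quotient map $A_\infty \to A_\infty / J$ will then be isometric on each $A_{\leq n}$, and by density of $\bigcup_n A_{\leq n}$ in $A_\infty$ it becomes isometric globally, forcing $J = 0$. For $j \in J \cap A_{\leq n}$ and $\xi, \eta \in E^{\otimes n}$, splitting $\xi = \xi' \otimes \xi''$ (with $\xi' \in E^{\otimes k}$) and $\eta$ similarly in the computation of $\oS_\xi^* j \oS_\eta$ on each summand of $j$ in $A_k$ with $k \leq n$ shows $\oS_\xi^* j \oS_\eta \in A$; combined with $\oS_\xi^* j \oS_\eta \in X_n^* J X_n \subset J$ from iterated invariance, this forces $\oS_\xi^* j \oS_\eta = 0$. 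Taking an approximate unit $(e_\lambda)$ of $\cK(E^{\otimes n})$, every summand of $\sigma_n(e_\lambda) j \sigma_n(e_\lambda)$ vanishes, so $\sigma_n(e_\lambda) j \sigma_n(e_\lambda) = 0$ for each $\lambda$. The essentialness of $A_n \subset A_{\leq n}$ (\cref{lem:faithfulX}) embeds $A_{\leq n}$ into $M(A_n)$, where $\sigma_n(e_\lambda) \to 1$ strictly, so $\sigma_n(e_\lambda) j \sigma_n(e_\lambda) \to j$ strictly; since this net is identically zero, $j = 0$.

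For Part (2), $\bigcap_n A_{\geq n}$ is a closed ideal of $A_\infty$ invariant under both operations, because $X_1 A_{\geq n} X_1^* \subset A_{\geq n+1}$ and $X_1^* A_{\geq n} X_1 \subset A_{\geq n-1}$ both restrict to $\bigcap_n A_{\geq n}$ upon intersecting over $n$. Part (1) then yields $\bigcap_n A_{\geq n} \in \{0, A_\infty\}$. To rule out $A_\infty$, I would establish $A \cap A_{\geq 1} = I_E$: the $\supset$ is the Cuntz--Pimsner relation, and for the $\subset$ I lift $a \in A \cap A_{\geq 1}$ to $a = \wt\chi + c$ in $\cT_E$, with $\wt\chi$ in the ideal of $\cT_E^\gamma$ generated by $\bigcup_{k \geq 1} \wt A_k$ (which surjects onto $A_{\geq 1}$ under $\sigma$) and $c \in \cK(\cF_E I_E) = \ker\sigma$. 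Restricting to $E^{\otimes 0} = A \subset \cF_E$, the action of $\wt\chi$ vanishes, so left multiplication by $a$ on $A$ equals $c|_A$; a direct computation with rank-one operators shows that $c|_A$ is left multiplication by an element of $I_E$, forcing $a \in I_E$. Non-properness of $E$ gives $I_E \neq A$, so $A \not\subset A_{\geq 1}$, whence $\bigcap_n A_{\geq n} \subset A_{\geq 1} \neq A_\infty$ and finally $\bigcap_n A_{\geq n} = 0$.
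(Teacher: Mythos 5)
Your proposal is correct and follows essentially the same route as the paper: both parts hinge on applying hypothesis (ii) to the ideal $\sigma_0^{-1}(A_0\cap J)$ of $A$ (after checking the same two containments), on the essentialness of $A_n\subset A_{\leq n}$ from \cref{lem:faithfulX}, and, for (2), on the computation $P_0\,\sigma^{-1}(A_{\geq 1})\,P_0=P_0I_EP_0$ combined with non-properness. The only cosmetic difference is in (1): the paper argues directly that $J\neq 0$ forces $A_0\cap J\neq 0$ (via the dense union $\bigcup_n A_{\leq n}$ and the compression $\oS_{E^{\otimes n}}^*J_n\oS_{E^{\otimes n}}\neq 0$), whereas you run the contrapositive with an approximate-unit/strict-convergence argument — the same ingredients in a different order.
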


\begin{proof}
	We show (1). 
	There is some $n\geq 1$ with $A_{\leq n}\cap J\neq 0$ by \cite[Lemma III.4.1]{Davidson:book} applied to the dense inclusion $\bigcup_{n=0}^{\infty} A_{\leq n}\subset A_\infty$. 
	By \cref{lem:faithfulX} (2), we have $J_n:=A_n\cap J\neq 0$. 
	It follows from $A_n:=\cspan \oS_{E^{\otimes n}} \oS_{E^{\otimes n}}^*$ 
	that 
	\begin{align}\label{eq:lem:simpleCPalg}
		0\neq \oS_{E^{\otimes n}}^* J_n \oS_{E^{\otimes n}}\subset A_0\cap (X_1^*)^n J X_1^n\subset A_0\cap J . 
	\end{align}
	Since $\bra E, \phi(\sigma_0^{-1}(A_0\cap J))E\ket\subset \sigma_0^{-1}(A_0\cap J)$, and, by using \cref{lem:faithfulX} (1), 
	\begin{align*}
		&
		\phi^{-1}\bigl( \cK\bigl( E\sigma_0^{-1}(A_0\cap J) \bigr) \bigr) 
		\subset 
		\phi^{-1}\sigma_1^{-1}(\cspan S_{E\sigma_0^{-1}(A_0\cap J)}S_{E\sigma_0^{-1}(A_0\cap J)}^*) 
		\\\subset{}& 
		\sigma_0^{-1}(\sigma_0(I_E)\cap \cspan S_{E\sigma_0^{-1}(A_0\cap J)}S_{E\sigma_0^{-1}(A_0\cap J)}^*) 
		\subset 
		\sigma_0^{-1}(A_0\cap J), 
	\end{align*}
	the condition (ii) implies $A_0\subset J$ or $A_0\cap J=0$, where the latter is forbidden by \cref{eq:lem:simpleCPalg}. Thus it follows $J=A_\infty$. 
	
	Next we show (2). 
	We put $J:=\bigcap_{n=0}^{\infty}A_{\geq n}$, which is a closed ideal of $A_\infty$ with $X_1^*J X_1\subset J \supset X_1JX_1^*$ and thus must be $0$ or $A_\infty$ by (1). 
	Using the projection $P_0$ from \cref{notn:simpleCPalg} (4), we observe that 
	\begin{align*}
		&
		P_0( A\cap \sigma^{-1}(J) )P_0 
		\subset 
		P_0 \sigma^{-1}(A_{\geq 1}) P_0
		\\={}& 
		P_0 \Bigl( \cK(\cF_EI_E) + \cspan\bigcup_{n=1}^{\infty} \tS_{E^{\otimes n}}\tS_{E^{\otimes n}}^* \Bigr) P_0 
		\\={}&
		P_0 \cK(\cF_EI_E) P_0 
		=
		P_0I_EP_0 , 
	\end{align*}
	and that $a\in A$ satisfies $\sigma(a)\in A_0\cap J$ only if $a\in I_E$. Thus $J\neq A_\infty$ since $E$ is non-proper, and $J=0$ follows. 
\end{proof}

With these ingredients, the harder direction (ii) $\Rightarrow$ (i) of \cref{thm:simpleCPalg} can be shown by following the strategies of \cite[Lemma 3.2, Theorem 3.1]{Kishimoto:1981outer} and \cite[Theorem 3.2]{Elliott:1980some}. 

\begin{lem}\label{lem:Kishimoto3.2}
	In the setting of \cref{notn:simpleCPalg}, suppose $\bigcap_{n=0}^{\infty}A_{\geq n}=0$. 
	Then for any $\e>0$, $0\leq a\in A_\infty$, $m\in\bZ_{\geq 0}$, and $\xi_k\in X_{n_k}$ with $n_k\in\bZ\setminus\{0\}$ for $1\leq k\leq m$, 
	there is $0\leq x\in A_\infty$ with $\|x\|=1$ such that for all $1\leq k\leq m$, 
	\begin{align*}
		&
		\|xax\|\geq \|a\|-\e
		\qquad\text{and}\qquad
		\|x\xi_k x\|\leq \e. 
	\end{align*}
\end{lem}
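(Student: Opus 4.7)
The proof follows the blueprint of Kishimoto's~\cite[Lemma 3.2]{Kishimoto:1981outer}, with the graded elements $\xi_k\in X_{n_k}$ playing the role of non-identity group automorphisms $\alpha_{g_k}$ in his setting. The two core ingredients are: (i) the Akemann--Anderson--Pedersen excision theorem for a pure state $\phi$ on $A_\infty$, which for any finite $F\subset A_\infty$ and $\delta>0$ produces $0\leq y\in A_\infty$ with $\|y\|=1$ and $\|yzy-\phi(z)y^2\|<\delta$ for all $z\in F$; and (ii) \cref{lem:Kishimoto1.1}, which applies in every non-zero hereditary C*-subalgebra of $A_\infty$ thanks to $\bigcap_n A_{\geq n}=0$.

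First, I would reduce to $\|a\|=1$; using $\|x\xi_kx\|=\|x\xi_k^*x\|$ for self-adjoint $x$ together with $\xi_k^*\in X_{-n_k}$, we may assume $n_k\geq 1$ for all $k$. Picking a pure state $\phi$ on $A_\infty$ with $\phi(a)>1-\e/4$ and applying (i) with $F=\{a\}$ yields $0\leq y_0\in A_\infty$ of norm one with $\|y_0ay_0\|>1-\e/2$. The argument then proceeds by induction on $m$: at each inductive step, within the hereditary C*-subalgebra generated by the current element, I would apply (ii) to produce a refinement realizing the next $\xi_k$-estimate, interleaved with a further application of (i) to a pure state almost saturating the norm of the current $y_{k-1}ay_{k-1}$ to preserve the $a$-estimate. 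After $m$ steps, the resulting element $x$ satisfies $\|xax\|>1-\e$ together with $\|x\xi_kx\|<\e$ for every $k$.

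The main obstacle is that naive iteration of (ii) in a nested chain of hereditary subalgebras does not automatically preserve either the $a$-estimate or the previously achieved $\xi_j$-estimates: for $0\leq y_k\in\overline{y_{k-1}A_\infty y_{k-1}}$ of norm one, there is no useful a priori bound on $\|y_kzy_k\|$ in terms of $\|y_{k-1}zy_{k-1}\|$ for either $z=a$ or $z=\xi_j$. The remedy, following~\cite[Lemma 3.2]{Kishimoto:1981outer}, is to interleave (ii) with excision (i) at every inductive step and to choose the tolerances geometrically decreasing, so that the successive refinements retain all the essential bounds. The translation from group automorphisms to the graded components $X_{n_k}$ is otherwise mechanical.
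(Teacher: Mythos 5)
You correctly identify the skeleton of the argument (reduce to $n_k\geq 1$, nest hereditary C*-subalgebras, apply \cref{lem:Kishimoto1.1} once for each $\xi_k$), and you correctly flag the main obstacle: for a norm-one positive $y_k$ in the hereditary subalgebra generated by $y_{k-1}$ there is no bound on $\|y_kzy_k\|$ in terms of $\|y_{k-1}zy_{k-1}\|$. But your proposed remedy --- ``interleave (ii) with excision (i) at every inductive step and choose the tolerances geometrically decreasing'' --- does not resolve that obstacle, so the induction does not close as written. The failure is not a quantitative accumulation of errors that smaller tolerances could absorb: a norm-one positive element $z$ of $\overline{yA_\infty y}$ can have $\|z\xi z\|$ as large as $\|\xi\|$ even when $\|y\xi y\|$ is arbitrarily small (take $z=h(y)$ with $h$ supported near a small point of the spectrum of $y$; the factorization $z=ywy$ then forces $\|w\|$ to be huge). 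Likewise, excising a single pure state $\phi$ with $\phi(a)$ close to $\|a\|$ controls $\|y_0ay_0\|$ but says nothing about $\|zaz\|$ for later refinements $z$ sitting below $y_0$.

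The missing idea --- which is what the paper's proof uses, with no excision at all --- is a functional-calculus ``local unit'' trick. Having found $x_k\in B_{k-1}$ positive of norm one with $\|x_k\xi_kx_k\|<\e$ via \cref{lem:Kishimoto1.1}, one passes not to the hereditary subalgebra generated by $x_k$ but to $B_k:=\overline{y_kB_{k-1}y_k}$ with $y_k:=f_{\delta_k}(x_k)$, where $f_{\delta}$ vanishes on $[0,1-\delta]$ and $f_\delta(1)=1$. The companion $g_{\delta_k}(x_k)$, with $g_{\delta}(t)=\min\{(1-\delta)^{-1}t,1\}$, then acts as an \emph{exact} unit on $B_k$ (since $f_{\delta_k}g_{\delta_k}=f_{\delta_k}$) and still satisfies $\|g_{\delta_k}(x_k)\xi_kg_{\delta_k}(x_k)\|<\e$ for $\delta_k$ small; hence \emph{every} norm-one positive element of $B_k$, in particular the final $y_m$, inherits the $\xi_k$-estimate via $y_m\xi_ky_m=y_mg_{\delta_k}(x_k)\xi_kg_{\delta_k}(x_k)y_m$. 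The $a$-estimate is obtained the same way by running the whole induction inside $B_0:=\overline{f_{\e}(a)A_\infty f_{\e}(a)}$, where $y_mg_\e(a)=y_m$ gives $\|y_may_m\|\geq\|y_m^2\|-\e$, rather than by excision. Without this device (or an equivalent one, e.g.\ nested excisors that are exact units for one another), your step ``the successive refinements retain all the essential bounds'' is precisely the unproved part.
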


\begin{proof}
	By retaking $a$ and $\xi_k$ to be non-zero, 
	 multiplying positive scalars, and replacing $\xi_k$ with $\xi_k^*$ if necessary, we may assume $0<\e<1$, $\|a\|=\|\xi_k\|=1$, and $n_k\geq 1$ for all $1\leq k\leq m$. 
	For $0<\delta<1$, we fix a non-decreasing continuous functions $f_\delta, g_\delta\colon [0,\infty)\to[0,1]$ such that 
	\begin{itemize}
		\item
		$f_\delta(1-\delta)=0$, $f_\delta(1)=1$, and 
		\item
		$g_\delta(t):=\min\{(1-\delta)^{-1}t, 1\}$ for $t\geq 0$. 
	\end{itemize}
	Note that for $0\leq x\in A_\infty$ with $\|x\|=1$, we have $f_\delta(x)g_\delta(x) = f_\delta(x)$ and thus $yg_\delta(x)=y$ for any $y\in \overline{f_\delta(x)A_\infty f_\delta(x)}$. 
	
	We consider the hereditary C*-subalgebra $B_0:=\overline{f_{\e}(a)A_\infty f_{\e}(a)}$. 
	Inductively on $1\leq k\leq m$, suppose that we are given a non-zero hereditary C*-subalgebra $B_{k-1}\subset A_\infty$. 
	By \cref{lem:Kishimoto1.1}, there exists $0\leq x_k\in B_{k-1}$ with $\|x_k\|=1$ such that $\|x_k\xi_k x_k\|<\e$. 
	We take $0<\delta_k<1$ small enough to satisfy $\| g_{\delta_k}(x_k)\xi_k g_{\delta_k}(x_k) \|<\e$ and set $y_k:=f_{\delta_k}(x_k)\in B_{k-1}$. 
	Then $B_k:=\overline{y_k B_{k-1} y_k}\subset B_{k-1}$ is a hereditary C*-subalgebra of $A_\infty$ containing $y_k$. 
	
	By construction, $B_k$ forms a decreasing sequence of non-zero hereditary C*-subalgebras of $A_\infty$, and $y_{k}=y_k g_{\delta_{l}}(x_{l})=y_k g_{\e}(a)$ for $1\leq l\leq k\leq m$ by $y_k\in B_{l}\subset B_0$. 
	We see that 
	\begin{align*}
		&
		\| y_m a y_m \|
		\geq
		\| y_m g_{\e}(a) y_m \| - \e 
		=\|y_m\|^2 -\e =1-\e 
	\end{align*}
	and that 
	\begin{align*}
		\| y_{m} \xi_k y_{m} \| 
		=
		\| y_{m}g_{\delta_{k}}(x_{k}) \xi_k g_{\delta_{k}}(x_{k})y_{m} \| 
		\leq \e 
	\end{align*}
	for $1\leq k\leq m$. 
	Thus $x:=y_m$ satisfies the desired conditions. 
\end{proof}

\begin{proof}[Proof of \cref{thm:simpleCPalg}]
	We assume (ii) and deduce (i). 
	Fix a proper closed ideal $J\subsetneq \cO_E$ arbitrarily. To see $J=0$, it suffices to prove that the canonical conditional expectation $\Phi\colon \cO_E\to \cO_E^\gamma =A_\infty$ defined by $\Phi(x):=\int_\bT\gamma_z(x)dz$ for $x\in \cO_E$ (with respect to the Haar probability measure on $\bT$) factors through the quotient map $q\colon \cO_E \to \cO_E/J$, because then $\Phi(x)=0$ for any $0\leq x\in J$, which will imply $x=0$ by the faithfulness of $\Phi$. 
	
	Since $X_1^*(J\cap A_\infty)X_1 \subset J\cap A_\infty \supset X_1(J\cap A_\infty)X_1^*$, we see that $J\cap A_\infty$ is $0$ or $A_\infty$ by \cref{lem:simpleCPalg} (1). 
	Thus $J\cap A_\infty=0$ by $J\neq \cO_E$ because the smallest closed ideal of $\cO_E$ containing $A_\infty$ is $\cO_E$ itself. 
	Fix $a\in A_\infty$, $m\in\bZ_{\geq 0}$, and $\xi_k\in X_{n_k}$ with $n_k\in\bZ\setminus\{0\}$ for $1\leq k\leq m$ arbitrarily. 
	By \cref{lem:Kishimoto3.2} and \cref{lem:simpleCPalg} (2), for any $\e>0$ we have $0\leq x_\e\in A_\infty$ with $\|x_\e\|=1$ such that $\| x_\e a^*ax_\e \|\geq \| a^*a \|-\e$ and that $\|x_\e a^*\xi_k x_\e\|\leq \e$ for all $k$. 
	Therefore 
	if $a=0$, clearly $\|\Phi (a+\sum_{k=1}^{m} \xi_k)\|=0\leq \|q(a+\sum_{k=1}^{m} \xi_k)\|$, and if $a\neq 0$, 
	\begin{align*}
		&
		\| a^* \| \Bigl\| \Phi\Bigl( a+\sum_{k=1}^{m} \xi_k \Bigr) \Bigr\| 
		=
		\| a^*a \| 
		\leq 
		\| x_\e a^*a x_\e \| + \e 
		\\={}&
		\| q(x_\e a^*a x_\e) \| + \e 
		\leq
		\Bigl\| q(x_\e a^*a x_\e) + \sum_{k=1}^{m}q(x_\e a^*\xi_k x_\e ) \Bigr\|+ (1+m)\e 
		\\\leq{}&
		\|a^*\| \Bigl\| q\Bigl(a+\sum_{k=1}^{m} \xi_k\Bigr) \Bigr\| + (1+m)\e . 
	\end{align*}
	Since $\e>0$ is arbitrary, it follows that $\| \Phi (a+\sum_{k=1}^{m} \xi_k) \| \leq \| q( a+\sum_{k=1}^{m} \xi_k ) \|$ 
	in both cases. Therefore $\Phi$ factors through $q$. 
	
	Conversely, it is a consequence of \cref{prop:CPideal} below that any non-trivial closed ideal $I\subset A$ with $\bra E,\phi(I)E\ket\subset I\supset \phi^{-1}(\cK(EI))$ generates a non-trivial closed ideal of $\cO_E$. Thus (i) implies (ii). 
\end{proof}

\begin{prop}\label{prop:CPideal}
	In the setting of \cref{notn:simpleCPalg}, let $I\subset A$ be a closed ideal such that $\bra E, \phi(I)E\ket\subset I \supset \phi^{-1}(\cK(EI))$. Then, $A_0\cap \cspan\cO_E \sigma_0(I)\cO_E = \sigma_0(I)$. 
\end{prop}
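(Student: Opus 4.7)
The inclusion $\sigma_0(I) \subset A_0 \cap \cspan \cO_E \sigma_0(I) \cO_E$ is immediate, so my plan focuses on the converse. The strategy is to construct a $*$-homomorphism $\pi\colon \cO_E \to B$ into a suitable target C*-algebra $B$ with $\sigma_0(I) \subset \Ker \pi$ and such that the composition $\pi \circ \sigma_0\colon A \to B$ has kernel exactly $I$; tracing kernels then gives $A_0 \cap \cspan \cO_E \sigma_0(I) \cO_E \subset \sigma_0(I)$.

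The natural target is the Cuntz--Pimsner algebra $\cO_{E'}$ of the quotient correspondence $(E', \phi') := (E/EI, \bar\phi)$ over $A' := A/I$. I would first verify this is a well-defined non-degenerate $(A', A')$-correspondence: the right action descends because $\langle E, EI\rangle = \langle E, E\rangle I \subset I$, and the first invariance condition forces $\phi(I)E \subset EI$ through an approximate-unit argument based on the identity $\langle \phi(i)\xi, \phi(i)\xi\rangle = \langle \xi, \phi(i^*i)\xi\rangle \in I$, so $\phi$ descends to $\phi'\colon A' \to \cL(E')$. The universal property of $\cO_E$, with Katsura covariance handled via the naturally induced map $\cK(E) \to \cK(E')$, then yields $\pi\colon \cO_E \to \cO_{E'}$ with $\pi(\sigma_0(a)) = \sigma'_0(a + I)$ and $\pi(\oS_\xi) = \oS'_{[\xi]}$, and clearly $\sigma_0(I) \subset \Ker\pi$.

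The crux of the proof, and the main obstacle, is to show $\sigma'_0 \colon A' \to \cO_{E'}$ is injective, which by \cref{lem:faithfulX}~(1) applied to $(E', \phi')$ reduces to verifying that $(E', \phi')$ is faithful. Here the second invariance condition $\phi^{-1}(\cK(EI)) \subset I$ is essential: my approach would be to show that $\phi(a) E \subset EI$ forces $\phi(a) \in \cK(EI)$ by approximating $\phi(a)$ through rank-one operators with indices in $EI$, exploiting an approximate unit $(e_\lambda)$ of $I$ together with rearrangement identities of the form $\theta_{\xi, \eta \cdot i} = \theta_{\xi \cdot i^*, \eta}$, so that the second invariance condition then forces $a \in I$. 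A subtle degenerate situation arises when the quotient correspondence collapses (for instance when $EI = E$, giving $E' = 0$ and $\cO_{E'} = 0$ in the paper's convention); in this case the direct $\pi$ loses all information, and I would instead observe that $EI = E$ entails $\langle E, E\rangle \subset I$, which via the Katsura covariance in $\cO_E$ gives $\sigma_1(\cK(E)) \subset \sigma_0(I)$, killing all images of $\oS_\xi$ in the quotient $\cO_E/\cspan \cO_E \sigma_0(I) \cO_E$ and making this quotient surject onto $A/I$ directly. Unifying these two cases so that the resulting map always has kernel equal to $I$ on $A_0$ is the principal technical challenge.
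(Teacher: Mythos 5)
Your overall strategy---quotienting the correspondence and comparing $\cO_E$ with a Pimsner algebra over $A/I$---is a genuinely different route from the paper, which instead works entirely inside $\cT_E$: it identifies the gauge-fixed part of the ideal generated by $\sigma_0(I)$ with $\cspan\bigcup_n D_n$ for explicit C*-subalgebras $D_n$, and then pushes a positive element down through the corners $P_k(-)P_k$ using a functional-calculus lemma and the negative invariance $\phi^{-1}(\cK(EI))\subset I$. Your route is the one underlying Katsura's classification of gauge-invariant ideals, and it could in principle be made to work, but as written it has a genuine gap at exactly the point you identify as the crux.

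The gap is that faithfulness of $(E',\phi')$ does \emph{not} follow from the hypotheses, and your proposed mechanism for it is false: $\phi(a)E\subset EI$ does not force $\phi(a)\in\cK(EI)$, because $\phi(a)$ need not be compact at all (and a non-compact operator cannot be approximated by finite-rank ones). Concretely, take $A=\bC^3$, $I=\bC e_1$, $E=\cH_1\oplus\cH_3$ with $Ee_1=\cH_1$, $Ee_3=\cH_3$ infinite-dimensional Hilbert spaces, $Ee_2=0$, and let $e_1,e_2,e_3$ act on $\cH_1$ by infinite-rank orthogonal projections $P,Q,R$ with $P+Q+R=1$ and on $\cH_3$ by $0,0,1$. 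Both hypotheses hold ($\bra E,\phi(I)E\ket\subset\bC e_1$ and $\phi^{-1}(\cK(EI))=0$), yet $\phi(e_2)E=Q\cH_1\subset EI$ with $e_2\notin I$, so $\phi'$ has nontrivial kernel while $EI\neq E$; your degenerate-case discussion therefore does not cover this. Moreover, with the paper's convention $\cO_{E'}=\cT_{E'}/\cK(\cF_{E'}I_{E'})$, any $a$ with $\phi'(a+I)=0$ lies in $I_{E'}$ and dies in $\cO_{E'}$, so $\Ker(\pi\circ\sigma_0)\supsetneq I$ and the kernel computation fails. Your fallback for the case $EI=E$ is also circular: knowing that all $\oS_\xi$ die in $\cO_E/\cspan\cO_E\sigma_0(I)\cO_E$ only shows this quotient is a quotient of $A/I$; that the map $A/I\to\cO_E/\cspan\cO_E\sigma_0(I)\cO_E$ is injective is precisely the statement being proved. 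To repair the approach one must replace $\cO_{E'}$ by Katsura's relative algebra with covariance imposed only on $J_{E'}=\phi'^{-1}(\cK(E'))\cap(\Ker\phi')^{\perp}$, where $A/I$ always embeds; the hypothesis $\phi^{-1}(\cK(EI))\subset I$ is then exactly what guarantees that $I_E$ maps into $J_{E'}$ (so that $\pi$ descends to $\cO_E$), but this requires the embedding theorem for non-faithful correspondences, which is not available in the paper's self-contained setup and is what its direct argument is designed to avoid.
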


In the proof, we use the following lemma. 

\begin{lem}\label{lem:CPideal}
	Let $C$ be a C*-algebra and $A,D\subset C$ be C*-subalgebras satisfying $ADA\subset D$. 
	For $\e>0$, consider the continuous function $h_\e\colon [0,\infty) \ni t \mapsto \max\{ 0, t-\e \} \in [0,\infty)$. 
	For any positive $a\in A$ and any positive $d\in D$ with $\|a-d\|\leq\e$, we have $h_{2\e}(a)\in D$. 
\end{lem}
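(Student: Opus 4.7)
The plan is to pass to the quotient of $A$ by the ideal $A \cap D$, in which the element $d$ becomes zero, and then to exploit the $2\e$-cutoff in $h_{2\e}$ to annihilate the resulting small-norm image. The first step is to verify that $B := A \cap D$ is a closed two-sided ideal of $A$. Taking an approximate unit $(u_\lambda)\subset A$, for any $a'\in A$ and $b\in B$ the product $a'bu_\lambda$ lies in $ADA\subset D$ while $a'bu_\lambda\to a'b$ in norm (since $a'b\in A$ and $(u_\lambda)$ is an approximate unit of $A$); thus $a'b\in\overline{D}=D$, and hence $a'b\in A\cap D=B$. A symmetric computation yields $ba'\in B$.

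In the context of the intended application in \cref{prop:CPideal}, the subalgebra $D$ will be contained in $A$, so $d\in B$. I would then consider the quotient map $\pi\colon A\to A/B$. Since $\pi(d)=0$, one has $\|\pi(a)\|\leq\|a-d\|\leq\e$, so the positive element $\pi(a)\in A/B$ has spectrum contained in $[0,\e]$. Consequently $\pi(a)-2\e\cdot 1$ has spectrum in $[-2\e,-\e]$, whence
\[
	h_{2\e}(\pi(a))=(\pi(a)-2\e)_+=0.
\]
Because continuous functional calculus commutes with $*$-homomorphisms, $\pi(h_{2\e}(a))=h_{2\e}(\pi(a))=0$, so $h_{2\e}(a)\in\ker\pi=B\subset D$, completing the argument in this case.

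The main obstacle I anticipate is the fully general situation in which $d$ is not a priori in $A$. A natural remedy would be to replace $d$ by the compression $u_\lambda du_\lambda\in ADA\subset D$, noting that
\[
	\|a-u_\lambda du_\lambda\|\leq\|a-u_\lambda au_\lambda\|+\|u_\lambda(a-d)u_\lambda\|\leq\delta+\e
\]
for any $\delta>0$ and all sufficiently large $\lambda$; under modest additional hypotheses (for instance, $A$ being hereditary in $C$, which forces $u_\lambda du_\lambda\leq\|d\|u_\lambda^2\in A_+$ and hence $u_\lambda du_\lambda\in A$) the compressions lie in $B=A\cap D$, whereupon applying the quotient argument to the pair $(a,u_\lambda du_\lambda)$ gives $h_{2(\e+\delta)}(a)\in B$, and letting $\delta\to 0^+$ yields $h_{2\e}(a)\in B\subset D$ by closedness of $B$ and norm-continuity of the functional calculus.
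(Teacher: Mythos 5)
Your argument is correct in the special case it actually covers, namely $d\in A\cap D$: the observation that $A\cap D$ is a closed ideal of $A$ and the quotient computation $h_{2\e}(\pi(a))=0$ are both fine. The problem is the reduction to that case. You justify it by asserting that in the intended application ``$D$ will be contained in $A$,'' but this is false: in \cref{prop:CPideal} the lemma is applied with $C=\cT_E$, $A$ the coefficient algebra acting diagonally on $\cF_E$, and $D=D_m$, which contains elements such as $\tS_\xi b\tS_\eta^*$ for $\xi,\eta\in E^{\otimes k}$ with $k\geq 1$; these do not lie in $A$. Your fallback for the general case requires $A$ to be hereditary in $C$, which is neither a hypothesis of the lemma nor true in the application (the diagonal copy of $A$ in $\cL(\cF_E)$ is not hereditary in $\cT_E$), so the proposal does not prove the lemma in the generality in which it is stated and used. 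Note also that quotienting instead by the ideal of $\mathrm{C}^*(A\cup D)$ generated by $D$ would only place $h_{2\e}(a)$ in that possibly larger ideal, not in $D$.

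The paper's proof needs no containment between $A$ and $D$: from $\|a-d\|\leq\e$ one gets the operator inequality $a-\e 1_C\leq d$, and with $k_\e(t):=\sqrt{(t-2\e)_+/(t-\e)}$ (so $k_\e(a)\in A$ and $k_\e(a)^2(a-\e 1_C)=h_{2\e}(a)$) one obtains
\[
0\leq h_{2\e}(a)=k_\e(a)(a-\e 1_C)k_\e(a)\leq k_\e(a)\,d\,k_\e(a)\in ADA\subset D,
\]
after which heredity of $D$ gives $h_{2\e}(a)\in D$. If you want to keep your quotient idea, you would have to find an honest substitute for this step; as written, the gap is genuine.
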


\begin{proof}
	Let $k_\e\colon [0,\infty) \to [0,\infty)$ be the continuous function such that $k_\e(t)=0$ for $0\leq t\leq 2\e$ and $k_\e(t)=\sqrt{\frac{t-2\e}{t-\e}}$ for $t\geq 2\e$. 
	It follows from $a-\e1_C \leq d$ that 
	\begin{align*}
		&
		0\leq h_{2\e}(a) = k_\e(a) (a-\e1_C) k_\e(a) \leq k_\e(a) d k_\e(a) \in ADA\subset D. 
	\end{align*}
	Since $D$ is a closed ideal of the C*-subalgebra generated by $A\cup D\subset C$, its heredity shows $h_{2\e}(a)\in D$. 
\end{proof}

\begin{proof}[Proof of \cref{prop:CPideal}]
	For all $n\in\bZ_{\geq 0}$, using an approximate unit of $\cK(E^{\otimes_A n})$, we check that 
	\begin{align}\label{eq:thm:simpleCPalg1}
		I \tS_{E^{\otimes n}} 
		= 
		\tS_{I E^{\otimes n}} 
		\subset 
		\cspan \tS_{\cK(E^{\otimes n}) I E^{\otimes n}} 
		\subset 
		\cspan \tS_{E^{\otimes n} I} 
		= 
		\tS_{E^{\otimes n}} I 
	\end{align}
	by $\bra E, \phi(I)E\ket$, and that 
	\begin{align}\label{eq:thm:simpleCPalg2}
		&\begin{aligned}
			&
			\{ x\in \cK(E^{\otimes_A n}) \mid x\otimes_\phi 1_E \in \cK(E^{\otimes_A n+1}I) \}
			\\\subset{}&
			\cspan 
			\cK(E^{\otimes_A n})
			\{ x\in \cK(E^{\otimes_A n}) \mid x\otimes_\phi 1_E \in \cK(E^{\otimes_A n+1}I) \}
			\cK(E^{\otimes_A n})
			\\\subset{}&
			\cspan 
			T_{E^{\otimes n}} 
			\{ a\in A \mid a\otimes_\phi 1_E \in \cK(EI) \}
			T_{E^{\otimes n}}^* 
			\subset 
			\cK(E^{\otimes_A n}I) , 
	\end{aligned}\end{align}
	where the last inclusion follows from $\phi^{-1}(\cK(EI))\subset I$. 
	For $n\in\bZ_{\geq 0}$, we let $P_n\in\cL(\cF_E)$ be the projection onto the direct summand $E^{\otimes_A n}$ and put 
	\begin{align*}
		D_n:= 
		\sum_{k=0}^{n} \cK(P_k\cF_EI_E) + \cspan \tS_{E^{\otimes k}}I\tS_{E^{\otimes k}}^* \subset \cT_E , 
	\end{align*}
	which is a $*$-subalgebra of $\cT_E$ as $\tS_{E^{\otimes k}}I\tS_{E^{\otimes k}}^* \cK(P_l\cF_EI_E)$ is contained in $\cK(P_l\cF_EI_E)$ if $0\leq k\leq l$ and equals $0$ if $0\leq l<k$. 
	Moreover, $D_n$ is a C*-subalgebra because $\sum_{k=0}^{n} \cK(P_k\cF_EI_E) \subset \cspan D_n$ is a norm closed ideal and $\sum_{k=0}^{n}\cspan \tS_{E^{\otimes k}}I\tS_{E^{\otimes k}}^*\subset \cT_E$ is a C*-subalgebra. 
	
	Abusing notation, we write $\gamma$ for the continuous $\bT$-action on the Hilbert $A$-module $\cF_E$ such that $\gamma_z(\xi)=z^n\xi$ for $n\in\bZ_{\geq 0}$, $\xi\in E^{\otimes_A n}$, and $z\in\bT$, which induces a strictly continuous $\bT$-action on the C*-algebra $\cL(\cF_E)$ still denoted by $\gamma$ such that $\gamma_z(x)=z^{n-m} x$ for $m,n\in\bZ_{\geq 0}$ and $x\in \cL(P_m\cF_E,P_n\cF_E)$. Its restriction yields the continuous $\bT$-action on $\cT_E$ such that $A\subset \cT_E^\gamma$ and $\gamma_z(\tS_\xi)=z^n\tS_\xi$ for $\xi\in E^{\otimes_A n}$. 
	Using \cref{eq:thm:simpleCPalg1}, we observe that 
	\begin{align*}
		&
		\sigma^{-1}(\cspan\cO_E \sigma_0(I) \cO_E) 
		=
		\cK(\cF_EI_E) + \cspan\cT_E I\cT_E 
		\\={}&
		\cspan \bigcup_{m,n=0}^{\infty} \cK(P_m\cF_EI_E,P_n\cF_EI_E) \cup \tS_{E^{\otimes n}}I\tS_{E^{\otimes m}}^*, 
	\end{align*}
	which is closed under the $\bT$-action. 
	Using the conditional expectation $\cT_E\ni x\mapsto \int_\bT\gamma_z(x)dz\in \cT_E^\gamma$, we have 
	\begin{align*}
		&
		\cT_E^\gamma \cap \sigma^{-1}(\cspan\cO_E \sigma_0(I) \cO_E) 
		= 
		\cspan \bigcup_{n=0}^{\infty} 
		\cK(P_n \cF_E I_E) 
		\cup 
		\tS_{E^{\otimes n}}I\tS_{E^{\otimes n}}^* 
		= 
		\cspan \bigcup_{n=0}^{\infty} D_n . 
	\end{align*}
	Clearly, we have $\sigma_0(I)\subset \cspan\cO_E\sigma_0(I)\cO_E$. To see that $A_0\cap \cspan\cO_E\sigma_0(I)\cO_E \subset \sigma_0(I)$, it is enough to show that $A\cap \sigma^{-1}(\cspan\cO_E \sigma_0(I) \cO_E) \subset I$. 
	Take any positive element $a\in A\cap \sigma^{-1}(\cspan\cO_E \sigma_0(I) \cO_E) \subset \cspan\bigcup_{n=0}^{\infty}D_n$ and fix any $\e>0$. 
	Since $h_{2\e}(a)\xto{\e\to +0}a$, and $A\cap \sigma^{-1}(\cspan\cO_E \sigma_0(I) \cO_E)$ is linearly spanned by its positive part, it suffices to prove $h_{2\e}(a)\in I$. 
	
	There are $m\in \bZ_{\geq 0}$ and $d\in D_m$ such that 
	$d\geq 0$ and $\|a - d\|<\e$. It follows from \cref{lem:CPideal} applied to $(C,A,D,a,d)=(\cT_E,A,D_m,a,d)$ that $h_{2\e}(a)\in D_m$, which implies that there are $x_k\in \cK(P_k\cF_EI_E)$ and $y_k\in \cspan\tS_{E^{\otimes k}}I\tS_{E^{\otimes k}}^*$ for $0\leq k\leq m$ such that 
	$h_{2\e}(a)=\sum_{k=0}^{m}x_k+y_k$. 
	Thus, by looking at each corner $\cK(P_k\cF_E)\cong \cK(E^{\otimes_A k})$ using $P_k$, we have 
	\begin{align*}
		&
		h_{2\e}(a)\otimes_\phi 1_{E^{\otimes k}} = P_k h_{2\e}(a) P_k = x_k+\sum_{l=0}^{k}P_k y_l P_k 
	\end{align*}
	for $0\leq k\leq m+1$, where we let $x_{m+1}:=0$ and $y_{m+1}:=0$. 
	We obtain 
	\begin{align*}
		&
		x_{k-1} \otimes_\phi 1_E 
		= 
		h_{2\e}(a)\otimes_\phi 1_{E^{\otimes_A k}} - \sum_{l=0}^{k-1}P_{k-1} y_l P_{k-1} \otimes_\phi 1_E  
		= 
		x_k + P_k y_k P_k \in \cK(E^{\otimes_A k})
	\end{align*}
	for $1\leq k\leq m+1$ because $P_{k-1}y_lP_{k-1}\otimes_\phi 1_E=P_ky_lP_k$ for $0\leq l<k$. 
	Inductively on $k=m+1,m,m-1,\cdots,1$, we check that $x_{k-1}\otimes_\phi 1_E = x_k + P_k y_k P_k\in \cK(E^{\otimes_A k}I)$ and that $x_{k-1} \in \cK(E^{\otimes_A k-1}I)$ by \cref{eq:thm:simpleCPalg2}. 
	Hence, $h_{2\e}(a) = x_0 + y_0 \in I$ as desired, which completes the proof. 
\end{proof}

As a consequence of \cref{thm:simpleCPalg}, we obtain the simplicity criterion for Toeplitz--Pimsner algebras. 

\begin{cor}\label{cor:simpleCPalg}
	For a non-zero C*-algebra $A$ and a non-degenerate faithful $(A,A)$-correspondence $E=(E,\phi)$, 
	the C*-algebra $\cT_E$ is simple if and only if the following hold. 
	\begin{itemize}
		\item[(i)]
		$\phi(A)\cap \cK(E)=0$, and 
		\item[(ii)]
		there is no closed ideal $I\subset A$ with $\bra E, \phi(I)E\ket \subset I$ except $I=0$ and $A$. 
	\end{itemize}
\end{cor}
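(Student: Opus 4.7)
The plan is to reduce \cref{cor:simpleCPalg} to \cref{thm:simpleCPalg} by showing that conditions (i) and (ii) force $\cT_E = \cO_E$ and that $E$ is automatically non-proper, so that \cref{thm:simpleCPalg} becomes applicable and its hypothesis (ii) collapses to our (ii).

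For the direction $\Leftarrow$, I would first observe that (i) together with the faithfulness of $\phi$ forces $I_E := \phi^{-1}(\cK(E)) = 0$, and hence $\cK(\cF_E I_E) = 0$, so $\cT_E = \cO_E$. Moreover, since $A\neq 0$ and $\phi$ is injective, $\phi(A)\neq 0$, so the disjointness $\phi(A)\cap\cK(E)=0$ prevents $\phi(A)\subset\cK(E)$; thus $E$ is non-proper. For any closed ideal $I\subset A$ one has $\phi^{-1}(\cK(EI))\subset I_E = 0\subset I$ automatically, so the condition (ii) of \cref{thm:simpleCPalg} on $I$ reduces exactly to our (ii). Hence \cref{thm:simpleCPalg} (ii)$\Rightarrow$(i) gives simplicity of $\cO_E = \cT_E$.

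For the direction $\Rightarrow$, assume $\cT_E$ is simple. I would first deduce (i) by contradiction: if $\phi(A)\cap\cK(E)\neq 0$, then $I_E\neq 0$, so $\cK(\cF_EI_E)$ is a non-zero closed ideal of $\cT_E$. The key point is that this ideal is proper, which follows because the quotient $\cO_E = \cT_E/\cK(\cF_EI_E)$ contains a faithful copy of $A$ by \cref{lem:faithfulX}(1); so $\cO_E\neq 0$ and $\cK(\cF_EI_E)\neq \cT_E$, contradicting simplicity. Once (i) is known, the preceding paragraph shows $\cT_E=\cO_E$ and $E$ non-proper, so \cref{thm:simpleCPalg} (i)$\Rightarrow$(ii) gives the desired ideal-theoretic condition, which again simplifies to our (ii) thanks to $I_E = 0$.

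The only subtlety, which I expect to be the main (though minor) obstacle, is verifying that $\cK(\cF_EI_E)$ is a \emph{proper} ideal of $\cT_E$ when $I_E\neq 0$; the clean way is to invoke \cref{lem:faithfulX}(1), ensuring $A\hookrightarrow \cO_E$ is non-zero. Beyond this, everything is a matter of carefully tracking how the Katsura-ideal term $\phi^{-1}(\cK(EI))$ in \cref{thm:simpleCPalg} degenerates under hypothesis (i).
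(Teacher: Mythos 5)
Your proposal is correct and follows essentially the same route as the paper: both reduce to \cref{thm:simpleCPalg} by observing that (i) is equivalent to $I_E=0$ (hence $\cT_E=\cO_E$), that $E$ is then automatically non-proper since $A\neq 0$ and $\phi$ is injective, and that the Katsura term $\phi^{-1}(\cK(EI))$ vanishes so the theorem's condition (ii) collapses to yours. The properness of the ideal $\cK(\cF_EI_E)$ that you flag as the one subtlety is handled in the paper by the same observation (the copy of $A$ inside $\cF_E$, equivalently its faithful image in $\cO_E$), so there is no gap.
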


Note that the condition (ii) automatically implies the fullness of $E$ because $\bra E,\phi(A)E\ket$ must generate $A$ as a closed ideal in that case. 

\begin{proof}
	It follows from the definition of the quotient map $\sigma\colon \cT_E\to\cO_E$ that $\cT_E$ is simple if and only if $\cO_E$ is simple and $\cF_EI_E=0$. 
	Since $\cF_E$ contains $A$, we see that $\cF_EI_E=0$ occurs if and only if $I_E=0$, or equivalently $\phi(A)\cap \cK(E)=0$ as $\phi$ is injective. 
	Now \cref{thm:simpleCPalg} implies the desired equivalence since $E$ must be non-proper and satisfy $\phi^{-1}(\cK(EI))=0$ for any closed ideal $I\subset A$ if $\phi(A)\cap \cK(E)=0$. 
\end{proof}

\section{Bimodules and equivariance}\label{sec:equivTPalg}

From this section, we investigate actions of unitary tensor categories on C*-algebras. 

\subsection{Terminologies on tensor categories and their actions}\label{ssec:tenorcat}
We recall and fix several terminologies on tensor categories and their actions on C*-algebras. 
A \emph{C*-category} is a category $\cA$ equipped with a structure of a complex Banach space on $\Hom_{\cA}(X,Y)$ and an anti-linear map $\Hom_{\cA}(X,Y)\to \Hom_{\cA}(Y,X)$ for all objects $X,Y\in\cA$ satisfying 
$\|gf\|\leq\|g\|\|f\|$, $f^{**}=f$, $\|f^*f\|=\|f\|^2$, and $f^*f\in \{ a^*a \mid a\in\Hom_{\cA}(X,X) \}$ for all morphisms $f\in \Hom_{\cA}(X,Y)$ and $g\in \Hom_{\cA}(Y,Z)$. 
Note that $\Hom_{\cA}(X,X)$ is a C*-algebra by assumption. 
We also assume that a C*-category $\cA$ is additive and idempotent-complete in the way compatible with the $*$-linear structure. More precisely, for all objects $X,Y\in\cA$, there is an object $X\oplus Y$ with isometries $r\in \Hom_{\cA}(X, X\oplus Y)$ and $s\in \Hom_{\cA}(Y, X\oplus Y)$ (i.e., $r^*r=\id_X$ and $s^*s=\id_Y$) satisfying $rr^*+ss^*=\id_{X\oplus Y}$, and there are an object $Z\in\cA$ and an isometry $v\in\Hom_{\cA}(Z,X)$ satisfying $vv^*=p$ for any projection $p\in\Hom_{\cA}(X,X)$ (i.e., $p=p^*=p^2$). 
For C*-categories $\cA$ and $\cC$, a \emph{$*$-functor} $F\colon \cC\to\cA$ is a functor such that $F(cf+g)=cF(f)+F(g)$ and $F(f^*)=F(f)^*$ for all $\pi,\varpi\in\cC$, $f,g\in\Hom_\cC(\pi,\varpi)$, and $c\in\bC$. 

A \emph{C*-tensor category} is a C*-category $\cA$ equipped with a bifunctor $-\otimes -\colon\cA\times \cA\to \cA$ preserving the $*$-linear structure (more precisely, bilinear and $(f\otimes g)^*=f^*\otimes g^*$ for all morphisms $f$ and $g$ in $\cA$), a natural unitary isomorphism $\ass(X,Y,Z)\colon (X\otimes Y)\otimes Z\to X\otimes(Y\otimes Z)$ for $X,Y,Z\in\cA$, an object $\mathbbm{1}_{\cA}$ with natural unitary isomorphisms $\mathrm{l}_X\colon \mathbbm{1}_\cA\otimes X\to X$ and $\mathrm{r}_X\colon X\otimes\mathbbm{1}_\cA\to X$ for $X\in\cA$ such that $\mathrm{l}_{\mathbbm{1}_\cA}=\mathrm{r}_{\mathbbm{1}_\cA}$ and that the following diagrams are commutative for all $W,X,Y,Z\in\cA$: 
\begin{align*}
	&
	\begin{aligned}
		\xymatrix@C=7em@R=2em{
			((W\otimes X)\otimes Y)\otimes Z \ar[dd]_-{\ass(W\otimes X, Y,Z)} \ar[r]^-{\ass(W,X,Y)\otimes\id_Z}& 
			(W\otimes (X\otimes Y))\otimes Z 
			\ar[d]^-{\ass(W,X\otimes Y,Z)} 
			\\
			& W\otimes ((X\otimes Y)\otimes Z ) \ar[d]^-{\id_W\otimes\ass(X,Y,Z)}
			\\
			(W\otimes X)\otimes (Y\otimes Z) 
			\ar[r]^-{\ass(W, X, Y\otimes Z)} &
			W\otimes (X\otimes (Y\otimes Z)) 
		}
	\end{aligned}, 
\end{align*}
and 
\begin{align*}
	&
	\begin{aligned}
		\xymatrix@C=2em{
			(X\otimes \mathbbm{1}_\cA) \otimes Y 
			\ar[rr]^-{\ass(X,\mathbbm{1}_\cA,Y)} \ar[dr]_-{\mathrm{r}_X\otimes\id_Y}&& 
			X\otimes (\mathbbm{1}_\cA \otimes Y) 
			\ar[dl]^-{\id_X\otimes\mathrm{l}_Y} 
			\\
			& X\otimes Y &
		}
	\end{aligned}. 
\end{align*}

For a C*-tensor category $\cA$, we call $\ass(X,Y,Z)$ the associator and $\mathbbm{1}_\cA$ the unit object. 
For simplicity, we shall not write $\mathrm{r}_X$ and $\mathrm{l}_X$ by abuse of notation. 

For objects $X$ and $\overline{X}$ in a C*-tensor category $\cA$, we say that $\overline{X}$ is conjugate to $X$ if there are $R\in \Hom_{\cA}(\mathbbm{1}_\cA,\overline{X}\otimes X)$ and $\overline{R}\in \Hom_{\cA}(\mathbbm{1}_\cA,X\otimes\overline{X})$ such that $(\id_{\overline{X}}\otimes\overline{R})^*\ass(\overline{X},X,\overline{X})(R\otimes\id_{\overline{X}})=\id_{\overline{X}}$ and $(\id_{X}\otimes R)^*\ass(X,\overline{X},X)(\overline{R}\otimes\id_X)=\id_X$. 
The pair $(R,\overline{R})$ satisfying these equalities are called a solution of the conjugate equation for $X$ and $\overline{X}$. 
A C*-tensor category $\cA$ is said to be \emph{rigid} if every objects $X\in\cA$ admits its conjugate object. 
An object $X\in\cA$ is called \emph{irreducible} if $\Hom_\cA(X)\cong\bC$. 
Assume that $\cA$ is a rigid C*-tensor category such that $\mathbbm{1}_\cA$ is irreducible. 
Then, $\cA$ is \emph{semisimple} in the sense that any object in $\cA$ is isomorphic to a direct sum of finitely many irreducible objects. 
For each $X\in\cA$ with its conjugate $\overline{X}$, there exists a solution $(R_X,\overline{R}_X)$ of the conjugate equation for $X$ and $\overline{X}$ with $\|R_X\|=\|\overline{R}_X\|$ such that $\|R_X\|\|\overline{R}_X\|\leq \|R\|\|\overline{R}\|$ for any other solution $(R,\overline{R})$ of the conjugate equation for $X$ and $\overline{X}$. Such $(R_X,\overline{R}_X)$ is called a \emph{standard solution}. We put $\dim_\cA X:=\|R_X\|\|\overline{R}_X\|$ and call it the \emph{intrinsic dimension} of $X$. 
Note that $R_X^* R_X = (\dim_\cA X) \id_{\mathbbm{1}_\cA} = \overline{R}_X^*\overline{R}_X$. 

For C*-tensor categories $\cA$ and $\cC$, a functor $F\colon\cC\to \cA$ accompanied by certain data that make $F$ compatible with the C*-tensor structures on $\cC$ and $\cA$, which we will recall soon later in a specific case, is called a \emph{unitary tensor functor}. 
If moreover $F$ admits a quasi-inverse $*$-functor, it is called a \emph{unitary monoidal equivalence}. 
For details on these notions and for further facts on rigid C*-tensor categories, we refer to \cite{Etingof-Gelaki-Nikshych-Ostrik:book,Neshveyev-Tuset:book}. 

In this paper, we call an (essentially small) rigid C*-tensor category $\cC$ with $\Hom_\cC(\mathbbm{1}_\cC)\cong\bC$ a \emph{unitary tensor category}. We fix a set $\Irr\cC$ of representatives of isomorphism classes of irreducible objects in $\cC$ such that $\mathbbm{1}_\cC\in\Irr\cC$. 
An example of a unitary tensor category is the category $\Rep G$ of finite-dimensional unitary representations of a compact group or, more generally, a compact quantum group $G$. 
In \cref{sec:TLJ}, we will focus on another example denoted by $\cC(\mathfrak{sl}_2,k)$ and its variants (see the beginning of that section for some of the properties). 
In \cref{sec:cyclicCuntz}, we focus on unitary tensor categories coming from $3$-cocycles on discrete groups in the following manner. 

\begin{eg}\label{eg_Hilb_Gamma}
	For a discrete group $\Gamma$, we let $\Hilb_\Gamma$ be the C*-category such that its objects are finite-dimensional Hilbert spaces $\cH$ with $\Gamma$-gradings, i.e., orthogonal direct sum decomposition of the form $\cH=\bigoplus_{g\in \Gamma}\cH_g$, and that its morphisms are $\bC$-linear maps preserving the $\Gamma$-gradings. We write $\bC_g$ for the one-dimensional Hilbert space graded by a single element $g\in \Gamma$. 
	Then $\Hilb_\Gamma$ has a structure of a C*-tensor category such that $\bC_g\otimes\bC_h:=\bC_{gh}$ with $\mathbbm{1}_{\Hilb_{\Gamma}}=\bC_{e}$ and 
	that $\ass(\bC_f,\bC_g,\bC_h)=\id_{\bC_{fgh}}\colon (\bC_f\otimes\bC_g)\otimes\bC_h = \bC_{fgh} \to \bC_{fgh} = \bC_f\otimes(\bC_g\otimes\bC_h)$ for all $f,g,h\in\Gamma$, 
	where $\mathrm{l}_\cH$ and $\mathrm{r}_\cH$ are multiplications by scalars. 
	We can see that $\Hilb_{\Gamma}$ is a unitary tensor category. 
	
	More generally, let $\omega\in \mathrm{Z}^3(\Gamma;\bT)$ be a $\bT$-valued $3$-cocycle, i.e., $\omega\colon\Gamma^3\to \bT$ such that $\omega(f,g,h)\omega(f,gh,k)\omega(g,h,k)=\omega(fg,h,k)\omega(f,g,hk)$ for all $f,g,h,k\in\Gamma$. 
	Then, $\Hilb_{\Gamma}$ has another structure of a C*-tensor category 
	such that $\bC_g\otimes\bC_h:=\bC_{gh}$ and that $\ass(\bC_f,\bC_g,\bC_h)=\omega(f,g,h)\id_{\bC_{fgh}}\colon (\bC_f\otimes\bC_g)\otimes\bC_h = \bC_{fgh} \to \bC_{fgh} = \bC_f\otimes(\bC_g\otimes\bC_h)$ for all $f,g,h\in\Gamma$ with the same unit object and natural unitaries $\mathrm{l}_\cH$, $\mathrm{r}_\cH$ as $\Hilb_{\Gamma}$ (note that we have $\omega(e,g,e)=1$ by letting $f=h=k=e$ in the cocycle relation above). 
	The resulting C*-tensor category is denoted by $\Hilb_{\Gamma,\omega}$. 
	We can see that $\Hilb_{\Gamma,\omega}$ is a unitary tensor category, which depends only on the cohomology class $[\omega]\in\Ho^3(\Gamma;\bT)$ up to unitary monoidal equivalence. 
\end{eg}

Let $\cC$ and $\cD$ be unitary tensor categories. 
We write $\cC\boxtimes\cD$ for their Deligne tensor product (see \cite[Subsection 4.6]{Etingof-Gelaki-Nikshych-Ostrik:book}). 
It is a C*-category such that its objects are isomorphic to direct sums of finitely many objects denoted by $\pi\boxtimes\varpi$ for $\pi\in\cC$ and $\varpi\in\cD$, that $\Hom_{\cC\boxtimes\cD}(\pi_1\boxtimes\varpi_1,\pi_2\boxtimes\varpi_2) = \Hom_\cC(\pi_1,\pi_2)\otimes\Hom_\cD(\varpi_1,\varpi_2)$ for all $\pi_1,\pi_2\in\cC$, $\varpi_1,\varpi_2\in\cD$, and that the bifunctor $-\boxtimes-\colon \cC\times\cD\to \cC\boxtimes\cD$, $(\pi,\varpi)\mapsto \pi\boxtimes\varpi$, $(f,g)\mapsto f\boxtimes g$ is compatible with the $*$-linear structures in a similar sense to $-\otimes -\colon \cA\times\cA\to\cA$ above. 
It has a canonical structure of a C*-tensor category such that the tensor product of $\pi_1\boxtimes\varpi_1$ and $\pi_2\boxtimes\varpi_2$ is $(\pi_1\otimes\pi_2) \boxtimes (\varpi_1\otimes\varpi_2)$ for all $\pi_1,\pi_2\in\cC$, $\varpi_1,\varpi_2$ with $\mathbbm{1}_{\cC\boxtimes\cD}=\mathbbm{1}_\cC\boxtimes \mathbbm{1}_\cD$, where the natural unitary isomorphisms $\ass$, $\mathrm{l}$, $\mathrm{r}$ of $\cC\boxtimes\cD$ are induced by those of $\cC$ and $\cD$. 
Then, $\cC\boxtimes\cD$ is a unitary tensor category. We may choose $\Irr(\cC\boxtimes\cD) = \{ \pi \boxtimes\varpi \mid \pi\in\Irr\cC, \varpi\in\Irr\cD \}$ and $\overline{\pi\boxtimes\varpi}=\overline{\pi}\boxtimes\overline{\varpi}$ for all $\pi\in\cC$ and $\varpi\in\cD$. 

We often assume that a unitary tensor category $\cC$ is \emph{countable} in the sense that $\cC$ has only countably many objects. 
Note that if $\Irr\cC$ is countable, we have a countable strict unitary tensor category that is unitarily monoidally equivalent to $\cC$ by applying strictification after skeletalization to $\cC$. 
A countable unitary tensor category $\cC$ is said to be a \emph{unitary fusion category} if $\#\Irr\cC<\infty$. 

For a C*-algebra $A$, we write $\Cor(A)$ for the C*-tensor category of non-degenerate $(A,A)$-correspondences as objects, bilinear adjointable operators as morphisms with the operator norms and the $*$-operations, and interior tensor products as the monoidal structure. 
A \emph{$\cC$-action} on $A$ is a unitary tensor functor $(\alpha,\fu)\colon \cC\to \Cor(A)$. 
Or equivalently, up to some simplification as in \cite[Remark 2.14]{Arano-Kitamura-Kubota:tensorKK}, it is a pair of a $*$-functor $\alpha\colon\cC\to\Cor(A)$ and a family $\fu=(\fu_{\pi,\varpi})_{\pi,\varpi\in\cC}$ of unitaries $\fu_{\pi,\varpi}\in\cU\cL(\alpha(\pi)\otimes_A\alpha(\varpi),\alpha(\pi\otimes\varpi))$ satisfying left $A$-linearity and naturality such that $\alpha(\mathbbm{1}_\cC)= (A,\id_A)$ and the diagram 
\begin{align*}
	\xymatrix@C=3em{
		\alpha(\tau\otimes\pi)\otimes_A\alpha(\varpi) \ar[d]_-{\fu_{\tau\otimes\pi,\varpi}} & 
		\alpha(\tau)\otimes_A\alpha(\pi)\otimes_A\alpha(\varpi) 
		\ar[l]_-{\fu_{\tau,\pi}\otimes1} \ar[r]^-{1\otimes\fu_{\pi,\varpi}} & 
		\alpha(\tau)\otimes_A\alpha(\pi\otimes\varpi)\ar[d]^-{\fu_{\tau,\pi\otimes\varpi}}\\
		\alpha((\tau\otimes\pi)\otimes\varpi) 
		\ar[rr]^-{\alpha(\ass(\tau,\pi,\varpi))} && 
		\alpha(\tau\otimes(\pi\otimes\varpi))
	}
\end{align*}
is commutative for all $\tau,\pi,\varpi\in\cC$. 
We also call such a triple $(A,\alpha,\fu)$ a \emph{$\cC$-C*-algebra}. 
For $\cC$-C*-algebras $(A,\alpha,\fu)$ and $(B,\beta,\fv)$, a $\cC$-$(A,B)$-correspondence is an $(A,B)$-correspondence $(E,\phi)$ equipped with a family $\bv=(\bv_\pi)_{\pi\in\cC}$ of (possibly non-adjointable) $(A,B)$-bilinear isometries $\bv_\pi\colon \alpha(\pi)\otimes_A E\to E\otimes_B\beta(\pi)$ satisfying the naturality such that the diagram 
\begin{align*}
	\xymatrix@C=3em{
		\alpha(\pi)\otimes_A\alpha(\varpi)\otimes_AE 
		\ar[r]^-{1\otimes\bv_\varpi} \ar[d]_-{\fu_{\pi,\varpi}\otimes1}& 
		\alpha(\pi)\otimes_AE\otimes_B\beta(\varpi)
		\ar[r]^-{\bv_\pi\otimes1}&
		E\otimes_B\beta(\pi)\otimes_B\beta(\varpi) 
		\ar[d]^-{1\otimes\fv_{\pi,\varpi}}\\
		\alpha(\pi\otimes\varpi)\otimes_AE
		\ar[rr]^-{\bv_{\pi\otimes\varpi}}&&
		E\otimes_B\beta(\pi\otimes\varpi)
	}
\end{align*}
is commutative for all $\pi,\varpi\in\cC$. 
For a $\cC$-$(A,B)$-correspondence of the form $(B,\phi,\bv)$, we say that the pair $(\phi,\bv)\colon A\to\cM(B)$ is a \emph{$\cC$-$*$-homomorphism}. 
For basic facts about $\cC$-actions, we refer to \cite{Arano-Kitamura-Kubota:tensorKK}, whose conventions we mostly follow. 
One of the few exceptions is that we shall write $\alpha_\pi$ for the left action $\alpha_\pi\colon A\to\cL(\alpha(\pi))$. 
Note that whenever $\pi\neq 0$, the $(A,A)$-correspondence $\alpha(\pi)$ is full, non-degenerate, faithful, and proper. 
Also, $\bv_\pi$ are unitary for a $\cC$-C*-correspondence $(E,\phi,\bv)$ with $(E,\phi)$ non-degenerate. 
For C*-algebras $A$, $B$, and an imprimitivity $(A,B)$-bimodule $M$, any $\cC$-action on $A$ induces a $\cC$-action on $B$ via the unitary monoidal equivalence $\Cor(A)\simeq\Cor(B)$ given by tensoring $M$, and $M$ becomes a $\cC$-$(A,B)$-correspondence canonically. 
See \cite[Lemma 2.26, Lemma 2.31, Remark 2.27]{Arano-Kitamura-Kubota:tensorKK}, respectively. 

\subsection{Actions on Pimsner algebras}\label{ssec:equivTPalg}
Let $A$ be a C*-algebra, $E=(E,\phi)$, $M=(M,\alpha)$ be non-degenerate faithful $(A,A)$-correspondences, and $\bv\in\cU\cL(M\otimes_A E, E\otimes_A M)$ be an $(A,A)$-bilinear unitary. 
We define the $(A,A)$-bilinear unitary $\bv^{(n)}\colon M\otimes_A E^{\otimes_A n}\to E^{\otimes_A n}\otimes_A M$ inductively on $n\in\bZ_{\geq 0}$ by $\bv^{(0)}:=1_M$ and $\bv^{(n+1)}:=(1_E\otimes_A\bv^{(n)})(\bv\otimes_A1_{E^{\otimes n}})$. 
In particular, $\bv^{(1)}=\bv$. 
We define the $(A,A)$-bilinear unitary $\bv^{(\infty)}:=\bigoplus_{n=0}^{\infty}\bv^{(n)}\colon M\otimes_A\cF_E\to \cF_E\otimes_A M$. 
The following lemma gives a correspondence over $\cT_E$ coming from $M$. 

\begin{lem}\label{lem:bimodFock}
	Let $A$ be a C*-algebra, $E=(E,\phi)$, $M=(M,\alpha)$ be non-degenerate faithful $(A,A)$-correspondences, and $\bv\in\cU\cL(M\otimes_A E, E\otimes_A M)$ be an $(A,A)$-bilinear unitary. 
	Suppose that $M$ is full and proper. 
	Then, we have a well-defined $(\cT_E,\cT_E)$-correspondence $M^\cT=(M^\cT,\alpha^\cT)$ such that 
	\begin{itemize}
		\item
		$M^\cT:=\cspan\{ \bv^{(\infty)} T_\xi x \mid \xi\in M, x\in\cT_E \} 
		\subset \cL(\cF_E,\cF_E\otimes_AM)$ as a right $\cT_E$-submodule,
		\item
		$\bra x,y\ket_{M^\cT}:=x^*y \in \cL(\cF_E)$ for $x,y\in M^\cT$, and 
		\item
		$\alpha^\cT(x) \xi:= (x\otimes_A1_M)\xi \in \cL(\cF_E,\cF_E\otimes_AM)$ for $\xi\in M^\cT$, $x\in\cT_E$. 
	\end{itemize}
	Moreover, $M^\cT$ is proper, full, non-degenerate, and faithful. 
\end{lem}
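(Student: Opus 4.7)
My plan is to verify the structural claims and then each of the four listed properties in turn, exploiting the identification $M\otimes_A\cT_E\xto{\sim} M^\cT$ induced by $\xi\otimes x\mapsto \bv^{(\infty)} T_\xi x$. The set $M^\cT$ is $\cT_E$-invariant on the right by construction, and for $\xi,\eta\in M$ and $x,y\in\cT_E$, unitarity of $\bv^{(\infty)}$ gives
\begin{align*}
\bra \bv^{(\infty)} T_\xi x,\, \bv^{(\infty)} T_\eta y\ket = x^* T_\xi^* T_\eta y = x^* \phi^{(\infty)}(\bra\xi,\eta\ket_M) y \in \cT_E,
\end{align*}
because $T_\xi^* T_\eta\in\cL(\cF_E)$ coincides with $\phi^{(\infty)}(\bra\xi,\eta\ket_M)\in A\subset\cT_E$. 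This simultaneously shows that the map $\xi\otimes x\mapsto \bv^{(\infty)} T_\xi x$ is isometric for the standard inner product on $M\otimes_A\cT_E$, so the displayed isomorphism holds.

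The hardest part is to verify that $\alpha^\cT(x)\xi:=(x\otimes_A 1_M)\xi$ preserves $M^\cT$. For $a\in A$, $(A,A)$-bilinearity of $\bv^{(\infty)}$ gives $(a\otimes 1_M)\bv^{(\infty)} T_\xi x=\bv^{(\infty)} T_{\alpha(a)\xi}x$, which is manifestly in $M^\cT$. The delicate case is $x=\tS_\eta$ for $\eta\in E$: here I would approximate $\bv^{-1}(\eta\otimes\xi)\in M\otimes_A E$ by finite sums $\sum_j\xi_j\otimes\eta_j$ and use the recursive definition $\bv^{(n+1)}=(1_E\otimes\bv^{(n)})(\bv\otimes 1_{E^{\otimes n}})$ to check, for each $\zeta\in E^{\otimes_A n}$, the twisted commutation identity
\begin{align*}
(\tS_\eta\otimes 1_M)\bv^{(\infty)}(\xi\otimes\zeta) = \eta\otimes\bv^{(n)}(\xi\otimes\zeta) = \sum_j\bv^{(n+1)}(\xi_j\otimes\eta_j\otimes\zeta) = \sum_j\bv^{(\infty)} T_{\xi_j}\tS_{\eta_j}(\zeta).
\end{align*}
Taking adjoints handles $\tS_\eta^*$, and since $\cT_E$ is generated by $A\cup\tS_E\cup\tS_E^*$, density and continuity extend to all $x\in\cT_E$. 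This commutation identity is the technical core of the lemma.

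For the four properties I would argue as follows. Non-degeneracy: $\cspan\alpha^\cT(A) M^\cT\supset\cspan\bv^{(\infty)} T_{\alpha(A)M}\cT_E=M^\cT$, by non-degeneracy of $M$. Fullness: one has $\cspan\bra M^\cT,M^\cT\ket=\cspan\cT_E\cdot A\cdot\cT_E=\cT_E$, where $\bra M,M\ket_M=A$ by fullness of $M$ and the last equality uses non-degeneracy of $A\subset\cT_E$ (which holds because $a_\lambda\tS_\xi=\tS_{\phi(a_\lambda)\xi}\to\tS_\xi$ for any approximate unit $(a_\lambda)\subset A$, by non-degeneracy of $E$). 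Faithfulness: if $(x\otimes_A 1_M)|_{M^\cT}=0$, then the set $\{(\bv^{(\infty)} T_\xi x')(\zeta)\}$ is dense in $\bv^{(\infty)}(M\otimes_A\cF_E)=\cF_E\otimes_A M$, so $x\otimes_A 1_M$ vanishes on $\cF_E\otimes_A M$; the identity $\|x\zeta\otimes m\|^2=\bra m,\alpha(\bra x\zeta,x\zeta\ket)m\ket$ combined with faithfulness of $\alpha\colon A\to\cL(M)$ then yields $x\zeta=0$ for all $\zeta\in\cF_E$, hence $x=0$. Properness is the most delicate step: I would reduce via the ideal $\cK(M^\cT)\triangleleft\cL(M^\cT)$ and the relation $a_\lambda x\to x$ in $\cT_E$ to showing $\alpha^\cT(A)\subset\cK(M^\cT)$; under the isomorphism $M^\cT\cong M\otimes_A\cT_E$ this amounts to $\alpha(a)\otimes 1_{\cT_E}\in\cK(M\otimes_A\cT_E)$, which I would verify by approximating $\alpha(a)\in\cK(M)$ by finite sums of rank-one operators $\theta_{\mu,\nu}$ and using the norm bound $\|\theta_{\mu\otimes a_\lambda,\nu\otimes a_\lambda}-\theta_{\mu,\nu}\otimes 1_{\cT_E}\|\le\|\mu\|\,\|\nu(a_\lambda^2-1)\|_M\to 0$.
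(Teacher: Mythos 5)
Most of what you write tracks the paper's proof: the inner-product computation, the identification $M^\cT\cong M\otimes_A\cT_E$, the commutation identity for $\tS_\eta$ (which is exactly the paper's first display $T_E\bv^{(\infty)}T_M=\bv^{(\infty)}T_MT_E$ up to closed spans), and your arguments for non-degeneracy, fullness, faithfulness and properness of $M^\cT$ are all sound. The gap is the phrase ``taking adjoints handles $\tS_\eta^*$.'' Taking adjoints of your identity $(\tS_\eta\otimes_A 1_M)\bv^{(\infty)}T_\xi=\sum_j\bv^{(\infty)}T_{\xi_j}\tS_{\eta_j}$ yields an identity between maps $\cF_E\otimes_AM\to\cF_E$, i.e.\ a statement about $(M^\cT)^*$; it does not show that $(\tS_\eta^*\otimes_A 1_M)M^\cT\subset M^\cT$. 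Since $M^\cT$ is merely a closed, right-$\cT_E$-invariant subspace of $\cL(\cF_E,\cF_E\otimes_AM)$, invariance under left multiplication by $\tS_\eta\otimes 1_M$ does not imply invariance under left multiplication by its adjoint, and you cannot appeal to adjointability of the left action on the Hilbert module $M^\cT$ without first knowing that $\tS_\eta^*$ preserves it --- which is the point at issue. The tell-tale sign is that your verification of the left action never uses the hypothesis that $M$ is proper, yet that hypothesis is exactly what makes this step work: for $A=\bC$ and $E,M$ infinite-dimensional Hilbert spaces with a suitably twisted $\bv$, the relevant matrix component of $(\tS_\eta^*\otimes 1_M)\bv^{(\infty)}T_\xi$ is a non-compact operator of the form $K\otimes 1_E$ with $K\in\cL(E,M)\setminus\cK(E,M)$, whereas the corresponding components of elements of $\cspan\bv^{(\infty)}T_M\cT_E$ are compact perturbations of $\cK(E,M)\otimes 1_E$; so the invariance genuinely fails without properness and no properness-free argument can prove it.

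To close the gap you need the analogue of the paper's second display. Writing $(\tS_\eta^*\otimes 1_M)\bv^{(\infty)}T_\xi=(\tS_\eta^*\otimes 1_M)\bv^{(\infty)}T_\xi(1-P_0)$ (the zeroth summand of $\cF_E\otimes_AM$ is killed by $\tS_\eta^*\otimes 1_M$) and using the recursion for $\bv^{(n+1)}$, this operator becomes $\bv^{(\infty)}(K\otimes 1_{\cF_E})(1-P_0)$, where $K\in\cL(E,M)$ contracts $\bv(\xi\otimes\eta')$ against $\eta$ in the first leg. Now invoke $\alpha(A)\subset\cK(M)$ non-degenerately: $K=\lim_\lambda\alpha(e_\lambda)K\in\overline{\cK(M)\cL(E,M)}\subset\cK(E,M)=\cspan\{\theta_{\mu,e}\mid\mu\in M,\,e\in E\}$, and $(\theta_{\mu,e}\otimes 1_{\cF_E})(1-P_0)=T_\mu\tS_e^*$ lies in $T_M\cT_E$. (Equivalently, your adjoint identity does give $(M^\cT)^*(\tS_\eta^*\otimes 1_M)M^\cT\subset\cT_E$, and combining this with $\phi^{(\infty)}(e_\lambda)\otimes 1_M=\bv^{(\infty)}(\alpha(e_\lambda)\otimes 1_{\cF_E})\bv^{(\infty)*}\in\cspan M^\cT(M^\cT)^*$ --- again properness of $M$ --- recovers the invariance; but some such argument is indispensable and is the actual technical core of the lemma.)
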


\begin{proof}
	It is easy to see that $\xi^*\eta\in\cT_E$ for any $\xi,\eta\in M^\cT\subset \cL(\cF_E,\cF_E\otimes_AM)$, and that the Hilbert $\cT_E$-module $M^\cT$ is unitary equivalent to the interior tensor product $M\otimes_A\cT_E$ by 
	\begin{align*}
		&
		M\otimes_A\cT_E \ni a\otimes x\mapsto \bv^{(\infty)} T_a x \in M^\cT. 
	\end{align*}
	In particular, $M^\cT$ is a full Hilbert $\cT_E$-module since $M$ and $M\otimes_A\cT_E$ are full Hilbert $A$- and $\cT_E$-modules, respectively. 
	
	We are going to show that the left $\cT_E$-action on $M^\cT$ is well-defined and non-degenerate. 
	First, $A\subset \cT_E$ acts on $M^\cT$ non-degenerately since it acts on $M$ non-degenerately and $\bv^{(\infty)}$ is left $A$-linear. 
	We have the following equality as subsets of $\cL(\cF_E,\cF_E\otimes_A M)$ up to closed linear spans, 
	\begin{align*}
		&
		T_{E}\bv^{(\infty)} T_{M} 
		=
		(1_{E}\otimes_A\bv^{(\infty)}) T_{E\otimes M} 
		=
		(1_{E}\otimes_A\bv^{(\infty)}) T_{\bv (M\otimes E)} 
		=
		\bv^{(\infty)} T_{M} T_{E} . 
	\end{align*}
	By using the projection $P_0\in\cL(\cF_E)$ from \cref{notn:simpleCPalg} (4), we see the following inclusion as subsets of $\cL(\cF_E,\cF_E\otimes_A M)$ up to closed linear spans, 
	\begin{align*}
		&
		T_{E}^* \bv^{(\infty)} T_{M} 
		=
		T_{E}^* \bv^{(\infty)} T_{M} (1-P_0) 
		=
		\bv^{(\infty)} T_E^* (\bv\otimes1_\cF) T_{M} (1-P_0) 
		\\\overset{\bigstar}{\subset}{}& 
		\bv^{(\infty)} T_{M} T_{M}^* T_E^* (\bv\otimes1_\cF) T_{M} (1-P_0) 
		=
		\bv^{(\infty)} T_{M} T_{\bv^*(E\otimes M)}^* T_{M} (1-P_0) 
		\\={}&
		\bv^{(\infty)} T_{M} T_{E}^* T_{M}^* T_{M} (1-P_0) 
		=
		\bv^{(\infty)} T_{M} T_E^* (1-P_0) 
		=
		\bv^{(\infty)} T_{M} T_E^* , 
	\end{align*}
	where for $\overset{\bigstar}{\subset}$ we have used the non-degenerate inclusion $\alpha(A)\subset\cK(M)$. 
	It follows $\cT_E M^\cT \subset M^\cT$. 
	Finally, the faithfulness of $M^\cT$ follows from the fact that $X\otimes_A1_M\neq 0$ for $0\neq X\in\cL(E)$ by the faithfulness of $M$, 
	and the properness of $M^\cT$ holds by combining the properness of $M$, the $(A,A)$-bilinearity of $\bv_\pi^{(\infty)}$, and the non-degeneracy of $A\subset\cT_E$. 
\end{proof}

We want to apply this construction to $(M,\bv)=(\alpha(\pi),\bv_\pi)$ for a $\cC$-action $(\alpha,\fu)$ on $A$ and a $\cC$-$(A,A)$-correspondence $(E,\phi,\bv)$. 

\begin{thm}\label{thm:equivTPalg}
	Let $\cC$ be a unitary tensor category, $(A,\alpha,\fu)$ be a $\cC$-C*-algebra, and $(E,\phi,\bv)$ be a non-degenerate faithful $\cC$-$(A,A)$-correspondence. 
	Then $\cT_E$ has a well-defined $\cC$-action such that $\cF_E$ is a $\cC$-$(\cT_E,A)$-correspondence with a canonical family of unitaries denoted by $\bv^\cT$. 
	The canonical inclusion gives a non-degenerate injective $\cC$-$*$-homomorphism $(\iota,\bu)\colon A\to \cT_E$. 
\end{thm}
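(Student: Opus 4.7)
The plan is to apply \cref{lem:bimodFock} objectwise and upgrade the resulting family of $(\cT_E,\cT_E)$-correspondences to a unitary tensor functor. For each $\pi\in\cC$, set
\[ \alpha^\cT(\pi):=\alpha(\pi)^\cT, \]
using the lemma with $(M,\bv)=(\alpha(\pi),\bv_\pi)$, which is applicable because $\alpha(\pi)$ is non-degenerate, faithful, full, and proper when $\pi\neq 0$ (the case $\pi=0$ is trivial). The lemma yields a right $\cT_E$-module identification $\alpha^\cT(\pi)\cong \alpha(\pi)\otimes_A\cT_E$ sending $\bv_\pi^{(\infty)} T_\xi x$ to $\xi\otimes x$. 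On a morphism $f\in\Hom_\cC(\pi,\varpi)$, define $\alpha^\cT(f)$ as the left multiplication by $1_{\cF_E}\otimes_A\alpha(f)$ on elements of $\cL(\cF_E,\cF_E\otimes_A\alpha(\pi))$; the naturality of $\bv$ in $\pi$ gives
\[ \alpha^\cT(f)(\bv_\pi^{(\infty)}T_\xi x)=\bv_\varpi^{(\infty)}T_{\alpha(f)\xi}x, \]
so $\alpha^\cT(f)$ restricts to an element of $\cL(\alpha^\cT(\pi),\alpha^\cT(\varpi))$ and is $\ast$-preserving, yielding a $\ast$-functor $\alpha^\cT\colon\cC\to\Cor(\cT_E)$.

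To promote $\alpha^\cT$ to a $\cC$-action, define the tensor unitaries
\[ \fu^\cT_{\pi,\varpi}\colon\alpha^\cT(\pi)\otimes_{\cT_E}\alpha^\cT(\varpi)\to\alpha^\cT(\pi\otimes\varpi),\qquad Z_1\otimes_{\cT_E}Z_2\mapsto(1_{\cF_E}\otimes_A\fu_{\pi,\varpi})(Z_1\otimes_A 1_{\alpha(\varpi)})Z_2. \]
A computation using the recursive identity $\bv^{(n+1)}_\pi=(1_E\otimes\bv^{(n)}_\pi)(\bv_\pi\otimes 1_{E^{\otimes n}})$ together with the coherence axiom of $\bv$ as a $\cC$-correspondence structure on $E$ shows that $\fu^\cT_{\pi,\varpi}$ lands in $\alpha^\cT(\pi\otimes\varpi)$ and is a bilinear unitary, and the associator diagram for $\fu^\cT$ then reduces to the corresponding one for $\fu$. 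Similarly, $\cF_E$ becomes a $(\cT_E,A)$-correspondence via $\cT_E\subset\cL(\cF_E)$, and the canonical evaluation
\[ \bv^\cT_\pi\colon\alpha^\cT(\pi)\otimes_{\cT_E}\cF_E\to\cF_E\otimes_A\alpha(\pi),\qquad Z\otimes\eta\mapsto Z(\eta), \]
is a $\cT_E$-$A$-bilinear unitary sending $\bv^{(\infty)}_\pi T_\xi x\otimes\eta$ to $\bv^{(\infty)}_\pi(\xi\otimes x\eta)$, and its coherence against $\fu^\cT$ and $\fu$ is built into these definitions. Finally, the canonical inclusion $\iota\colon A\hookrightarrow\cT_E$ is non-degenerate and injective, and the family $\bu_\pi\colon\alpha(\pi)\otimes_A\cT_E\xto{\sim}\alpha^\cT(\pi)$, $\xi\otimes x\mapsto\bv^{(\infty)}_\pi T_\xi x$, provided by \cref{lem:bimodFock}, upgrades $\iota$ to a $\cC$-$\ast$-homomorphism.

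The main obstacle is the verification of the coherence axioms, especially the associator diagram for $\fu^\cT$ and the compatibility of $\bv^\cT$ with tensor products. These reduce to careful bookkeeping through the interior tensor products: moving elements of $\cT_E$ past the $\bv^{(\infty)}_\pi$'s using the inductive formula above and the $(A,A)$-bilinearity of each $\bv_\pi$, then invoking the coherence axioms of $\bv$ and of $\fu$. The individual verifications are elementary, but one must track how the twisted left $\cT_E$-action on $\alpha^\cT(\pi)$ behaves under the right $\cT_E$-module identification with $\alpha(\pi)\otimes_A\cT_E$.
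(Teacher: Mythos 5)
Your proposal is correct and follows essentially the same route as the paper: define $\alpha^\cT(\pi):=\alpha(\pi)^\cT$ via \cref{lem:bimodFock}, take $\fu^\cT_{\pi,\varpi}$ to be $(1_{\cF_E}\otimes_A\fu_{\pi,\varpi})(Z_1\otimes_A 1_{\alpha(\varpi)})Z_2$ (the paper's $\fu^\cF_{\pi,\varpi}\circ\fs_{\pi,\varpi}$), and realize $\bv^\cT_\pi$ and $\bu_\pi$ by the same formulas. The only part you defer — the computation showing $\fu^\cT_{\pi,\varpi}$ maps onto $\alpha^\cT(\pi\otimes\varpi)$, which uses the coherence of $\bv$ against $\fu$, the $(A,A)$-bilinearity of $\bv^{(\infty)}_\varpi$, and the non-degeneracy of $\alpha^\cT(\varpi)$ — is exactly the bookkeeping the paper carries out, and your plan for it is the right one.
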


\begin{proof}
	Note that $\cF_E$ is a well-defined $\cC$-$(A,A)$-correspondence with $\bv_\pi^{(\infty)}\colon \alpha(\pi)\otimes_A \cF_E\to \cF_E\otimes_A \alpha(\pi)$ as the $\ell^2$-direct sum of $\cC$-$(A,A)$-correspondences $E^{\otimes_A n}$. 
	We write $(\alpha^\cF,\fu^\cF)$ for the $\cC$-action on $\cK(\cF_E)$ defined by $\alpha^\cF(\pi):=\cK(\cF_E,\cF_E\otimes_A\alpha(\pi))$ and 
	$\fu^\cF_{\pi,\varpi}(x\otimes_{\cK(\cF_E)} y) := (1_{\cF_E}\otimes_A\fu_{\pi,\varpi})(x\otimes_A1_{\alpha(\varpi)})y$ 
	for $\pi,\varpi\in\cC$, $x\in \cK(\cF_E,\cF_E\otimes_A\alpha(\pi))$, and $y\in\cK(\cF_E,\cF_E\otimes_A\alpha(\varpi))$.
	
	For $\pi\in\cC\setminus\{0\}$, we let $\alpha^\cT(\pi):=\alpha(\pi)^\cT$ be the non-degenerate $(\cT_E,\cT_E)$-correspondence defined by \cref{lem:bimodFock}. 
	We put $\alpha^\cT(0):=0$. 
	For $\pi,\varpi\in\cC$, we observe that there is an obvious isometric embedding of the Banach $(\cT_E,\cT_E)$-bimodule preserving inner products, 
	\begin{align*}
		\fs_{\pi,\varpi}\colon \alpha^\cT(\pi)\otimes_{\cT_E}\alpha^\cT(\varpi)
		\to{}&
		\cL(\cF_E,\cF_E\otimes_A \alpha(\pi)\otimes_A \alpha(\varpi)) , 
		\\
		( x \otimes_{\cT_E} y )
		\mapsto{}&
		( x \otimes_A 1_{\alpha(\varpi)}) y 
	\end{align*}
	whose image is $\cspan (\alpha^\cT(\pi)\otimes1_{\alpha(\varpi)})\alpha^\cT(\varpi)$. 
	Using the $(A,A)$-bilinearity of $\bv^{(\infty)}_\varpi$, we check, up to closed linear spans, 
	\begin{align*}
		&
		\fu^\cF_{\pi,\varpi} (\alpha^\cT(\pi)\otimes_A 1_{\alpha(\varpi)})\alpha^\cT(\varpi) 
		\\={}&
		\fu^\cF_{\pi,\varpi} \bigl( (\bv^{(\infty)}_\pi T_{\alpha(\pi)} \cT_E) \otimes_A 1_{\alpha(\varpi)} \bigr) \alpha^\cT(\varpi)
		\\={}&
		\bv^{(\infty)}_{\pi\otimes\varpi} (\fu_{\pi,\varpi}\otimes1_{\cF_E}) (1_{\alpha(\pi)}\otimes\bv^{(\infty) *}_\varpi) (T_{\alpha(\pi)}\otimes_A1_{\cF_E\otimes\alpha(\varpi)}) (\cT_E\otimes_A 1_{\alpha(\varpi)}) \alpha^\cT(\varpi)
		\\={}&
		\bv^{(\infty)}_{\pi\otimes\varpi} (\fu_{\pi,\varpi}\otimes1_{\cF_E}) (T_{\alpha(\pi)}\otimes_A1_{\alpha(\varpi)\otimes\cF_E}) \bv^{(\infty) *}_\varpi (\cT_E\otimes_A1_{\alpha(\varpi)}) \alpha^\cT(\varpi)
		\\={}&
		\bv^{(\infty)}_{\pi\otimes\varpi} (\fu_{\pi,\varpi}\otimes1_{\cF_E}) (T_{\alpha(\pi)}\otimes_A T_{\alpha(\varpi)}\otimes_A \cT_E) ,
	\end{align*}
	where for the last equality we have used the non-degeneracy of $\alpha^\cT(\varpi)$. 
	It follows that $\fu^\cF_{\pi,\varpi} \fs_{\pi,\varpi} (\alpha^\cT(\pi)\otimes_{\cT_E}\alpha^\cT(\varpi)) = \alpha^\cT(\pi\otimes\varpi)$. 
	We can check that the family of $(\cT_E,\cT_E)$-bilinear unitaries $\fu^\cT:=\fu^\cF\fs$ satisfies the pentagonal axiom. We obtain a $\cC$-action $(\alpha^\cT,\fu^\cT)$ on $\cT_E$, which, by construction, makes $\cF_E$ a $\cC$-$(\cT_E,A)$-correspondence with the canonical family of unitaries 
	\begin{align*}
		\bv^{\cT}_\pi \colon \alpha^\cT(\pi)\otimes_{\cT_E}\cF_E \ni 
		( \bv^{(\infty)}_\pi T_a x )\otimes_{\cT_E} \xi \mapsto{}& \bv^{(\infty)}_\pi ( a \otimes_{A} x\xi )
		\in \cF_E\otimes_A\alpha(\pi) 
	\end{align*}
	for $\xi\in\cF_E$, $x\in \cT_E$, $a\in\alpha(\pi)$. 
	Finally, it is routine to check that the inclusion $\iota\colon A\subset\cT_E$ is $\cC$-equivariant with $\bu_\pi\colon \alpha(\pi)\otimes_A\cT_E\ni a\otimes x \mapsto \bv^{(\infty)}_\pi(T_a x)\in\alpha^\cT(\pi) = A\otimes_A\alpha^{\cT}(\pi)$. 
\end{proof}

\begin{prop}\label{prop:KKCequiv}
	In the setting of \cref{thm:equivTPalg}, if $A$ and $E$ are separable and $\cC$ is countable, then $\cT_E$ is $\KK^\cC$-equivalent to $A$ by $(\iota,\bu)$. 
\end{prop}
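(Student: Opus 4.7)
The statement is the $\cC$-equivariant extension of Pimsner's theorem \cite{Pimsner:class} that the canonical embedding $A \hookrightarrow \cT_E$ is a $\KK$-equivalence. My plan is to exhibit an explicit $\KK^\cC$-inverse $\tau \in \KK^\cC(\cT_E, A)$ to the class of $(\iota, \bu)$ and to verify that both Kasparov products give identities.

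The natural candidate for $\tau$ starts from the Fock module $\cF_E$, which by \cref{thm:equivTPalg} is already a $\cC$-$(\cT_E, A)$-correspondence with structure unitaries $\bv^{\cT}_\pi$ built from the diagonal family $\bigoplus_{n \geq 0} \bv^{(n)}_\pi$. Following the non-equivariant argument, I would equip the graded double $\cF_E \oplus \cF_E$ with an odd self-adjoint Fredholm operator $F$ constructed from the Pimsner shift $\tS_{(-)}$, producing a Kasparov $(\cT_E, A)$-cycle. The key point is that $F$ can be chosen $\cC$-equivariant: the structure unitaries act diagonally with respect to the $\bN$-grading of $\cF_E$, and the inductive formula $\bv^{(n+1)}_\pi = (1_E \otimes_A \bv^{(n)}_\pi)(\bv_\pi \otimes_A 1)$ is precisely what makes the shift $E^{\otimes_A n} \hookrightarrow E^{\otimes_A n+1}$ commute with $\bv^{(\infty)}_\pi$ up to a degree shift. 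This promotes the cycle to a class $\tau \in \KK^\cC(\cT_E, A)$.

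The two products are computed as in the classical proof. The composition $[(\iota, \bu)] \otimes_{\cT_E} \tau \in \KK^\cC(A, A)$ amounts to restricting $F$ to the $E^{\otimes_A 0} = A$ summand of $\cF_E$ and is homotopic to the degenerate cycle representing $1_A$; the homotopy preserves the $\bN$-grading and is therefore $\cC$-equivariant. Conversely, $\tau \otimes_A [(\iota, \bu)] \in \KK^\cC(\cT_E, \cT_E)$ is identified with $1_{\cT_E}$ via a rotation-type operator homotopy on $\cF_E \otimes_A \cT_E$ built from the shift, which again is $\cC$-equivariant by the same diagonal principle. Separability of $A$ and $E$ ensures that $\cF_E$ is countably generated, and countability of $\cC$ keeps the entire argument inside the separable framework of $\KK^\cC$ developed in \cite{Arano-Kitamura-Kubota:tensorKK}.

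The main technical obstacle is therefore the $\cC$-equivariance of the Fredholm operator $F$ and of all the operator homotopies appearing in Pimsner's argument. Each of these is manufactured either from the multiplication in $\cT_E$, which is $\cC$-equivariant by \cref{thm:equivTPalg}, or from the Pimsner shift on $\cF_E$, whose compatibility with the family $\bv^{(\infty)}_\pi$ is wired into the inductive definition of $\bv^{(n)}_\pi$. Hence no genuinely new difficulty arises beyond a careful bookkeeping of equivariance throughout the classical proof.
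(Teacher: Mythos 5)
Your overall strategy coincides with the paper's: the $\KK^\cC$-inverse is the Fock module, packaged as the $\cC$-Kasparov $(\cT_E,A)$-bimodule $(\cF_E^{\pm},\bv^{\cT\pm},F)$ with $F$ the off-diagonal shift (the paper uses $\cF_E\oplus(\cF_E\otimes_A E)$ rather than the graded double of $\cF_E$, but this is cosmetic), and the first product $[(\iota,\bu)]\mathbin{\widehat\otimes}\tau$ does indeed split off a degenerate piece leaving the summand $E^{\otimes_A 0}=A$, exactly as you say.

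The gap is in your treatment of the second product $\tau\mathbin{\widehat\otimes}[(\iota,\bu)]=1_{\cT_E}$. You assert that the rotation-type homotopy is ``$\cC$-equivariant by the same diagonal principle,'' but the homotopy needed here does \emph{not} preserve the $\bN$-grading of $\cF_E\otimes_\iota\cT_E$: in the paper it is the family $\Phi_t$ with $\Phi_t(\tS_\xi)$ containing both a degree-preserving term $\sqrt{t}\,\tS_\xi$ on the $\cT_E$-summand and a degree-raising term $\sqrt{1-t}\,(T_\xi P_0)\otimes 1$, so the diagonal structure of $\bv^{(\infty)}_\pi$ gives you nothing for free. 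The actual verification that the intertwining unitaries $\bV_{t,\pi}$ are left $\cT_E$-linear for all $t$ is the real content of the proposition: it requires approximating $\bv_\pi^*T_\xi$ in $\cK(\alpha(\pi),\alpha(\pi)\otimes_A E)$ by finite sums $\sum_i T_{a_i}T_{\xi_i}T_{b_i}^*$, which uses the properness and non-degeneracy of the bimodules $\alpha(\pi)$ in an essential way — a hypothesis your sketch never invokes. As written, the claim that ``no genuinely new difficulty arises beyond bookkeeping'' skips precisely the step where the proof could fail, so you need to supply this equivariance argument (or an equivalent one) explicitly.
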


We refer to \cite{Arano-Kitamura-Kubota:tensorKK} also for the definition and properties of $\cC$-equivariant $\KK$-theory. 

\begin{proof}
	We consider the $\cC$-$(\cT_E,A)$-correspondences $(\cF_E^+,\bv^{\cT+}) := (\cF_E,\id_{\cT_E},\bv^{\cT})$, $(\cF_E^-,\bv^{\cT-}) := \cF_E^+ \otimes_A (E,\phi,\bv)$ and the triple 
	$(\cF^{\pm}_E,\bv^{\cT\pm},F)$, 
	where we wrote $F:=\left( \begin{smallmatrix}0&P^*\\P&0\end{smallmatrix} \right)$ using the projection onto the direct summand $P\colon\cF_E\to \cF^-_E=\bigoplus_{n=1}^{\infty}E^{\otimes_A n}$, that is, $P:=1_{\cF_E}-P_0$ in the convention of \cref{notn:simpleCPalg} (4). 
	Then using $P_0\cT_E,\cT_EP_0\subset\cK(\cF_E)$, we can see that $(\cF^{\pm}_E,\bv^{\cT\pm},\left( \begin{smallmatrix}0&P^*\\P&0\end{smallmatrix} \right))$ is a $\cC$-Kasparov $(\cT_E,A)$-bimodule. 
	We will check that its $\KK^\cC$-inverse can be given by the inclusion $(\iota,\bu)\colon A\subset \cT_E$. 
	
	Since $F$ itself is an $F$-connection for $A\otimes_\iota \cF^{\pm}_E\cong \cF^{\pm}_E$, we see 
	\begin{align*}
		&
		(\cT_E,\iota,\bu,0)\mathbin{\wh{\otimes}}_{\cT_E}(\cF^{\pm}_E,\bv^{\cT\pm},F) \simeq_\cC (\cF^{\pm}_E, \bv^{(\infty)\oplus2}, F) 
		\\={}& 
		(\cF^{-\oplus2}_E, \bv^{(\infty)\oplus2}, \left( \begin{smallmatrix} 0&1\\1&0 \end{smallmatrix} \right)) \oplus (\id_A,0) \simeq_\cC (\id_A,0) , 
	\end{align*}
	where $\wh{\otimes}$ indicates the $\cC$-equivariant Kasparov product and $\simeq_\cC$ means $\cC$-equivariant homotopy of $\cC$-Kasparov bimodules. 
	For $t\in[0,1]$, the universality of $\cT_E$ induces the well-defined $*$-homomorphism 
	$\Phi_t\colon \cT_E \to \cL(\cF_E\otimes_\iota\cT_E)\cong \cL(\cT_E\oplus \cF^-_E\otimes_\iota\cT_E)$ such that $\Phi_t(a):=a\otimes1$ for $a\in A$ and that 
	\begin{align*}
		&
		\Phi_t(\tS_\xi) 
		:= 
		\left(\begin{array}{cc}
			\sqrt{t} \tS_\xi & 0 \\ \sqrt{1-t} (T_\xi P_0) \otimes_\iota 1_{\cT_E} & (T_\xi P) \otimes_\iota 1_{\cT_E}
		\end{array}\right)
		\in \cL\bigl(\cT_E\oplus (\cF_E^-\otimes_\iota\cT_E)\bigr)
	\end{align*}
	for $\xi\in E$. 
	For $\pi\in\cC$, we define the $(A,\cT_E)$-bilinear unitary 
	\begin{align*}
		\bV_{t,\pi}\colon 
		\alpha^\cT(\pi) \otimes_{\Phi_t} (\cF_E\otimes_\iota\cT_E) 
		\to{}&
		\cF_E\otimes_A \alpha^\cT(\pi), 
		\\
		( \bv^{(\infty)}_\pi(a\otimes_A1_{\cT_E}) )\otimes_{\Phi_t} (\xi\otimes_\iota x)\mapsto{}&
		(\bv^{(\infty)}_\pi)^{(2)} (a\otimes_A\xi\otimes_\iota x) 
	\end{align*}
	for $a\in\alpha(\pi)$, $\xi\in\cF_E$, $x\in\cT_E$, where we recall $(\bv^{(\infty)}_\pi)^{(2)}\in\cL(\alpha(\pi)\otimes_A\cF_E^{\otimes_A2},\cF_E^{\otimes_A2}\otimes_A\alpha(\pi))$. 
	
	To see that the unitary $\bV_t$ is left $\cT_E$-linear, we only need to check that $\bV_t$ commutes with the left actions of $\tS_\xi$ for all $\xi\in E$. 
	For any $\e>0$, there are finitely many $a_1,\cdots,a_n,b_1,\cdots,b_n\in\alpha(\pi)$ and $\xi_1,\cdots,\xi_n\in E$ such that $\|\bv_\pi^*T_{\xi} - \sum_{i=1}^{n} T_{a_i} T_{\xi_{i}} T_{b_i}^* \|<\e$ inside $\cK(\alpha(\pi),\alpha(\pi)\otimes_A E)$. 
	Then $\|T_\xi\bv_\pi^{(\infty)} - \bv_\pi^{(\infty)}\sum_{i=1}^{n} T_{a_i} T_{\xi_{i}} T_{b_i}^* \|<\e$ inside $\cL(\alpha(\pi)\otimes_A\cF_E)$. 
	For contractive elements $c\in\alpha(\pi)$, $x\in\cT_E$, $\eta_0\in A$, and $\eta_+\in\cF_E^-$, we put $\eta:=\eta_0+\eta_+\in\cF_E$ and see 
	\begin{align*}
		&
		\Bigl\| 
		\bigl( (\Phi_t\otimes_{\cT_E}1_{\alpha^\cT(\pi)}) ( \tS_\xi ) \bigr) 
		\bV_{t,\pi} \bigl( (\bv_\pi^{(\infty)} c\otimes_A1_{\cT_E}) \otimes_{\Phi_t} (\eta\otimes_A x) \bigr) 
		\\&-
		\bV_{t,\pi} \Bigl( 
		\bigl( (T_\xi\otimes_A1_{\alpha(\pi)}) \bv_\pi^{(\infty)} ( c\otimes_A1_{\cT_E}) \bigr) \otimes_{\Phi_t} (\eta\otimes_A x) 
		\Bigr) \Bigr\| 
		\\\leq{}& 
		\Bigl\| 
		\bigl( (\Phi_t\otimes_{\cT_E}1_{\alpha^\cT(\pi)}) ( \tS_\xi ) \bigr) 
		(\bv_\pi^{(\infty)})^{(2)} (c\otimes_A \eta\otimes_A x) 
		\\&-
		\bV_{t,\pi} \Bigl( \sum_{i=1}^{n} 
		\bigl( \bv_\pi^{(\infty)} (T_{a_i} \otimes_A \tS_{\xi_i} \bra b_i, c\ket) \bigr) \otimes_{\Phi_t} (\eta\otimes_A x) 
		\Bigr) \Bigr\| +2\e 
		\\\leq{}& 
		\Bigl\| 
		\xi\otimes_A \bigl( (\bv_\pi^{(\infty)})^{(2)} (c\otimes_A (\eta_+ +\sqrt{1-t}\eta_0)\otimes_A x) \bigr) 
		+ \sqrt{t} 
		(T_\xi\otimes_A1_{\alpha(\pi)})\bv_\pi^{(\infty)} (c \otimes_A (\eta_0 x) ) 
		\\&-
		\sum_{i=1}^{n} 
		(\bv_\pi^{(\infty)})^{(2)} \bigl( a_i  \otimes_A \Phi_t(\tS_{\xi_i})( \bra b_i, c\ket \eta\otimes_A x ) \bigr) 
		\Bigr\| +2\e 
		\\\leq{}&5\e. 
	\end{align*}
	Since $\e$ is arbitrary, the $(\cT_E,\cT_E)$-bilinearity of $\bV_{t,\pi}$ follows. 
	
	With the aid of $(\Phi_t(x)(P_0\otimes1))_{t\in[0,1]},((P_0\otimes1)\Phi_t(x))_{t\in[0,1]}\in C[0,1]\otimes\cK(\cF_E\otimes_\iota\cT_E)$ for all $x\in\cT_E$, we can check that $\big( (\cF_E^{\pm}\otimes_\iota\cT_E, \Phi_t \oplus P\Phi_{1}P^*, \bV_t\oplus\bV_1, F\otimes1_{\cT_E}) \big)_{t\in[0,1]}$ is a $\cC$-Kasparov $(\cT_E,C[0,1]\otimes\cT_E)$-bimodule. 
	This shows the second $\cC$-equivariant homotopy in the following, 
	\begin{align*}
		&
		(\cF^{\pm}_E,\bv^{\cT\pm},F)\mathbin{\widehat{\otimes}}_A (\cT_E,\iota,\bu,0) 
		\simeq_\cC
		(\cF_E^{\pm}\otimes_\iota\cT_E, \Phi_0 \oplus P\Phi_{1}P^*, \bV_0\oplus\bV_1, F\otimes1_{\cT_E}) 
		\\\simeq_\cC{}&
		(\cF_E^{\pm}\otimes_\iota\cT_E, \Phi_1 \oplus P\Phi_1P^*, \bV_1\oplus\bV_1, F\otimes1_{\cT_E}) 
		\simeq_\cC \id_{\cT_E}. 
	\qedhere\end{align*}
\end{proof}

\subsection{Regular equivariant correspondences}\label{ssec:regularcorr}

To apply the Pimsner construction to $\cC$-equivariant settings, we enhance the construction of the regular unitary half-braiding \cite[Theorem 3.4]{Neshveyev-Yamashita:Drinfeld}, which originally produces an object in the Drinfeld centre of the ind-completion of a unitary tensor category, into the situation with coefficients of C*-correspondences. 
It should be noted that the same construction is also utilized in \cite[Section 4.1]{Henriques-Penneys2017:bicommutant}. 

\begin{prop}\label{prop:regularcorr}
	For a $\cC$-C*-algebra and a non-degenerate $(A,A)$-correspondence $(E,\phi)$, 
	there is a canonical $\cC$-equivariant structure on the $(A,A)$-correspondence $\ell^2(\cC;E):=\bigoplus_{\pi\in\Irr\cC}\alpha(\pi)\otimes_A E\otimes_A \alpha(\overline{\pi})$. 
\end{prop}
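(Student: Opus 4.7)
Proof proposal: The plan is to extend the regular unitary half-braiding construction of \cite{Neshveyev-Yamashita:Drinfeld} (the case $E=A$ with trivial action) to arbitrary coefficient correspondence~$E$. Since $E$ sits sandwiched between $\alpha(\pi)$ and $\alpha(\overline\pi)$ in each summand of $\ell^2(\cC;E)$, it plays a purely passive role, and the entire $\cC$-equivariant structure is built from the $\cC$-action on $A$; the role of $E$ is only to occupy the ``middle slot'' which stays fixed under the braiding.

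First I would fix, once and for all, standard solutions $(R_\pi,\overline R_\pi)$ of the conjugate equation for every $\pi\in\Irr\cC$. For $\varpi\in\cC$ and $\pi,\tau\in\Irr\cC$, fix an orthonormal basis $\{v_i\}_i\subset\Hom_\cC(\tau,\varpi\otimes\pi)$ with respect to $w^*u=\langle u,w\rangle\id_\tau$, and form its Frobenius dual $\{v_i^\vee\}_i\subset\Hom_\cC(\overline\pi,\overline\tau\otimes\varpi)$ by
\[
v_i^\vee:=(\id_{\overline\tau}\otimes\id_\varpi\otimes R_\pi^*)(\id_{\overline\tau}\otimes v_i\otimes\id_{\overline\pi})(\overline R_{\tau}\otimes\id_{\overline\pi}),
\]
which is orthonormal up to the overall scalar $d_\tau/d_\pi$, where $d_\pi:=\dim_\cC\pi$. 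Define the $(\tau,\pi)$-component of $\bv_\varpi$ as
\[
\alpha(\varpi)\otimes_A\alpha(\pi)\otimes_A E\otimes_A\alpha(\overline\pi)\longrightarrow\alpha(\tau)\otimes_A E\otimes_A\alpha(\overline\tau)\otimes_A\alpha(\varpi),
\]
given by
\[
\bigl(\tfrac{d_\pi}{d_\tau}\bigr)^{1/2}\sum_i\bigl(\alpha(v_i^*)\circ\fu_{\varpi,\pi}\bigr)\otimes_A 1_E\otimes_A\bigl(\fu_{\overline\tau,\varpi}^{-1}\circ\alpha(v_i^\vee)\bigr),
\]
and then take $\bv_\varpi$ to be the orthogonal direct sum over $(\tau,\pi)\in\Irr\cC\times\Irr\cC$.

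Next I would verify, in order: (a) independence of the basis $\{v_i\}$, because the sum $\sum_i v_i\otimes v_i^\vee$ is the canonical element of $\Hom_\cC(\tau,\varpi\otimes\pi)\otimes\Hom_\cC(\overline\pi,\overline\tau\otimes\varpi)$; (b) $(A,A)$-bilinearity, inherited from the bilinearity of all building blocks; (c) unitarity, which reduces to the completeness relations $\sum_{\tau,i}v_iv_i^*=\id_{\varpi\otimes\pi}$ and its Frobenius analogue $\sum_{\tau,i}(d_\tau/d_\pi)v_i^\vee(v_i^\vee)^*=\id_{\overline\pi\otimes\varpi}$, which together with the normalization $\sqrt{d_\pi/d_\tau}$ yield $\bv_\varpi^*\bv_\varpi=1=\bv_\varpi\bv_\varpi^*$; and (d) naturality in $\varpi$, which follows from the naturality of Frobenius reciprocity.

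The main obstacle is the pentagonal coherence
\[
\bv_{\varpi_1\otimes\varpi_2}\bigl(\fu_{\varpi_1,\varpi_2}\otimes 1\bigr)=\bigl(1\otimes\fu_{\varpi_1,\varpi_2}\bigr)\bigl(\bv_{\varpi_1}\otimes 1_{\alpha(\varpi_2)}\bigr)\bigl(1_{\alpha(\varpi_1)}\otimes\bv_{\varpi_2}\bigr)
\]
for $\varpi_1,\varpi_2\in\cC$. This reduces to a combinatorial statement: given orthonormal bases $\{u_j\}\subset\Hom_\cC(\sigma,\varpi_2\otimes\pi)$ and $\{w_k\}\subset\Hom_\cC(\tau,\varpi_1\otimes\sigma)$ for $\sigma$ ranging over $\Irr\cC$, the composites $(\id_{\varpi_1}\otimes u_j)\circ w_k$ form an orthonormal basis of $\bigoplus_\sigma\Hom_\cC(\tau,\varpi_1\otimes\varpi_2\otimes\pi)$, and under Frobenius reciprocity their duals agree with the iterated duals built from $\{u_j^\vee\}$ and $\{w_k^\vee\}$. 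The normalization factor splits as $(d_\pi/d_\tau)^{1/2}=(d_\pi/d_\sigma)^{1/2}(d_\sigma/d_\tau)^{1/2}$, and the matching of direct versus iterated Frobenius duals is precisely the core computation of \cite[Theorem 3.4]{Neshveyev-Yamashita:Drinfeld}, relying on the multiplicativity of standard solutions under tensor products. This carries over verbatim to the present situation, because the factor $E$ commutes with every reorganizing map by virtue of its $(A,A)$-bilinearity.
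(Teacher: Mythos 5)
Your proposal is correct and follows essentially the same route as the paper: the same component maps $\sum_i(\alpha(v_i^*)\fu_{\varpi,\pi})\otimes 1_E\otimes(\fu_{\overline\tau,\varpi}^*\alpha(v_i^\vee))$ built from orthonormal bases of hom-spaces and their Frobenius duals normalized by $\sqrt{d_\pi/d_\tau}$, with unitarity, naturality, and the cocycle relation reduced to basis-independence and the core computation of \cite[Theorem 3.4]{Neshveyev-Yamashita:Drinfeld}. The only discrepancy is a harmless swap of the roles of $R_\pi$ and $\overline R_\pi$ relative to the paper's stated convention $R_X\colon\mathbbm 1\to\overline X\otimes X$.
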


\begin{proof}
	Via unitary monoidal equivalence, we may assume that $\cC$ is strict for simplicity. 
	We fix the standard solution $R_\pi\colon\mathbbm{1}_\cC\to\overline{\pi}\otimes\pi$, $\overline{R}_\pi\colon\mathbbm{1}_\cC\to\pi\otimes\overline{\pi}$ of the conjugate equation for each $\pi\in\Irr\cC$. 
	
	For $\tau\in\cC$ and $\pi,\varpi\in\Irr\cC$, 
	we regard $\Hom_\cC(\varpi,\tau\otimes\pi)$ as a Hilbert space with the inner product $\bra f,g\ket:=f^*g\in \Hom_\cC(\varpi,\varpi)\cong \bC$ for $f,g\colon\varpi\to\tau\otimes\pi$ 
	and, for its orthonormal basis $(s_i)_i$, let 
	\begin{align*}
		\overline{s_i}:=\frac{\|R_\pi\|}{\|R_\varpi\|}(\id_{\overline{\varpi}\otimes \tau}\otimes \overline{R}_\pi^*) (\id_{\overline{\varpi}}\otimes s_i\otimes \id_{\overline{\pi}}) (R_\varpi\otimes\id_{\overline{\pi}}) , 
	\end{align*}
	which form an orthonormal basis $(\overline{s_i})_i$ of $\Hom_\cC(\overline{\pi},\overline{\varpi}\otimes\tau)$. 
	We define 
	\begin{align*}
		\bU_{\tau,\pi,\varpi} 
		&:= 
		\sum_{i} (\alpha(s_i^*)\fu_{\tau,\pi}) \otimes_A 1_E \otimes_A (\fu_{\overline{\varpi},\tau}^*\alpha( \overline{s_i} ))
		\\&
		\colon \alpha(\tau) \otimes_A \alpha(\pi) \otimes_A E \otimes_A \alpha(\overline{\pi}) \to \alpha(\varpi) \otimes_A E \otimes_A \alpha(\overline{\varpi}) \otimes_A \alpha(\tau), 
	\end{align*}
	which does not depend on the choice of $(s_i)_i$, and 
	\begin{align*}
		&
		\bU_\tau:=\bigoplus_{\pi,\varpi\in\Irr\cC} \bU_{\tau,\pi,\varpi} \colon \alpha(\tau)\otimes_A \ell^2(\cC;E) \to \ell^2(\cC;E) \otimes_A \alpha(\tau). 
	\end{align*}
	Then $\bU_\tau$ is a well-defined $(A,A)$-bilinear unitary 
	since the fact that $(\overline{s_i})_i$ is an orthonormal basis shows that each $\bU_{\tau,\pi,\varpi}$ is a unitary between the direct summands of the domain and codomain of $\bU_\tau$ that are isomorphic to $(\alpha(\varpi)\otimes_AE\otimes_A\alpha(\overline{\pi}))^{\oplus\dim_\bC\Hom_\cC(\varpi,\tau\otimes\pi)}$. 
	It is routine to check the naturality of $(\bU_\tau)_{\tau\in\cC}$ and the cocycle relation 
	$(\bU_\sigma \otimes_A 1_{\alpha(\tau)})(1_{\alpha(\sigma)} \otimes_A \bU_\tau) = \fu_{\sigma,\tau}^*\bU_{\sigma\otimes \tau}\fu_{\sigma,\tau}$
	for $\sigma,\tau\in\cC$ using the independence of the choice of $(s_i)_i$ as in the proof of \cite[Theorem 3.4]{Neshveyev-Yamashita:Drinfeld}. 
\end{proof}

\begin{prop}\label{prop:tensorKirch}
	Let $\cC$ be a countable unitary tensor category. 
	For any non-zero separable $\cC$-C*-algebra $(A,\alpha,\fu)$, there is a separable simple purely infinite C*-algebra $B$ with a $\cC$-action and a non-degenerate injective $\cC$-$*$-homomorphism $(\iota,\bu)\colon A\to B$ giving a $\KK^\cC$-equivalence. 
	If moreover $A$ is nuclear, we can arrange $B$ to be nuclear. 
\end{prop}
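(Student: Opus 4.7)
The plan is to realize $B$ as the Toeplitz--Pimsner algebra $\cT_E$ for a judiciously chosen non-degenerate faithful non-proper $\cC$-equivariant $(A,A)$-correspondence $E$. The $\cC$-action on $\cT_E$ and the non-degenerate injective $\cC$-$*$-homomorphism $(\iota,\bu)\colon A\to\cT_E$ will then come for free from \cref{thm:equivTPalg}, while \cref{prop:KKCequiv} upgrades $\iota$ to a $\KK^\cC$-equivalence using the separability of $A$ and the countability of $\cC$. Nuclearity of $\cT_E$ when $A$ is nuclear is the standard fact recorded in \cref{ssec:C*-corr}, so what must be established is the simplicity and pure infiniteness of $\cT_E$.

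To construct $E$, I would first fix a non-degenerate faithful $(A,A)$-correspondence $E_0$ with two properties: (a) $\phi_0(A)\cap\cK(E_0)=0$ (non-properness), and (b) for every non-zero closed ideal $I\subset A$ one has $\bra E_0, \phi_0(I)E_0\ket = A$. Then \cref{prop:regularcorr} equips $E := \ell^2(\cC;E_0)$ with a canonical $\cC$-equivariant structure, and its $\pi=\mathbbm{1}_\cC$ direct summand ensures that $E$ inherits both (a) and (b). Simplicity of $\cT_E$ is then immediate from \cref{cor:simpleCPalg}: condition (i) is the non-properness, and condition (ii) holds because any closed ideal $I\subset A$ with $\bra E, \phi(I)E\ket \subset I$ satisfies in particular $\bra E_0, \phi_0(I)E_0\ket \subset I$, forcing $I=0$ or $I=A$ by (b). Pure infiniteness of $\cT_E$ then follows from a Cuntz-style scaling argument: the abundance of generating isometries $\tS_\xi$ for $\xi\in E^{\otimes_A n}$, together with the simplicity just shown, allows one to realize an infinite projection inside every non-zero hereditary C*-subalgebra of $\cT_E$.

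The main technical obstacle is exhibiting an $E_0$ that satisfies both (a) and (b) for an arbitrary separable $\cC$-C*-algebra $A$. The naive candidate $E_0=A^{\oplus\infty}$ with the standard diagonal action satisfies (a) but only yields $\bra E_0, \phi_0(I) E_0\ket = I$, and hence fails (b) whenever $A$ admits a proper non-trivial closed ideal. The resolution is to incorporate into $E_0$ a left action of $A$ that is sufficiently off-diagonal relative to the right $A$-module structure, so that the matrix coefficients $\bra\xi,\phi_0(a)\eta\ket$ are not confined to the ideal generated by $a$; concrete candidates arise from non-standard embeddings of $A$ into matrix amplifications such as $\bK\otimes A$, which supply enough mixing for the closed linear span $\bra E_0,\phi_0(a)E_0\ket$ to exhaust $A$ whenever $a\neq 0$. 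Carrying this through carefully, together with tensoring by an infinite-dimensional Hilbert space to restore non-properness, produces the $E_0$ required to close the argument.
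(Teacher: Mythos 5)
Your overall architecture is the same as the paper's: build $E=\ell^2(\cC;E_0)$ from a single $(A,A)$-correspondence $E_0$ via \cref{prop:regularcorr}, pass to $\cT_E$ using \cref{thm:equivTPalg} and \cref{prop:KKCequiv}, and get simplicity from \cref{cor:simpleCPalg}. But the two steps you leave as sketches are precisely the ones that need an argument, and neither sketch closes the gap. For the construction of $E_0$: the resolution is not a ``non-standard embedding of $A$ into $\bK\otimes A$'' --- for a correspondence whose coefficients live in $A$ it is not at all clear that $\bra E_0,\phi_0(a)E_0\ket$ exhausts $A$ --- but simply $E_0:=(\cH\otimes_\bC A,\phi\otimes 1)$ for a non-degenerate faithful representation $\phi\colon A\to\cB(\cH)$ with $\phi(A)\cap\cK(\cH)=0$ (pass to $\cH^{\oplus\infty}$ to arrange this). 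The point is that the matrix coefficients are \emph{scalars}: for any non-zero closed ideal $I$ and any $0\neq\xi\in\phi(I)\cH$ one has $\bra \cH\otimes_\bC A,\xi\otimes_\bC A\ket=\bra\cH,\xi\ket\cdot A=A$, which is exactly your condition (b). Without some such concrete choice your proof does not get off the ground, since condition (b) is what drives the whole simplicity argument.

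The second gap is pure infiniteness. You propose to show that $\cT_E$ itself is purely infinite by a Cuntz-type scaling argument using ``generating isometries $\tS_\xi$''. But $\tS_\xi^*\tS_\xi=\bra\xi,\xi\ket$, so $\tS_\xi$ is an isometry only when $\bra\xi,\xi\ket=1_A$; for non-unital $A$ (which is allowed here) there need be no isometries in $\cT_E$ at all, and no actual argument for pure infiniteness is supplied. The paper avoids this entirely by setting $B:=\cT_E\otimes\cO_\infty$: once $\cT_E$ is simple, $B$ is simple and purely infinite by \cite[Lemma 2.3]{Izumi:finiteI}, the inclusion $\cT_E\to B$ is still a $\KK^\cC$-equivalence by \cite[Theorem 3.19]{Arano-Kitamura-Kubota:tensorKK} together with the $\KK$-equivalence $\bC\subset\cO_\infty$, and nuclearity is preserved. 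Since the statement only asks for \emph{some} $B$ receiving a non-degenerate injective $\cC$-$*$-homomorphism from $A$, you should take this detour rather than attempt a direct proof that $\cT_E$ is purely infinite.
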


\begin{proof}
	Let $\phi\colon A\to \cB(\cH)$ be a non-degenerate faithful $*$-representation of $A$ on a separable Hilbert space $\cH$. By taking $\cH^{\oplus\infty}$ if necessary, we may assume that $\phi(A)\cap\cK(\cH)=0$. Then we apply \cref{prop:regularcorr} to the $(A,A)$-correspondence $(\cH\otimes_\bC A,\phi\otimes1)$ to get a $\cC$-$(A,A)$-correspondence $E:=\ell^2(\cC;\cH\otimes_\bC A)$, which is non-degenerate, faithful, and satisfying (i) of \cref{cor:simpleCPalg}. 
	It follows from \cref{thm:equivTPalg,prop:KKCequiv} that the non-degenerate inclusion $A\to \cT_{E}$ is a $\KK^\cC$-equivalence. 
	
	We check (ii) in \cref{cor:simpleCPalg} as, for any non-zero ideal $I\subset A$, we have a non-zero vector $\xi\in \phi(I)\cH$ and see that 
	$\bra E, IE\ket \supset \bra \cH\otimes_\bC A,\xi\otimes_\bC A\ket=A$. Thus $\cT_{E}$ is simple. 
	
	Now we put $B:=\cT_{E}\otimes \cO_\infty$. The non-degenerate inclusion $\id\otimes 1_{\cO_\infty}\colon \cT_{E}\to B$ is a $\KK^\cC$-equivalence by \cite[Theorem 3.19]{Arano-Kitamura-Kubota:tensorKK} with the aid of the $\KK$-equivalence $\bC\subset\cO_{\infty}$. 
	Thus the composition $A\subset \cT_{E}\subset B$ is the desired non-degenerate injective $\cC$-$*$-homomorphism. 
	Here, $B$ is simple and purely infinite by \cite[Lemma 2.3]{Izumi:finiteI}. 
	Finally, $B:=\cT_{E}\otimes\cO_\infty$ is nuclear if so is $A$. 
\end{proof}

\section{Outer actions on Kirchberg algebras}\label{sec:outer}

We want to modify the $\cC$-action on $\cT_E$ to be outer in the following sense. 

\begin{df}\label{def:irroutfull}
	Consider an action $(\alpha,\fu)$ of a unitary tensor category $\cC$ on a C*-algebra $A$. 
	\begin{enumerate}
		\item
		We say that $(\alpha,\fu)$ is \emph{irreducible} if the $(A,A)$-correspondence $\alpha(\pi)$ is irreducible for all $\pi\in\Irr\cC$. 
		\item
		We say that $(\alpha,\fu)$ is \emph{(objectwise) injective} if there is no $(A,A)$-bilinear unitary $\alpha(\pi)\to \alpha(\varpi)$ for all $\pi,\varpi\in\Irr\cC$ with $\pi\neq\varpi$. 
		\item
		We say that $(\alpha,\fu)$ is \emph{(pointwise) outer} if it is full as a unitary tensor functor $\cC\to\Cor(A)$. 
	\end{enumerate}
\end{df}

\begin{rem}\label{rem:irroutfull}
	Consider an action $(\alpha,\fu)$ of a unitary tensor category $\cC$ on a non-zero C*-algebra $A$. 
	\begin{enumerate}
		\item
		Then the functor $(\alpha,\fu)\colon\cC\to\Cor(A)$ is always faithful thanks to the semisimplicity and rigidity of $\cC$. 
		\item
		If $(\alpha,\fu)$ is irreducible, then any non-zero element $x\in \Hom_{\Cor(A)}(\alpha(\pi),\alpha(\varpi))$ is a unitary up to a scalar multiplication for all $\pi,\varpi\in\Irr\cC$, where we recall that $\Hom_{\Cor(A)}(\alpha(\pi),\alpha(\varpi))$ is the set of $(A,A)$-bilinear elements in $\cL(\alpha(\pi),\alpha(\varpi))$. 
		Indeed, the irreducibility of $\alpha(\pi)$ and $\alpha(\varpi)$ shows that both $x^*x$ and $xx^*$ are scalars. Thus $\|x\|^{-1}x$ is a unitary and the claim follows. 
		\item
		Hence, a $\cC$-action $(\alpha,\fu)$ on the C*-algebra $A$ is outer if and only if $\Hom_{\Cor(A)}(\alpha(\pi),\alpha(\varpi)) = \delta_{\pi,\varpi}\bC1_{\alpha(\pi)}$ for all $\pi,\varpi\in\Irr\cC$, which is equivalent to saying that $(\alpha,\fu)$ is both irreducible and injective. 
	\end{enumerate}
\end{rem}

\subsection{Modification trick}\label{ssec:outer}
\begin{rem}\label{notn:outer}
	In this subsection, we work with the following situation. 
	Consider a C*-algebra $A$ and a full non-degenerate faithful $(A,A)$-correspondence $E=(E,\phi)$. 
	We assume the following. 
	\begin{enumerate}
		\item
		$E$ is of the form $\bigoplus_{\lambda\in\Lambda}\cH_{\lambda}\otimes_\bC N_{\lambda}$, 
		where 
		$\cH_{\lambda}$ is a Hilbert space, $\phi_{\lambda}\colon A\to\cB(\cH_{\lambda})$ is a non-degenerate faithful $*$-representation of $A$, 
		and $N_{\lambda}$ is a full Hilbert $A$-module for each $\lambda$ in a countable non-empty set $\Lambda$. 
		\item
		$\phi_{\lambda}(A)\cap \cK(\cH_{\lambda})=0$ for all $\lambda\in\Lambda$. 
		In particular, $\phi(A)\cap \cK(E)=0$. 
		\item
		$A$ is unital and admits no finite-dimensional quotient C*-algebra. 
	\end{enumerate}
	Since $\cT_E=\cO_E$ by (2), we again use the convention from \cref{notn:simpleCPalg}. 
\end{rem}

We are going to give a criterion (\cref{thm:simpleTPcorr}) of the irreducibility of bimodules in a similar spirit to \cref{thm:simpleCPalg}. 
However, the proof is based on different techniques because several arguments in \cref{thm:simpleCPalg} are no longer valid for bimodules. 
Alternatively, we will use the following trick that crucially relies on the condition \cref{notn:outer} (3) on $A$. 

\begin{lem}\label{lem:simpleTPcorr}
	In the setting of \cref{notn:outer}, for proper full non-degenerate faithful $(A,A)$-correspondences $M_i=(M_i,\alpha_i)$ with $i=1,2$, we have 
	\begin{align*}
		&
		\{ y\in \cL(M_1\otimes_A A_\infty, M_2\otimes_A A_\infty) \mid  
		y(\alpha_1(a)\otimes1) = (\alpha_2(a)\otimes1)y, \, \forall a\in A
		\} 
		\\={}&
		\{ x\otimes_A1_{A_\infty} \mid x\in \cK(M_1, M_2), 
		x\alpha_1(a)=\alpha_2(a)x, \, \forall a\in A \} . 
	\end{align*}
\end{lem}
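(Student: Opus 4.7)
The plan is to isolate the ``constant part'' of any left-$A$-linear $y\in\cL(M_1\otimes_AA_\infty,M_2\otimes_AA_\infty)$ via a vacuum-type conditional expectation $E_0\colon A_\infty\to A$, and then rule out the residual higher-degree part using the structural hypotheses of \cref{notn:outer}. Because $A$ is unital and each $M_i$ is proper, non-degenerate, and faithful, $\alpha_i(1_A)=1_{M_i}\in\cK(M_i)$, so $M_i$ is finitely generated projective as a right Hilbert $A$-module and $\cL(M_i)=\cK(M_i)$. Writing $M_i\cong p_iA^{n_i}$ for projections $p_i\in\bM_{n_i}(A)$, the claimed equality becomes the statement that any $B\in p_2\bM_{n_2,n_1}(A_\infty)p_1$ satisfying $\alpha_2(a)B=B\alpha_1(a)$ for all $a\in A$ must lie in $p_2\bM_{n_2,n_1}(A)p_1$.

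Let $1_A\in A=E^{\otimes_A 0}\subset\cF_E$ denote the vacuum vector, and set $E_0(a):=\bra 1_A,a\cdot 1_A\ket_A\in A$ for $a\in A_\infty\subset\cL(\cF_E)$. Since every element of $A_\infty$ preserves the grading of $\cF_E$ and $A$ acts on $1_A\in A\subset\cF_E$ by multiplication, $E_0$ is an $A$-bilinear UCP map with $E_0\circ\sigma_0=\id_A$; moreover $\oS_\eta\oS_\xi^*$ annihilates $1_A$ whenever $\eta,\xi\in E^{\otimes_A k}$ with $k\geq 1$, so $E_0|_{A_k}=0$ for $k\geq 1$. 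Using that $A_n\cap A_{\leq n-1}=0$ (which one can verify by comparing compressions $P_l(-)P_l$ for increasing $l$), one sees $\ker E_0=A_{\geq 1}$. Applying $E_0$ entrywise to $B$ yields $C\in p_2\bM_{n_2,n_1}(A)p_1$ which inherits the intertwining property; this $C$ corresponds to an element $x\in\cK(M_1,M_2)$ with $x\alpha_1(a)=\alpha_2(a)x$, providing the candidate on the right-hand side of the lemma. It therefore suffices to show that any $B\in p_2\bM_{n_2,n_1}(A_{\geq 1})p_1$ intertwining $\alpha_1$ and $\alpha_2$ vanishes.

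This rigidity statement is the main obstacle. The key structural input is that each $\cH_\lambda$ is infinite-dimensional (since $\phi_\lambda$ is non-degenerate with $\phi_\lambda(A)\cap\cK(\cH_\lambda)=0$ and $A$ admits no finite-dimensional quotient), and consequently $\cK(\cH_\lambda)^{\phi_\lambda(A)}=0$: a non-zero positive element of this commutant would have a non-zero finite-rank spectral projection commuting with $\phi_\lambda(A)$, whose compression of $\phi_\lambda$ would produce a non-zero finite-dimensional $*$-representation of $A$, contradicting the assumptions. Via slice maps in the $N_\lambda$ factors and an induction on the tensor power of $E$, this upgrades to the vanishing of every left-$A$-linear compact operator on $E^{\otimes_A n}$ for $n\geq 1$, and hence (through $\sigma_n\colon\cK(E^{\otimes_A n})\cong A_n$ and a direct matrix argument) to the vanishing of intertwiners whose entries lie in a single $A_n$. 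The delicate passage from this per-level vanishing to the full statement about $A_{\geq 1}$ can be approached by extending $B$ via the left action $A_\infty\subset\cL(\cF_E)$ to $\wt B\colon M_1\otimes_A\cF_E\to M_2\otimes_A\cF_E$ and noting that $\wt B$ preserves the grading (because elements of $A_{\geq 1}$ annihilate the vacuum and send each $E^{\otimes_A n}$ into itself), then combining the essentiality of $A_n\subset A_{\leq n}$ from \cref{lem:faithfulX}(2) with the right $A_\infty$-linearity of $B$ to inductively extract vanishing ``level-$n$ components''. I expect this last bridging step to be the most technical part of the proof.
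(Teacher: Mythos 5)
Your overall architecture --- isolate the degree-zero part with a vacuum conditional expectation, then kill the remainder using the no-finite-dimensional-quotient hypothesis --- is the same as the paper's in spirit, and your identification of the key rigidity input (a unital representation of $A$ whose commutant meets the compacts trivially) is exactly right. But the step you defer as ``the most technical part'' is where the proof actually lives, and the route you sketch does not close. The degree-$n$ component of $\wt B=B\otimes1_{\cF_E}$ is only an \emph{adjointable} intertwiner $M_1\otimes_AE^{\otimes_An}\to M_2\otimes_AE^{\otimes_An}$, whereas your per-level vanishing holds only for \emph{compact} intertwiners: $1_{M\otimes_AE^{\otimes_An}}$ is a nonzero adjointable intertwiner for $n\geq1$ (it is not compact, since $E^{\otimes_An}$ is built from the infinite-dimensional $\cH_\lambda$), and already a rank-one $\theta_{m_2\otimes b,\,m_1\otimes c}$ on $M_1\otimes_AA_\infty$ contributes at level $n$ the operator $m\otimes\eta\mapsto m_2\otimes\bigl(bc^*\bra m_1,m\ket\bigr)\eta$, which is not compact on $M_1\otimes_AE^{\otimes_An}$ because $\phi(A)\cap\cK(E)=0$. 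One can salvage a level-by-level induction --- for entries in $A_{\geq n}$ the compression $P_n(\cdot)P_n$ lands in $\sigma_n^{-1}(A_n)\subset\cK(E^{\otimes_An})$, so the resulting intertwiner is compact and vanishes, forcing the entries into $A_{\geq n+1}$ --- but this terminates only with entries in $\bigcap_nA_{\geq n}$, and under \cref{notn:outer} alone that intersection is not established to be zero (the paper proves $\bigcap_nA_{\geq n}=0$ in \cref{lem:simpleCPalg} only under the additional ideal condition (ii) of \cref{thm:simpleCPalg}, which is not assumed here).

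The missing idea is an approximation at the level of compact operators rather than of entries. Since $\alpha(1_A)=1_M\in\cK(M)$ and $A\subset A_\infty$ is unital, one has $\cL(M\otimes_AA_\infty)=\cK(M\otimes_AA_\infty)=\cspan\bigcup_n\cK(M\otimes_AA_n)$, and the compressions $\Phi_m=(1_M\otimes P_{\leq m})(-)(1_M\otimes P_{\leq m})$ converge in norm to the identity on compacts and restrict to conditional expectations onto $(\alpha(A)\otimes1)'\cap\cK(M\otimes_AA_{\leq m})=\bigoplus_{k\leq m}\bigl((\alpha(A)\otimes1)'\cap\cK(M\otimes_AA_k)\bigr)$; each summand with $k\geq1$ is identified with the relative commutant in $\cK(M\otimes_AE^{\otimes_Ak})$ via the imprimitivity bimodule $E^{\otimes_Ak}$, where your rigidity argument applies. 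Note also that this rigidity must be run directly for the relative commutant of $\alpha(A)\otimes1$ in $\cK(M_1\otimes_AE^{\otimes_An},M_2\otimes_AE^{\otimes_An})$ (the paper uses the Hilbert space $M\otimes_AE^{\otimes_An}\otimes_A\bigoplus_\lambda\cH_\lambda$), not deduced by a ``direct matrix argument'' from the vanishing of $\cK(E^{\otimes_An})$-commutants, since $\alpha_i(A)$ sits as a unital, non-diagonal subalgebra of $p_i\bM_{n_i}(A)p_i$. Your vacuum expectation $E_0$ and the reduction to entries in $A_{\geq1}$ are correct but become unnecessary once this decomposition is in place.
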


Before the proof, note that $A_{\leq n}\cap A_{\geq n+1}=0$ for $n\in\bZ_{\geq 0}$. 
Thus the algebraic direct sum $\bigoplus_{n\geq 0}^{\mathrm{alg}}A_{n}$ is a norm-dense $(A,A)$-subbimodule of $A_\infty$. 

\begin{proof}
	By letting $(M,\alpha):=(M_1,\alpha_1)\oplus(M_2,\alpha_2)$ and looking at the corner, it suffices to show the statement when $M_1=M_2=M$. 
	
	Since the $*$-homomorphism $\alpha\colon A\to \cK(M)$ is unital by assumption, we see $\cK(M)$ is unital and thus 
	$\cK(M)=\cL(M)$. 
	Then the unital inclusion $A\subset A_\infty$ shows $1_M\otimes 1_{A_\infty}\in \cK(M\otimes_AA_{\infty})$ and thus 
	\begin{align*}
		&
		\cL(M\otimes_AA_{\infty}) 
		= 
		\cK(M\otimes_AA_{\infty}) 
		=
		\cspan\bigcup_{n=0}^{\infty}\cK(M\otimes_A A_n). 
	\end{align*}
	Here, since 
	\begin{align*}
		\cK(M\otimes_A A_n)
		\subset
		\{
		x\in \cK(M\otimes_A A_\infty) 
		\mid 
		T_\xi^*x^*xT_\xi \in A_n, \, \forall \xi\in M
		\}, 
	\end{align*}
	the following norm-dense $\cK(M)$-subbimodule is indeed a direct sum 
	\begin{align*}
		\bigoplus_{n\geq 0}^{\mathrm{alg}}\cK(M\otimes_A E^{\otimes_A n})
		\cong
		\bigoplus_{n\geq 0}^{\mathrm{alg}}\cK(M\otimes_A A_n)
		\subset 
		\cL(M\otimes_AA_{\infty}) , 
	\end{align*}
	where $\cK(M\otimes_A E^{\otimes_A n})\cong \cK(M\otimes_A A_n)$ because, by using $\sigma_n$ from \cref{lem:faithfulX}, $(E^{\otimes_A n},\sigma_n^{-1})$ is an imprimitivity $(A_n,A)$-bimodule satisfying $M\otimes_A A_n\otimes_{\sigma_n^{-1}} E^{\otimes_A n}\cong M\otimes_A E^{\otimes_A n}$, which induces the canonical $*$-isomorphism $\cK(M\otimes_A A_n)\ni x \mapsto x\otimes 1_{E^{\otimes n}}\in \cK(M\otimes_A A_n\otimes_{\sigma_n^{-1}} E^{\otimes_A n})\cong \cK(M\otimes_A E^{\otimes_A n})$. 
	By the same argument, we see $\bigoplus_{n=0}^{m}\cK(M\otimes_A A_n)= \cL(M\otimes_A A_{\leq m})$ for all $m\in\bZ_{\geq 0}$. 
	For the $(A,A)$-bilinear projection $P_{\leq m}\in\cL(\cF_E)$ onto $\bigoplus_{n=0}^{m}E^{\otimes_A n}$, we have the conditional expectation 
	\begin{align*}
		\Phi_m:=(1_M\otimes P_{\leq m})(-)(1_M\otimes P_{\leq m}) \colon 
		\cK(M\otimes_A A_\infty) \to \cK(M\otimes_A A_{\leq m}) , 
	\end{align*}
	such that $\| x-\Phi_m(x) \|\xto{m\to\infty} 0$ for all $x\in\cK(M\otimes_A A_\infty)$. Since $\Phi_m$ restricts to a conditional expectation 
	\begin{align*}
		(\alpha(A)\otimes1_{A_\infty})'\cap\cK(M\otimes_A A_\infty) \to (\alpha(A)\otimes1_{A_{\leq m}})'\cap\cK(M\otimes_A A_{\leq m}), 
	\end{align*}
	we see the norm-dense inclusion of $\bC$-vector spaces 
	\begin{align*}
		&
		\bigoplus_{n\geq 0}^{\mathrm{alg}} \bigl(
		(\alpha(A)\otimes1_{E^{\otimes n}})'\cap\cK(M\otimes_A E^{\otimes_A n}) \bigr)
		\subset 
		(\alpha(A)\otimes1_{A_\infty})'\cap \cL(M\otimes_AA_{\infty}) . 
	\end{align*}
	Thus it suffices to show that the summand of the left hand side is $0$ for all $n\geq 1$. 
	
	For $n\geq 0$, consider the Hilbert space $\wt{\cH}_{n} := M\otimes_AE^{\otimes_A n}\otimes_A \bigoplus_{\lambda\in\Lambda}\cH_{\lambda}$ with the unital $*$-representation of $A$ denoted by $\wt{\phi}_{n}:=\alpha\otimes1$. 
	Using an $(A,A)$-bilinear adjointable isometry 
	\begin{align*}
		M\otimes_A E^{\otimes_A n+1} 
		\cong 
		M\otimes_A E^{\otimes_A n} \otimes_A \bigoplus_{\lambda\in\Lambda}\cH_{\lambda} \otimes_\bC N_{\lambda} 
		\leq 
		\wt{\cH}_n \otimes_\bC \bigoplus_{\lambda\in\Lambda}N_{\lambda} , 
	\end{align*}
	we get an injective $*$-homomorphism 
	\begin{align}\label{eq:lem:simpleTPcorr2}
		(\alpha(A)\otimes1)'\cap\cK(M\otimes_A E^{\otimes_A n+1}) 
		\hookrightarrow 
		(\wt{\phi}_{n}(A)\otimes1)'\cap \Bigl( \cK(\wt{\cH}_{n}) \otimes \cK\Bigl(\bigoplus_{\lambda\in\Lambda}N_{\lambda}\Bigr) \Bigr) . 
	\end{align}
	If $\wt{\phi}_n(A)'\cap \cK(\wt{\cH}_n)$ contains a non-zero finite rank projection $p$, then we have a unital $*$-homomorphism $A\to \cB(p\wt{\cH}_n)$, whose range must be finite-dimensional, but this is forbidden by \cref{notn:outer} (3). 
	Thus $\wt{\phi}_n(A)'\cap \cK(\wt{\cH}_n)=0$, and a slicing argument shows that the right hand side of \cref{eq:lem:simpleTPcorr2} is $0$, which completes the proof. 
\end{proof}

\begin{prop}\label{thm:simpleTPcorr}
	In the setting of \cref{notn:outer}, for a proper full non-degenerate faithful $(A,A)$-correspondence $M=(M,\alpha)$ and an $(A,A)$-bilinear unitary $\bv\in\cL(M\otimes_A E,E\otimes_A M)$, the following are equivalent. 
	\begin{enumerate}
		\item[(i)]
		The $(\cT_E,\cT_E)$-correspondence $(M^\cT,\alpha^\cT)$ is irreducible. 
		\item[(ii)]
		There is no $(A,A)$-bilinear adjointable operator $x\in\cL(M)$ such that $\bv (x\otimes_A1_E) = (1_E\otimes_A x)\bv$ except scalars $\bC 1_M$. 
	\end{enumerate}
\end{prop}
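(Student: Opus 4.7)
\emph{Direction (i) $\Rightarrow$ (ii).} Given $(A,A)$-bilinear $x\in\cL(M)$ with $\bv(x\otimes 1_E)=(1_E\otimes x)\bv$, I will first verify inductively on $n$ that $(1_{E^{\otimes n}}\otimes x)\bv^{(n)}=\bv^{(n)}(x\otimes 1_{E^{\otimes n}})$, which extends in the $\ell^2$-limit to $(1_{\cF_E}\otimes x)\bv^{(\infty)}=\bv^{(\infty)}(x\otimes 1_{\cF_E})$. The operator $\tilde x:=1_{\cF_E}\otimes x\in\cL(\cF_E\otimes_AM)$ will then preserve $M^\cT$ via $\tilde x(\bv^{(\infty)}T_\xi y)=\bv^{(\infty)}T_{x\xi}y$, and the restriction $\tilde x|_{M^\cT}$ will lie in $\Hom_{\cT_E,\cT_E}(M^\cT)$ because multiplication by $1_{\cF_E}\otimes x$ commutes both with right multiplication by $\cT_E$ and with left multiplication by $\cT_E\otimes 1_M$. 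Irreducibility (i) then forces $\tilde x|_{M^\cT}=\lambda 1$, and evaluating at a ``vacuum vector'' $\bv^{(\infty)}T_\xi$ on $1_A\in\cF_E$ gives $x\xi=\lambda\xi$ for all $\xi\in M$, so $x=\lambda 1_M$.

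\emph{Direction (ii) $\Rightarrow$ (i).} Let $Y\in\Hom_{\cT_E,\cT_E}(M^\cT)$. Since $A$ is unital by \cref{notn:outer}(3) and $M$ is proper non-degenerate, we have $\alpha(1_A)=1_M\in\cK(M)$, so $M$ is finitely generated projective; writing $M\cong pA^n$ for a projection $p\in M_n(A)$, we get $\cL(M^\cT)=\cK(M^\cT)\cong pM_n(\cT_E)p$. The gauge action $\gamma$ on $\cT_E$ induces a continuous action $\tilde\gamma$ on $\cL(M^\cT)$ by conjugation with $1_M\otimes U_z$, where $U_z\in\cU\cL(\cF_E)$ is the gauge unitary, and a direct calculation shows $\tilde\gamma_z(\alpha^\cT(w))=\alpha^\cT(\gamma_z(w))$ for $w\in\cT_E$; hence $\tilde\gamma$ preserves the commutant $\alpha^\cT(\cT_E)'\cap\cL(M^\cT)$ setwise. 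I would then decompose $Y=\sum_{k\in\bZ}Y^{(k)}$ with $Y^{(k)}:=\int_\bT z^{-k}\tilde\gamma_z(Y)\,dz$ in the commutant of gauge weight $k$. The gauge-fixed piece $Y^{(0)}\in pM_n(A_\infty)p\cong\cL(M\otimes_AA_\infty)$ commutes with $\alpha(A)\otimes 1_{A_\infty}$, and \cref{lem:simpleTPcorr} (with $M_1=M_2=M$) yields $Y^{(0)}=x\otimes 1_{A_\infty}$ for some $(A,A)$-bilinear $x\in\cL(M)$. Unwinding the commutation of $Y^{(0)}$ with $\alpha^\cT(\tS_\xi)$---using $\alpha^\cT(\tS_\xi)(\bv^{(\infty)}T_my)=\sum_i\bv^{(\infty)}T_{m_i}\tS_{\xi_i}y$ for $\sum_im_i\otimes\xi_i=\bv^*(\xi\otimes m)$---on a vacuum vector yields $(x\otimes 1_E)\bv^*=\bv^*(1_E\otimes x)$, equivalently $\bv(x\otimes 1_E)=(1_E\otimes x)\bv$. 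Hypothesis (ii) then forces $x=\lambda 1_M$ and thus $Y^{(0)}=\lambda 1_{M^\cT}$.

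\emph{Main obstacle.} The hard step is to rule out the gauge-shifting pieces: one must show $Y^{(k)}=0$ for all $k\neq 0$. Here \cref{lem:simpleTPcorr} cannot be applied directly, because $Y^{(k)}|_{(M^\cT)_0}$ lands in $M\otimes_AE^{\otimes k}\otimes_AA_\infty$ and the $(A,A)$-correspondence $M\otimes_AE^{\otimes k}$ fails to be proper when $E$ is non-proper, as ensured by \cref{notn:outer}(2). The plan is to adapt the Kishimoto-style argument from \cref{thm:simpleCPalg}: leveraging the decomposition $E=\bigoplus_\lambda\cH_\lambda\otimes N_\lambda$ and the infinite-dimensionality ensured by $\phi_\lambda(A)\cap\cK(\cH_\lambda)=0$, together with the absence of finite-dimensional quotients of $A$ in \cref{notn:outer}(3), one should produce, for any hypothetical non-zero $Y^{(k)}$, approximating sequences in hereditary sub-C*-algebras of $A_\infty$ (mirroring $(x_k)$ in \cref{lem:Kishimoto1.1}) on which $Y^{(k)}$ becomes vanishingly small. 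Combining these estimates with the $\cT_E$-bilinearity of $Y^{(k)}$ and the gauge-spectral identity $\tilde\gamma_z(Y^{(k)})=z^kY^{(k)}$ should then force $Y^{(k)}=0$, completing the proof.
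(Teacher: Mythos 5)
Your direction (i) $\Rightarrow$ (ii) and your treatment of the gauge-invariant part of the commutant in (ii) $\Rightarrow$ (i) are correct and essentially match the paper, which packages both into a $*$-isomorphism $\Theta\colon D_0\to D^{\gamma}$ between the intertwiner algebra $D_0$ of condition (ii) and the gauge-fixed part of $D\cong \alpha^\cT(\cT_E)'\cap\cL(M^\cT)$, with surjectivity of $\Theta$ resting on \cref{lem:simpleTPcorr} exactly as you describe. The genuine gap is the step you yourself flag as the ``main obstacle'': you never prove $Y^{(k)}=0$ for $k\neq 0$, and the plan you sketch --- adapting the Kishimoto argument of \cref{lem:Kishimoto1.1} --- is precisely the strategy the introduction of the paper says fails once the action is not given by $*$-automorphisms. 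The failure is structural: once you know $D^{\gamma}\cong\bC$, a non-zero homogeneous $Y^{(k)}$ satisfies $Y^{(k)*}Y^{(k)},\,Y^{(k)}Y^{(k)*}\in D^{\gamma}=\bC 1$, so $\|Y^{(k)}\|^{-1}Y^{(k)}$ is automatically a unitary; since $Y^{(k)}$ commutes with the left action of $A_\infty$, one has $\alpha^\cT(x)Y^{(k)}\alpha^\cT(x)=Y^{(k)}\alpha^\cT(x^2)$, whose norm equals $\|Y^{(k)}\|$ for every positive norm-one $x$, so no choice of hereditary subalgebra or approximating sequence can make these quantities small. A Kishimoto-type estimate therefore cannot kill $Y^{(k)}$.

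The paper instead turns this rigidity into the contradiction: after replacing $Y^{(k)}$ by its adjoint one may assume weight $-m$ with $m\geq 1$, and restricting the unitary $\|Y^{(k)}\|^{-1}Y^{(k)}$ via the maps \cref{eq:thm:simpleTPcorr1} produces an $(A_\infty,A_\infty)$-bilinear unitary from $M\otimes_A A_\infty$ onto $M\otimes_A X_{-m}$. The former is full, while the latter is not, because $\langle X_{-m},X_{-m}\rangle\subset A_{\geq m}$ is a proper closed ideal of the unital algebra $A_\infty$ (here \cref{notn:outer} (2) and the unitality from (3) enter, not any infinite-dimensionality of $E$). Since unitaries preserve fullness, $Y^{(k)}=0$. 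So the missing step has a short, purely module-theoretic proof, but it is not the one you propose; as written, your argument is incomplete at its decisive point.
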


\begin{rem}\label{notn:gauge}
	We consider the situation of \cref{thm:simpleTPcorr}. 
	\begin{enumerate}
		\item
		We have the $\bT$-actions on the Hilbert $A$-modules $\cF_E\otimes_AM$ and $M\otimes_A\cF_E$, for which we still write $\gamma$ by abusing notation, such that $\gamma_z(\xi\otimes\eta)=z^n(\xi\otimes\eta)$ and $\gamma_z(\eta\otimes\xi)=z^n(\eta\otimes\xi)$ for all $n\in\bZ_{\geq 0}$, $\xi\in E^{\otimes_A n}$, $\eta\in M$, and $z\in\bT$. By definition, $\bv^{(\infty)}$ preserves $\gamma$. Also, $\gamma$ induces the strictly continuous $\bT$-actions on $\cL(\cF_E\otimes_AM)$ and $\cL(M\otimes_A\cF_E)$, the C*-algebras of adjointable $A$-linear endomorphisms, again denoted by $\gamma$. 
		Also, we have the $\bT$-action on the Hilbert $\cT_E$-module $M\otimes_A\cT_E$ such that each $z\in \bT$ acts on $\eta\otimes x$ as $\eta\otimes\gamma_z(x)$ for all $\eta\in M$ and $x\in\cT_E$, which induces the strictly continuous $\bT$-actions on $\cL(M\otimes_A\cT_E)$, the C*-algebra of adjointable $\cT_E$-linear endomorphisms. 
		\item
		By construction, $\cT_E\ni x\mapsto x\otimes_A1_{M} \in \cL(\cF_E\otimes_AM)$ is $\bT$-equivariant with respect to $\gamma$, where we identify $x\in\cL(\cF_E)$ when we take $x\otimes_A1_{M}$. 
		In a similar spirit to $\cT_E\subset\cL(\cF_E)$, we regard $\cL(M\otimes_A\cT_E)$ as a $\bT$-equivariant C*-subalgebra of $\cL(M\otimes_A\cF_E)$ using the $*$-homomorphism $\cL(M\otimes_A\cT_E)\ni x\mapsto x\otimes_{\cT_E}1_{\cF_E}\in \cL(M\otimes_A\cT_E\otimes_{\cT_E}\cF_E)\cong\cL(M\otimes_A\cF_E)$, which is injective and $\bT$-equivariant. 
		Then for $m,n\in\bZ$, we have the well-defined contractive $\bC$-linear map 
		\begin{align}\label{eq:thm:simpleTPcorr1}\begin{aligned}
				\{ x\in \cL(M\otimes_A \cT_E) \mid \gamma_z(x)=z^mx, \forall z\in\bT \} 
				&\to 
				\cL(M\otimes_A X_{n},M\otimes_A X_{m+n}) , 
				\\
				x&\mapsto x|_{M\otimes X_n}
		\end{aligned}\end{align}
		which is injective for $n\leq 0$ because then $\cspan(M\otimes_AX_n)\cT_E=M\otimes_A\cT_E$ by the fullness of $E$. 
		\item
		Finally, we note that $\cL(M\otimes_A\cT_E)=\cK(M\otimes_A\cT_E)$ because of $1_M\in \cK(M)$ and the unital inclusion $A\subset \cT_E$, and that the algebraic direct sum of the left hand side of \cref{eq:thm:simpleTPcorr1} over all $m\in\bZ$ forms a dense $*$-subalgebra of $\cK(M\otimes_A\cT_E)$. 
	\end{enumerate}
\end{rem}

\begin{proof}
	By \cref{notn:gauge}, the gauge action $\gamma$ on $\cL(M\otimes_A\cT_E)$ is inherited by the C*-subalgebra 
	\[ D := \bigl(\bv^{(\infty)*}(\cT_E\otimes_A1_{M})\bv^{(\infty)}\bigr)' \cap \cL(M\otimes_A\cT_E) \cong \alpha^\cT(\cT_E)' \cap \cL(M^\cT) . \]
	Then, we have the injective $*$-homomorphism 
	\begin{align*}
		\Theta\colon 
		D_0 := \{ x\in \alpha(A)'\cap\cL(M) \mid \bv(x\otimes_A1_E) = (1_E\otimes_Ax)\bv \} 
		&\to D^{\gamma} , \\
		x&\mapsto x\otimes_A 1_{\cT_E}
	\end{align*}
	which is well-defined because for all $x\in D_0$, $\xi\in E$, and $\eta\in M\otimes_A E^{\otimes_A n}\leq M\otimes_A\cF_E$ with $n\in\bZ_{\geq 0}$, by identifying $\cL(M\otimes_A\cT_E)\subset\cL(M\otimes_A\cF_E)$ we have 
	\begin{align*}
		&
		\bv^{(\infty)*} (T_\xi\otimes 1_M) \bv^{(\infty)} (x\otimes 1_{\cT_E}) \eta 
		= \bv^{(n+1)*} (T_\xi\otimes 1_M) \bv^{(n)} (x\otimes 1_{\cT_E}) \eta 
		\\={}&
		\bv^{(n+1)*} (T_\xi\otimes 1_M) (1_{E^{\otimes n}}\otimes x) \bv^{(n)} \eta 
		= 
		\bv^{(n+1)*} (1_{E^{\otimes n+1}}\otimes x) ( \xi\otimes \bv^{(n)} \eta )
		\\={}&
		(x\otimes 1_{E^{\otimes n+1}}) \bv^{(n+1)*} T_\xi \bv^{(n)} \eta  
		= 
		(x\otimes 1_{\cT_E}) \bv^{(\infty)*} (T_\xi\otimes 1_M) \bv^{(\infty)} \eta . 
	\end{align*}
	
	We show the surjectivity of $\Theta$. 
	Using \cref{lem:simpleTPcorr} and \cref{eq:thm:simpleTPcorr1} we have the injective $*$-homomorphism 
	\begin{align*}
		&
		D^{\gamma}\hookrightarrow (\alpha(A)\otimes1_{A_\infty})'\cap \cL(M\otimes_AA_\infty) = (\alpha(A)'\cap \cK(M))\otimes_A1_{A_\infty}. 
	\end{align*}
	Therefore any element in $D^{\gamma}$ must be of the form $x\otimes1_{\cT_E}$ for uniquely determined $x\in \alpha(A)'\cap\cK(M)$. 
	For $\xi\in E\leq\cF_E$ and $\eta\in M\cong M\otimes_AA\leq M\otimes_A\cF_E$ we check, 
	\begin{align*}
		&
		\bv^*(1_E\otimes x)(\xi\otimes\eta) 
		= \bv^*(T_\xi\otimes1_M)(x\otimes1_{\cF_E})\eta 
		\\={}& 
		\bv^{(\infty)*}(T_\xi\otimes1_M)\bv^{(\infty)}(x\otimes1_{\cF_E})\eta 
		=
		(x\otimes1_{\cF_E})\bv^{(\infty)*}(T_\xi\otimes1_M)\bv^{(\infty)}\eta 
		\\={}&
		(x\otimes1_{\cF_E})\bv^{*}(\xi\otimes\eta) , 
	\end{align*}
	which shows $x\in D_0$. 
	Hence $\Theta\colon D_0\to D^{\gamma}$ is surjective and thus $*$-isomorphic. 
	Note that we have not used either (i) or (ii) so far. 
	
	Clearly, $D_0\cong\bC$ if $D\cong\bC$, which shows that (i) implies (ii). Conversely, we assume $D_0\cong \bC$. 
	Suppose that for some $0\neq m\in \bZ$, there is non-zero $w\in D$ with $\gamma_z(w)=z^{-m}w$ for all $z\in \bT$. 
	By considering $w^*$ if necessary, we may assume $m\geq 0$. 
	As in \cref{rem:irroutfull} (2), we see from $w^*w, ww^*\in D^{\gamma}=\bC1_D$ that $\|w\|^{-1}w\in \cU(D)$, which can be identified as an $(A_\infty,A_\infty)$-bilinear unitary in $\cU\cL(M\otimes_A A_\infty,M\otimes_A X_{-m})$ via \cref{eq:thm:simpleTPcorr1}. 
	But this contradicts the fact that $M\otimes X_{-m}$ is not full when $m\geq 1$. 
	Therefore we obtain $D=D^{\gamma}\cong\bC$, which shows that (ii) implies (i). 
\end{proof}

\begin{thm}\label{thm:tensorKirch}
	Let $\cC$ be a countable unitary tensor category and $(A,\alpha,\fu)$ be a non-zero separable $\cC$-C*-algebra. Then, there is a separable simple purely infinite C*-algebra $B$ with an outer $\cC$-action and a non-degenerate injective $\cC$-$*$-homomorphism $(\iota,\bu)\colon A\to B$ giving a $\KK^\cC$-equivalence. 
	If moreover $A$ is nuclear, we can arrange $B$ to be nuclear. 
\end{thm}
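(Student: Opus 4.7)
The strategy is to apply the Toeplitz--Pimsner construction twice: first to secure simplicity and the $\KK^\cC$-equivalence as in \cref{prop:tensorKirch}, and second to upgrade the resulting $\cC$-action to an outer one via \cref{thm:simpleTPcorr}. Between the two passes, the simple Kirchberg algebra produced by the first step must be unitalised to fall within the hypotheses of \cref{notn:outer}; this is where Zhang's dichotomy enters.

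\textbf{First Pimsner and unitalisation.} Apply \cref{prop:tensorKirch} to $(A,\alpha,\fu)$ to obtain a separable simple purely infinite $\cC$-C*-algebra $B_1$, nuclear if $A$ is, together with a non-degenerate injective $\cC$-$*$-homomorphism $A\to B_1$ realising a $\KK^\cC$-equivalence. If $A$ is unital then so is $B_1$ and set $A':=B_1$; otherwise Zhang's dichotomy forces $B_1$ to be stable, and let $A':=pB_1 p$ for a non-zero projection $p\in B_1$. In either case $A'$ is unital, simple, purely infinite, Morita equivalent to $B_1$, and carries a transported $\cC$-action $(\alpha',\fu')$ via the imprimitivity $(B_1,A')$-bimodule $B_1 p$ (cf.~\cite[Lemma 2.26, Lemma 2.31]{Arano-Kitamura-Kubota:tensorKK}). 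In particular $A'$ has no finite-dimensional quotient, and the chain $A\to B_1 \sim_{\mathrm{Morita}} A'$ realises a $\KK^\cC$-equivalence.

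\textbf{Second Pimsner.} Fix a faithful non-degenerate $*$-representation $\phi\colon A'\to\cB(\cH)$ with $\phi(A')\cap\cK(\cH)=0$, and set $E:=\ell^2(\cC;\cH\otimes_\bC A')$ with the regular $\cC$-equivariant structure of \cref{prop:regularcorr}. Then $E$ decomposes as $\bigoplus_{\pi\in\Irr\cC}\cH\otimes_\bC(\alpha'(\pi)\otimes_{A'}\alpha'(\overline{\pi}))$, with each summand full over $A'$ by simplicity, so the hypotheses of \cref{notn:outer} are satisfied. By \cref{thm:equivTPalg,prop:KKCequiv}, $B_2:=\cT_E$ carries a $\cC$-action $(\beta,\fv)$ and the canonical inclusion $A'\to B_2$ is a $\KK^\cC$-equivalence; by \cref{cor:simpleCPalg}, $B_2$ is simple. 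Setting $B:=B_2\otimes\cO_\infty$ (tensoring additionally with $\bK$ when $A$ is non-unital), $B$ is a Kirchberg algebra with $\cC$-action, nuclear whenever $A$ is, and the required non-degenerate injective $\cC$-$*$-homomorphism $(\iota,\bu)\colon A\to B$ is obtained by chaining $A\to B_1$, the Morita identification $B_1\otimes\bK\cong A'\otimes\bK$ (or the identity when $A$ is unital), and the second-Pimsner inclusion tensored with identities on $\cO_\infty\otimes\bK$. Each step is a $\KK^\cC$-equivalence, hence so is $\iota$.

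\textbf{Outerness.} By \cref{rem:irroutfull}(3), outerness of $(\beta,\fv)$ on $B_2$ reduces to the vanishing of all non-scalar $(B_2,B_2)$-bilinear adjointable morphisms between $\beta(\pi)$ and $\beta(\varpi)$ for $\pi,\varpi\in\Irr\cC$. Applying \cref{thm:simpleTPcorr} to each $\beta(\pi)=\alpha'(\pi)^\cT$, and applying it to the off-diagonal corner of $\alpha'(\pi)\oplus\alpha'(\varpi)$ for the cross-case, reduces this to the statement that any $(A',A')$-bilinear adjointable $x\colon\alpha'(\pi)\to\alpha'(\varpi)$ intertwining the regular unitaries $\bU_\pi$ and $\bU_\varpi$ of \cref{prop:regularcorr} must vanish unless $\pi=\varpi$, in which case it must be scalar. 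The main obstacle is this intertwiner analysis: the regular half-braiding on $\ell^2(\cC;\cH\otimes_\bC A')$ is engineered so that compatibility with $\bU_\tau$ for every $\tau\in\Irr\cC$ forces $x$ to respect the $\Irr\cC$-grading of $E$ and to act as a scalar on each summand, by a direct computation exploiting the independence of $\bU_{\tau,\pi,\varpi}$ from the chosen orthonormal basis of $\Hom_\cC(\varpi,\tau\otimes\pi)$ together with the rigidity and semisimplicity of $\cC$. Once this is verified, $(\beta,\fv)$ is outer, and outerness is preserved by tensoring with $\cO_\infty\otimes\bK$ equipped with trivial $\cC$-actions, yielding the outer $\cC$-action on $B$ as required.
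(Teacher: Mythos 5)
Your plan follows the paper's own proof of \cref{thm:tensorKirch} quite closely: reduce to a simple purely infinite algebra via \cref{prop:tensorKirch}, unitalise through Zhang's dichotomy and a full corner when necessary, run the regular-correspondence Toeplitz--Pimsner construction a second time so that \cref{notn:outer} applies, and extract outerness from \cref{thm:simpleTPcorr}. The structure is right, but the heart of the outerness argument is exactly what you defer as ``the main obstacle'': the claim that the only $(A',A')$-bilinear adjointable $x\colon\alpha'(\pi)\to\alpha'(\varpi)$ with $\bU_\varpi(x\otimes_{A'}1_E)=(1_E\otimes_{A'}x)\bU_\pi$ are $\delta_{\pi,\varpi}\bC1$. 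Your heuristic (compatibility with every $\bU_\tau$ forces $x$ to respect the grading and be scalar) is not how the argument runs and does not by itself constitute a proof. The paper's computation isolates the $\mathbbm{1}_\cC$-summand of $E$, on which $\bU_\pi$ acts by $1\otimes\|R_\pi\|^{-1}\fu_{\overline{\pi},\pi}^*\alpha'(R_\pi)$: for $\pi=\varpi$ one shows any nonzero $x$ in the commutant generates all of $\alpha'(\pi)$ as a subbimodule of $E\otimes_{A'}\alpha'(\pi)$, so two orthogonal elements cannot coexist and the commutant is $\bC$; for $\pi\neq\varpi$ one compares the images of that summand under the two sides of the intertwining relation and finds them orthogonal because $\mathbbm{1}_\cC\not\leq\varpi\otimes\overline{\pi}$. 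Some such argument is indispensable.

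Two further points. The cross-case $\pi\neq\varpi$ (injectivity of the action) is not literally an application of \cref{thm:simpleTPcorr}: applied to $M=\alpha'(\pi)\oplus\alpha'(\varpi)$ that proposition only asserts that the commutant $D$ is $\bC$ iff $D_0$ is $\bC$, and here both contain $\bC^2$, so it yields nothing directly. One must rerun its proof for the off-diagonal corner --- decompose a nonzero intertwiner into gauge-homogeneous components, use the already-established irreducibility to see each nonzero component is a unitary up to scalar, exclude nonzero degree by the fullness obstruction, and then invoke $\Theta\colon D_0\cong D^\gamma$ to descend to an $(A',A')$-bilinear intertwiner compatible with $\bU$, which the computation above kills; the paper writes this out as a separate argument. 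Finally, a bookkeeping slip: $\alpha'(\pi)\otimes_{A'}(\cH\otimes_\bC A')\otimes_{A'}\alpha'(\overline{\pi})$ is $(\alpha'(\pi)\otimes_{A'}\cH)\otimes_\bC\alpha'(\overline{\pi})$, not $\cH\otimes_\bC(\alpha'(\pi)\otimes_{A'}\alpha'(\overline{\pi}))$; it is with $\cH_\lambda=\alpha'(\pi)\otimes_{A'}\cH$ and $N_\lambda=\alpha'(\overline{\pi})$ that \cref{notn:outer} is met, and one should note that $(\alpha'_\pi\otimes1)(A')\cap\cK(\alpha'(\pi)\otimes_{A'}\cH)=0$ follows from $\phi(A')\cap\cK(\cH)=0$ because $\alpha'(\pi)$ is finitely generated projective over the unital algebra $A'$.
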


\begin{proof}
	By \cref{prop:tensorKirch}, we may further assume that $A$ is simple and purely infinite. 
	
	First, suppose that $A$ is unital. 
	Then, for a unital faithful $*$-representation $A\to\cB(\cH)$ on a Hilbert space $\cH$, the $\cC$-$(A,A)$-correspondence $E:=\ell^2(\cC;\cH^{\oplus\infty}\otimes_\bC A)$ satisfies the situation of \cref{notn:outer} by setting $((\cH_{\lambda},N_{\lambda}) \mid \lambda\in\Lambda) = ((\alpha(\pi)\otimes_A\cH^{\oplus\infty},\alpha(\overline{\pi}))\mid \pi\in\Irr\cC)$. 
	Then, as in the proof of \cref{prop:tensorKirch}, we see (i) and (ii) in \cref{cor:simpleCPalg} and thus the simplicity of $\cT_E$. 
	The unital inclusion $A\subset \cT_E$ is a $\KK^\cC$-equivalence by \cref{prop:KKCequiv}. 
	
	Using $\bU$ from the proof of \cref{prop:regularcorr}, we observe for $\pi,\varpi\in\Irr\cC$, 
	\begin{align}\label{eq:thm:simpleTPcorr2}
		&
		\{ x\in \Hom_{\Cor(A)}(\alpha(\pi),\alpha(\varpi))
		\mid 
		\bU_\varpi(x\otimes_A1_E)=(1_E\otimes_Ax)\bU_\pi \} 
		= \delta_{\pi,\varpi}\bC1 
	\end{align}
	as follows. 
	Indeed, when $\pi=\varpi$ and if \cref{eq:thm:simpleTPcorr2} is false, the left hand side, which is a C*-algebra, must contain non-zero elements $x,y$ with $x^*y=0$. 
	Then, as subbimodules of $E\otimes_A \alpha(\pi)$ we have 
	\begin{align*}
		&
		\cspan x\alpha(\pi) = \bra E,E\ket\otimes_A\cspan x\alpha(\pi)
		= 
		\cspan T_E^*(1_E\otimes x)(E\otimes_A \alpha(\pi))
		\\
		={}& 
		\cspan T_E^*(1_E\otimes x)\bU_\pi (\alpha(\pi)\otimes_A E)
		= 
		\cspan T_E^*\bU_\pi(x\alpha(\pi)\otimes_A E)
		\\
		\supset{}&
		\cspan
		T_{ \alpha(\pi) \otimes_A \cH^{\oplus\infty} \otimes_\bC \alpha(\overline{\pi}) }^*
		\bU_\pi \bigl(
		x\alpha(\pi) \otimes_A \alpha(\mathbbm{1}_\cC) \otimes_A \cH^{\oplus\infty} \otimes_\bC \alpha(\mathbbm{1}_\cC) 
		\bigr) 
		\\
		\overset{\bigstar}{=}{}& 
		\cspan
		T_{ \alpha(\pi) \otimes_A \cH^{\oplus\infty} \otimes_\bC \alpha(\overline{\pi}) }^*
		x\alpha(\pi) \otimes_A \cH^{\oplus\infty} \otimes_\bC \fu^*_{\overline{\pi},\pi}\alpha(R_\pi)\alpha(\mathbbm{1}_\cC) 
		\\
		={}& 
		\cspan
		T_{ \alpha(\overline{\pi}) }^*
		\fu^*_{\overline{\pi},\pi}\alpha(R_\pi)\alpha(\mathbbm{1}_\cC) 
		=\alpha(\pi) \supset y\alpha(\pi), 
	\end{align*}
	where for the last equality we have used the bijectivity of the map $\alpha(\overline{\pi})\ni \xi\mapsto \alpha(R_\pi)^*\fu_{\overline{\pi},\pi}T_\xi\in\cK(\alpha(\pi),A)$ for a standard solution $(R_\pi,\overline{R}_\pi)$ with the inverse $\cK(\alpha(\pi),A)\ni w\mapsto (w\otimes1)\fu_{\pi,\overline{\pi}}^*\alpha(\overline{R}_\pi)\in \alpha(\overline{\pi})$, 
	and for $\overset{\bigstar}{=}$ we have used $\bU_\pi=1\otimes\|R_\pi\|^{-1}\fu_{\overline{\pi},\pi}^*\alpha(R_\pi)$ on the summand $\alpha(\pi)\otimes_A\alpha(\mathbbm{1}_\cC)\otimes_A\cH^{\oplus\infty}\otimes_\bC\alpha(\mathbbm{1}_\cC)$ by invoking the construction of $\bU_\pi$. 
	This contradicts $\bra x\alpha(\pi), y\alpha(\pi)\ket=0$ and $y\neq0$. Thus \cref{eq:thm:simpleTPcorr2} holds when $\pi= \varpi$. 
	When $\pi\neq\varpi$ and if \cref{eq:thm:simpleTPcorr2} is false, there is a non-zero element $x$ in the left hand side. 
	By \cref{eq:thm:simpleTPcorr2} in the previous case, we see that $x^*x$ and $xx^*$ are non-zero scalars, which implies the bijectivity of $x$. 
	Then as direct summands of $E\otimes_A \alpha(\varpi)$ we have 
	\begin{align*}
		&
		(1_E\otimes x)\bU_\pi 
		\bigl(
		\alpha(\pi) \otimes_A \alpha(\overline{\pi}) \otimes_A \cH^{\oplus\infty} \otimes_\bC \alpha(\pi) 
		\bigr)
		\\
		\supset{}& 
		(1_E\otimes x) 
		\bigl(
		\alpha(\mathbbm{1}_\cC) \otimes_A \cH^{\oplus\infty} \otimes_\bC \alpha(\mathbbm{1}_\cC) \otimes_A \alpha(\pi) 
		\bigr)
		\\
		={}&
		\alpha(\mathbbm{1}_\cC) \otimes_A \cH^{\oplus\infty} \otimes_\bC \alpha(\mathbbm{1}_\cC) \otimes_A \alpha(\varpi) , 
	\end{align*}
	which is orthogonal to the summand 
	\begin{align*}
		&
		\bU_\varpi (x\otimes1_E) 
		\bigl(
		\alpha(\pi) \otimes_A \alpha(\overline{\pi}) \otimes_A \cH^{\oplus\infty} \otimes_\bC \alpha(\pi) 
		\bigr)
		\\={}&
		\bU_\varpi \bigl( \alpha(\varpi) \otimes_A \alpha(\overline{\pi}) \otimes_A \cH^{\oplus\infty} \otimes_\bC \alpha(\pi) \bigr) 
		\\={}&
		\bigoplus_{\varpi\otimes\overline{\pi}\geq \tau\in\Irr\cC} \alpha(\tau) \otimes_A \cH^{\oplus\infty} \otimes_\bC \alpha(\overline{\tau})\otimes_A \alpha(\varpi) , 
	\end{align*}
	since $\mathbbm{1}_\cC \not\leq \varpi\otimes\overline{\pi}$. 
	Therefore we get a contradiction to obtain \cref{eq:thm:simpleTPcorr2} for all $\pi,\varpi\in\Irr\cC$. 
	
	Thanks to \cref{thm:simpleTPcorr}, it follows that the $\cC$-action on $\cT_E$ is irreducible. 
	We show the injectivity. 
	We suppose the existence of $\pi,\varpi\in\Irr\cC$ with $\pi\neq \varpi$ such that $\Hom_{\Cor(\cT_E)}(\alpha^\cT(\pi), \alpha^\cT(\varpi))\neq 0$ and deduce a contradiction. 
	
	We write $\gamma$ for the gauge actions from \cref{notn:gauge} with $M:=\alpha(\pi\oplus\varpi)$. 
	Since $\Hom_{\Cor(\cT_E)}(\alpha^\cT(\pi),\alpha^\cT(\varpi))$ inherits the $\bT$-action $\gamma$ on $\cL(\alpha^\cT(\pi\oplus\varpi),\alpha^\cT(\pi\oplus\varpi))$ by taking the commutant $(\alpha^\cT_{\pi\oplus\varpi}(\cT_E)\otimes1)'$ and looking at the corner, 
	the spectral decomposition as in \cref{notn:gauge} (3) shows that there exists $u\in\Hom_{\Cor(\cT_E)}(\alpha(\pi)\otimes_A\cT_E, \alpha(\varpi)\otimes_A\cT_E)\cong \Hom_{\Cor(\cT_E)}(\alpha^\cT(\pi),\alpha^\cT(\varpi))$ and $m\in \bZ$ such that $0\neq u =z^m\gamma_z(u)$ for all $z\in\bT$. 
	By considering $\fu^*$ and exchanging $\pi$ and $\varpi$, we may assume $m\geq 0$. 
	By \cref{rem:irroutfull} (2), we may assume that $u$ is a unitary by considering $\|u\|^{-1}u$. 
	Using \cref{eq:thm:simpleTPcorr1}, we see that $u|_{\alpha(\pi)\otimes_AA_\infty}\in \cL(\alpha(\pi)\otimes_AA_\infty, \alpha(\varpi)\otimes_AX_{-m})$ is a unitary, 
	but this is impossible if $m\geq 1$ because the domain of $u|_{\alpha(\pi)\otimes_AA_\infty}$ is full while its codomain is not. 
	Thus $u$ is invariant under $\gamma$. 
	
	Thanks to the $*$-isomorphism $\Theta\colon D_0\cong D^{\gamma}$ in the proof of \cref{thm:simpleTPcorr} for $(M,\bv):=(\alpha(\pi)\oplus\alpha(\varpi),\bU)$, we see $u|_{\alpha(\pi)\otimes_AA_\infty}=v\otimes_A1_{A_\infty}$ for a uniquely determined $(A,A)$-bilinear unitary $v\in\cL(\alpha(\pi),\alpha(\varpi))$ 
	satisfying $\bU_\varpi(v\otimes 1_E)=(1_E\otimes v)\bU_\pi$. 
	Since \cref{eq:thm:simpleTPcorr2} forces $v=0$, we get a desired contradiction. 
	Now it follows that $(\alpha^\cT,\fu^\cT)$ is outer when $A$ is unital. 
	
	Finally, if $A$ is not unital, then $A$ is stable by Zhang's dichotomy~\cite{Zhang1992:certainI}, and thus $A\cong \bK\otimes pAp$ for some non-zero projection $p\in A$. We apply the argument above to the unital separable simple purely infinite C*-algebra $pAp$ with the $\cC$-action induced by the imprimitivity bimodule $Ap$ and obtain $(\iota,\bu)\colon pAp\to\cT_E$. 
	Then $A\cong \bK\otimes pAp\xto{\id_\bK\otimes(\iota,\bu)} \bK\otimes\cT_E$ is a non-degenerate injective $*$-homomorphism giving a $\KK^\cC$-equivalence, where the $\cC$-action on $\bK\otimes\cT_E$ is outer by \cref{lem:irrbimodtensor} below. 
\end{proof}

\begin{lem}\label{lem:irrbimodtensor}
	Let $A$, $B$ be C*-algebras, and $E=(E,\phi)$, $E_i=(E_i,\phi_i)$ for $i=1,2$ be non-degenerate $(A,B)$-correspondences. 
	Then for any non-zero simple C*-algebra $D$, we have the following. 
	\begin{enumerate}
		\item
		The $(A\otimes D, B\otimes D)$-correspondence $E\boxtimes D$ is irreducible if and only if $E$ is irreducible. 
		Here, $\boxtimes$ indicates the exterior tensor product. 
		\item
		There is no non-zero $(A\otimes D,B\otimes D)$-bilinear adjointable operator in $\cL(E_1\boxtimes D,E_2\boxtimes D)$ if and only if there is no non-zero $(A,B)$-bilinear adjointable operator in $\cL(E_1,E_2)$. 
	\end{enumerate}
\end{lem}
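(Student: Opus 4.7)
The plan is to reduce both parts to one structural claim: every $(A\otimes D,\,B\otimes D)$-bilinear adjointable $T \in \cL(E_1\boxtimes D,\,E_2\boxtimes D)$ has the form $T = S \otimes 1_D$ for a uniquely determined $(A,B)$-bilinear $S \in \cL(E_1,E_2)$. The easy halves of (1) and (2) are handled by the inverse assignment: $S \mapsto S \otimes 1_D$ sends $(A,B)$-bilinear operators to $(A \otimes D, B \otimes D)$-bilinear ones, preserving non-vanishing and non-scalarity. Granting the structural claim, the non-trivial directions follow at once: in (2) a non-zero $T$ yields a non-zero $S$, and in (1) irreducibility of $E$ (equivalently, that the only bilinear endomorphism of $E$ is a scalar) forces the corresponding $S$ to be a scalar, whence so is $T$.

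To establish the structural claim, I would first use approximate units of $A$ and $B$ together with non-degeneracy of the correspondences to promote bilinearity of $T$ into commutation with the left and right actions of $1\otimes d$ for every $d\in D$, so that $T$ becomes a $(D,D)$-bimodule map on the nose. For fixed $\xi\in E_1$, $\eta\in E_2$, and a state $\omega$ on $B$, I would then consider the composite
\[
D \xto{\xi\otimes -} E_1\boxtimes D \xto{T} E_2\boxtimes D \xto{\langle \eta\otimes 1,\,\cdot\,\rangle} B\otimes D \xto{\omega\otimes\id} D ,
\]
which, by a direct chase using the two $D$-equivariances of $T$, is a $(D,D)$-bimodule endomorphism of $D$. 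Simplicity of $D$ rules out $(D,D)$-bimodule endomorphisms of $D$ other than scalar multiplications: for unital $D$ this is immediate from $\cZ D = \bC$, and the $\sigma$-unital case reduces via approximate units to the fact that $\cZ M(D) = \bC$. Collecting the resulting scalar across varying $\omega$, $\eta$, and $\xi$, and invoking a slice-map argument inside $B\otimes D$ to conclude that $\langle \eta\otimes 1,\, T(\xi\otimes 1)\rangle \in B\otimes 1_D$, one produces a $B$-valued sesquilinear form on $E_2\times E_1$ whose associated operator $S\in\cL(E_1,E_2)$ is $(A,B)$-bilinear, adjointable, and satisfies $T = S\otimes 1_D$ by right $D$-linearity.

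The anticipated main obstacle is the non-unital case of $D$, where \textquotedblleft{}$\xi\otimes 1$\textquotedblright{} is not available and must be replaced by $\xi\otimes e_\lambda$ for an approximate unit $(e_\lambda)\subset D$; the ensuing limits then need to be interpreted strictly inside the multiplier algebra of $B\otimes D$, and the slice-map identification adapted accordingly. The unital case is a direct computation along the lines above, and the non-unital case reduces to it after this bookkeeping, which I expect to introduce no fundamental new difficulty.
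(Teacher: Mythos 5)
Your argument is correct in substance but organized quite differently from the paper's. The paper disposes of both parts with a short slice-map argument over the factor $D$: for $x$ in the relative commutant of $\phi(A)\otimes D$, every slice $(\id\otimes f)(x)$ with $f$ a state on $D$ lies in $\phi(A)'\cap\cL(E)=\bC$, whence $x\in 1_E\otimes(D'\cap\cM(D))=\bC$, and part (2) is handled by the same slicing. You instead slice over $B$ after pairing with vectors, use the simplicity of $D$ first (via $\cZ\cM(D)=\bC$ applied to bounded $(D,D)$-bimodule endomorphisms of $D$, which is the right fact) to prove the stronger structural statement that every bilinear $T$ equals $S\otimes 1_D$, and only then invoke irreducibility of $E$; so the two uses of hypotheses occur in the opposite order. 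This buys a unified treatment of (1) and (2) and an intermediate result of independent interest, at the cost of more bookkeeping, especially for non-unital $D$. One step you should tighten: a bounded $B$-sesquilinear form on Hilbert C*-modules is not automatically implemented by an adjointable operator, so you cannot simply speak of "the associated operator" $S$. Instead, fix a state $f$ on $D$ and define $S\xi:=\Psi_f\bigl(T(\xi\otimes 1)\bigr)$, where $\Psi_f\colon E_2\boxtimes D\to E_2$ is the contractive slice map determined by $\zeta\otimes d\mapsto f(d)\zeta$; then $\langle\eta,S\xi\rangle=b(\eta,\xi)$ by your identity $\langle\eta\otimes 1,T(\xi\otimes d)\rangle=b(\eta,\xi)\otimes d$, adjointability of $S$ follows by running the same construction on $T^*$, and $T=S\otimes 1_D$ because both operators have the same inner products against the total set of elementary tensors. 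With that repair (and the strict-topology adjustments you already flag for non-unital $D$), the proof goes through.
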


\begin{proof}
	Since $(\phi(A)'\cap\cL(E))\otimes1_D\subset (\phi(A)\otimes D)'\cap\cL(E\boxtimes D)$, we see that if $E\boxtimes D$ is irreducible, so is $E$. 
	Conversely, the irreducibility of $E$ implies the irreducibility of $E\boxtimes D$ by a slicing argument. 
	Indeed, any $x\in (\phi(A)\otimes D)'\cap\cL(E\boxtimes D)$ satisfies $(\id\otimes f)(x)\in \phi(A)'\cap \cL(E)=\bC$ for all states $f\colon D\to \bC$ and thus $x\in 1_E\otimes (D'\cap \cM(D))=\bC$. 
	This shows (1). 
	
	As for (2), again the slicing argument shows that the absence of non-zero adjointable bilinear operators in $\cL(E_1,E_2)$ implies the absence of such operators in $\cL(E_1\boxtimes D,E_2\boxtimes D)$, while the converse is easy. 
\end{proof}

\subsection{First application}\label{ssec:outensorO2}
It is observed that a $3$-cocycle twist of a countable group always admits an action on $\cO_2$ \cite[Theorem 2.4]{Izumi2023:Gkernels}. Also, it is certainly known to the experts that every fusion category has an action on $\cO_2$ as a consequence of Ocneanu's compactness argument. 
From \cref{thm:tensorKirch}, we obtain the generalization of these results for arbitrary countable unitary tensor categories. 

\begin{cor}\label{thm:outensorO2}
	Every countable unitary tensor category $\cC$ admits an outer action on $\cO_2$, or equivalently, a fully faithful unitary tensor functor $\cC\to\Cor(\cO_2)$. 
\end{cor}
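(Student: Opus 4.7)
The plan is to apply \cref{thm:tensorKirch} to a starting $\cC$-C*-algebra whose $\K$-theory vanishes, then identify the resulting Kirchberg algebra with $\cO_2$ via the Kirchberg--Phillips classification.

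First I would produce a non-zero separable unital nuclear $\cC$-C*-algebra $A_0$. For a unitary fusion category $\cC$ this is standard, via Ocneanu's compactness argument \cite{Ocneanu:quantized} alluded to in the introduction, which supplies an action on a unital AF algebra. For a general countable unitary tensor category, one writes $\cC$ as an increasing union $\bigcup_n\cC_n$ of full unitary fusion subcategories generated by finitely many objects at a time (possible because $\cC$ is countable), chooses compatible AF $\cC_n$-actions, and passes to an inductive limit, producing a separable unital AF $\cC$-algebra $A_0$.

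Next I would set $A:=A_0\otimes\cO_2$ and equip it with the $\cC$-action $\pi\mapsto\alpha_{A_0}(\pi)\otimes\cO_2$ obtained by exterior tensor product of the original $(A_0,A_0)$-correspondences with the identity $(\cO_2,\cO_2)$-correspondence, together with structure unitaries $\fu^{A_0}_{\pi,\varpi}\otimes 1_{\cO_2}$. Then $A$ is a separable unital nuclear simple purely infinite $\cC$-C*-algebra. Since $A_0$ is AF (hence in the UCT class) and $\K_*(\cO_2)=0$, the Künneth formula gives $\K_*(A)=0$.

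Now apply \cref{thm:tensorKirch} to $(A,\alpha,\fu)$. The output is a separable nuclear simple purely infinite C*-algebra $B$ equipped with an outer $\cC$-action together with a non-degenerate injective $\cC$-$*$-homomorphism $(\iota,\bu)\colon A\to B$ that realizes a $\KK^\cC$-equivalence. Since $A$ is unital, the unital branch of the proof of \cref{thm:tensorKirch} delivers $B=\cT_E$, which is again unital. The underlying $\KK$-equivalence forces $\K_*(B)\cong\K_*(A)=0$, so $B$ is a unital nuclear Kirchberg algebra with trivial K-theory. By Kirchberg--Phillips classification, $B\cong\cO_2$, and the outer $\cC$-action transfers, giving an outer $\cC$-action on $\cO_2$; equivalently, by \cref{rem:irroutfull}(3), a fully faithful unitary tensor functor $\cC\to\Cor(\cO_2)$.

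The main obstacle is the very first step: producing a nuclear $\cC$-C*-algebra for arbitrary countable $\cC$. For fusion $\cC$ this is classical subfactor theory, but the general countable case requires an inductive limit argument to organize compatible Ocneanu-type $\cC_n$-actions. All subsequent steps are formal consequences of \cref{thm:tensorKirch}, Künneth, and Kirchberg--Phillips.
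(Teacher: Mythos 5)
There is a genuine gap at the very first step, and it is the step you yourself flag as the main obstacle. Your claim that a countable unitary tensor category $\cC$ can be written as an increasing union of full unitary fusion subcategories generated by finitely many objects is false in general: the full tensor subcategory generated by finitely many objects need not have finitely many irreducibles. For instance, $\Rep \operatorname{SU}_q(2)$ is countable and generated by its fundamental representation, yet it is not a union of fusion subcategories, so your inductive-limit construction of $A_0$ never gets off the ground for such $\cC$. Even in the fusion case, invoking Ocneanu's compactness argument imports a substantial amount of subfactor machinery, and the compatibility of the $\cC_n$-actions along the union is itself a nontrivial coherence problem that you do not address. The paper avoids all of this: it takes as input the completely elementary canonical $\cC$-action on the commutative C*-algebra $c_0(\Irr\cC)$ coming from the right regular module structure (\cref{eg:LRtranslation} (3)), which exists for every countable unitary tensor category with no further hypotheses.

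Once a starting nuclear $\cC$-C*-algebra is in hand, the remainder of your argument is sound and close to the paper's: you tensor with $\cO_2$ before applying \cref{thm:tensorKirch} and then use K\"unneth plus Kirchberg--Phillips to identify the unital output with $\cO_2$ on the nose, whereas the paper applies \cref{thm:tensorKirch} first, tensors with $\cO_2$ afterwards (using \cref{lem:irrbimodtensor} to preserve outerness), and settles for a Morita equivalence $\Cor(A)\simeq\Cor(\cO_2)$, which suffices since the statement only concerns a fully faithful unitary tensor functor into $\Cor(\cO_2)$. Your ordering has the small advantage of not needing \cref{lem:irrbimodtensor}, but note that if you repair step one by taking $A_0=c_0(\Irr\cC)$ you lose unitality when $\Irr\cC$ is infinite, so you would then need to pass to a full corner or fall back on the Morita-equivalence formulation anyway.
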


\begin{proof}
	Since $\cC$ acts on $c_0(\Irr\cC)$ (see \cref{eg:LRtranslation} (3) below, for example), there is an outer $\cC$-action on some Kirchberg algebra in the UCT class. 
	By tensoring $\cO_2$, we get an outer $\cC$-action on a Kirchberg algebra $A$ Morita equivalent to $\cO_2$ with the aid of \cref{lem:irrbimodtensor}. 
	This yields a fully faithful unitary tensor functor $\cC\to\Cor(A)\simeq\Cor(\cO_2)$. 
\end{proof}

Note that for an action $(\alpha,\fu)$ of a unitary tensor category $\cC$ on a unital simple C*-algebra $A$, associated with each non-zero object $\pi\in\cC$ we have a unital inclusion $A\to \cK(\alpha(\pi))$ via the left action $\alpha_\pi$. 
Via $(\alpha,\fu)$, the standard solution of the conjugate equation for $\pi$ induces a solution of the conjugate equation for $\alpha(\pi)$ in $\Cor(A)$ and then yields a conditional expectation $\cK(\alpha(\pi))\to A$ of finite Watatani index $(\dim_\cC\pi)^2$ (see \cite[Lemma 1.26]{Kajiwara-Watatani2000:Jones} and \cite[Theorems 4.4, 4.13]{Kajiwara-Pinzari-Watatani2004:Jones}).

\begin{rem}\label{rem:prf:main1}
	Let $\cC$ be a countable unitary tensor category $\cC$ and $(\alpha,\fu)$ be an outer $\cC$-action on $\cO_2$. 
	\begin{enumerate}
		\item
		If $\cC$ is finitely generated in the sense that there exists $\pi\in\cC$ such that every $\varpi\in\Irr\cC$ satisfies $\varpi\leq \pi^{\otimes n}$ for some $n\in\bZ_{\geq 0}$, 
		there is a unital inclusion $\iota\colon \cO_2\hookrightarrow \cO_2$ with a conditional expectation $E$ of finite Watatani index~\cite{Watatani:index} such that the idempotent-complete full tensor subcategory of $\Cor(\cO_2)$ generated by the $(\cO_2,\cO_2)$-correspondence given by $\cO_2$ with the inner product coming from $E$ is unitarily monoidally equivalent to $\cC$. 
		See \cite[Remark 8.2]{Popa-Vaes2015:representation} for a similar statement on $L(\mathbb{F}_\infty)$ and also \cite{Hartglass-HernandezPalomares2020:realizations} for its variant for exact monotracial simple C*-algebras (which might depend on $\cC$). 
		Indeed, for $\pi\in\cC$ as above, we let $\iota:=\alpha_{\pi\oplus\mathbbm{1}}\colon \cO_2\hookrightarrow\cK(\alpha(\pi\oplus\mathbbm{1}_\cC))$, which has a conditional expectation of finite Watatani index, and see that $\cK(\alpha(\pi\oplus\mathbbm{1}_\cC))$ is Morita equivalent and thus $*$-isomorphic to $\cO_2$. 
		In particular, this is the case when $\cC$ has property (T) thanks to \cite[Proposition 5.4]{Popa-Vaes2015:representation}. 
		See \cite{Arano:cpxssDrinfeld,Arano-Vaes:C*-tensor,Vaes-Valvekens:property} for examples of $\cC$ with property (T) beyond groups. 
		\item
		It is not hard to see that $(\cO_2\otimes\cO_2,\alpha\otimes\id_{\cO_2},\fu\otimes1_{\cO_2})$ is $\KK^\cC$-equivalent to $0$ by using \cite[Theorem 3.19]{Arano-Kitamura-Kubota:tensorKK}. Thus we see \cref{main:1} in the remaining case of $A=0$ with the aid of \cref{lem:irrbimodtensor}. 
	\end{enumerate}
\end{rem}

Note that a discrete quantum group always acts trivially on arbitrary C*-algebras, though the corresponding action of the tensor category is typically not injective or irreducible. 
However, we can produce outer actions from such trivial actions using \cref{thm:tensorKirch}. 

\begin{cor}\label{prop:outensorOinfty}
	For any compact quantum group $G$ with countable $\Irr(\Rep G)$, there is an outer $\Rep G$-action on any Kirchberg algebra. 
\end{cor}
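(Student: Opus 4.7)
The plan is to first produce an outer $\Rep G$-action on $\cO_\infty$ and then transport it to every Kirchberg algebra $K$ by using Kirchberg's absorption isomorphism $K\cong K\otimes\cO_\infty$.

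First I would equip $\cO_\infty$ with the trivial $\Rep G$-action, which exists because $\Rep G$ admits the forgetful unitary tensor functor to $\Hilb$ (as noted just above the statement, any compact quantum group acts trivially on an arbitrary C*-algebra). The resulting $\Rep G$-C*-algebra is non-zero, separable and nuclear, so \cref{thm:tensorKirch} supplies a nuclear Kirchberg algebra $B$ carrying an outer $\Rep G$-action together with a non-degenerate injective $\Rep G$-$*$-homomorphism $(\iota,\bu)\colon\cO_\infty\to B$ that is a $\KK^{\Rep G}$-equivalence. In the unital input case, the Pimsner construction underlying \cref{thm:tensorKirch} makes $1_{\cO_\infty}$ act as the identity on the Fock module $\cF_E$, so $\iota$ is unital.

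Next I would forget the $\Rep G$-equivariance and invoke the Kirchberg--Phillips classification. The UCT class is closed under $\KK$-equivalence and contains $\cO_\infty$, so $B$ is a unital UCT Kirchberg algebra with $(\K_0,[1],\K_1)=(\bZ,1,0)$ (the unitality of $\iota$ gives $\iota_*[1_{\cO_\infty}]=[1_B]$), matching $\cO_\infty$. Kirchberg--Phillips then produces a $*$-isomorphism $B\cong\cO_\infty$, along which the outer $\Rep G$-action on $B$ transports to an outer $\Rep G$-action $(\beta,\fv)$ on $\cO_\infty$.

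For the general case, let $K$ be a Kirchberg algebra. Kirchberg's absorption theorem identifies $K\cong K\otimes\cO_\infty$, and I would form the $\Rep G$-action on $K\otimes\cO_\infty$ sending $\pi$ to the exterior tensor $K\boxtimes\beta(\pi)$ with coherence unitaries $1_K\boxtimes\fv_{\pi,\varpi}$. Outerness is then a direct application of \cref{lem:irrbimodtensor} with $D=K$ (which is simple): part (1) transfers irreducibility of each $\beta(\pi)$ to $K\boxtimes\beta(\pi)$, while part (2) rules out non-zero bilinear adjointable operators between $K\boxtimes\beta(\pi)$ and $K\boxtimes\beta(\varpi)$ for $\pi\neq\varpi$ using the outerness of $(\beta,\fv)$ through \cref{rem:irroutfull}(3). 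Transporting along $K\otimes\cO_\infty\cong K$ yields the desired outer $\Rep G$-action on $K$.

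The main obstacle is the middle step of identifying $B$ with $\cO_\infty$ via Kirchberg--Phillips; this requires both the unitality of the Pimsner inclusion $\iota$ and the inheritance of UCT. Once this is in place, the passage from $\cO_\infty$ to an arbitrary Kirchberg algebra via absorption and \cref{lem:irrbimodtensor} is essentially formal.
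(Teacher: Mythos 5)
Your proof is correct and follows essentially the same route as the paper: reduce to $\cO_\infty$ via the $\cO_\infty$-absorption theorem together with \cref{lem:irrbimodtensor}, feed the fibre-functor ("trivial") action into \cref{thm:tensorKirch}, and identify the resulting unital UCT Kirchberg algebra with $\cO_\infty$ by Kirchberg--Phillips. The only cosmetic difference is that the paper applies \cref{thm:tensorKirch} to the action on $\bC$ rather than on $\cO_\infty$, which makes the $\KK$-equivalence with $\bC$ (hence $A\cong\cO_\infty$) immediate.
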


\begin{proof}
	Thanks to the $\cO_\infty$-absorption theorem~\cite{Kirchberg-Phillips:embedding} combined with \cref{lem:irrbimodtensor}, it suffices to show the statement for $\cO_\infty$. 
	By \cref{thm:tensorKirch} applied to the $\Rep G$-action on $\bC$ given by the unitary fibre functor $\Rep G\to \Hilb=\Cor(\bC)$, 
	we obtain an outer $\Rep G$-action on a unital Kirchberg algebra $A$ with the unital embedding $\bC\to A$ being a $\KK$-equivalence. Thus $A\cong\cO_\infty$. 
\end{proof}

In the proof of \cref{thm:outensorO2}, we used the following $\cC$-action, which will be the fundamental source of further constructions of $\cC$-actions also in \cref{sec:fusionmod} and \cref{sec:TLJ}. 
\begin{rem}\label{eg:LRtranslation}
	Consider a countable unitary tensor category $\cC$ and a countable set $I$. 
	Let $\cC^{\rev}$ denote the unitary tensor category $\cC$ whose monoidal structure is reversed. 
	We write $\Hilb_I$ for the semisimple C*-category of finite-dimensional Hilbert $c_0(I)$-modules as objects and adjointable operators as morphisms. For each $i\in I$, we write $e_i\in c_0(I)$ for the minimal projection corresponding to $i$ and $\bC_i$ for the one-dimensional Hilbert $c_0(I)$-module on which $e_i$ acts by $\id$. 
	\begin{enumerate}
		\item
		A structure of a \emph{unitary right $\cC$-module} on $\Hilb_I$ is a bifunctor $\olessthan\colon \Hilb_I\times \cC\to \Hilb_I$ preserving the $*$-linear structures accompanied by a natural family of unitaries $\ass(\tau,\pi,\varpi)\colon (\tau\olessthan\pi)\olessthan\varpi \xto{\sim} \tau\olessthan(\pi\otimes\varpi)$ for $\tau\in\Hilb_I$, $\pi,\varpi\in\cC$ and a natural family of unitaries $\tau\olessthan\mathbbm{1}_\cC\xto{\sim}\tau$ for $\tau\in\Hilb_I$ satisfying the commutativity of the pentagon and triangle diagrams analogous to \cite[(7.2), (7.4)]{Etingof-Gelaki-Nikshych-Ostrik:book}. 
		
		For a given structure of a unitary right $\cC$-module on $\Hilb_I$, we obtain a $\cC$-action $(\alpha,\fu)$ on $c_0(I)$. See \cite[Propositions 2.3, 2.4]{Antoun-Voigt2020:bicolimits}, \cite[Proposition 2.5, Remark 2.2]{Arano-Kitamura-Kubota:tensorKK}, for example. 
		Here, $\alpha$ is given by, for $i,j\in I$ and $\pi,\varpi\in\cC$, 
		\[ e_{i}\alpha(\pi)e_{j} := \Hom_{\Hilb_I}(\bC_j,\bC_i\olessthan\pi), \]
		which has the inner product defined by $\bra \xi,\eta\ket:=\xi^*\eta\in \Hom_{\Hilb_I}(\bC_j,\bC_j)\cong \bC e_j$. 
		\item
		Moreover, for another countable set $J$ and a unitary right $\cC$-module structure $\olessthan'\colon \Hilb_J\times \cC\to \Hilb_J$, 
		a \emph{unitarily $\cC$-linear} functor $F\colon \Hilb_I\to \Hilb_J$ in the sense of a $*$-functor accompanied by a natural family of unitaries $F(\tau\olessthan\pi)\xto{\sim}F(\tau)\olessthan'\pi$ for $\tau\in\Hilb_I$, $\pi\in\cC$ satisfying the commutativity of appropriate diagrams analogous to \cite[(7.6), (7.7)]{Etingof-Gelaki-Nikshych-Ostrik:book}, induces a $\cC$-$(c_0(I),c_0(J))$-correspondence $(E,\bv)$. 
		See \cite[Proposition 2.16, Remark 2.2]{Arano-Kitamura-Kubota:tensorKK}, for example. 
		Here, $E$ is given by, for $i\in I$ and $j\in J$, 
		\[ e_{i}Ee_{j} := F(\bC_i)e_j. \]
		This $\cC$-$(c_0(I),c_0(J))$-correspondence $(E,\bv)$ is always proper thanks to $\dim_\bC F(\bC_i)<\infty$ by the definition of $\Hilb_J$. 
		\item
		In particular, $\cC=\Hilb_{\Irr\cC}$ has a unitary right $\cC^{\rev}\boxtimes\cC$-module structure such that $\tau\olessthan(\pi\boxtimes\varpi)=\pi\otimes\tau\otimes\varpi$ for $\pi\in\Irr\cC^{\rev}$, $\varpi\in\Irr\cC$, and $\tau\in\cC$, which induces a $\cC^{\rev}\boxtimes\cC$-action on $c_0(\Irr\cC)$. 
		This restricts to a $\cC$-action and a $\cC^{\rev}$-action on $c_0(\Irr\cC)$ denoted by $(\rho,\mathfrak{r})$ and $(\lambda,\mathfrak{l})$, respectively. 
		When we regard $c_0(\Irr\cC)$ as a $\cC$-C*-algebra with $(\rho,\mathfrak{r})$, then $\lambda(\pi)$ has a canonical structure of a $\cC$-$(c_0(\Irr\cC),c_0(\Irr\cC))$-correspondence. 
	\end{enumerate}
\end{rem}

\section{K-theoretic realization of modules over fusion rings}\label{sec:fusionmod}
For a semisimple C*-category $\cC$, we write $\bZ[\cC]$ for its Grothendieck group. 
It is a free $\bZ$-module with the basis $( [\pi] \mid \pi\in \Irr\cC )$. 
When $\cC$ is a unitary tensor category, its monoidal structure makes $\bZ[\cC]$ a unital ring by $[\pi] [\varpi] := \sum_{\tau\in\Irr\cC}\dim_\bC\Hom_\cC(\tau,\pi\otimes\varpi) [\tau]$ for $\pi,\varpi\in\Irr\cC$, which is called the fusion ring of $\cC$. 
For a unital commutative ring $R$, we let $R[\cC]:=R\otimes_\bZ\bZ[\cC]$. 
When we consider modules over $R[\cC]$, we will not take into account any compatibility with the involution of $R[\cC]$ induced by the conjugation of $\cC$. 

\begin{rem}\label{rem:fusionmodule}
	Let $\cC$ be a countable unitary tensor category and $(A,\alpha,\fu)$, $(B,\beta,\fv)$ be separable $\cC$-C*-algebras. 
	\begin{enumerate}
		\item
		It is easy to see that for $\pi\in\cC$ the proper $(A,A)$-correspondence $\alpha(\pi)$ induces an element in $\KK(A,A)$, and that this assignment equips $\K_i(A)$ for $i=0,1$ with right $\bZ[\cC]$-module structures by Kasparov products. 
		\item
		For separable $\cC$-C*-algebras $A$ and $B$, any $\mathbf{x}\in\KK^\cC(A,B)$ induces homomorphisms of right $\bZ[\cC]$-modules $\K_i(A)\to \K_i(B)$ for $i=0,1$, which are isomorphic if $\mathbf{x}$ is a $\KK$-equivalence. 
		Indeed, this claim can be easily checked when $\mathbf{x}$ is given by some proper $\cC$-$(A,B)$-correspondence $(E,\phi,\bv)$ with the aid of $[E,\phi,\bv]=[\overline{\phi(A)E},\phi,\bv]\in\KK^\cC(A,B)$, and the general case of $\mathbf{x}$ can be reduced to this case 
		because it holds by \cite[Proof of Theorem 4.24]{Arano-Kitamura-Kubota:tensorKK} that $[\mathbf{x}]=[\mathsf{K}_A]^{-1}\mathbin{\widehat{\otimes}}[\psi,\bu]\mathbin{\widehat{\otimes}}[\mathsf{K}_B]$ for some $\cC$-$*$-homomorphism $(\psi,\bu)$ and some separable proper $\cC$-C*-correspondences $\mathsf{K}_A$, $\mathsf{K}_B$ that give $\KK^\cC$-equivalences. 
	\end{enumerate}
\end{rem}

\begin{rem}\label{rem:LRtranslation}
	We note several compatibilities of modules over fusion rings, which are routine to check. 
	\begin{enumerate}
		\item
		In \cref{eg:LRtranslation} (1), 
		the right $\bZ[\cC]$-module structures on $\bZ[\Hilb_I]\cong \bZ^{\oplus I}$ induced by $\olessthan$ and on $\K_0(c_0(I))\cong \bZ^{\oplus I}$ induced by $\alpha$ as in \cref{rem:fusionmodule} (1) coincide. 
		\item
		In \cref{eg:LRtranslation} (2), 
		the right $\bZ[\cC]$-linear maps $\bZ[\Hilb_I]\to\bZ[\Hilb_J]$ induced by $F$ and $\K_0(c_0(I))\to \K_0(c_0(J))$ induced by $E$ as in \cref{rem:fusionmodule} (2) coincide. 
		\item
		In particular, for the $\cC$-C*-algebra $(c_0(\Irr\cC),\rho,\mathfrak{r})$, we have $\K_0(c_0(\Irr\cC))\cong \bZ[\cC]$ as a right $\bZ[\cC]$-module, and 
		the map $\K_0(c_0(\Irr\cC))\to\K_0(c_0(\Irr\cC))$ induced by $\lambda_\pi$ equals the left multiplication by $[\pi]$ on $\bZ[\cC]$ for all $\pi\in\cC$. 
	\end{enumerate}
\end{rem}

In general, $\bZ[\cC]$-module structures on the $\K$-groups need not lift to a $\cC$-action on a given C*-algebra (see also \cref{eg:liftconstgrp} (2) below). 
In this section, we explore sufficient conditions for such liftability in the purely infinite case. 
In particular, we investigate the following constant $d_\cC$. 
\begin{df}\label{def:liftconst}
	For a countable unitary tensor category $\cC$, we define $d_\cC\in \bZ_{\geq 1}$ to be the smallest natural number $d\in\bZ_{\geq 1}$ with the following property if exists: 
	\begin{itemize}
		\item
		For any countable right $\bZ[\cC]$-modules $M_0$ and $M_1$ on which $d$ acts invertibly, 
		there is a Kirchberg algebra $A$ in the UCT class and a $\cC$-action on $A$ such that $\K_i(A)\cong M_i$ as $\bZ[\cC]$-modules for $i=0,1$. 
	\end{itemize}
\end{df}
Thanks to the Kirchberg--Phillips theorem and \cref{thm:tensorKirch}, if $d_\cC$ exists, for any Kirchberg algebra $A$ in the UCT class with $\bM_{d_\cC^{\infty}}\otimes A\cong A$, 
arbitrary structures of right $\bZ[\cC]$-modules on $\K_0(A)$ and $\K_1(A)$ can be realized by some outer $\cC$-action on $A$. 
Hereafter, we promise $\bM_{1^{\infty}}:=\bC$. 

\subsection{Geometric resolution}\label{ssec:geomrsl}
First, we show the liftability under a homological condition using $\cC$-equivariant geometric resolution. 
Using the following iteration of mapping cones, we build a $\cC$-C*-algebra from a sequence of $\cC$-C*-correspondences implementing a given free resolution of finite length. 

\begin{prop}\label{prop:inductiveMC}
	Let $\cC$ be a countable unitary tensor category and $n\in\bZ_{\geq 1}$. Consider separable $\cC$-C*-algebras $(A_k,\alpha_k,\fu_k)$ with $\K_1(A_k)=0$ for $k=0,1,\cdots,n$ and separable proper $\cC$-$(A_{k-1},A_k)$-correspondences $(E_k,\phi_k,\bv_k)$ for $k=1,\cdots,n$. 
	If the sequence 
	\begin{align*}
		&
		0\to \K_0(A_0)\xto{(E_1,\phi_1)} \K_0(A_1) \to \cdots \to \K_0(A_{n-1})\xto{(E_n,\phi_n)}\K_0(A_n)
	\end{align*}
	is exact, then there is a separable $\cC$-C*-algebra $(D,\delta,\fw)$ with $\K_0(D)\cong \Coker(\K_0(A_{n-1})\xto{(E_n,\phi_n)} \K_0(A_n))$ and $\K_1(D)=0$ as $\bZ[\cC]$-modules. 
	If moreover $A_k$ is nuclear and in the UCT class for all $k$, we can arrange $D$ to be nuclear and in the UCT class. 
\end{prop}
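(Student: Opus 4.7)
The proof proceeds by induction on $n$, using iterated $\cC$-equivariant mapping cones and suspensions.

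For the base case $n=1$, I first invoke the identification in \cref{rem:fusionmodule}(2), which rests on \cite[Theorem 4.24]{Arano-Kitamura-Kubota:tensorKK}, to replace the proper $\cC$-correspondence $(E_1, \phi_1, \bv_1)$ with a $\cC$-equivariant $*$-homomorphism $f_1\colon A_0 \to A_1$ representing the same class in $\KK^\cC(A_0, A_1)$, after possible $\KK^\cC$-equivalent enlargements of the $A_k$'s (these preserve both $\K$-groups and their $\bZ[\cC]$-module structure). Forming the $\cC$-equivariant mapping cone $C_{f_1}$, which carries a canonical $\cC$-action since every datum is $\cC$-equivariant, and setting $D$ to be the suspension $C_0(\bR)\otimes C_{f_1}$, the standard six-term exact sequence applied to $0\to C_0(\bR)\otimes A_1\to C_{f_1}\to A_0\to 0$ yields $\K_0(D)\cong \Coker$ and $\K_1(D)\cong \Ker =0$ of the map induced by $(E_1, \phi_1, \bv_1)$; this sequence is $\bZ[\cC]$-equivariant by naturality of every construction in the $\cC$-action.

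For general $n$, I propose to strengthen the inductive hypothesis so that we simultaneously produce a $\cC$-equivariant morphism $\pi_k\colon A_k \to D_k$ (as a $\KK^\cC$-class, realizable by a $*$-homomorphism after stabilization) inducing the canonical quotient $\K_0(A_k)\twoheadrightarrow \K_0(D_k)\cong \Coker$; for $k=1$ this is the canonical map $A_1\to C_{f_1}$ coming from the mapping cone short exact sequence. For the inductive step from $k-1$ to $k$, the exactness at $\K_0(A_{k-1})$ identifies $\K_0(D_{k-1})\cong \Coker(\K_0(A_{k-2})\to\K_0(A_{k-1}))$ with the image of $\K_0(A_{k-1})$ in $\K_0(A_k)$, so once a $\KK^\cC$-class $g_k\colon D_{k-1}\to A_k$ realizing this injection is produced, the suspension of the mapping cone $C_{g_k}$ gives $D_k$ with $\K_0(D_k)\cong \Coker(\K_0(A_{k-1})\to\K_0(A_k))$ and $\K_1(D_k)=0$ by the six-term sequence, while the canonical map from $A_k$ into $C_{g_k}$ furnishes $\pi_k$. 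Persistence of nuclearity and of the UCT class under $\cC$-equivariant mapping cones, suspensions, and $\KK^\cC$-equivalent modifications is well known, so the parenthetical strengthening follows routinely.

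The main technical obstacle is the construction of the $\KK^\cC$-lift $g_k\colon D_{k-1}\to A_k$: by the universal property of the cofibre in the triangulated category $\KK^\cC$, such a lift exists precisely when the composition $[f_k]\circ[f_{k-1}]$ vanishes in $\KK^\cC(A_{k-2}, A_k)$, whereas the hypothesis only ensures vanishing at the level of $\K$-theory. To bridge this gap, I plan to rectify the chain step by step, at each stage modifying the $A_k$'s by absorbing suitable $\cC$-equivariant mapping cylinder algebras---which are $\KK^\cC$-equivalences preserving the $\K$-theoretic data---so as to render the successive compositions strictly null-homotopic. Carrying out this rectification, which exploits the given $\K$-theoretic vanishing of the compositions to manufacture the required null-homotopies along filtered mapping cylinders, is the technical heart of the argument.
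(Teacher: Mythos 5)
Your base case and your first reduction (replacing the proper correspondences by genuine $\cC$-$*$-homomorphisms up to $\KK^\cC$-equivalent enlargement) are in the spirit of the paper, which does this concretely by tensoring with the cumulative modules $\wt{E}_k:=(\wt{E}_{k-1}\otimes_{\phi_k}E_k)\oplus A_k$ and passing to $B_k:=\cK(\wt{E}_k)$. The problem is the inductive step, and it is exactly where you locate the ``technical heart'': to factor $[f_k]$ through the cofibre $A_{k-1}\to D_{k-1}$ you need $[f_k]\circ[f_{k-1}]=0$ in $\KK^\cC(A_{k-2},A_k)$, and this is \emph{not} implied by the hypothesis. The hypothesis only gives vanishing of the induced map on $\K_0$. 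Even non-equivariantly and under the UCT there is no reason for a class in $\KK(A_{k-2},A_k)$ to vanish just because its $\K$-theoretic shadow does (here $\K_1=0$ happens to kill the $\mathrm{Ext}$ term, but that argument is unavailable in the general separable case and, more fatally, says nothing about $\KK^\cC$: the forgetful map $\KK^\cC\to\KK$ is far from injective, there is no UCT computing $\KK^\cC$ from $\bZ[\cC]$-modules, and indeed much of this paper is about equivariant classes that are invisible to $\K$-theory). So the proposed ``rectification along mapping cylinders to manufacture null-homotopies from the $\K$-theoretic vanishing'' cannot be carried out from the stated hypotheses; this is a genuine gap, not a routine technicality.

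The paper's proof sidesteps the lifting problem entirely. Instead of iterating cofibres in the triangulated category $\KK^\cC$ (which would require the successive compositions to vanish as $\KK^\cC$-classes), it builds in one stroke the algebra
\begin{align*}
	D_k \subset \bigoplus_{j=0}^{k} C_0(0,1]^{\otimes j}\otimes A_j
\end{align*}
of tuples $(a_j)_j$ that are coherent under the compositions $\phi_j\phi_{j-1}\cdots$ whenever a cone coordinate equals $1$. This ``multiple mapping cone'' is defined for an \emph{arbitrary} chain of $\cC$-$*$-homomorphisms --- no vanishing of compositions, in any sense, is required --- and carries a $\cC$-action because it is an iterated fibred sum of $\cC$-$*$-homomorphisms. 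The exactness hypothesis is then used only at the very end, in the six-term exact sequences attached to $0\to C_0(0,1)^{\otimes k}\otimes A_k\to D_k\to D_{k-1}\to 0$, to compute $\K_*(D_n)$ by induction. If you want to keep a cofibre-by-cofibre induction, you would need to add to the hypotheses the vanishing of the compositions in $\KK^\cC$ (or some equivalent lifting data), which is strictly stronger than what the proposition assumes.
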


\begin{proof}
	We define the Hilbert $A_0$-module $\wt{E}_{0}:=A_0$ and the Hilbert $A_{k}$-module $\wt{E}_{k}:=(\wt{E}_{k-1}\otimes_{\phi_{k}}E_{k})\oplus A_{k}$ inductively on $k=1,\cdots,n$, all of which are separable and full. 
	We define $B_k:=\cK(\wt{E}_{k})$ for $k=0,1,\cdots,n$ with the $\cC$-action induced by the imprimitivity $(B_k,A_k)$-bimodule $\wt{E}_{k}$ and 
	see that the $\cC$-$(B_{k-1},B_{k})$-correspondence 
	\[ \bigl( \wt{E}_{k-1}\otimes_{A_{k-1}} (E_k,\phi_k,\bv_k)\otimes_{A_{k}} \cK(\wt{E}_{k},A_{k}) \bigr) \oplus (\cK(\wt{E}_{k},A_{k}),0,0) \]
	is of the form $(B_k,\psi_k,\bw_k)$ for some $\cC$-$*$-homomorphism $(\psi_k,\bw_k)\colon B_{k-1}\to B_{k}$ for $k=1,\cdots,n$ and that 
	the following diagram of $\bZ[\cC]$-linear maps is commutative, 
	\begin{align*}\begin{aligned}
			\xymatrix{
				\K_0(B_{k-1}) \ar[d]_-{\wt{E}_{k-1}} \ar[r]^-{\psi_k} & \K_0(B_{k}) \ar[d]^-{\wt{E}_{k}} \\
				\K_0(A_{k-1}) \ar[r]^-{(E_k,\phi_k)} & \K_0(A_{k})
			}
		\end{aligned}.\end{align*}
	Since $\wt{E}_{k}$ is an imprimitivity $(B_k,A_k)$-bimodule, we see that the vertical maps above are isomorphic, that $\K_1(B_k)=0$, and that $B_k$ is nuclear and in the UCT class if so is $A_k$. 
	Thus by replacing $A_k$ and $(E_k,\phi_k,\bv_k)$ by $B_k$ and $(B_k,\psi_k,\bw_k)$, we may assume that all $(E_k,\phi_k,\bv_k)$ are $\cC$-$*$-homomorphisms. 
	
	We define the C*-algebras $D_{k}\subset \bigoplus_{j=0}^{k} C_0(0,1]^{\otimes j}\otimes A_j$ for $k=0,1,\cdots,n$ by 
	\begin{align*}
		D_k :={}& 
		\Biggl\{ (a_j)_{j=0}^{k} \in \bigoplus_{j=0}^{k} C_0(0,1]^{\otimes j}\otimes A_j
		\,\Bigg|\, 
		\begin{array}{l}
			\phi_{j} \phi_{j-1}\cdots \phi_{j-i+1}(a_{j-i}(t_{i+1},\cdots,t_j)) 
			\\
			= a_{j}(t_1,t_2,\cdots,t_{i-1},1,t_{i+1},\cdots,t_j), 
			\\
			\forall 1\leq i\leq j\leq k, \forall t_1,t_2,\cdots,t_k\in (0,1]
		\end{array}
		\Biggr\}. 
	\end{align*}
	Note that $D_0=A_0$. 
	If all $A_k$ are nuclear and in the UCT class, then so are $D_k$. 
	Inductively on $k=0,1,\cdots,n$, we can equip $D_k$ with a canonical $\cC$-action with the aid of \cite[Subsection A.2]{Arano-Kitamura-Kubota:tensorKK}, 
	because $D_k$ is the fibred sum of $D_{k-1}$ and $C_0(0,1]^{\otimes k}\otimes A_k$ with respect to the $\cC$-$*$-homomorphism 
	\begin{align*}
		\bigoplus_{j=0}^{k-1} C_0(0,1]^{\otimes j}\otimes A_j 
		\supset D_{k-1} &\to 
		C_0((0,1]^{k}\setminus (0,1)^{k})\otimes A_k ,
		\\
		(a_j)_{j=0}^{k-1} &\mapsto 
		[(t_j)_{j=1}^{k}\mapsto 
		\phi_{k} \phi_{k-1}\cdots \phi_{k-i_t+1}(a_{k-i_t}(t_{i_t+1},\cdots,t_k)) ]
	\end{align*}
	where $i_t:= \max\{ i \mid 0\leq i\leq k, t_i=1 \}$ for $t=(t_j)_{j=1}^{k}\in (0,1]^{k}\setminus (0,1)^{k}$, 
	and the restriction 
	\begin{align*}
		C_0(0,1]^{\otimes k}\otimes A_k\twoheadrightarrow 
		C_0((0,1]^{k}\setminus (0,1)^{k})\otimes A_k , 
	\end{align*}
	which is $\cC$-equivariant and surjective. 
	
	We calculate the $\K$-groups of $D_n$. 
	When we let $D_{-1}:=0$, we have the following $\cC$-equivariant short exact sequence for $k=0,1,\cdots,n$, 
	\begin{align*}
		&
		0\to C_0(0,1)^{\otimes k}\otimes A_k \xto{\iota_k} D_k \xto{q_k} D_{k-1}\to 0, 
	\end{align*}
	where $\iota_k$ is the canonical inclusion and $q_k(a_j)_{j=0}^{k}:=(a_j)_{j=0}^{k-1}$ is the projection, 
	both of which are canonically $\cC$-equivariant by construction. 
	Also, for $k=1,\cdots,n$, we write $\Cone(q_k)\subset D_k\oplus (C_0[0,1)\otimes D_{k-1})$ for the mapping cone of $q_k$ with the inclusion denoted by $\mathrm{in}_k\colon C_0(0,1)\otimes D_{k-1}\to \Cone(q_k)$, which is canonically $\cC$-equivariant. Then we have the following diagram that commutes up to (non-equivariant) homotopy, 
	\begin{align*}\begin{aligned}
		\xymatrix@C=3em{
			C_0(0,1)^{\otimes k}\otimes A_{k-1} \ar[r]^-{\id\otimes\iota_{k-1}}\ar[d]_-{\id^{\otimes k}\otimes \phi_{k}} & 
			C_0(0,1)\otimes D_{k-1} \ar[d]^-{\mathrm{in}_k}
			\\
			C_0(0,1)^{\otimes k}\otimes A_k \ar[r]^-{\jmath_k}&
			\Cone(q_k) 
		}
	\end{aligned},\end{align*}
	where $\jmath_k\colon C_0(0,1)^{\otimes k}\otimes A_{k}\to (C_0[0,1)\otimes 0)\oplus\Ker q_k \subset \Cone(q_k)$ is the inclusion, which is canonically $\cC$-equivariant and induces isomorphisms on $\K_0$ and $\K_1$-groups as $\Coker\jmath_k\cong\Cone(\id_{D_{k-1}})$. 
	This induces the following commutative diagram of $\bZ[\cC]$-linear maps, 
	\begin{align*}\begin{aligned}
			\xymatrix{
				\K_{2k} (A_{k-1}) \ar[r]^-{\iota_{k-1}} \ar[d]_-{\phi_{k}} & 
				\K_{k+1}(D_{k-1}) \ar[dl]_-{\partial_{k-1}} \ar[d]^-{\mathrm{in}_k}
				\\
				\K_{2k} (A_k) \ar[r]^-{\jmath_k}_-{\sim}&
				\K_k(\Cone(q_k))
			}
		\end{aligned},\end{align*}
	where $\partial_{k-1}$ fits into the following six term exact sequence of $\bZ[\cC]$-modules by the construction of the boundary map (see \cite[Theorem 7.5.14]{Murphy:book}, for example), 
	\begin{align*}\begin{aligned}
		\xymatrix{
			\K_{2k}(A_k)\ar[r]^-{\iota_k}& 
			\K_k(D_{k})\ar[r]& 
			\K_k(D_{k-1})\ar[d] \\
			\K_{k+1}(D_{k-1})\ar[u]^-{\partial_{k-1}}&
			\K_{k+1}(D_k)\ar[l]^-{q_k}& 
			0\ar[l]
		}
	\end{aligned}.\end{align*}
	
	We know $\K_1(D_0)=0$ and $\K_0(D_0)=\K_0(A_0)$ by $D_0=A_0$ and $\iota_0=\id_{A_0}$ induces bijections on $\K$-groups. Using the Bott periodicity, inductively on $1\leq k\leq n$ we see the following; 
	\begin{itemize}
		\item
		If $\K_k(D_{k-1})=0$ and $\K_{2k-2}(A_{k-1}) \xto{\iota_{k-1}} \K_{k-1}(D_{k-1})$ is surjective, then we see 
		that the sequence 
		$\K_{2k}(A_{k-1}) \xto{\phi_{k}} \K_{2k}(A_{k}) \xto{\iota_{k}} \K_{k}(D_{k})\to 0$ is exact and that $\iota_k$ induces an isomorphism $\Coker\partial_{k-1}\xto{\sim}\K_k(D_k)$. 
		\item
		If the sequence $\K_{2k-2}(A_{k-2}) \xto{\phi_{k-1}} \K_{2k-2}(A_{k-1}) \xto{\iota_{k-1}} \K_{k-1}(D_{k-1})\to 0$ is exact, then we see 
		the injectivity of $\partial_{k-1}$, which implies $\K_{k+1}(D_k)=0$. 
		Here, we let $A_{-1}=0$ and $\phi_{-1}=0$. 
	\end{itemize}
	Thus $D:=C_0(0,1)^{\otimes n}\otimes D_n$ has the desired properties. 
\end{proof}

We will often use the following absorption argument. 
\begin{rem}\label{rem:absorption}
	Let $R$ be a unital ring and $M$ be a countably generated projective right $R$-module. Then, we have $M\oplus R^{\oplus\infty}\cong R^{\oplus\infty}$ as $R$-modules. 
	Indeed, there is an $R$-linear surjection $p\colon R^{\oplus\infty}\to M$, and we see $R^{\oplus\infty}\cong M\oplus\Ker p$ by the projectivity of $M$, which gives an $R$-linear isomorphism 
	\begin{align*}
		&
		R^{\oplus\infty} \cong (R^{\oplus\infty})^{\oplus\infty} \cong (M\oplus \Ker p)^{\oplus\infty} 
		\cong M\oplus (M\oplus \Ker p)^{\oplus\infty} 
		\cong M \oplus R^{\oplus\infty} . 
	\end{align*}
\end{rem}

We recall that for a unital ring $R$, the \emph{projective dimension} of a right $R$-module is the minimal length, possibly $\infty$, of its projective resolutions. 
The \emph{right global dimension} of $R$ is the supremum of projective dimensions over all right $R$-modules. 
We recall the following lemma from homological algebra. 
\begin{lem}\label{lem:freersl}
	Let $R$ be a countable unital commutative ring and $M$ be a countable right $R$-module of projective dimension no greater than $n\in\bZ_{\geq 1}$. 
	Then, $M$ has a free resolution of length $n$ of the form 
	\begin{align*}
		0\to R^{\oplus\infty} \to \cdots \to R^{\oplus\infty} \to R^{\oplus\infty} \to M\to 0. 
	\end{align*}
\end{lem}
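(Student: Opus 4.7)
The plan is to build a free resolution $F_\bullet \to M$ inductively using surjections from $R^{\oplus\infty}$ at each stage, and then to freeify the last kernel using the absorption argument of \cref{rem:absorption}.

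First, I would fix a surjection $F_0 := R^{\oplus\infty} \twoheadrightarrow M$, which exists since $M$ is countable and hence countably generated. Its kernel $K_1$ sits inside $R^{\oplus\infty}$, so it is a countable $R$-module because $R$ itself is countable. Inductively, for $1 \leq k \leq n-1$, given a countable submodule $K_k \subset F_{k-1}$, I choose a surjection $F_k := R^{\oplus\infty} \twoheadrightarrow K_k$ and set $K_{k+1} := \Ker(F_k \twoheadrightarrow K_k)$, again a countable submodule of $R^{\oplus\infty}$. Composing each surjection with the inclusion $K_k \hookrightarrow F_{k-1}$ yields a complex
\[ F_{n-1} \to F_{n-2} \to \cdots \to F_0 \to M \to 0 \]
exact everywhere except possibly at the leftmost term, where the kernel is $K_n$.

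The key point is that $K_n$, as an $n$-th syzygy of $M$, is projective: this follows from the hypothesis that the projective dimension of $M$ is at most $n$ via the generalized Schanuel lemma, which shows that any two partial projective resolutions of $M$ of equal length agree up to free summands, exhibiting $K_n$ as a direct summand of a projective module. Since $K_n$ is also a countable submodule of $R^{\oplus\infty}$, it is countably generated. Applying \cref{rem:absorption} gives $K_n \oplus R^{\oplus\infty} \cong R^{\oplus\infty}$. I then replace $F_{n-1}$ by $F_{n-1} \oplus R^{\oplus\infty}$ (which is still isomorphic to $R^{\oplus\infty}$) with the differential to $F_{n-2}$ extended by zero on the new summand, so that the kernel of this modified map becomes $K_n \oplus R^{\oplus\infty} \cong R^{\oplus\infty}$. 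Declaring this latter module to be $F_n$ completes the desired resolution of length $n$.

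There is no real obstacle here; the content lies entirely in the syzygy step, which is standard homological algebra. The countability of $R$ is used crucially to ensure that all intermediate kernels remain countable, so that each stage admits a surjection from $R^{\oplus\infty}$, while the countability of $M$ merely guarantees that the initial surjection exists.
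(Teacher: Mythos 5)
Your proof is correct and follows essentially the same route as the paper: iterate surjections from $R^{\oplus\infty}$ to keep all syzygies countable, invoke the (generalized) Schanuel lemma to see that the $n$-th syzygy is projective, and then absorb it into a free module via \cref{rem:absorption} by summing the identity of $R^{\oplus\infty}$ onto the last inclusion. No issues.
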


\begin{proof}
	We put $M_{-1}:=M$ and, inductively on $k\geq 0$, take an $R$-linear surjection $p_k\colon R^{\oplus\infty}\to M_{k-1}$, whose kernel $M_{k}:=\Ker p_k$ is countable again. 
	We get the following exact sequence of $R$-modules 
	\begin{align*}
		0\to M_n\to 
		R^{\oplus\infty}\xto{p_{n-1}} \cdots \xto{p_2} R^{\oplus\infty}\xto{p_1} R^{\oplus\infty}\xto{p_0} M\to 0, 
	\end{align*}
	where $M_n$ is projective by a variant of Schanuel's lemma \cite[7.1.2]{McConnell-Robson:book} (the proof is the same as the commutative case). 
	With the aid of \cref{rem:absorption}, by taking the direct sum with $\id\colon R^{\oplus\infty}\to R^{\oplus\infty}$ at $M_n\to R^{\oplus\infty}$ in the sequence above we obtain the desired free resolution. 
\end{proof}

Note that for a unital ring $R$ that is torsion-free as an additive group, we can always consider the localization $R[d^{-1}]$ for $d\in\bZ_{\geq 1}\subset \cZ R$. 
For an abelian group $M$, a right $R$-module structure on $M$ extends to a right $R[d^{-1}]$-module structure on $M$ if and only if the multiplication by $d$ is an invertible map $M\to M$, and if so such an $R[d^{-1}]$-module structure is unique. 

\begin{thm}\label{thm:homologyKirch}
	Let $\cC$ be a countable unitary tensor category and $d\in\bZ_{\geq 1}$. 
	Then for any countable right $\bZ[d^{-1}][\cC]$-modules $M_0$ and $M_1$ of finite projective dimensions, 
	there is an outer $\cC$-action on a Kirchberg algebra $A$ in the UCT class such that $M_i\cong \K_i(A)$ as right $\bZ[\cC]$-modules for $i=0,1$. 
\end{thm}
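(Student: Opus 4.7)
The plan is to construct a separable nuclear $\cC$-C*-algebra $D$ in the UCT class with $\K_i(D)\cong M_i$ as right $\bZ[\cC]$-modules for $i=0,1$, and then invoke \cref{thm:tensorKirch} to replace $D$ by an outer $\cC$-action on a Kirchberg algebra $A$ that is $\cC$-equivariantly $\KK$-equivalent to $D$. Since \cref{rem:fusionmodule} (2) shows the $\KK^\cC$-equivalence induces $\bZ[\cC]$-linear isomorphisms on $\K$-groups, and $D$ being nuclear UCT forces $A$ to be nuclear UCT, this reduces the theorem to producing $D$.

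Set $R:=\bZ[d^{-1}][\cC]$. The proof of \cref{lem:freersl} applies verbatim to this non-commutative ring (Schanuel's lemma has no commutativity hypothesis), yielding finite free resolutions
\begin{align*}
0\to R^{\oplus\infty}\to\cdots\to R^{\oplus\infty}\to M_i\to 0
\end{align*}
for $i=0,1$. I realise each free $R$-module $R^{\oplus\infty}$ as the $\K_0$-group of the $\cC$-C*-algebra $A_0:=\bM_{d^\infty}\otimes c_0(\Irr\cC)^{\oplus\infty}$, with the $\cC$-action induced by $(\rho,\mathfrak{r})$ on each summand from \cref{eg:LRtranslation} (3). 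Then $A_0$ is separable, nuclear and in the UCT class, and \cref{rem:LRtranslation} (3) combined with the K\"unneth formula for $\bM_{d^\infty}$ gives $\K_0(A_0)\cong R^{\oplus\infty}$ as right $\bZ[\cC]$-modules, with $\K_1(A_0)=0$.

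The crux is to realise the differentials of the resolution by proper $\cC$-$(A_0,A_0)$-correspondences, so that \cref{prop:inductiveMC} applies. By \cref{rem:LRtranslation} (2), such a correspondence is classified by a unitarily $\cC$-linear functor between copies of $\Hilb_{\Irr\cC\times\bN}$, and thus induces an $R$-linear map on $\K_0$ whose matrix has entries in $\bZ_{\geq 0}[\cC]$; only non-negative differentials are realisable as a single correspondence. A general differential $f_k$ decomposes as $f_{k,+}-f_{k,-}$ with $f_{k,\pm}$ non-negative, and I plan to extend the resolution by inserting auxiliary stages that absorb the negative piece: concretely, one enlarges the free modules with identity summands and exploits the invertibility of $d$ in $R$ to split and average, replacing the original resolution with a longer exact complex of non-negative $\cC$-correspondences having the same cokernel $M_i$. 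Applying \cref{prop:inductiveMC} to the modified complexes produces separable nuclear $\cC$-C*-algebras $D_0$, $D_1$ in the UCT class with $\K_0(D_i)\cong M_i$ and $\K_1(D_i)=0$ as right $\bZ[\cC]$-modules.

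Setting $D:=D_0\oplus(C_0(\bR)\otimes D_1)$ yields a separable nuclear UCT $\cC$-C*-algebra with $\K_0(D)\cong M_0$ and $\K_1(D)\cong M_1$ as right $\bZ[\cC]$-modules, and \cref{thm:tensorKirch} then finishes the construction. The main obstacle is the positivity manoeuvre in the previous paragraph: arbitrary $R$-linear differentials in the free resolution are not non-negative, while \cref{prop:inductiveMC} accepts only proper $\cC$-correspondences rather than general $\KK^\cC$-morphisms, so the resolution must be rewritten as a longer exact complex of non-negative correspondences, which is precisely where the invertibility of $d$ in $R$ becomes indispensable.
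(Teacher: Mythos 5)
Your overall architecture matches the paper's: reduce to $M_1=0$ via $D_0\oplus(C_0(\bR)\otimes D_1)$, resolve $M_i$ freely over $R=\bZ[d^{-1}][\cC]$ by \cref{lem:freersl}, realise the resolution by proper $\cC$-correspondences, feed it to \cref{prop:inductiveMC}, and finish with \cref{thm:tensorKirch}. You also correctly identify the crux: the differentials of a free resolution have arbitrary (in particular negative) coefficients in $\bZ[d^{-1}]$, whereas correspondences built from \cref{eg:LRtranslation} induce only non-negative matrices on $\K_0$. But your proposed resolution of this difficulty --- decomposing $f_k=f_{k,+}-f_{k,-}$ and ``inserting auxiliary stages'' to rewrite the resolution as a longer exact complex with non-negative differentials --- is not carried out, and I do not see how to make it work: preserving exactness and the cokernel while positivising every differential of a possibly long resolution over $R$ is a nontrivial claim for which you give no argument, and the invertibility of $d$ has nothing to do with positivity, so the sentence claiming it is ``precisely where the invertibility of $d$ becomes indispensable'' is a misdiagnosis. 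This is a genuine gap at the central step.

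The idea you are missing is to tensor the coefficient algebras with $\cO_\infty$. The paper takes $A_k:=c_0(\Irr\cC)\otimes c_0(\bZ_{\geq 1})\otimes(\bM_{d^\infty}\otimes\cO_\infty)^{\otimes k+1}$; since $\bM_{d^\infty}\otimes\cO_\infty$ is purely infinite and simple with $\K_0\cong\bZ[d^{-1}]$, \emph{every} class $k\in\bZ[d^{-1}]$, negative ones included, is $[p(k)]$ for an honest projection $p(k)$, and the corner $p(F_{n-k}(i,j)_\pi)\bM_{d^\infty}\otimes\cO_\infty$ is a proper bimodule inducing multiplication by $F_{n-k}(i,j)_\pi$ on $\K_0$. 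Tensoring these corners with $\lambda(\pi)\boxtimes\bC_{j,i}$ and summing over $(i,j,\pi)$ realises the original differential exactly as a single proper $\cC$-correspondence, with no positivisation needed; properness holds because each column of the differential has only finitely many nonzero entries. The role of $d$'s invertibility is solely to make $\K_0$ of the coefficient algebra equal to $\bZ[d^{-1}]$ (so that $\bZ[\cC]$-linear maps between the resulting $\K$-groups are automatically $\bZ[d^{-1}][\cC]$-linear), not to repair signs. In your version, $\bM_{d^\infty}\otimes c_0(\Irr\cC)^{\oplus\infty}$ has the standard order on $\K_0$, so you are genuinely stuck with non-negative maps; adding the $\cO_\infty$ factor dissolves the obstacle you flagged.
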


For $i,j\in\bZ_{\geq 1}$, we let $\bC_{i,j}$ be the $(c_0(\bZ_{\geq 1}),c_0(\bZ_{\geq 1}))$-correspondence with $\dim_\bC\bC_{i,j}=1$ such that $e_i \bC_{i,j} e_j=\bC_{i,j}$ for the minimal projections $e_i,e_j\in c_0(\bZ_{\geq 1})$. Also, for $0\neq k\in\bZ[d^{-1}]$, we fix a projection $p(k)\in \bM_{d^\infty}\otimes\cO_{\infty}$ such that $[p]=k\in\K_0(\bM_{d^\infty}\otimes\cO_{\infty})\cong \bZ[d^{-1}]$ and set $p(0):=0\in \bM_{d^\infty}\otimes\cO_{\infty}$. 

\begin{proof}
	It suffices to show that for any countable right $\bZ[d^{-1}][\cC]$-module $M$ of finite projective dimension, there is a $\cC$-action on some separable nuclear C*-algebra $D$ in the UCT class such that $\K_0(D)\cong M$ and $\K_1(D)\cong 0$. 
	Indeed, when we write $D_i$ for the C*-algebra $D$ given by this claim with $M=M_i$ for each $i=0,1$, 
	we can apply \cref{thm:tensorKirch} to $D_0\oplus (C_0(\bR)\otimes D_1)$ and obtain the desired $\cC$-C*-algebra. 
	
	By \cref{lem:freersl}, we have a free resolution of a given countable right $\bZ[d^{-1}][\cC]$-module $M$ of the form 
	\begin{align*}
		0\to \bZ[d^{-1}][\cC]^{\oplus\infty}\xto{F_n} \bZ[d^{-1}][\cC]^{\oplus\infty} \xto{F_{n-1}} \cdots \xto{F_{1}} \bZ[d^{-1}][\cC]^{\oplus\infty} \xto{F_0} M\to 0. 
	\end{align*}
	For $k=0,\cdots,n-1$, we regard each $(i,j)$-th component of $F_{n-k}$ as the left multiplication by $F_{n-k}(i,j)\in\bZ[d^{-1}][\cC]$ so that $F_{n-k}(a_j)_j = (\sum_j F_{n-k}(i,j)a_j)_i$ for all $(a_j)_j\in \bZ[d^{-1}][\cC]^{\oplus\infty}$ and express $F_{n-k}(i,j)=\sum_{\pi\in\Irr\cC}F_{n-k}(i,j)_\pi [\pi]$ using $F_{n-k}(i,j)_\pi\in\bZ[d^{-1}]$. 
	We write $A_k:=c_0(\Irr\cC)\otimes c_0(\bZ_{\geq 1})\otimes(\bM_{d^\infty}\otimes\cO_\infty)^{\otimes k+1}$ with the $\cC$-action $(\rho\otimes\id,\mathfrak{r}\otimes1)$, where $(\rho,\mathfrak{r})$ is from \cref{eg:LRtranslation} (3). 
	Using the proper $\cC$-C*-correspondences $\lambda(\pi)$ from \cref{eg:LRtranslation} (3), we consider the $\cC$-$(A_k,A_{k+1})$-correspondence 
	defined by the direct sum $E_{k}:=\bigoplus_{i,j=1}^{\infty}\bigoplus_{\pi\in\Irr\cC}E_{k}(i,j,\pi)$ of 
	\begin{align*}
		&
		E_{k}(i,j,\pi):=
		\lambda(\pi) \boxtimes \bC_{j,i}\boxtimes ( p(F_{n-k}(i,j)_{\pi}) \bM_{d^\infty}\otimes\cO_\infty) \boxtimes (\bM_{d^\infty}\otimes\cO_\infty)^{\otimes k+1} 
	\end{align*}
	where the tensorial components $c_0(\Irr\cC)$, $c_0(\bZ_{\geq 1})$, and $(\bM_{d^\infty}\otimes\cO_{\infty})^{\otimes k+1}$ of $A_k$ act on each $E_{k}(i,j,\pi)$ from the left by $\lambda_\pi\otimes1$, $1_{\lambda(\pi)}\otimes e_j(-)e_j\otimes 1$, and $1\otimes \id_{\bM_{d^\infty}\otimes \cO_{\infty}}^{\otimes k+1}$, respectively. 
	
	Here, we claim that $E_{k}$ is proper. 
	To see this, it suffices to show that $e_\varpi\otimes e_j\otimes 1\in A_k$ acts on $E_k$ as an element in $\cK(E_k)$ for all $j\in\bZ_{\geq 1}$ and $\varpi\in\Irr\cC$. 
	There are at most finitely many pairs $(i,\pi)\in\bZ_{\geq 1}\times\Irr\cC$ such that $F_{n-k}(i,j)_\pi\neq 0$ as $(F_{n-k}(i,j))_i=(\sum_{\pi\in\Irr\cC}F_{n-k}(i,j)_\pi[\pi])_i$ is contained in the algebraic direct sum $\bZ[d^{-1}][\cC]^{\oplus\infty} = \bigoplus_{\pi\in\Irr\cC}\bZ[d^{-1}][\pi]^{\oplus\infty}$. It follows from $p(0)=0$ that $E_k(i,j,\pi)\neq 0$ for at most finitely many $(i,\pi)$. 
	Since $e_\varpi\otimes e_j\otimes 1$ acts on each component $E_k(i,j,\pi)$ properly by the properness of $\lambda(\pi)$, $\bC_{j,i}$, and $( p(F_{n-k}(i,j)_{\pi}) \bM_{d^\infty}\otimes\cO_\infty) \boxtimes (\bM_{d^\infty}\otimes\cO_\infty)^{\otimes k+1}$, we see the claim. 
	
	We have $\bZ[\cC]$-linear isomorphisms $\K_1(A_l)=0$ and $\K_0(A_l)\cong \bZ[d^{-1}][\cC]^{\oplus\infty}$ for $l=k,k+1$. Under these isomorphisms, the $\bZ[\cC]$-linear map $\K_0(A_k)\to \K_0(A_{k+1})$ induced by the proper $\cC$-$(A_{k},A_{k+1})$-correspondence $E_k$ coincides with $F_{n-k}\colon \bZ[d^{-1}][\cC]^{\oplus\infty}\to \bZ[d^{-1}][\cC]^{\oplus\infty}$ by construction. In particular, we have a $\bZ[\cC]$-linear isomorphism $\Coker(\K_0(A_{n-1})\to \K_0(A_n))\cong M$. Here, note that any $\bZ[\cC]$-linear map between right $\bZ[d^{-1}][\cC]$-modules is automatically $\bZ[d^{-1}][\cC]$-linear. Thus, \cref{prop:inductiveMC} applied to the exact sequence 
	\begin{align*}
		&
		0\to \K_0(A_0)\xto{E_0} \K_0(A_1) \to \cdots \to \K_0(A_{n-1})\xto{E_{n-1}}\K_0(A_n) 
	\end{align*}
	yields the desired $\cC$-C*-algebra $A$ with the aid of \cref{thm:tensorKirch}. 
\end{proof}

\begin{eg}\label{eg:repCQG}
	When $\bZ[\cC]\cong\bZ[X_1,\cdots,X_n]$ for some $n\in\bZ_{\geq 0}$, its right global dimension is finite. 
	Then the liftability constant $d_\cC$ exists and equals $1$ by \cref{thm:homologyKirch} with $d=1$. 
	For example, this happens for $\cC=\Rep G$ with the following compact quantum groups $G$. 
	\begin{itemize}
		\item
		When $G$ is the $q$-deformation $K_q$ of a connected simply connected compact group $K$ for $0<q\leq 1$ 
		or the free orthogonal quantum group $O^+_N$ for $N\geq 2$. 
		See \cite{Bichon-deRijdt-Vaes:ergodic}. 
		\item
		When $G$ is the $q$-deformation $\operatorname{SO}_q(3)$ of $\operatorname{SO}(3)$ for $0<q\leq 1$ or the quantum permutation group $S^+_N$ for $N\geq 4$. 
		See \cite{deRijdt-vanderVennet:moneq} and also \cite{Soltan:SOq3M2,Banica-Bichon2009:quantumaut4}. 
	\end{itemize}
\end{eg}

\begin{prop}\label{prop:irrindex}
	Let $A$ be a unital Kirchberg algebra and $d\in [4,\infty)$. Then, there is a unital $*$-homomorphism $\iota\colon A\to A$ with $\iota(A)'\cap A=\bC1_A$ whose Watatani index is $d$. 
\end{prop}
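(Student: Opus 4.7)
Given $d\in[4,\infty)$, I choose the unique $q\in(0,1]$ with $d=(q+q^{-1})^{2}$ and work with the unitary tensor category $\cC:=\Rep\operatorname{SU}_{q}(2)$. Its fusion ring is $\bZ[\cC]\cong\bZ[X]$, where $X=[\pi_{1}]$ denotes the class of the fundamental representation (of intrinsic dimension $q+q^{-1}$), and by \cref{eg:repCQG} the liftability constant satisfies $d_{\cC}=1$.

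My first step is to build an outer $\cC$-action on $\cO_{\infty}$ with a prescribed K-theoretic behaviour. Since $\cO_{\infty}$ is a UCT Kirchberg algebra and $\bM_{1^{\infty}}\otimes\cO_{\infty}=\cO_{\infty}$, the remark following \cref{def:liftconst} furnishes an outer $\cC$-action $(\beta,\fw)$ on $\cO_{\infty}$ realizing any prescribed right $\bZ[\cC]$-module structure on $\K_{*}(\cO_{\infty})$; I take the one on $\K_{0}(\cO_{\infty})=\bZ$ induced by the augmentation $\bZ[X]\to\bZ$, $X\mapsto 1$ (the structure on $\K_{1}=0$ being unique).

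I then transport this action to $A$ via the $\cO_{\infty}$-absorption $A\cong A\otimes\cO_{\infty}$, setting $\alpha(\pi):=A\otimes\beta(\pi)$ with exterior-tensor actions and coherence unitaries $\fu_{\pi,\varpi}:=\id_{A}\otimes\fw_{\pi,\varpi}$. Outerness of $(\alpha,\fu)$ is preserved by \cref{lem:irrbimodtensor} thanks to simplicity of $A$. Via the K\"unneth formula, which applies because $\cO_{\infty}$ has free K-theory and lies in the bootstrap class, the isomorphism $\K_{0}(A\otimes\cO_{\infty})\cong\K_{0}(A)$ intertwines the action of $X$ with the identity on $\K_{0}(A)$. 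In particular, the finitely generated projective right $A$-module $\alpha(\pi_{1})$ (automatically finitely generated because it is proper and full) has $\K_{0}$-class $[\alpha(\pi_{1})]=X\cdot[1_{A}]=[1_{A}]$.

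Cuntz's classification of projections in $A\otimes\bK$ up to Murray--von Neumann equivalence by $\K_{0}$-class---valid for any Kirchberg algebra without a UCT hypothesis---then produces a right $A$-module isomorphism $\alpha(\pi_{1})\cong A$, and hence a C*-algebra isomorphism $\cK(\alpha(\pi_{1}))\cong A$. Composing the left action $\alpha_{\pi_{1}}\colon A\to\cK(\alpha(\pi_{1}))$ with this isomorphism defines a unital $*$-endomorphism $\iota\colon A\to A$. The standard solution of the conjugate equation for $\pi_{1}$ in $\cC$ induces a conditional expectation $\cK(\alpha(\pi_{1}))\to A$ of Watatani index $(\dim_{\cC}\pi_{1})^{2}=d$, as recalled just before \cref{rem:prf:main1}. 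Finally, outerness of $(\alpha,\fu)$ forces $\alpha(\pi_{1})$ to be irreducible by \cref{rem:irroutfull}, which translates to $\iota(A)'\cap A=\bC 1_{A}$. The step to single out is the passage from the K-theoretic equality $[\alpha(\pi_{1})]=[1_{A}]$ to an honest isomorphism of Hilbert $A$-modules: this is precisely where the purely infinite structure of $A$ replaces any UCT hypothesis that the output of \cref{thm:homologyKirch} would otherwise impose.
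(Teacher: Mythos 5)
Your argument is correct and follows essentially the same route as the paper: both choose $q$ with $q+q^{-1}=\sqrt{d}$, realize an outer $\Rep\operatorname{SU}_q(2)$-action on $\cO_\infty$ with $[\pi_1]$ acting as the identity on $\K_0$ (via $d_{\Rep\operatorname{SU}_q(2)}=1$), pass to $A$ by tensoring, and read off the index from $(\dim_{\Rep\operatorname{SU}_q(2)}\pi_1)^2$ and the trivial relative commutant from outerness. The only cosmetic difference is that you trivialize the module $\alpha(\pi_1)\cong A$ after tensoring, using Cuntz's $\K_0$-classification of projections in purely infinite simple C*-algebras, whereas the paper identifies $\cK(\alpha(\pi_1))\cong\cO_\infty$ via the Kirchberg--Phillips theorem before applying $-\otimes\id_A$; both steps are valid.
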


Note that by \cite[Corollary 1.4.3]{Watatani:index}, for a unital inclusion of C*-algebras $B\subset A$ with $B'\cap A=\bC1_A$, if there is a conditional expectation $E\colon A\to B$ of finite Watatani index, then $E$ is the unique conditional expectation. In this case, we define the Watatani index of the inclusion $B\subset A$ by the Watatani index of $E$, which must be a scalar. 

\begin{proof}
	Thanks to the $\cO_\infty$-absorption theorem~\cite{Kirchberg-Phillips:embedding}, 
	it suffices to show the statement for $\cO_\infty$ and take the tensor product with $\id_A$. 
	We take $q\in (0,1]$ such that $q+q^{-1}=\sqrt{d}$. 
	
	Since we have the ring isomorphism $\bZ[X]\cong\bZ[\Rep \operatorname{SU}_q(2)]$ sending $X$ to the fundamental representation $\pi_1\in\Irr \operatorname{SU}_q(2)$ with $\dim_{\Rep \operatorname{SU}_q(2)}\pi_1=q+q^{-1}$, there is an outer $\Rep \operatorname{SU}_q(2)$-action $(\alpha,\fu)$ on $\cO_\infty$ such that $[\pi_1]\in \bZ[\Rep \operatorname{SU}_q(2)]$ acts on $\K_0(\cO_\infty)$ by $\id$ as in \cref{eg:repCQG} (1). 
	Thanks to the Kirchberg--Phillips theorem, $\cK(\alpha(\pi_1))\cong \cO_\infty$ as the places of their units in the $\K_0$-groups coincide. 
	The composition $\iota\colon \cO_\infty\to \cK(\alpha(\pi_1))\cong \cO_\infty$ satisfies $\iota(\cO_\infty)'\cap\cO_\infty=\bC1$ by the irreducibility of $\alpha(\pi_1)$. 
	The Watatani index of $\iota$ is $(\dim_{\Rep \operatorname{SU}_q(2)}\pi_1)^2=d$ as desired. 
\end{proof}

\subsection{Generic regularity}\label{ssec:genreg}

As geometrically expected, an affine variety over $\bC$ contains some open subvariety that is smooth, or equivalently its coordinate ring has finite global dimension after some localization. 
In a similar spirit, one might hope that fusion rings also satisfy generic finiteness of global dimension in good situations. 
The observation below confirms this naive expectation for fusion categories. 

For a fusion category $\cC$, we have the unital ring homomorphism 
\begin{align*}
	\bZ[\cC] \ni [\tau] &\mapsto (\dim\Hom_\cC(\pi,\tau\otimes\varpi))_{\pi,\varpi\in \Irr\cC} \in \bM_{\Irr\cC},  
\end{align*}
whose range lies in $\Irr\cC\times\Irr\cC$-matrices with integer coefficients. 
Since this homomorphism is injective by extracting the column at $\mathbbm{1}_\cC\in\Irr\cC$, 
we often identify $\bZ[\cC]$ as a unital subring of $\bM_{\Irr\cC}$ and regard $[\tau]$ for each $\tau\in\cC$ as a matrix with non-negative integer coefficients. 
Then it holds $[\overline{\tau}]=[\tau]^*$. 

\begin{prop}\label{thm:localfusion}
	For a unitary fusion category $\cC$, let $Z_\cC:=\sum_{\pi\in\Irr\cC}[\pi][\overline{\pi}]\in\bZ[\cC]\subset\bM_{\Irr\cC}$ and $d:=|\det Z_\cC|\in\bZ_{\geq 0}$. 
	Then $d\geq 1$ and for any Noetherian unital commutative ring $R$ of global dimension $0$ or $1$ such that $d$ is invertible in $R$, the following hold. 
	\begin{enumerate}
		\item
		A finitely generated right $R[\cC]$-module $M$ is projective if it is projective as an $R$-module. 
		\item
		The right global dimension of $R[\cC]$ is $0$ or $1$. 
	\end{enumerate}
\end{prop}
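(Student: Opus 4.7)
The approach is to show that $R[\cC]$ is separable over $R$ by exhibiting a central separability element; both (1) and (2) then follow from a Maschke-type averaging argument. For the positivity $d\geq 1$, a double application of Frobenius reciprocity in the unitary setting gives
\[
([\overline{\tau}])_{\pi,\varpi} = \dim\Hom_\cC(\pi,\overline{\tau}\otimes\varpi) = \dim\Hom_\cC(\varpi,\tau\otimes\pi) = ([\tau])_{\varpi,\pi},
\]
so $[\overline{\tau}] = [\tau]^T$ in $\bM_{\Irr\cC}(\bZ)$. Hence $Z_\cC = \sum_{\pi\in\Irr\cC} [\pi][\pi]^T$ is a real symmetric positive semidefinite matrix whose summand at $\pi=\mathbbm{1}_\cC$ is the identity, so $Z_\cC\geq I$ and therefore $d=|\det Z_\cC|\geq 1$.

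\emph{Building the separability element.} Set $e_0:=\sum_\pi [\pi]\otimes[\overline{\pi}]\in\bZ[\cC]\otimes_\bZ\bZ[\cC]$ and $z:=\mu(e_0)=\sum_\pi[\pi][\overline{\pi}]$, where $\mu$ denotes multiplication. I first verify that $e_0$ is bimodule-central: comparing the coefficient of $[\sigma]\otimes[\varpi]$ in $[\tau]e_0$ versus $e_0[\tau]$, both reduce to $\dim\Hom_\cC(\sigma\otimes\varpi,\tau)$ via Frobenius reciprocity applied on either side (duality of $\varpi$ in one case, duality of $\sigma$ in the other). Applying $\mu$ then shows $z\in\cZ(\bZ[\cC])$. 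Since $z$ acts as $Z_\cC$ under the regular embedding $\bZ[\cC]\hookrightarrow\bM_{\Irr\cC}(\bZ)$ and $\det Z_\cC=\pm d$, Cramer's rule inside $\bQ[\cC]$ yields $z^{-1}\in\bZ[d^{-1}][\cC]\subseteq R[\cC]$ once $d$ is invertible in $R$. Then $e:=z^{-1}\cdot e_0\in R[\cC]\otimes_R R[\cC]$ remains bimodule-central (as $z$ is central) and satisfies $\mu(e)=z^{-1}z=1$, giving a separability element for $R[\cC]/R$.

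\emph{Deducing the statements.} Writing $e=\sum_i a_i\otimes b_i$, for any right $R[\cC]$-module $M$ the map $s\colon M\to M\otimes_R R[\cC]$, $s(m):=\sum_i ma_i\otimes b_i$, is an $R[\cC]$-linear section of the multiplication $\mu_M\colon M\otimes_R R[\cC]\to M$: the identity $\mu_M\circ s=\mathrm{id}_M$ follows from $\mu(e)=1$, and $s(mx)=s(m)x$ follows from $xe=ex$. Hence $M$ is an $R[\cC]$-summand of $M\otimes_R R[\cC]$. For (1), if $M$ is projective as an $R$-module then $M\otimes_R R[\cC]$ is projective over $R[\cC]$ (being a summand of a free $R[\cC]$-module), so $M$ is too. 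For (2), pick an $R$-projective resolution $P_\bullet\to M$ of length at most $\mathrm{gl.dim}(R)\leq 1$; tensoring with the $R$-free module $R[\cC]$ yields an $R[\cC]$-projective resolution of $M\otimes_R R[\cC]$ of the same length, and projective dimension passes to direct summands, so the right global dimension of $R[\cC]$ is at most $\mathrm{gl.dim}(R)\in\{0,1\}$.

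\emph{Main obstacle.} The substantive point is the bimodule-centrality of $e_0$, equivalently the Grothendieck-ring identity $\sum_\pi[\tau][\pi][\overline{\pi}]=\sum_\pi[\pi][\overline{\pi}][\tau]$, which relies essentially on the rigidity of $\cC$; everything else is a formal consequence of separability. The role of $d$ is merely to clear the denominators arising in Cramer's rule so that $z^{-1}$ lives in $R[\cC]$ rather than only in its total quotient ring.
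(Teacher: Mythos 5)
Your proof is correct, and at its core it is the same Maschke-type averaging the paper uses — the same element $\sum_{\pi}[\pi]\otimes[\overline{\pi}]$, the same Frobenius-reciprocity computation for its bimodule-centrality, and the same inversion of $Z_\cC$ after localizing at $d$ — but you package it as a separability element for $R[\cC]$ over $R$, whereas the paper splits a chosen surjection $R[\cC]^{\oplus n}\twoheadrightarrow M$ directly via the formula $\jmath(x)=\sum_{\pi}\iota(x[\pi])[\overline{\pi}]f([Z_\cC])/F(0)$. Your packaging buys two small improvements: statement (1) without the finite-generation hypothesis (any $R$-projective module is a summand of the induced module $M\otimes_R R[\cC]$, hence $R[\cC]$-projective), and a direct proof of (2) by inducing an $R$-projective resolution, whereas the paper reduces (2) to (1) for finitely generated modules and invokes \cite[7.1.8]{McConnell-Robson:book} together with Schanuel's lemma. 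One step to tighten: ``Cramer's rule inside $\bQ[\cC]$'' is not quite the right justification for $z^{-1}\in\bZ[d^{-1}][\cC]$, since $\bZ[\cC]$ need not be commutative and the adjugate of $Z_\cC$ is a priori only a matrix in $\bM_{\Irr\cC}(\bZ)$, not visibly an element of the subring $\bZ[\cC]$; the clean argument — and the one the paper uses — is Cayley--Hamilton: writing the characteristic polynomial of $Z_\cC$ as $F=F(0)-Xf$, one gets $[Z_\cC]f([Z_\cC])=F(0)$ inside $\bZ[\cC]$, so $z^{-1}=f(z)/F(0)$ lies in (the centre of) $\bZ[d^{-1}][\cC]$. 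With that fix everything goes through.
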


Note that the unitary structure and the choice of coefficient field $\bC$ for $\cC$ will not be essential in the proof below as long as $\cC$ is rigid and semisimple. 
The proof is inspired by Maschke's theorem, combined with the construction similar to \cref{prop:regularcorr}. 

\begin{proof}
	We see that 
	$[Z_\cC]=\sum_{\pi\in\Irr\cC}[\pi] [\pi]^*\geq [\mathbbm{1}_\cC][\mathbbm{1}_\cC]^*=1$ inside $\bM_{\Irr\cC}$. 
	Thus $d\geq 1$. 
	Note that for $F:=\det(X1_{\bM_{\Irr\cC}} - Z_\cC)\in\bZ[X]$ and the polynomial $f\in\bZ[X]$ such that $F=F(0)-Xf$, it holds $d=|F(0)|$ and $F(0) - [Z_\cC]f([Z_\cC]) = F([Z_\cC])=0$ in $\bZ[\cC]$. So does for their images in $R[\cC]$. 
	
	To show (2), it suffices to show that any finitely generated right $R[\cC]$-module has projective dimension at most $1$ by \cite[7.1.8]{McConnell-Robson:book} (the proof is the same as the commutative case). 
	(2) follows once we show (1) because any finitely generated right $R[\cC]$-module is a quotient of $R[\cC]^{\oplus m}$ for some $m\in\bZ_{\geq 0}$, whose kernel must be a projective $R$-module. 
	Here, we used the fact that an $R$-submodule of a free $R$-module is projective, which follows from the assumption on the global dimension of $R$ and Schanuel's lemma. 
	
	For a finitely generated right $R[\cC]$-module $M$ that is projective as an $R$-module, we have an $R[\cC]$-linear surjection $q\colon R[\cC]^{\oplus n}\to M$ for some $n\in\bZ_{\geq 0}$ with an $R$-linear section $\iota$. 
	Then the $R$-linear map 
	\begin{align*}
		\jmath\colon M \ni x &\mapsto 
		\sum_{\pi\in\Irr\cC}\iota (x[\pi])[\overline{\pi}] \frac{f([Z_\cC])}{F(0)} 
		\in R[\cC]^{\oplus n} 
	\end{align*}
	is $R[\cC]$-linear because for all $\tau\in\Irr\cC$ and $x\in M$, 
	\begin{align*}
		&
		\sum_{\pi\in\Irr\cC}\iota(x[\tau][\pi])[\overline{\pi}] 
		=
		\sum_{\pi,\varpi\in\Irr\cC}\dim\Hom(\varpi,\tau\otimes\pi) \iota(x[\varpi])[\overline{\pi}] 
		\\={}&
		\sum_{\pi,\varpi\in\Irr\cC}\dim\Hom(\overline{\pi},\overline{\varpi}\otimes\tau) \iota (x[\varpi]) [\overline{\pi}] 
		=
		\sum_{\varpi\in\Irr\cC} \iota (x[\varpi])[\overline{\varpi}][\tau] , 
	\end{align*}
	and $[Z_\cC]\in \cZ R[\cC]$ by a similar reasoning with $\iota$ replaced by $\id_{R[\cC]}$, which implies $f([Z_\cC])\in\cZ R[\cC]$. 
	Using $q\iota=\id_M$, we have for $x\in M$, 
	\begin{align*}
		q\jmath(x) 
		= 
		\sum_{\pi\in\Irr\cC}q \iota (x[\pi])[\overline{\pi}] \frac{f([Z_\cC])}{F(0)} 
		= 
		\sum_{\pi\in\Irr\cC}x[\pi] [\overline{\pi}] \frac{f([Z_\cC])}{F(0)} 
		= 
		x\frac{[Z_\cC]f([Z_\cC])}{F(0)} 
		= 
		x. 
	\end{align*}
	Therefore $M$ is a direct summand of $R[\cC]^{\oplus n}$ also as an $R[\cC]$-module. This shows (1). 
\end{proof}

\begin{cor}\label{cor:localfusion}
	For any unitary fusion category $\cC$, the constant $d_\cC\in\bZ_{\geq 1}$ exists and satisfies $d_\cC\leq |\det Z_\cC|$. 
\end{cor}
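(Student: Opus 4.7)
The plan is to take $d := |\det Z_\cC|$ in the definition of $d_\cC$ and combine the $\K$-theoretic realization result \cref{thm:homologyKirch} with the homological input of \cref{thm:localfusion}. By \cref{thm:localfusion}, $d \geq 1$, so this $d$ is a legitimate candidate.

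First, I will apply \cref{thm:localfusion} with $R := \bZ[d^{-1}]$. This $R$ is Noetherian as a localization of $\bZ$, and its global dimension is at most $1$ since $\bZ$ is a principal ideal domain. Trivially, $d$ is invertible in $R$. Hence \cref{thm:localfusion}(2) yields that the right global dimension of $R[\cC] = \bZ[d^{-1}][\cC]$ is at most $1$, so every right $R[\cC]$-module has projective dimension at most $1$. Now let $M_0$ and $M_1$ be countable right $\bZ[\cC]$-modules on which $d$ acts invertibly. As noted in the discussion preceding \cref{thm:homologyKirch}, each $M_i$ then uniquely extends to a right $\bZ[d^{-1}][\cC]$-module, and by the preceding sentence it has finite (indeed at most $1$) projective dimension over $\bZ[d^{-1}][\cC]$. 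Applying \cref{thm:homologyKirch} with this $d$ produces an outer $\cC$-action on a Kirchberg algebra $A$ in the UCT class with $M_i \cong \K_i(A)$ as right $\bZ[\cC]$-modules for $i = 0,1$. This verifies the defining property of $d_\cC$, giving its existence together with $d_\cC \leq d = |\det Z_\cC|$.

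There is essentially no new obstacle in this argument, since the hard work has been absorbed into \cref{thm:homologyKirch} (the geometric resolution and Pimsner construction inputs) and \cref{thm:localfusion} (the Maschke-type regularity of $R[\cC]$). The only point requiring a moment's thought is the reduction from countable, not necessarily finitely generated, $\bZ[\cC]$-modules to the hypotheses of \cref{thm:homologyKirch}: the extension of scalars to $\bZ[d^{-1}][\cC]$ is automatic once $d$ acts invertibly, and the projective dimension bound transfers to arbitrary modules because \cref{thm:localfusion}(2) is a statement about right \emph{global} dimension, not merely the dimensions of finitely generated modules.
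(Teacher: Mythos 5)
Your proposal is correct and follows exactly the paper's (one-line) argument: take $d=|\det Z_\cC|$, apply \cref{thm:localfusion}(2) with $R=\bZ[d^{-1}]$ to bound the right global dimension of $\bZ[d^{-1}][\cC]$ by $1$, and then invoke \cref{thm:homologyKirch}. The extra care you take about extending countable $\bZ[\cC]$-modules to $\bZ[d^{-1}][\cC]$-modules and about the global (rather than finitely generated) dimension is exactly the right bookkeeping.
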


\begin{proof}
	It follows from \cref{thm:homologyKirch,thm:localfusion} that $d=|\det Z_\cC|$ satisfies the property in \cref{def:liftconst}. 
\end{proof}


\begin{eg}\label{eg:liftconstgrp}
	Let $\cC=\Hilb_{\Gamma,\omega}$ for a finite group $\Gamma$ and $[\omega]\in\Ho^3(\Gamma;\bT)$. 
	\begin{enumerate}
		\item
		We know $d_\cC\leq r$ for the radical $r$ of $\#\Gamma\in\bZ_{\geq 1}$ by the proof of \cref{cor:localfusion} as $|\det Z_\cC|=\#\Gamma^{\#\Gamma}$ and $\bZ[r^{-1}]=\bZ[(\#\Gamma)^{-\#\Gamma}]$. 
		Also, it directly follows from \cite[Theorem 3.5]{Katsura2008:construction} for stable Kirchberg algebras in the UCT class that $d_\cC=1$ when every Sylow subgroup of $\Gamma$ is cyclic and $[\omega]$ is trivial. 
		\item
		On the other hand, \cite[Theorem 3.4]{Izumi2023:Gkernels} is a source of lower bounds for $d_\cC$ depending on $[\omega]$. 
		Indeed, partly using its technique, it is shown in \cite[Proposition 5.3.4]{GironPacheco:thesis} that $\cC$ never acts on $\cO_\infty\otimes\bK$ unless $[\omega]\in\Ho^3(\Gamma;\bT)$ is trivial. In this case, $M_0=\bZ$ and $M_1=0$ with trivial $\bZ[G]$-module structures cannot lift to any $\cC$-action. 
		Thus, $d_\cC\neq 1$ whenever $[\omega]$ is non-trivial. 
		We will see in \cref{eg:liftconstgrp2} that sometimes $d_\cC$ can be as big as $r$. 
	\end{enumerate}
\end{eg}

\section{Examples from the Temperley--Lieb--Jones category}\label{sec:TLJ}
In this section, we focus on the unitary fusion category of $\operatorname{SU}(2)$ at level $k\in\bZ_{\geq 0}$, which is denoted by $\cC(\mathfrak{sl}_2,k)$ or $\cC_k$ here. 
It moreover has the structure of a unitary modular tensor category and in particular a \emph{unitary braiding}, that is, a natural unitary isomorphism $\br(\pi,\varpi)\colon \pi\otimes\varpi\xto{\sim} \varpi\otimes\pi$, where $\pi,\varpi\in\cC_k$, such that the following diagrams are commutative for all $\tau,\pi,\varpi\in\cC_k$: 
\begin{align*}
	&
	\begin{aligned}
		\xymatrix@C=7em@R=2em{
			(\tau\otimes \pi)\otimes \varpi \ar[r]^-{ \br(\tau,\pi)\otimes\id_\varpi } \ar[d]_-{\ass(\tau,\pi,\varpi) } & 
			(\pi\otimes\tau)\otimes \varpi 
			\ar[d]^-{ \ass(\pi,\tau,\varpi) } 
			\\
			\tau\otimes (\pi\otimes\varpi) \ar[d]_-{ \br(\tau,\pi\otimes\varpi) } & 
			\pi\otimes (\tau\otimes\varpi) \ar[d]^-{ \id_\pi \otimes\br(\tau,\varpi) }
			\\
			(\pi\otimes\varpi) \otimes \tau \ar[r]^-{\ass(\pi,\varpi,\tau)}&
			\pi\otimes(\varpi\otimes\tau) 
		}
	\end{aligned}, 
\end{align*}
and 
\begin{align*}
	&
	\begin{aligned}
		\xymatrix@C=7em@R=2em{
			\tau\otimes (\pi\otimes \varpi) \ar[r]^-{ \id_\tau\otimes\br(\pi,\varpi) } \ar[d]_-{\ass(\tau,\pi,\varpi)^* } & 
			\tau\otimes (\varpi\otimes\pi) \ar[d]^-{ \ass(\tau,\varpi,\pi)^* } 
			\\
			(\tau\otimes \pi)\otimes\varpi \ar[d]_-{ \br(\tau\otimes\pi,\varpi) } & 
			(\tau\otimes\varpi)\otimes\pi \ar[d]^-{ \br(\tau,\varpi)\otimes\id_\pi }
			\\
			\varpi\otimes(\tau \otimes \pi) \ar[r]^-{\ass(\varpi,\tau,\pi)^*}&
			(\varpi\otimes\tau)\otimes \pi 
		}
	\end{aligned}. 
\end{align*}
See \cite{Frohlich-Kerler:book,Wenzl:Cstar-tensor,Xu1998:standard,Yamagami2004:categorical,Edie-Michell-Morrison2017:field} for details. 
We can enumerate $\Irr\cC_k=\{ \pi_i \mid 0\leq i\leq k \}$ and it holds $\pi_i\otimes\pi_j\cong \bigoplus_{h=|i-j|}^{\min \{ i+j, 2k-i-j \}}\pi_h$ for all $i,j\in\{0,1,\cdots,k\}$. 
In particular, 
$\pi_1\otimes \pi_i\cong \pi_i\otimes \pi_1\cong \pi_{i-1}\oplus \pi_{i+1}$, 
where $\pi_{-1}=\pi_{k+1}:=0$. 
We also have $\pi_0=\mathbbm{1}_{\cC_k}$ and $\overline{\pi_i}=\pi_i$. 
The categorical dimension of $\pi_i$ is $\dim_{\cC_k}\pi_i = (\sin \frac{\pi}{k+2})^{-1} (\sin \frac{(i+1)\pi}{k+2})$. 

We can twist the associator ${\ass}(\tau,\pi,\varpi)\colon (\tau\otimes\pi)\otimes\varpi\xto{\sim} \tau\otimes(\pi\otimes\varpi)$ of $\cC_k$ by $\ass^{\tw}(\pi_{i},\pi_{j},\pi_{h}) := (-1)^{ijh}{\ass}(\pi_{i},\pi_{j},\pi_{h})$ for $i,j,h = 0,1,\cdots,k$ to obtain another unitary fusion category denoted by $\cC_k^{\tw}$. See \cite{Frohlich-Kerler:book,Kazhdan-Wenzl:reconst,Edie-Michell-Morrison2017:field}. Also, we can twist the unitary braiding $\br(\pi,\varpi)\colon \pi\otimes\varpi\to
\varpi\otimes\pi$ on $\cC_{k}$ to obtain a unitary natural isomorphism $\br^{\tw}\colon\pi\otimes\varpi\to\varpi\otimes\pi$ such that $\br^{\tw}(\pi_i,\pi_j):=\sqrt{-1}^{i^2j^2}\br(\pi_i,\pi_j)$ for $i,j = 0,1,\cdots,k$, which is unique by considering irreducible decompositions. Then, we can check the commutativity of the following diagrams for all $i,j,h=0,1,\cdots,k$, 
\begin{align*}
	&
	\begin{aligned}
		\xymatrix@C=8em@R=2em{
			(\pi_i\otimes \pi_j)\otimes \pi_h \ar[r]^-{ \sqrt{-1}^{i^2j^2} \br(\pi_i,\pi_j)\otimes\id_{\pi_h} } \ar[d]_-{(-1)^{ijh}\ass(\pi_i,\pi_j,\pi_h) } & 
			(\pi_j\otimes\pi_i)\otimes \pi_h 
			\ar[d]^-{ (-1)^{ijh}\ass(\pi_j,\pi_i,\pi_h) } 
			\\
			\pi_i\otimes (\pi_j\otimes\pi_h) \ar[d]_-{ \br^{\tw}(\pi_i,\pi_j\otimes\pi_h) } & 
			\pi_j\otimes (\pi_i\otimes\pi_h) \ar[d]^-{ \id_{\pi_j} \otimes\sqrt{-1}^{i^2h^2}\br(\pi_i,\pi_h) }
			\\
			(\pi_j\otimes\pi_h) \otimes \pi_i \ar[r]^-{(-1)^{ijh}\ass(\pi_j,\pi_h,\pi_i)}&
			\pi_j\otimes(\pi_h\otimes\pi_i) 
		}
	\end{aligned}, 
\end{align*}
by using $\sqrt{-1}^{i^2(j+h)^2}\br(\pi_i,\pi_j\otimes\pi_h) = \br^{\tw}(\pi_i,\pi_j\otimes\pi_h)$, and similarly 
\begin{align*}
	&
	\begin{aligned}
		\xymatrix@C=8em@R=2em{
			\pi_i\otimes (\pi_j\otimes \pi_h) \ar[r]^-{ \id_{\pi_i}\otimes\sqrt{-1}^{j^2h^2}\br(\pi_j,\pi_h) } \ar[d]_-{(-1)^{ijh}\ass(\pi_i,\pi_j,\pi_h)^* } & 
			\pi_i\otimes (\pi_h\otimes\pi_j) \ar[d]^-{ (-1)^{ijh}\ass(\pi_i,\pi_h,\pi_j)^* } 
			\\
			(\pi_i\otimes \pi_j)\otimes\pi_h \ar[d]_-{ \sqrt{-1}^{(i+j)^2h^2}\br(\pi_i\otimes\pi_j,\pi_h) } & 
			(\pi_i\otimes\pi_h)\otimes\pi_j \ar[d]^-{ \sqrt{-1}^{i^2h^2}\br(\pi_i,\pi_h)\otimes\id_{\pi_j} }
			\\
			\pi_h\otimes(\pi_i \otimes \pi_j) \ar[r]^-{(-1)^{ijh}\ass(\pi_h,\pi_i,\pi_j)^*}&
			(\pi_h\otimes\pi_i)\otimes \pi_j 
		}
	\end{aligned}, 
\end{align*}
which shows that $\br^\tw(\pi,\varpi)\colon \pi\otimes\varpi\to\varpi\otimes\pi$ is a unitary braiding on $\cC_{k}^{\tw}$ by considering irreducible decompositions. 

We write $\cC_{k,0}$ for the unitary tensor full subcategory of $\cC_k$ such that $\Irr\cC_{k,0} = \{ \pi_0, \pi_2,\cdots,\pi_{2\lfloor\frac{k}{2}\rfloor} \}$. 
This is also a unitary tensor full subcategory of $\cC_k^{\tw}$. 
We can check that $[\pi_i]=U_i(2^{-1}[\pi_1])$ recursively on $i=0,1,\cdots,k$, where $U_{n}(X)$ is the Chebyshev polynomial of the second kind. Then note that $U_i(2^{-1}X)\in\bZ[X]$ and that $U_i(2^{-1}X)=\prod_{j=1}^{i}(X-2\cos\frac{j\pi}{i+1})\in\bZ[X]$. 
It follows that the eigenvalues of $[\pi_1]\in \bM_{\Irr\cC_k}=\bM_{\Irr\cC_k^{\tw}}$ must be $2\cos(\frac{j+1}{k+2}\pi)$ for $0\leq j\leq k$ with multiplicity one since $U_{k+1}(2^{-1}X)\in\bZ[X]$ is the minimal monic polynomial of $[\pi_1]$. Also, the eigenvalues of $[\pi_2]=[\pi_1]^2-1\in \bM_{\Irr\cC_{k,0}}$ must be $2\cos(\frac{2(j+1)}{k+2}\pi)+1$ for $0\leq j\leq \lfloor \frac{k}{2}\rfloor$ with multiplicity one.

\begin{eg}\label{eg:Temperley-Lieb-Jones}
	We calculate the upper bounds of $d_{\cC_{k,0}}$, $d_{\cC_k^{\tw}}$, and $d_{\cC_k}$ obtained from \cref{thm:localfusion}. 
	Note that the case $k=1$ reduces to the cases of the trivial group, $\bZ/2\bZ$, and its $3$-cocycle twist, respectively. 
	
	The eigenvalues of 
	$\sum_{i=0}^{\lfloor k/2 \rfloor}[\pi_{2i}][\overline{\pi_{2i}}] = \sum_{i=0}^{\lfloor k/2 \rfloor}U_{2i}(2^{-1}[\pi_1])^2$ 
	in $\bM_{\Irr\cC_{k,0}}$ are 
	\begin{align*}
		&
		\sum_{i=0}^{\lfloor k/2 \rfloor}U_{2i}\Bigl(\cos\frac{(j+1)\pi}{k+2}\Bigr)^2 
		= \sum_{i=0}^{\lfloor k/2 \rfloor}\biggl(\frac{\sin\frac{(j+1)(2i+1)\pi}{k+2}}{\sin\frac{(j+1)\pi}{k+2}}\biggr)^2 
		\\={}& 
		\sum_{i=0}^{\lfloor k/2 \rfloor}\frac{1-\cos\frac{2(j+1)(2i+1)\pi}{k+2}}{1-\cos\frac{2(j+1)\pi}{k+2}}
		= 
		\frac{\lfloor \frac{k}{2} \rfloor+1 + (\frac{k}{2} - \lfloor \frac{k}{2} \rfloor)}{1-\cos\frac{2(j+1)\pi}{k+2}}
		\\={}& 
		\frac{k+2}{(1-e^{\frac{2(j+1)\pi}{k+2}\sqrt{-1}})(1-e^{\frac{2(k-j+1)\pi}{k+2}\sqrt{-1}})} 
	\end{align*}
	for $0\leq j\leq \lfloor \frac{k}{2}\rfloor$. 
	Thus \[|\det Z_{\cC_{k,0}}|=(k+2)^{\lfloor k/2 \rfloor+1}(k+2)^{-1}2^{k-1 -2\lfloor k/2 \rfloor}=(k+2)^{\lfloor k/2 \rfloor}2^{k-1 -2\lfloor k/2 \rfloor}.\]
	
	Similarly, the eigenvalues of 
	$\sum_{i=0}^{k}[\pi_i][\overline{\pi_i}] = \sum_{i=0}^{k}U_i(2^{-1}[\pi_1])^2$ 
	in $\bM_{\Irr\cC_k}$ are 
	\begin{align*}
		&
		\sum_{i=0}^{k}U_i\Bigl(\cos\frac{(j+1)\pi}{k+2}\Bigr)^2 
		= \frac{2(k+2)}{(1-e^{\frac{2(j+1)\pi}{k+2}\sqrt{-1}})(1-e^{\frac{2(k-j+1)\pi}{k+2}\sqrt{-1}})} 
	\end{align*}
	for $0\leq j\leq k$. 
	Thus \[|\det Z_{\cC_k^{\tw}}|=|\det Z_{\cC_k}|=(k+2)^{k+1}2^{k+1}(k+2)^{-2}=2^{k+1}(k+2)^{k-1}.\]
	In summary, we see that $d_{\cC_{k,0}}\leq k+2$ and that $d_{\cC_{k}^{\tw}},d_{\cC_{k}}\leq 2(k+2)$ for $k\geq 2$. 
	Here, note that $\bZ[d^{-1}]=\bZ[d^{\prime -1}]$ for any $d,d'\in\bZ_{\geq 1}$ with the same set of prime divisors. 
\end{eg}

\begin{prop}\label{prop:Temperley-Lieb-Jones}
	For every $k\in\bZ_{\geq 1}$, there exists an outer $\cC_{2k,0}$-action on an arbitrary Kirchberg algebra. 
\end{prop}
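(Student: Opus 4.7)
The plan is to reduce to producing an outer $\cC_{2k,0}$-action on the Cuntz algebra $\cO_\infty$ and then apply $\cO_\infty$-absorption: for any Kirchberg algebra $A$ we have $A\cong A\otimes\cO_\infty$, and \cref{lem:irrbimodtensor} applied with the simple C*-algebra $A$ shows that tensoring an outer $\cC_{2k,0}$-action $(\alpha,\fu)$ on $\cO_\infty$ with $\id_A$ yields an outer $\cC_{2k,0}$-action on $\cO_\infty\otimes A\cong A$. Hence the task reduces to constructing an outer $\cC_{2k,0}$-action on $\cO_\infty$ itself.

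For this, by \cref{thm:tensorKirch} and the Kirchberg--Phillips theorem, it suffices to exhibit a separable nuclear $\cC_{2k,0}$-C*-algebra $D$ in the UCT class with $(\K_0(D),\K_1(D))\cong(\bZ,0)$; the unital Kirchberg algebra $\KK^{\cC_{2k,0}}$-equivalent to $D$ provided by \cref{thm:tensorKirch} must then be isomorphic to $\cO_\infty$. The required $\bZ[\cC_{2k,0}]$-module structure on $\bZ$ comes from the character $\chi\colon\bZ[\cC_{2k,0}]\to\bZ$ sending $[\pi_{2i}]\mapsto(-1)^i$, which is a genuine ring homomorphism because evaluating the Verlinde character of $\bZ[\cC_{2k}]$ at $\theta=\pi/2=(k+1)\pi/(2k+2)$ yields the integer value $\sin((2i+1)\pi/2)/\sin(\pi/2)=(-1)^i$ on the even subcategory. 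Starting from the $\cC_{2k,0}$-C*-algebra $c_0(\Irr\cC_{2k,0})\otimes\cO_\infty$ equipped with the action $(\rho\otimes\id,\mathfrak{r}\otimes 1)$ of \cref{eg:LRtranslation}~(3) (whose $\K_0$ is the regular $\bZ[\cC_{2k,0}]$-module), $D$ is to be built via the equivariant Cuntz--Pimsner construction of \cref{thm:equivTPalg}, using a $\cC_{2k,0}$-equivariant C*-correspondence assembled from $\lambda(\pi_2)\oplus\lambda(\pi_0)$ augmented by a non-proper summand so that the resulting Cuntz--Pimsner algebra is simple by \cref{thm:simpleCPalg} and its induced $\K$-theoretic endomorphism on the regular module is multiplication by $[\pi_2]+[\pi_0]$. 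Since $\Ker\chi=([\pi_2]+[\pi_0])\bZ[\cC_{2k,0}]$, the Pimsner six-term exact sequence will produce $\K_0$ equal to $\bZ$ modulo a residual term coming from the annihilator of $[\pi_2]+[\pi_0]$, which should then be killed by iterating the Pimsner construction a bounded number of times.

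The principal obstacle lies in controlling this iteration, since $-1$ is an eigenvalue of $[\pi_2]$ in $\bZ[\cC_{2k,0}]$ (so $[\pi_2]+[\pi_0]$ is a zero-divisor) and $\bZ$ may have infinite projective dimension over $\bZ[\cC_{2k,0}]$, making a direct application of \cref{prop:inductiveMC} inadequate. The key observation to handle this is that the rational algebra $\bQ[\cC_{2k,0}]$ decomposes as a finite product of number fields indexed by the Galois orbits of characters of $\bZ[\cC_{2k,0}]$, one factor of which is the $\bQ$ corresponding to $\chi$; this decomposition constrains the zero-divisor structure of $[\pi_2]+[\pi_0]$ in a finite-dimensional way and allows the successive Pimsner iterations to terminate after boundedly many steps, yielding the desired $\cC_{2k,0}$-C*-algebra $D$ with the right K-theory and completing the argument.
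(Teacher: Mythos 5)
Your overall frame is the same as the paper's: reduce to $\cO_\infty$ via $\cO_\infty$-absorption and \cref{lem:irrbimodtensor}, realize the character $\chi([\pi_{2i}])=(-1)^i$ as $\K_0$ of a $\cC_{2k,0}$-C*-algebra $D$ with $\K_1(D)=0$, and conclude with \cref{thm:tensorKirch} and Kirchberg--Phillips. But the construction of $D$ is where your argument breaks down. You propose a Pimsner construction over the regular module $\bZ[\cC_{2k,0}]\cong\K_0(c_0(\Irr\cC_{2k,0}))$ whose $\K$-theoretic map is multiplication by $[\pi_0]+[\pi_2]=[\pi_1]^2$. That element is a zero-divisor (as you note: $0=4\cos^2(\pi/2)$ is an eigenvalue), so while its image is indeed $\Ker\chi$, its kernel is a rank-one subgroup, and the resulting six-term sequence leaves you with $\K_1\cong\bZ$. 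Your claim that this residual term "should be killed by iterating the Pimsner construction a bounded number of times" cannot work as stated: the module $\bZ_\chi$ has \emph{infinite} projective dimension over $\bZ[\cC_{2k,0}]$ in general --- already for $k=1$ one has $\bZ[\cC_{2,0}]\cong\bZ[\bZ/2\bZ]$ and $\chi$ is the sign character, whose minimal free resolution is the $2$-periodic one --- so no finite iteration of correspondences built from multiplications on free modules can terminate. The rational decomposition of $\bQ[\cC_{2k,0}]$ into number fields does not see this integral obstruction and does not supply a termination mechanism; \cref{prop:inductiveMC} requires an honest finite exact sequence with injective first map.

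The missing idea is to resolve $\bZ_\chi$ by modules that are \emph{not} free but are still geometrically realizable. The paper uses the odd part $\cC_{2k,1}$ (with $\Irr\cC_{2k,1}=\{\pi_1,\pi_3,\dots,\pi_{2k-1}\}$) as a right $\cC_{2k,0}$-module category, so that $\bZ[\cC_{2k,1}]\cong\K_0(C(\Irr\cC_{2k,1}))$ for the induced $\cC_{2k,0}$-action via \cref{eg:LRtranslation}, and the unitarily $\cC_{2k,0}$-linear functor $\pi\mapsto\pi_1\otimes\pi$ gives a proper $\cC_{2k,0}$-correspondence realizing the short exact sequence
\begin{align*}
0\to \bZ[\cC_{2k,1}] \xrightarrow{[\pi_1]\times(-)} \bZ[\cC_{2k,0}] \to \bZ_\chi \to 0
\end{align*}
(exact because $\pi_1\otimes\pi_{2i+1}\cong\pi_{2i}\oplus\pi_{2i+2}$ makes the map injective with cokernel exactly $\chi$). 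This length-one resolution feeds directly into \cref{prop:inductiveMC}, avoiding the zero-divisor problem entirely. Your factorization $[\pi_1]^2$ secretly passes through $\bZ[\cC_{2k,1}]$; the fix is to stop halfway and use the odd module category itself.
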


Here, if a unitary fusion category $\cC$ admits a unitary fibre functor, then it yields a compact quantum group that is finite-dimensional, which is necessarily of Kac type by \cite[Th{\'e}or{\`e}me 4.10]{Baaj-Skandalis:mltu},
and thus $\dim_\cC\pi\in\bZ$ for every object $\pi\in \cC$. 
Therefore, we see that $\cC_{2k,0}$ does not have a unitary fibre functor for $k\geq 3$ as $\dim_{\cC_{2k,0}}\pi_2=2\cos\frac{2\pi}{2k +2} +1\not\in\bZ$. 

\begin{proof}
	Thanks to the $\cO_\infty$-absorption theorem~\cite{Kirchberg-Phillips:embedding} combined with \cref{lem:irrbimodtensor}, it suffices to construct a $\cC_{2k,0}$-action on $\cO_\infty$. 
	
	Let $\cC_{2k,1}$ be the semisimple full subcategory of $\cC_{2k}$ such that $\Irr\cC_{k,1} = \{ \pi_1, \pi_3,\cdots,\pi_{2k-1} \}$. 
	Then, the unitary right $\cC_{2k}$-module structure on $\cC_{2k}$ as in \cref{eg:LRtranslation} (3) restricts to unitary right $\cC_{2k,0}$-module structures on $\cC_{2k,0}$ and $\cC_{2k,1}$. 
	Also, we have the well-defined functor $\cC_{2k,1}\ni\pi\mapsto \pi_1\otimes\pi\in\cC_{2k,0}$, which is canonically unitarily $\cC_{2k,0}$-linear. 
	Then by \cref{rem:LRtranslation}, we obtain $\cC_{2k,0}$-actions on $C(\Irr\cC_{2k,0})$ and $C(\Irr\cC_{2k,1})$ and a proper $\cC_{2k,0}$-$(C(\Irr\cC_{2k,1}),C(\Irr\cC_{2k,0}))$-correspondence such that the induced map $\K_0(C(\Irr\cC_{2k,1}))\to \K_0(C(\Irr\cC_{2k,0}))$ 
	equals the left multiplication by $[\pi_1]$. 
	Then by the exact sequence 
	\begin{align*}
		0\to \bZ[\cC_{2k,1}] \xto{[\pi_1]\times(-)} \bZ[\cC_{2k,0}] &\to \bZ \to 0 , 
		\\
		[\pi_{2i}] &\mapsto (-1)^i 
	\end{align*}
	we obtain an outer $\cC_{2k,0}$-action on $\cO_{\infty}$ using \cref{prop:inductiveMC} and \cref{thm:tensorKirch}. 
\end{proof}

The full tensor subcategory generated by $\pi_0,\pi_k\in\cC_k$ is unitarily monoidally equivalent to $\Hilb_{\bZ/2\bZ}$ if $k$ is even but not if $k$ is odd. But, $\pi_0,\pi_k$ in $\cC_{k}^{\tw}$ generate the full tensor subcategory unitarily monoidally equivalent to $\Hilb_{\bZ/2\bZ}$ for all $k\in\bZ_{\geq 1}$. See \cite[Lemma 7, Remark 13]{Ostrik2003:module} for details, where we note that a unitary tensor category that is (just $\bC$-linearly) monoidally equivalent to $\Hilb_{\bZ/2\bZ}$ is unitarily monoidally equivalent to it because the inclusion $\bT\subset \bC^\times$ induces an isomorphism $\Ho^3(\bZ/n\bZ;\bT)\to\Ho^3(\bZ/n\bZ;\bC^{\times})$ for all $n\in\bZ_{\geq 1}$. 
Thus besides $C(\Irr\cC_{k}^{\tw})$, we have another source of $\cC_{k}^{\tw}$-actions coming from the Q-system $\varpi_0:=\pi_0\oplus\pi_k$ with the unit $\eta\colon\pi_0\to\varpi_0$ and the multiplication $\mu\colon\varpi_0\otimes\varpi_0\to\varpi_0$ corresponding to those of the group C*-algebra $\bC[\bZ/2\bZ]$. 
This gives a right $\cC_k^{\tw}$-module category of type T by \cite[Remark 14 (ii)]{Ostrik2003:module}. To see its unitary structure, we describe it as follows. 
Consider the idempotent-complete additive C*-category $\cD_{k}$ whose objects are of the form $\varpi_0\otimes\pi$ for $\pi\in \cC_k^{\tw}$ such that $\Hom_{\cD_{k}}(\varpi_0\otimes\pi,\varpi_0\otimes\varpi)$ for $\pi,\varpi\in\cC_{k}^{\tw}$ is defined by the set of morphisms $f\in\Hom_{\cC_{k}^{\tw}}(\varpi_0\otimes\pi,\varpi_0\otimes\varpi)$ such that the following diagram is commutative, 
\begin{align*}\begin{aligned}
	\xymatrix{
		\varpi_0\otimes(\varpi_0\otimes\pi) \ar[r]^{{\ass}^{\tw *}}\ar[d]_-{\id\otimes f}& 
		(\varpi_0\otimes\varpi_0)\otimes\pi \ar[r]^-{\mu\otimes\id}& 
		\varpi_0\otimes\pi \ar[d]^-{f}\\
		\varpi_0\otimes(\varpi_0\otimes\varpi) \ar[r]^-{{\ass}^{\tw *}}&
		(\varpi_0\otimes\varpi_0)\otimes\varpi \ar[r]^-{\mu\otimes\id}&
		\varpi_0\otimes\varpi 
	}
\end{aligned}.\end{align*}
We observe that $\cD_{k}$ is a semisimple C*-category with the unitary right $\cC_k^{\tw}$-module structure induced by the monoidal structure of $\cC_{k}^{\tw}$ such that $\Irr\cD_{k}=\{ \varpi_i \mid i=0,1,\cdots,\lfloor k/2\rfloor \}$, where $\varpi_i:=\varpi_0\otimes\pi_i$, which can be checked to be isomorphic to $\varpi_0\otimes\pi_{k-i}$ in $\cD_{k}$. 
Note that the forgetful functor $\mathop{\mathrm{Forg}}_{\varpi_0}\colon\cD_{k}\to \cC_{k}^{\tw}$ is unitarily $\cC_{k}^{\tw}$-linear. 
Then by \cref{eg:LRtranslation}, the unital C*-subalgebra 
\begin{align*}
	C(\Irr\cD_{k})=\bigoplus_{i=0}^{\lfloor k/2\rfloor}\bC (e_{\pi_i} + e_{\pi_{k-i}})\subset \bigoplus_{i=0}^{k}\bC e_{\pi_i} = C(\Irr\cC_{k}^{\tw}) 
\end{align*}
admits the $\cC_{k}^{\tw}$-action such that this inclusion becomes a $\cC_{k}^{\tw}$-$*$-homomorphism canonically. 
Also, the unitary functor $\mathop{\mathrm{Ind}}_{\varpi_0}\colon \cC_{k}^{\tw}\ni \pi\mapsto \varpi_0\otimes\pi\in\cD_{k}$ is unitarily $\cC_{k}^{\tw}$-linear, which induces a proper $\cC_{k}^{\tw}$-$(C(\Irr\cC_{k}^{\tw}),C(\Irr\cD_{k}))$-correspondence. 

\begin{thm}\label{thm:Temperley-Lieb-Jones}
	For $k\in\bZ_{\geq 1}$ such that $p:=k+2$ is an odd prime, $d_{\cC_{k,0}}=d_{\cC_{k}^{\tw}}=1$. 
\end{thm}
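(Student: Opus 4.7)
The proof splits by the choice of $\cC$, and in each case the first task is to identify the fusion ring explicitly. For $\cC=\cC_{k,0}$, the Chebyshev computations at the start of the section show that $[\pi_2]$ generates $\bZ[\cC_{k,0}]$ over $\bZ$ with minimal polynomial of degree $(p-1)/2$ having simple roots $\{1+2\cos\tfrac{2m\pi}{p}\}_{m=1}^{(p-1)/2}$, which are the Galois conjugates of $1+2\cos\tfrac{2\pi}{p}$ over $\bQ$. Comparing $\bZ$-ranks then forces $\bZ[\cC_{k,0}]\cong\bZ[2\cos\tfrac{2\pi}{p}]$, the ring of integers of the maximal real cyclotomic subfield $\bQ(\zeta_p)^+$, which is a Dedekind domain. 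Hence every countable module over it has projective dimension at most one, so \cref{thm:homologyKirch} with $d=1$ immediately yields $d_{\cC_{k,0}}=1$.

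For $\cC=\cC_k^{\tw}$, twisting the associator preserves the fusion ring, so $\bZ[\cC_k^{\tw}]=\bZ[\cC_k]$. Since the fusion ring is commutative and $\pi_k\otimes\pi_k\cong\pi_0$, the class $[\pi_k]$ is a central involution, and left multiplication by $[\pi_k]$ induces the $\bZ$-linear bijection $[\pi_i]\mapsto[\pi_{k-i}]$ between even- and odd-indexed basis vectors. Hence $\bZ[\cC_k^{\tw}]\cong R[\bZ/2\bZ]$, where $R=\bZ[2\cos\tfrac{2\pi}{p}]$ and the generator of $\bZ/2\bZ$ is identified with $[\pi_k]$. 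A direct application of \cref{thm:homologyKirch} with $d=1$ now fails: at a prime $\mathfrak{p}$ of $R$ above $2$, the local ring $R_\mathfrak{p}[\bZ/2\bZ]$ has embedding dimension $2$ but Krull dimension $1$ and so is non-regular, and already the augmentation module $R_+:=R[\bZ/2\bZ]/([\pi_k]-1)\cong R$ has infinite projective dimension. To bypass this, I would enlarge the class of realizable building blocks in \cref{prop:inductiveMC} using the Q-system $\varpi_0=\pi_0\oplus\pi_k\in\cC_k^{\tw}$ from the excerpt: $\cC_k^{\tw}$ acts on $C(\Irr\cD_k)$ with $\K_0$ canonically identified with $R_+$, and the $\cC_k^{\tw}$-equivariant inclusion $C(\Irr\cD_k)\hookrightarrow C(\Irr\cC_k^{\tw})$ induces on $\K_0$ the norm map $R_+\xrightarrow{1+[\pi_k]}R[\bZ/2\bZ]$, so the mapping cone of this inclusion realizes the sign module $R_-:=R[\bZ/2\bZ]/([\pi_k]+1)\cong R$ on $\K_0$ via the short exact sequence $0\to R_+\xrightarrow{1+[\pi_k]}R[\bZ/2\bZ]\to R_-\to 0$. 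Combined with the standard realization of $R[\bZ/2\bZ]$ by $C(\Irr\cC_k^{\tw})$ and the $\cC_k^{\tw}$-equivariant correspondences induced by $\mathop{\mathrm{Ind}}_{\varpi_0}$ and its adjoint, this produces enough realizable building blocks and morphisms that a straightforward variant of \cref{prop:inductiveMC}, followed by \cref{thm:tensorKirch}, realizes any countable $R[\bZ/2\bZ]$-module admitting a finite resolution by countable direct sums of $R[\bZ/2\bZ]$, $R_+$, and $R_-$.

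The proof thus reduces to the purely algebraic claim that every countable $R[\bZ/2\bZ]$-module admits such a finite resolution, and this is the main obstacle; the claim is the content of the number-theoretic \cref{sec:numbertheory}. At primes $\mathfrak{p}$ of $R$ not above $2$, one has $R_\mathfrak{p}[\bZ/2\bZ]\cong R_\mathfrak{p}\times R_\mathfrak{p}$, a product of Dedekind domains of global dimension at most one, so local resolutions of bounded length exist trivially. At primes $\mathfrak{p}\mid 2$, the number-theoretic input enters: unramifiedness of $\bQ(2\cos\tfrac{2\pi}{p})/\bQ$ at $2$ forces $2$ to be a uniformizer of $R_\mathfrak{p}$, so the discrepancy between $R_\mathfrak{p}[\bZ/2\bZ]$ and the regular semisimple-like ring $R_\mathfrak{p}\times R_\mathfrak{p}$ is as mild as possible and is precisely accounted for by the exact sequence above together with its dual $0\to R_-\xrightarrow{1-[\pi_k]}R[\bZ/2\bZ]\to R_+\to 0$, permitting local resolutions of uniformly bounded length; gluing via the Dedekind structure of $R$ then completes the algebraic argument.
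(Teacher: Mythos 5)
Your treatment of $\cC_{k,0}$ is correct and is exactly the paper's argument: $\bZ[\cC_{k,0}]\cong\bZ[2\cos\frac{2\pi}{p}]$ is a Dedekind domain, so \cref{thm:homologyKirch} with $d=1$ applies. For $\cC_k^{\tw}$ you also identify the right skeleton — the isomorphism $\bZ[\cC_k^{\tw}]\cong R[\bZ/2\bZ]$ with $R=\bZ[2\cos\frac{2\pi}{p}]$, the failure of finite global dimension at primes above $2$, the module category $\cD_k$ realizing $R_+\cong\bZ[\cD_k]$, and unramifiedness of $\bQ(2\cos\frac{2\pi}{p})/\bQ$ at $2$ as the essential number-theoretic input. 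These all match the paper.

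The gap is in your use of $R_-$ as a C*-algebraic building block. You produce $R_-$ only as the $\K_0$-group of the mapping cone of $C(\Irr\cD_k)\hookrightarrow C(\Irr\cC_k^{\tw})$, and then assert that a ``straightforward variant'' of \cref{prop:inductiveMC} realizes any module with a finite resolution by sums of $R[\bZ/2\bZ]$, $R_+$, and $R_-$. But \cref{prop:inductiveMC} consumes an exact sequence of $\K_0$-groups induced by \emph{proper equivariant correspondences between the given $\cC$-C*-algebras}, and the mapping cone carries no module-category presentation; you have not explained how to realize, say, multiplication by $r\in R$ on $R_-$, or the nonzero maps $R_-\to R[\bZ/2\bZ]$ (those of the form $x\mapsto xa(1-[\pi_k])$), as proper $\cC_k^{\tw}$-correspondences out of that cone. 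Interleaving the cone construction for $R_-$ with the resolution of $M$ would require a genuinely more elaborate iterated-cone argument than \cref{prop:inductiveMC} provides. The paper sidesteps this entirely: \cref{thm:Z/2module} produces a length-three resolution of $M$ using \emph{only} $R_+^{\oplus\infty}$ and $R[\bZ/2\bZ]^{\oplus\infty}$ (the splicing of $0\to R_+\xrightarrow{1+Y}R[\bZ/2\bZ]\to R_-\to 0$ is done purely algebraically inside the appendix), and then every $\bZ[\cC_k^{\tw}]$-linear map between these two building blocks is realized by explicit unitarily $\cC_k^{\tw}$-linear functors — $\pi_i\otimes(-)$, $\mathop{\mathrm{Ind}}_{\varpi_0}$, $\mathop{\mathrm{Forg}}_{\varpi_0}$, and $(-)\otimes\pi_i$ on $\cD_k$, the last requiring the twisted braiding $\br^{\tw}$. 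Separately, your local-to-global sketch of the algebraic resolution understates the difficulty: $2R$ generally splits into several primes, so the relevant quotients are projective but not free over $R/2R$, and the paper needs a Galois-averaging trick (passing to $M[G]$ in \cref{lem:BCKlem3.2G}) before it can split off the projective pieces; ``gluing via the Dedekind structure'' does not by itself deliver this.
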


We will see that $d_{\cC_k}=2$ in \cref{eg:Temperley-Lieb-Jones2}. 
We separate several number-theoretic materials required in the proof of \cref{thm:Temperley-Lieb-Jones} into \cref{sec:numbertheory}. 

\begin{proof}
	From $\dim_\bQ \bQ(2\cos\frac{2\pi}{p})=\frac{p-1}{2}=\frac{\deg U_{k+1}}{2}$, 
	we see that $U_{k+1}(2^{-1}X)\in\bZ[X^2]$ is the monic minimal polynomial (with respect to the variable $X^2$) of $[\pi_1]^2$ and that $\bZ[\cC_{k,0}]\cong \bZ[X^2]/U_{k+2}(2^{-1}X) \bZ[X^2] \cong \bZ[2\cos\frac{2\pi}{p}]$, whose global dimension is $1$ (see \cref{notn:algnumber,eg:cyclotomic}). 
	Thus the case $\cC=\cC_{k,0}$ follows from \cref{thm:homologyKirch} with $d=1$. 
	From now on, we consider the case $\cC=\cC_{k}^{\tw}$. 
	
	As in the proof of \cref{thm:homologyKirch}, it suffices to show the case $M_1=0$. 
	We have $\bZ[\cC_{k}^{\tw}]\cong \bZ[2\cos\frac{2\pi}{p}][\bZ/2\bZ]$ by $\pi_k\not \in\Irr\cC^{\tw}_{k}\setminus\Irr\cC_{k,0}$, the commutativity of the fusion rule, and $\pi_k\otimes\pi_k\cong \pi_0$. 
	Thus we can apply \cref{thm:Z/2module} to $R:=\bZ[2\cos\frac{2\pi}{p}]$ thanks to \cref{eg:cyclotomic} and obtain a resolution of the right $\bZ[\cC_{k}^{\tw}]$-module $M:=M_0$ only using countably many direct sums of $\bZ[\cC_{k}^{\tw}]$ and $R_+\cong\bZ[\cD_{k}]$. 
	We can lift the $\bZ[\cC_{k}^{\tw}]$-linear maps $F_1,F_2,F_3$ in \cref{eq:thm:Z/2module0} to the sequence of proper $\cC_{k}^{\tw}$-C*-correspondences by a similar argument as in \cref{thm:homologyKirch} thanks to the following. 
	\begin{itemize}
		\item
		As we have used in \cref{thm:homologyKirch} for $d=1$, any $\bZ[\cC_{k}^{\tw}]$-linear map $\bZ[\cC_{k}^{\tw}]\to \bZ[\cC_{k}^{\tw}]$ is a $\bZ$-linear combination of the maps coming from $\pi_i\otimes(-)\colon \cC_{k}^{\tw}\to \cC_{k}^{\tw}$ over $i$. 
		\item
		For a $\bZ[\cC_{k}^{\tw}]$-linear map $f\colon \bZ[\cD_{k}]\to \bZ[\cC_{k}^{\tw}]$, there are $f_i\in \bZ$ such that $f([\varpi_j])=\sum_{i=0}^{\lfloor k/2\rfloor}f_i [\pi_i]([\pi_0]+[\pi_k]) [\pi_j]$ for all $j$, and thus $f$ is a $\bZ$-linear combination of the maps coming from the compositions $\cD_{k}\xto{\mathop{\mathrm{Forg}}_{\varpi_0}} \cC_{k}^{\tw}\xto{\pi_i\otimes(-)} \cC_{k}^{\tw}$ over $i$. 
		\item
		For a $\bZ[\cC_{k}^{\tw}]$-linear map $f\colon \bZ[\cC_{k}^{\tw}]\to \bZ[\cD_{k}]$, there are $f_i\in \bZ$ such that $f([\pi_j])=\sum_{i=0}^{\lfloor k/2\rfloor}f_i [\varpi_i] [\pi_j]$ for all $j$, and thus $f$ is a $\bZ$-linear combination of the maps coming from the compositions $\cC_{k}^{\tw} \xto{\pi_i\otimes(-)} \cC_{k}^{\tw} \xto{\mathop{\mathrm{Ind}}_{\varpi_0}} \cD_{k}$ over $i$. 
		\item
		For a $\bZ[\cC_{k}^{\tw}]$-linear map $f\colon \bZ[\cD_{k}]\to \bZ[\cD_{k}]$, there are $f_i\in \bZ$ such that $f([\varpi_j])=\sum_{i=0}^{\lfloor k/2\rfloor}f_i [\varpi_i] [\pi_j]$ for all $j$, and thus $f$ is a $\bZ$-linear combination of the maps coming from $(-)\otimes\pi_i\colon \cD_{k} \to \cD_{k}$ over $i$. 
		This endofunctor is unitarily $\cC_{k}^{\tw}$-linear 
		thanks to the unitary braiding $\br^{\tw}$. 
	\end{itemize}
	Now \cref{prop:inductiveMC} and \cref{thm:tensorKirch} complete the proof. 
\end{proof}

Unfortunately, such an argument is no longer applicable when $k+2$ is not a prime. It certainly holds that for some positive integer $d$, the fusion ring $\bZ[d^{-1}][\cC_{k,0}]$ is isomorphic to the direct product of $\bZ[d^{-1}][2\cos\frac{2\pi}{n}]$ for positive divisors $n$ of $k+2$. However, we cannot take $d=1$, for example when $k+2=9$ or $15$.

\begin{eg}[Fibonacci category]\label{eg:Fibonacci}
	Let $\cC$ be the Fibonacci category, or equivalently, $\cC=\cC_{k,0}$ with $k=3$, where we can apply \cref{thm:Temperley-Lieb-Jones}. 
	Then the Cuntz algebra $\cO_{n+1}$ admits a $\cC$-action if and only if there exists $m\in\bZ$ such that $m^2 - (m+1) \in n\bZ$. 
	This is impossible if $n$ is even. Next, we consider the case when $2$ is invertible modulo $n$. 
	It follows from \[(m+2^{-1})^2 -(m+2^{-1})-1 \equiv m^2-4^{-1}5 \mod n \]
	that $\cC$ can act on $\cO_{n+1}$ if and only if $5$ is a quadratic residue of $n$. 
	When $n$ is a prime, this is equivalent to $p\equiv 0,\pm1$ modulo $5$ thanks to quadratic reciprocity. 
	For general $n$, this is the case if and only if $n$ is either of the form $\prod_{i=1}^{h}p_i^{s_i}$ or $5\prod_{i=1}^{h}p_i^{s_i}$ for some $h\in\bZ_{\geq 0}$, $s_i\in\bZ_{\geq 0}$ and primes $p_i$ with $p_i\equiv\pm1$ modulo $5$, with the aid of Chinese remainder theorem and the fact on an odd prime $p$ that $a\in\bZ\setminus p\bZ$ is a quadratic residue modulo $p^r$ for all $r\in\bZ_{\geq 1}$ if it holds for $r=1$. 
	See \cite[Example 3.1]{Izumi:subalgebrasI}, \cite[Example 4.8]{Izumi:inclusions} for constructions of $\cC$-actions on $\cO_{n+1}$ for several $n$ based on a different approach. 
\end{eg}

Finally, note that if $\cC_k$ or $\cC_k^{\tw}$ has an outer action $(\alpha,\fu)$ on a unital Kirchberg algebra $A$, we have a unital inclusion $A\subset\cK(\alpha(\pi_1))$ with $A'\cap\cK(\alpha(\pi_1))\cong\bC$ with Watatani index $(\dim_{\cC_k}\pi_1)^2=(\dim_{\cC_k^{\tw}}\pi_1)^2=4\cos^2\frac{\pi}{k+2}$. 
Moreover, $A\cong\cK(\alpha(\pi_1))$ if $\pi_1$ fixes the class $[1_A]\in\K_0(A)$, which happens when $[1_A]=0\in\K_0(A)$, for example. 

\section{Actions of 3-cocycle twists of cyclic groups on Cuntz algebras}\label{sec:cyclicCuntz}
In this section, we consider actions of $3$-cocycle twists of discrete groups on a Kirchberg algebra $A$ that are given by $*$-automorphisms. 
When $A$ is stable, they are equivalent to considering $\Hilb_{\Gamma,\omega}$-actions on $A$ by Kasparov's stabilization (see \cite[Lemma 2.36 (1)]{Arano-Kitamura-Kubota:tensorKK}). 
When $A$ is unital, they correspond to $\Hilb_{\Gamma,\omega}$-actions on $A$ whose induced $\bZ[\Gamma]$-module structures on $\K_0(A)$ fix the class $[1_A]$ in the sense that $[1_A][g]=[1_A]\in\K_0(A)$ for all $g\in\Gamma$ (see \cite[Lemma 2.36 (2)]{Arano-Kitamura-Kubota:tensorKK} or more generally \cref{lem:corneranomalous}). 

\subsection{Conventions on anomalous actions and \texorpdfstring{$\K_0^{\#}$}{K0\#}}\label{ssec:termK0sharp}
We say that a \emph{right $(\Gamma,\omega)$-action} on a C*-algebra $A$ is a $\Hilb_{\Gamma,\omega}$-action $(\alpha,\fu)$ on $A$ such that $\alpha(g)$ is of the form $(A,\alpha_g)$ for some $\alpha_g\in\Aut(A)$ for all $g\in \Gamma$. 
Equivalently, a right $(\Gamma,\omega)$-action on $A$ is a family $(\alpha_g)_{g\in\Gamma}$ of $*$-automorphisms on $A$ and a family $(\fu_{g,h})_{g,h\in\Gamma}$ of unitaries in $\cU\cM(A)$ satisfying for all $f,g,h\in\Gamma$, 
\begin{align*}
	\Ad \fu_{g,h}\alpha_h\alpha_g &= \alpha_{gh} , 
	\qquad\text{and}\qquad
	\omega(f,g,h) \fu_{fg,h} \alpha_h(\fu_{f,g}) 
	= 
	\fu_{f,gh} \fu_{g,h}. 
\end{align*}
In this section, we shall use the multiplicative notation for $\bT$-valued $3$-cocycles and thus the cohomology group $\Ho^3(\Gamma;\bT)$ (unlike the additive notation of $\Ho^3(\Gamma;\bR/\bZ)$ in \cref{main:4}). 

\begin{rem}\label{rem:rightleftanomalous}
	A \emph{left} $(\Gamma,\omega)$-action on a C*-algebra $A$ consists of a family $(\alpha_g)_{g\in\Gamma}$ of $*$-automorphisms on $A$ and a family $(\fu_{g,h})_{g,h\in\Gamma}$ of unitaries in $\cU\cM(A)$ satisfying for all $f,g,h\in\Gamma$, 
	\begin{align*}
		\alpha_g\alpha_h &= \Ad \fu_{g,h}\alpha_{gh} , 
		\qquad\text{and}\qquad
		\alpha_f(\fu_{g,h}) \fu_{f,gh} 
		= 
		\omega(f,g,h) \fu_{f,g} \fu_{fg,h}. 
	\end{align*}
	In the literature, this is often called an anomalous action of $(\Gamma,\omega)$, and the induced group homomorphism $\Gamma\to \mathop{\mathrm{Out}}(A)$ is called a $\Gamma$-kernel or an outer action of $\Gamma$ (warn that this notion is not the same as our outerness). 
	We shall mainly treat right $(\Gamma,\omega)$-actions in this section. For a right $(\Gamma,\omega)$-action $(\alpha,\fu)$ on $A$, we let 
	\begin{align*}
		\omega'(f,g,h) := \omega(h^{-1},g^{-1},f^{-1}), 
		\qquad \alpha'_g := \alpha_{g^{-1}}, 
		\qquad \fu'_{g,h} := \fu_{h^{-1},g^{-1}}^*, 
	\end{align*}
	to have a well-defined $3$-cocycle $\omega'$ on $\Gamma$ and a left $(\Gamma,\omega')$-action $(\alpha',\fu')$ on $A$. 
	Conversely, a left $(\Gamma,\omega')$-action on $A$ gives a right $(\Gamma,\omega)$-action on $A$ in a similar way. 
	The assignment $\omega\mapsto\omega'$ induces an involutive group automorphism on $\Ho^3(\Gamma;\bT)$. 
\end{rem}

\begin{lem}\label{lem:corneranomalous}
	For a $\Hilb_{\Gamma,\omega}$-C*-algebra $(A,\alpha,\fu)$ and a projection $p\in A$ with $\cspan ApA=A$ such that there is a unital $*$-homomorphism from $\cO_{\infty}$ to $B:=pAp$, 
	we write $(\beta,\fv)$ for the $\Hilb_{\Gamma,\omega}$-action on $B:=pAp$ induced by the imprimitivity bimodule $Ap$. 
	If the $\Gamma$-action on $\K_0(A)$ fixes $[p]$, 
	then the $\Hilb_{\Gamma,\omega}$-action $(\beta,\fv)$ on $B$ is given by a right $(\Gamma,\omega)$-action. 
\end{lem}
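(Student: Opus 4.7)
The plan is to manufacture, for each $g\in\Gamma$, a partial isometry $v_g\in A$ with $v_g^*v_g = p$ and $v_gv_g^* = \alpha_g(p)$, and then define
\[
\beta_g := v_g^*\alpha_g(\cdot)v_g \colon B \to B
\qquad\text{and}\qquad
\fv_{g,h} := v_{gh}^*\,\fu_{g,h}\,\alpha_h(v_g)\,v_h \in B .
\]
The strategy is then to verify directly that $(\beta_g,\fv_{g,h})$ is a right $(\Gamma,\omega)$-action on $B$ whose associated $\Hilb_{\Gamma,\omega}$-action recovers $(\beta,\fv)$.

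The existence of such $v_g$ is the crux. The assumption that $\Gamma$ fixes $[p]\in\K_0(A)$ yields $[\alpha_g(p)]=[p]$; the fullness hypothesis $\cspan ApA = A$ makes both $p$ and $\alpha_g(p)$ full in $A$; and the unital embedding $\cO_\infty \hookrightarrow B$ forces $p$, hence $\alpha_g(p)$, to be properly infinite in $A$. One then invokes the standard fact that any two full properly infinite projections of equal $\K_0$-class in a C*-algebra are Murray--von Neumann equivalent to select $v_g$; one sets $v_e := p$. Direct computation using $v_g^*v_g = p$, $v_gv_g^* = \alpha_g(p)$, and $b = pbp$ for $b\in B$ shows $\beta_g\in\Aut(B)$. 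For $\fv_{g,h}$, the relation $\alpha_{gh}(p) = \fu_{g,h}\alpha_h\alpha_g(p)\fu_{g,h}^* = v_{gh}v_{gh}^*$ telescopes the product and gives $\fv_{g,h}^*\fv_{g,h} = p = \fv_{g,h}\fv_{g,h}^*$, so $\fv_{g,h}\in\cU(B)$. Similar manipulations show $\Ad\fv_{g,h}\,\beta_h\beta_g = \beta_{gh}$, and the $\omega$-cocycle identity for $\fu$ translates (after using $\alpha_f(p)v_f = v_f$ and $\fu_{g,h}^*\alpha_{gh}(p)\fu_{g,h} = \alpha_h\alpha_g(p)$) into
\[
\omega(f,g,h)\,\fv_{fg,h}\,\beta_h(\fv_{f,g}) = \fv_{f,gh}\,\fv_{g,h} ,
\]
so $(\beta_g,\fv_{g,h})_{g,h\in\Gamma}$ is indeed a right $(\Gamma,\omega)$-action on $B$.

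It remains to match this data with the Morita-transported $(\beta,\fv)$. A short analysis of $\beta(g) = pA\otimes_A {}_{\alpha_g}A\otimes_A Ap$ shows that every element reduces to $p\otimes 1\otimes z$ for a unique $z\in\alpha_g(p)Ap$, realising $\beta(g)\cong\alpha_g(p)Ap$ as a right Hilbert $B$-module on which $B$ acts from the left by $b\cdot w = \alpha_g(b)w$. The map $b\mapsto v_g b$ is then a $(B,B)$-bimodule unitary $(B,\beta_g)\xto{\sim}\beta(g)$: right linearity and isometricity are immediate from $v_g^*v_g = p$, surjectivity from fullness of $p$, and the left-linearity follows from $v_g\beta_g(b) = v_gv_g^*\alpha_g(b)v_g = \alpha_g(p)\alpha_g(b)v_g = \alpha_g(b)v_g$. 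Under this identification, the $\Hilb_{\Gamma,\omega}$-cocycle unitaries of $(\beta,\fv)$ become left multiplication by the explicit $\fv_{g,h}$ above. The main obstacle is the very first step: producing the partial isometries $v_g$, which requires combining proper infiniteness (from the $\cO_\infty$-embedding), fullness (from $\cspan ApA = A$), and $\K_0$-cancellation for properly infinite full projections. Once this equivalence is in hand, the rest is routine Morita bookkeeping.
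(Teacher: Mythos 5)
There is a genuine gap, and it sits at the very first step: you treat the $\Hilb_{\Gamma,\omega}$-action $(\alpha,\fu)$ on $A$ as if it were already given by $*$-automorphisms $\alpha_g\in\Aut(A)$ together with unitaries $\fu_{g,h}\in\cU\cM(A)$ --- you write $\alpha_g(p)$, $\alpha_g(\cdot)$, $\alpha_h(v_g)$, and $\beta(g)=pA\otimes_A{}_{\alpha_g}A\otimes_A Ap$. The hypothesis only provides a unitary tensor functor $\Hilb_{\Gamma,\omega}\to\Cor(A)$, so each $\alpha(g)$ is an abstract imprimitivity $(A,A)$-bimodule, and such a Morita self-equivalence need not come from an automorphism (the Picard group of $A$ is in general strictly larger than $\mathrm{Out}(A)$). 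Consequently there is no projection ``$\alpha_g(p)$'' in $A$ and no partial isometry $v_g\in A$ to search for: your crux step is not merely difficult, it is not well posed. This is not a cosmetic issue, because the lemma is applied in the paper precisely to actions that are only given as bimodule actions (e.g.\ those produced by \cref{thm:homologyKirch} and \cref{thm:tensorKirch}, and in \cref{eg:Temperley-Lieb-Jones2} and \cref{thm:cyclicCuntzbimod}); its entire content is to show that the transported bimodules $\beta(g)$ over $B=pAp$ are isomorphic to $(B,\beta_g)$ for genuine automorphisms $\beta_g$. Under the extra hypothesis that $(\alpha,\fu)$ is already a right $(\Gamma,\omega)$-action on $A$, your computation is a correct (and standard) corner/exterior-equivalence argument, but that special case essentially presupposes the conclusion one level up.

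What is actually needed is the statement the paper reduces to: for any full, proper, non-degenerate $(B,B)$-correspondence $(E,\phi)$ whose induced endomorphism of $\K_0(B)$ fixes $[1_B]$, one has $E\cong B$ as a right Hilbert $B$-module; applied to the imprimitivity bimodule $E=\beta(g)$ this forces the left action $\phi\colon B\to\cK(E)\cong B$ to be an automorphism, after which the structure unitaries land in $\cU(B)$ and your final ``Morita bookkeeping'' goes through. Your three ingredients (proper infiniteness from the unital copy of $\cO_\infty$, fullness, and the $\K_0$-hypothesis) are indeed the right ones, but the cancellation has to be run inside $\bK\otimes B$ for the projection $q$ with $q(\bK\otimes B)q\cong\cK(E)$ furnished by Kasparov stabilization, not inside $A$ for a nonexistent projection $\alpha_g(p)$. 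There is also an extra step with no analogue in your write-up: one must first exhibit a subprojection of $q$ equivalent to $1_B$, which the paper does by using the mutually orthogonal isometries $s_k\in\cO_\infty\subset B$ to splice finitely many $\xi_k\in E$ with $\sum_k\bra\xi_k,\xi_k\ket$ invertible into a single $\xi\in E$ with $\bra\xi,\xi\ket=1_B$; only then do $[q]=[1_B]$ and proper infiniteness yield $q\sim 1_B$.
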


\begin{proof}
	We take mutually orthogonal isometries $s_k\in B$ for $k\in\bZ_{\geq 1}$. 
	It suffices to show that for any full proper non-degenerate $(B,B)$-correspondence $(E,\phi)$ that induces an endomorphism on $\K_0(B)$ fixing $[1_B]$, we have $E\cong B$ as a Hilbert $B$-module. 
	
	By using Kasparov's stabilization with the aid of the unitality of $\cK(E)$, we see that there is a projection $q\in\bK \otimes B$ with $q(\bK \otimes B)q\cong \cK(E)$. 
	Since $E$ is full and $B$ is unital, there are $n\in\bZ_{\geq 1}$ and $\xi_1,\cdots,\xi_n\in E$ such that $x:=\sum_{k=1}^{n}\bra \xi_k,\xi_k\ket\geq 0$ is invertible. Then $\xi:=\sum_{k=1}^{n}\phi(s_k) \xi_k x^{-1/2} \in E$ satisfies $\bra\xi,\xi\ket=1_B$ and gives a right $B$-linear adjointable isometry $B\ni b\mapsto \xi b\in E$. 
	Thus, $q$ contains a projection $r$ with $r\sim 1_B$, where $\sim$ denotes Murray--von Neumann equivalence. 
	Since $[1_B]=[q]\in\K_0(B)$ by assumption, if $l$ is large enough we have 
	\begin{align*}
		q\sim (q-r)\oplus 1_B \sim q\oplus 1_B^{\oplus l}\oplus \Bigl(1_B-\sum_{k=1}^{l+1} s_ks_k^*\Bigr) \sim 1_B\oplus 1_B^{\oplus l}\oplus \Bigl(1_B-\sum_{k=1}^{l+1} s_ks_k^*\Bigr) \sim 1_B
	\end{align*}
	and thus $E\cong B$ as a Hilbert $B$-module. 
\end{proof}

\begin{eg}\label{eg:poly-Z}
	When $\Gamma$ is a poly-$\bZ$ group with Hirsch length $h=h(\Gamma)\in\bZ_{\geq 0}$, the right global dimension of $\bZ[\Gamma]$ is at most $h+1$. See \cite[Corollary 7.5.6]{McConnell-Robson:book}. 
	Thus, $d_\cC=1$ for any $[\omega]\in\Ho^3(\Gamma;\bT)$ by \cref{thm:homologyKirch}. 
	In the case of the trivial $\bZ[\Gamma]$-module $\bZ$, for all $[\omega]\in\Ho^3(\Gamma;\bT)$ there is a right $(\Gamma,\omega)$-action on $\cO_\infty$ by \cref{lem:corneranomalous}. 
\end{eg}

\begin{eg}\label{eg:liftconstgrp2}
	Let $\cC=\Hilb_{\bZ/m\bZ,\omega}$ for $m\in\bZ_{\geq 1}$ and a generator $[\omega]\in\Ho^3(\bZ/m\bZ;\bZ)\cong \bZ/m\bZ$. Then $d_\cC=r$ for the radical $r$ of $m$. 
	To see this, we fix $k\in\bZ_{\geq 1}\setminus r\bZ$ and take a prime divisor $p$ of $m$ that does not divide $k$. 
	Thanks to \cite[Theorem 3.4]{Izumi2023:Gkernels} combined with \cref{lem:corneranomalous}, there is no $\cC$-action on $\cO_{p+1}$ with trivial $\bZ[\bZ/m\bZ]$-module structures on $\K_i(\cO_{p+1})$. As $k$ acts invertibly on $\K_0(\cO_{p+1})\cong\bZ/p\bZ$, it follows that $d=k$ does not satisfy the property in \cref{def:liftconst}. 
	Thus, $d\in \bZ_{\geq 1}$ can satisfy the property in \cref{def:liftconst} only if $d\in r\bZ$. 
	Combined with \cref{eg:liftconstgrp} (1), we see $d_\cC=r$. 
\end{eg}

\begin{eg}\label{eg:anomalousPimsner}
	Let $(A,\alpha,\fu)$ be a right $(\Gamma,\omega)$-action and $(E,\phi,\bv)$ be a non-degenerate faithful $\Hilb_{\Gamma,\omega}$-$(A,A)$-correspondence. 
	For $g\in\Gamma$, by regarding $\bv_g\colon E\cong A\otimes_{\phi}E\to E\otimes_{\alpha_g}A$, 
	we consider the surjective isometric $\bC$-linear map $\mu_g\colon E\to E$ determined by $\mu_g(\xi)\otimes_{\alpha_g}1_A = \bv_g(\xi)$ for $\xi\in E$. 
	By definition, we have $\phi(a)\mu_g(\xi)=\mu_g(\phi\alpha_g(a)\xi)$, $\mu_g(\xi)a=\mu_g(\xi\alpha_g(a))$, and $\alpha_g(\bra \mu_g(\xi),\mu_g(\eta)\ket)=\bra\xi,\eta\ket$ for $a\in A$, $\xi,\eta\in E$, and $g\in\Gamma$. 
	It holds that 
	\begin{align*}
		&
		\mu_g\mu_h(\xi) \otimes_{\alpha_g}1_A \otimes_{\alpha_h}1_A
		= 
		(\bv_g\otimes_{\alpha_h}1_A)\bv_h(\xi)
		=
		(1_E\otimes\fu_{g,h}^*)\bv_{gh}(\phi(\fu_{g,h})\xi)
		\\={}&
		(1_E\otimes\fu_{g,h}^*) \bigl( \mu_{gh}(\phi(\fu_{g,h})\xi)\otimes_{\Ad\fu_{g,h}\alpha_h\alpha_g}1_A \bigr)
		\\={}&
		\bigl( \mu_{gh}(\phi(\fu_{g,h})\xi) \alpha_g^{-1}\alpha_h^{-1}(\fu_{g,h}^*) \bigr)
		\otimes_{\alpha_h\alpha_g}1_A 
		=
		\mu_{gh}(\phi(\fu_{g,h})\xi \fu_{g,h}^*) 
		\otimes_{\alpha_h\alpha_g}1_A 
	\end{align*}
	for any $g,h\in\Gamma$ and $\xi\in E$, 
	where we have used $\alpha_g^{-1}\alpha_h^{-1}(\fu_{g,h}^*) = \alpha_g^{-1}\alpha_h^{-1}(\fu_{g,h}^*\fu_{g,h}^*\fu_{g,h}) = \alpha_{gh}^{-1}(\fu_{g,h}^*)$. 
	
	Then if we define $\alpha^\cT_g\in\Aut(\cT_E)$ for $g\in\Gamma$ by $\alpha^\cT_g(a)=\alpha_g(a)$ and $\alpha^\cT_g(\tS_\xi)=\tS_{\mu^{-1}_g \xi}$ for $a\in A$, $\xi\in E$, we can check that $(\alpha^\cT,\fu)$ is a well-defined right $(\Gamma,\omega)$-action on $\cT_E$. The corresponding $\Hilb_{\Gamma,\omega}$-action on $\cT_E$ is isomorphic to the one given by \cref{prop:tensorKirch}. 
	Thus, as in the proof of \cref{thm:tensorKirch}, for any right $(\Gamma,\omega)$-action $(\alpha,\fu)$ on a nuclear separable C*-algebra $A$, there is a Kirchberg algebra $B$ with a right $(\Gamma,\omega)$-action $(\beta,\fv)$ that is injective (and thus outer) as a $\Hilb_{\Gamma,\omega}$-action and a non-degenerate injective $\Hilb_{\Gamma,\omega}$-$*$-homomorphism $(\iota,\bu)\colon A\to B$ giving a $\KK^\cC$-equivalence such that $\iota\alpha_g=\beta_g\iota$ and $\fv_{g,h}=\iota(\fu_{g,h})$ for all $g,h\in\Gamma$. 
	Note that $B$ must be unital if so is $A$. 
\end{eg}

Next, we recall some definitions and facts we need from \cite{Izumi2023:Gkernels,GironPacheco:thesis}. 
For a unital Kirchberg algebra $A$ with $\K_1(A)=0$, we define the group $\K^\#_0(A)$ as the quotient 
\begin{align*}
	\K^\#_0(A) 
	:= 
	{}&
	\frac{
		\bigl\{ u\in \cU(C[0,1]\otimes A) 
		\,\big|\, 
		u(0)=1, 
		u(1) \in \bT 
		\bigr\} 
	}{
		\bigl\{ u\in \cU(C[0,1]\otimes A) 
		\,\big|\, 
		u(0)=u(1)=1 
		\bigr\}_0
	}, 
\end{align*}
where $_0$ at the bottom means the connected component containing the unit. 
Then we have an isomorphism 
\begin{align*}
	(\K_0(A)\oplus\bR)/\bZ([1_A],-1) \ni ([p], s) \mapsto{}& [t\mapsto e^{ 2\pi\sqrt{-1} t (s + p) }] \in \K^\#_0(A) , 
\end{align*}
where $p\in A$ is a projection. 
In particular, $\bT\ni e^{ 2\pi\sqrt{-1} s } \mapsto [t\mapsto e^{ 2\pi\sqrt{-1} t ns }] \in \K_0^{\#}(\cO_{n+1})$, where $n\in\bZ_{\geq 1}$ and $s\in\bR$, is a well-defined group isomorphism. 

For a right $(\Gamma,\omega)$-action $(\alpha,\fu)$ on $A$, we take a norm-continuous path $\wt{\fu}_{g,h}\colon [0,1]\to \cU(A)$ with $\wt{\fu}_{g,h}(0)=1$ and $\wt{\fu}_{g,h}(1)=\fu_{g,h}$ for $g,h\in\Gamma$ and define the $3$-cochain $\wt{\omega}=\wt{\omega}_{\alpha,\fu}\colon \Gamma^{3}\to \K_0^\#(A)$ by 
\begin{align*}
	&
	\wt{\omega}(f,g,h) 
	:= 
	\alpha_h(\wt{\fu}_{f,g}^*) \wt{\fu}_{fg,h}^* \wt{\fu}_{f,gh} \wt{\fu}_{g,h} . 
\end{align*}
By putting $\wt{\fu}_{g,h}^{\prime}:=\wt{\fu}_{h^{-1},g^{-1}}^*$ and using the left $(\Gamma,\omega')$-action $(\alpha',\fu')$ as in \cref{rem:rightleftanomalous}, we see that 
\begin{align*}
	\wt{\omega}'(f,g,h)
	:=
	\wt{\omega}(h^{-1},g^{-1},f^{-1})
	=
	\alpha'_{f}(\wt{\fu}_{g,h}^{\prime}) \wt{\fu}_{f,gh}^{\prime} \wt{\fu}_{fg,h}^{\prime *} \wt{\fu}_{f,g}^{\prime *} , 
\end{align*}
and thus our $\wt{\omega}$ is compatible with \cite[Definition 3.3]{Izumi2023:Gkernels} up to left and right. 
In view of the bijective correspondence $\wt{\omega}\mapsto\wt{\omega}'$, it holds that $\wt{\omega}$ is a well-defined $3$-cocycle, 
whose class $[\wt{\omega}_{\alpha,\fu}]\in \Ho^3(\Gamma;\K_0^\#(A))$ does not depend on the choice of $\wt{\fu}$. 
Let us write $\mathrm{ev}_{1*}$ for the map $\Ho^3(\Gamma;\K_0^\#(A)) \to \Ho^3(\Gamma;\bT)$ induced by $\mathrm{ev}_1\colon \K_0^\#(A)\ni [u]\mapsto u(1)\in\bT$. 
Since $\mathrm{ev}_{1*}([\wt{\omega}])=[\omega]$ by construction, we see that if $\omega\in\mathrm{Z}^3(\Gamma;\bT)$ admits a right $(\Gamma,\omega)$-action, then $[\omega]$ must be contained in the image of $\mathrm{ev}_{1*}$, as observed in \cite{Izumi2023:Gkernels}. 

\begin{eg}\label{eg:Temperley-Lieb-Jones2}
	For an odd prime $p$ and $k:=p-2$, we show $d_{\cC_{k}}=2$ as promised after \cref{thm:Temperley-Lieb-Jones}. 
	We see $d_{\cC_k}\leq 2$ by \cref{thm:homologyKirch} because the right global dimension of $\bZ[2^{-1}][\cC_k]\cong\bZ[2\cos\frac{2\pi}{p},2^{-1}][\bZ/2\bZ]$ is at most $1$ by \cref{thm:localfusion} for $R:=\bZ[2\cos\frac{2\pi}{p},2^{-1}]$ and $\cC:=\Hilb_{\bZ/2\bZ}$. 
	
	To see $d_{\cC_k}\neq 1$, let $M:=\bZ[2\cos\frac{2\pi}{p}]$ equipped with the $\bZ[2\cos\frac{2\pi}{p}][\bZ/2\bZ]$-module structure such that $\bZ/2\bZ$ acts trivially. 
	It suffices to deduce a contradiction by supposing that there is a Kirchberg algebra $A$ with a $\cC_k$-action such that $\K_0(A)\cong M/2M$ and $\K_1(A)\cong 0$. 
	For the non-trivial class $[\omega]\in\Ho^3(\bZ/2\bZ;\bT)\cong\bZ/2\bZ$, it holds that $\Hilb_{\bZ/2\bZ,\omega}\subset \cC_{k}$ by \cite[Lemma 7]{Ostrik2003:module}. 
	Restricting the $\cC_k$-action on $A$ to $\Hilb_{\bZ/2\bZ,\omega}$ and applying \cref{lem:corneranomalous} to some projection $q\in A$ with $0\neq[q]\in\K_0(A)$ such that $N\oplus (\bZ/2\bZ)[q]= \K_0(A)$ for some subgroup $N\subset \K_0(A)$ (we can choose such $q$ and $N$ since $\bZ[2\cos\frac{2\pi}{p}]$ is a finitely generated free abelian group), we obtain a right $(\bZ/2\bZ,\omega)$-action on the unital Kirchberg algebra $B=qAq$. 
	We see the commutativity of the following diagram 
	\begin{align*}
		\xymatrix@C=2em{
			\K_0^\#(B)\ar[r]^-{\sim}\ar[dr]_-{[u]\mapsto u(1)\quad}& (\K_0(B)\oplus \bR)/\bZ([q],-1) 
			\ar[r]^-{\sim} & N\oplus \bR/2\bZ \ar[d]^-{(x,r)\mapsto r}
			\\
			& \bT & \bR/2\bZ \ar[l]_-{r\mapsto e^{2\pi \sqrt{-1} r}}
		}. 
	\end{align*}
	The induced map $\Ho^3(\bZ/2\bZ;\K_0^\#(B))\to \Ho^3(\bZ/2\bZ;\bT)\cong \bZ/2\bZ$ sends $[\wt{\omega}]$ to $[\omega]\neq 0$, while the range of this map must be $0$ as in the proof of \cite[Theorem 3.4]{Izumi2023:Gkernels}, yielding a contradiction as desired. 
	Since any odd natural number acts on $\K_0(B)$ invertibly, this argument also shows that $d\in\bZ_{\geq 1}$ satisfies the condition in \cref{def:liftconst} if and only if $d$ is even. 
\end{eg}

\subsection{Construction of \texorpdfstring{$(\bZ/m\bZ,\omega)$}{(Z/mZ,ω)}-actions}\label{ssec:cyclicCuntz}
In this subsection, we consider the problem on the realizability of right $(\bZ/m\bZ,\omega)$-actions on Cuntz algebras $\cO_{n+1}$. 
We fix $m,n\geq 1$ and put $\zeta_m:=e^{\frac{2\pi}{m}\sqrt{-1}}$. 
For an integer $k\in \bZ$, we write $\bra k\ket_m:=k-\lfloor\frac{k}{m}\rfloor m \in \{ 0,1,\cdots,m-1 \}$ for the remainder of the division by $m$. 
Also, we define the $3$-cocycle $\omega_m^k\in\mathrm{Z}^3(\bZ/m\bZ;\bT)$ by 
\begin{align*}
	\omega_m^k(i,j,h) := \exp\Bigl( 2\pi\sqrt{-1} \Bigl\lfloor\frac{i+j}{m}\Bigr\rfloor \frac{hk}{m} \Bigr) , 
\end{align*}
for $i,j,h=0,1,\cdots,m-1$, and we have a group isomorphism (see \cite[Example 2.6.4]{Etingof-Gelaki-Nikshych-Ostrik:book}, for example) 
\begin{align*}
	\{ z\in\bC \mid z^m=1 \} \ni \zeta_m^k  &\mapsto [\omega_m^k] \in \Ho^3(\bZ/m\bZ;\bT) . 
\end{align*}
Note that for $k\in\bZ/m\bZ$, a pair $(\alpha,u)$ of a $*$-automorphism $\alpha\in\Aut(A)$ and a unitary $u\in\cU\cM(A)$ such that $\alpha^m=\Ad u^*$ and $\zeta_m^{k}\alpha(u)= u$ gives a right $(\bZ/m\bZ,\omega_m^{k})$-action on $A$ by 
$(\alpha^i \mid i=0,1,\cdots,m-1)$ and $(u^{\lfloor\frac{i+j}{m}\rfloor} \mid i,j=0,1,\cdots,m-1)$. 

The $(\bZ/m\bZ,\omega)$-actions in this subsection come from those on the following locally compact space. See \cite{Jones2021:remarks} for several examples and constraints on actions of $3$-cocycle twists of groups on compact spaces. 
\begin{eg}\label{eg:cyclicspace}
	For $n\geq 1$, consider the locally compact space $X_n:= (0,1] \times\bT/{\sim}$, where we define $(r,x)\sim (s,y)$ if and only if $s=r=1$ and $x^n=y^n$. 
	Then $\K_0(C_0(X_n))=\bZ/n\bZ$ and $\K_1(C_0(X_n))=0$ by the six term exact sequence associated with the mapping cone sequence for $C_0(X_n)=\Cone(\phi)$, where $\phi\colon C(\bT)\ni f(x)\mapsto f(x^n)\in C(\bT)$. 
	Thus $C_0(X_n)$ is $\KK$-equivalent to $\cO_{n+1}$. 
	
	It is not hard to see that we have the right $(\bZ/m\bZ,\omega_m^{nk})$-action on $C(\bT)$ defined by $\alpha(f)\colon f(x)\mapsto f(\zeta_m x)$ for $f\in C(\bT)$ and $u(x):=x^{-nk}$. 
	This action induces a right $(\bZ/m\bZ,\omega_m^{nk})$-action on $C_0(X_n)$ by the tensor product with $C[0,1]$ and the restriction to $C_0(X_n)\subset C_0([0,1]\times\bT)$. 
\end{eg}

When $m,n$ are coprime, the construction by Connes~\cite{Connes1977:periodic} gives a $(\bZ/m\bZ,\omega)$-action on $\bM_{m^\infty}$ and thus on $\cO_{n+1}\cong \cO_{n+1}\otimes\bM_{m^\infty}$ as observed in \cite[Theorem C]{Evington-GironPacheco2023:anomalous} and the remark before \cite[Problem 3.5]{Izumi2023:Gkernels}. However, this strategy is not applicable when $m,n$ are not coprime. 
By the following result, we can still construct $(\bZ/m\bZ,\omega)$-actions on $\cO_{n+1}$ in this case. 

\begin{thm}\label{thm:cyclicCuntz}
	For $m,n\in\bZ_{\geq 1}$, 
	every class in $\Ho^3(\bZ/m\bZ;\K_0^\#(\cO_{n+1}))$ can be realized as $[\wt{\omega}]=[\wt{\omega}_{\gamma,\fw}]$ for some $\omega\in \mathrm{Z}^3(\bZ/m\bZ;\bT)$ with a right $(\bZ/m\bZ,\omega)$-action $(\gamma,\fw)$ on $\cO_{n+1}$. 
	In particular, for $k\in\bZ/m\bZ$, a right $(\bZ/m\bZ,\omega_m^{k})$-action on $\cO_{n+1}$ exists if (and only if by \cite[Theorem 3.4]{Izumi2023:Gkernels}) $k=nl$ for some $l\in \bZ / m\bZ$. 
\end{thm}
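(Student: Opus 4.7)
The plan is to realize every class by combining the geometric model of \cref{eg:cyclicspace} with the anomalous Pimsner construction of \cref{eg:anomalousPimsner}, and then passing to a corner via \cref{lem:corneranomalous}. For each $k\in\bZ/m\bZ$, \cref{eg:cyclicspace} provides a right $(\bZ/m\bZ,\omega_m^{nk})$-action on the separable nuclear UCT algebra $C_0(X_n)$; \cref{eg:anomalousPimsner} lifts it to an injective outer right $(\bZ/m\bZ,\omega_m^{nk})$-action $(\beta_k,\fv_k)$ on a Kirchberg algebra $B_k$, together with a non-degenerate injective $\Hilb_{\bZ/m\bZ,\omega_m^{nk}}$-$*$-homomorphism $\iota_k\colon C_0(X_n)\to B_k$ giving a $\KK^{\Hilb_{\bZ/m\bZ,\omega_m^{nk}}}$-equivalence. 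Since $C_0(X_n)$ is non-unital, so is $B_k$, hence stable by Zhang's dichotomy, whence $B_k\cong\cO_{n+1}\otimes\bK$ by Kirchberg--Phillips.

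As the rotation on $C(\bT)$ is homotopic to the identity, the induced $\bZ[\bZ/m\bZ]$-module structure on $\K_0(C_0(X_n))\cong\bZ/n\bZ$, and thus on $\K_0(B_k)$, is trivial. Picking a projection $p_k\in B_k$ whose class generates $\K_0(B_k)$, \cref{lem:corneranomalous} then produces a right $(\bZ/m\bZ,\omega_m^{nk})$-action $(\gamma_k,\fw_k)$ on $p_kB_kp_k\cong\cO_{n+1}$. Because $\{\omega_m^{nk}:k\in\bZ/m\bZ\}=\{\omega_m^{nl}:l\in\bZ/m\bZ\}$, this already establishes the ``in particular'' claim, the converse being \cite[Theorem~3.4]{Izumi2023:Gkernels}.

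For the main statement I would show that, under the identification $\Ho^3(\bZ/m\bZ;\K_0^{\#}(\cO_{n+1}))\cong\bZ/m\bZ$ coming from $\K_0^{\#}(\cO_{n+1})\cong\bT$, the assignment $k\mapsto[\wt{\omega}_{\gamma_k,\fw_k}]$ is the identity. Naturality of $\mathrm{ev}_{1*}$ already forces $\mathrm{ev}_{1*}[\wt{\omega}_{\gamma_k,\fw_k}]=[\omega_m^{nk}]$, which corresponds to $nk\in\bZ/m\bZ$, so only a $\gcd(m,n)$-fold ambiguity remains to be pinned down. The plan is to lift the unitary $u\in\cU\cM(C_0(X_n))$ defined by $u|_{r=1}(x)=x^{-nk}$ along $\iota_k$, cut it down by $p_k$ to obtain $w_k\in\cU(\cO_{n+1})$, pick a norm-continuous path $\wt{w}_k\colon[0,1]\to\cU(\cO_{n+1})$ from $1$ to $w_k$ (possible as $\K_1(\cO_{n+1})=0$), set $\wt{\fw}_{i,j}(t):=\wt{w}_k(t)^{\lfloor(i+j)/m\rfloor}$, and read off the class of the resulting defect loop with fixed endpoint $\omega_m^{nk}(i,j,h)\in\bT$.

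The main obstacle is this K-theoretic computation. It demands tracking the winding of $u$, which represents $-nk\in\K_1(C(\bT))\cong\bZ$, through the mapping-cone identification $C_0(X_n)\cong\Cone(\phi)$ with $\phi\colon C(\bT)\to C(\bT)$ the $n$-th power, through the anomalous Pimsner lift, and through the Morita cutdown, and then interpreting the resulting defect via the extension $0\to\bZ/n\bZ\to\K_0^{\#}(\cO_{n+1})\to\bT\to 0$. Combined with the already-known image $\mathrm{ev}_{1*}[\wt{\omega}_{\gamma_k,\fw_k}]=nk$, the residual ambiguity living in $\Ho^3(\bZ/m\bZ;\bZ/n\bZ)$ should be resolved by the explicit linear dependence on $k$, yielding $[\wt{\omega}_{\gamma_k,\fw_k}]=k$ and hence the desired surjectivity onto $\Ho^3(\bZ/m\bZ;\K_0^{\#}(\cO_{n+1}))$.
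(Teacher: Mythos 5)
Your skeleton (geometric model of \cref{eg:cyclicspace} $\to$ anomalous Pimsner lift of \cref{eg:anomalousPimsner} $\to$ corner via \cref{lem:corneranomalous}) is the right one, and your direct derivation of the ``in particular'' clause is correct: existence of a $(\bZ/m\bZ,\omega_m^{nk})$-action on $\cO_{n+1}$ for every $k$ does follow from those three ingredients plus Zhang's dichotomy and Kirchberg--Phillips, without needing the exact value of $[\wt{\omega}]$.

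The main statement, however, is exactly the part you defer, and the plan you sketch for it would not go through. First, the mechanics: $u$ lives only in $\cM(C_0(X_n))$, and after pushing forward and cutting down there is no reason for $p_k\iota_k(u)p_k$ to be a unitary of $p_kB_kp_k$; worse, the corner cocycle $\fw_{i,j}$ produced by \cref{lem:corneranomalous} is \emph{not} $p_k\iota_k(\fu_{i,j})p_k$ --- it is built from non-explicit Murray--von Neumann partial isometries conjugating $\beta_i(p_k)$ back to $p_k$, so there is no ``defect loop'' you can read off from $u$ alone. This is precisely why the paper abandons $C_0(X_n)$ at this stage and replaces it by the unital, $\KK$-equivalent algebra $A_n$ (built from $C[0,1]\otimes C(\bT)\otimes\cO_\infty^{\st}\otimes\cO_\infty$ with boundary conditions), in which $u$ is a genuine central unitary, the corner projection $p$ is chosen explicitly relative to isometries $s_1,s_2$, and the cocycle correction is implemented by an explicit path $w_t$ intertwining the rotation flow $\alpha_t(p)$ with $p$; the class is then computed from the two-parameter unitary $\wt{V}(s,t)$ via the boundary identities \cref{eq:thm:cyclicCuntz}. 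Second, the soft argument you propose to remove the remaining ambiguity cannot work: ``linear dependence on $k$'' is neither available (the actions for different $k$ are constructed independently, with no additivity) nor sufficient (a homomorphism $k\mapsto ak$ with $na\equiv n \bmod m$ need not be surjective; e.g.\ for $m\mid n$ the constraint is vacuous and $a=0$ is allowed). Indeed, when $m\mid n$ the map $\mathrm{ev}_{1*}$ is zero, every $\omega_m^{nk}$ is the trivial cocycle, and the \emph{entire} content of the theorem is the refined computation $[\wt{\omega}_{\gamma_k,\fw_k}]=k$ in the kernel of $\mathrm{ev}_{1*}$ --- so no amount of naturality or formal bookkeeping substitutes for the explicit homotopy the paper constructs.
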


Of course, we can arrange these $(\bZ/m\bZ,\omega)$-actions to be outer by \cref{eg:anomalousPimsner}. 
Note that the former assertion says that the map $\wt{\mathrm{ob}}$ in \cite[Problem 3.5]{Izumi2023:Gkernels} is surjective by considering $\wt{\omega}'$. 
Thus the latter assertion will follow from the argument of \cite[Theorem 3.4]{Izumi2023:Gkernels} once we can show the former one. 

We briefly explain the strategy of constructing right $(\bZ/m\bZ,\omega)$-actions on $\cO_{n+1}$. 
Naively, we want to apply \cref{thm:tensorKirch} to \cref{eg:cyclicspace}, but then it would not be easy to discuss $\wt{\omega}$ since $u\not\in C_0(X_n)$ by the non-unitality of $C_0(X_n)$. Instead, before applying \cref{thm:tensorKirch}, we modify $C_0(X_n)$ to be a unital C*-algebra that is $\KK$-equivalent, denoted by $A_n$ in the proof below, and take a good full corner $pA_n p$ such that its unit generates $\K_0(pA_np)$ just as $[1_{\cO_{n+1}}]$ generates $\K_0(\cO_{n+1})$. 

\begin{proof}
	As we have argued above, it suffices to construct a right $(\bZ/m\bZ,\omega_{m}^{nk})$-action $(\gamma,\fw)$ on $\cO_{n+1}$ with $[\wt{\omega}_{\gamma,\fw}]\equiv k$ in $\Ho^3(\bZ/m\bZ;\K_0^\#(\cO_{n+1}))\cong \bZ/m\bZ$ for all $k\in\bZ$. 
	Indeed, since the image of the map $\mathrm{ev}_{1*}\colon \Ho^3(\bZ/m\bZ;\K_0^\#(\cO_{n+1}))\to \Ho^3(\bZ/m\bZ;\bT)\cong \bZ/m\bZ$ is $n(\bZ/m\bZ)$ (see \cite{Izumi2023:Gkernels}), 
	for any $l\in n(\bZ/m\bZ)$, there is an element $\upsilon\in\mathrm{ev}_{1*}^{-1}(\{ [\omega_m^{l}] \})\subset \Ho^3(\bZ/m\bZ;\K_0^\#(\cO_{n+1}))$, which must be $[\wt{\omega}_{\gamma,\fw}]$ for some $k\in\bZ$ and some $(\bZ/m\bZ,\omega_m^{nk})$-action $(\gamma,\fw)$, by the claimed construction. Since $[\omega_m^{l}]=\mathrm{ev}_{1*}(\upsilon)=\mathrm{ev}_{1*}([\wt{\omega}_{\gamma,\fw}])=[\omega_m^{nk}]$, we get $l\equiv nk$ modulo $m$ and thus $\omega_m^{l}=\omega_m^{nk}$, which shows the latter assertion of \cref{thm:cyclicCuntz}. 
	
	We fix a unital embedding $\cO_2\subset\cO_{\infty}^{\st}$ and define the unital C*-algebras in the UCT class, 
	\begin{align*}
		A_{n,0} := 
		\left\{ f\in C[0,1]\otimes C(\bT)\otimes \cO_{\infty}^{\st} \,\bigg|\, 
		\begin{array}{l}
			f(1,x)=f(1, e^{\frac{2\pi}{n}\sqrt{-1}} x), 
			\\
			f(0,x)\in \cO_2 , 
		\end{array}
		\forall x\in\bT
		\right\} 
	\end{align*}
	and $A_{n}:=A_{n,0}\otimes\cO_{\infty}$. 
	Since the canonical inclusion 
	$C_0(X_n)\otimes\cO_{\infty}^{\st}\subset A_{n,0}$ is a $\KK$-equivalence by $A_{n,0}/(C_0(X_n)\otimes\cO_{\infty}^{\st})\cong C(\bT)\otimes\cO_2$, we see the $\KK$-equivalence of $A_n$ and $\cO_{n+1}$. 
	The right $(\bZ/m\bZ,\omega_m^{nk})$-action on $C(\bT)$ constructed in \cref{eg:cyclicspace} induces a right $(\bZ/m\bZ,\omega_m^{nk})$-action on $C[0,1]\otimes C(\bT)\otimes \cO_{\infty}^{\st}\otimes\cO_{\infty}$ and restricts to $A_n$. 
	This action is given by $(\alpha^i,u^{\lfloor\frac{i+j}{m} \rfloor})$, where $\alpha\colon f(r,x)\mapsto f(r,\zeta_mx)$ and $u:=[(r,x)\mapsto x^{-nk}]\in\cU\cZ (A_n)$. 
	We take a projection $p_0\in A_{n,0}$ with $[p_0]=1\in\K_0(A_{n,0})\cong \bZ/n\bZ$ such that $1\otimes s_1s_1s_1^*s_1^*\leq p_0\leq 1\otimes s_1s_1^*$ for orthogonal isometries $s_1,s_2\in\cO_2\subset\cO_{\infty}^{\st}$ with $s_1s_1^*+s_2s_2^*=1$ 
	(for example, take a projection $p_0'\in A_{n,0}$ with $[p_0']=1$ and set $p_0:=1\otimes(s_1s_1s_1^*s_1^*) + (1\otimes s_1s_2) p_0' (1\otimes s_2^* s_1^*)$). 
	We let $p:=p_0\otimes 1_{\cO_{\infty}}\in A_n$. 
	
	We consider the $\bR$-action $\alpha_t$ on $A_n$ such that 
	$\alpha_t(f)(r,x) = f(r,e^{2\pi\sqrt{-1} \frac{t}{m}} x)$ 
	for $f\in A_n$, $r\in[0,1]$, $x\in\bT$, and $t\in\bR$. 
	We let $\wt{\alpha}:=(\alpha_t)_{t\in[0,1]}\in\Aut(C[0,1]\otimes A_n)$, i.e., $\wt{\alpha}(a)(t) = \alpha_t(a(t))$ for a continuous map $a\colon [0,1]\to A_n$ and $t\in[0,1]$. 
	Since $[1_{C[0,1]}\otimes p]=[\wt{\alpha}(1_{C[0,1]}\otimes p)]\in\K_0(C[0,1]\otimes A_n)$ by $\wt{\alpha}\simeq\id_{A_n}$, there is $\wt{w}\in \cU(C[0,1]\otimes A_n)$ with $\wt{w}\wt{\alpha}(1\otimes p)\wt{w}^*=1\otimes p$ because \cref{lem:corneranomalous} shows that $( 1\otimes p, \wt{\alpha}(1\otimes p) )$ and $( 1\otimes (1-p), \wt{\alpha}(1\otimes (1-p)) )$ are pairs of Murray--von Neumann equivalent projections each one of which generates $C[0,1]\otimes A_n$ as a closed ideal 
	because $(1\otimes s_1^*s_1^*\otimes1)\alpha_t(p)(1\otimes s_1s_1\otimes 1)=1=(1\otimes s_2^*\otimes 1)\alpha_t(1-p)(1\otimes s_2\otimes1)$ for all $t\in[0,1]$. 
	We write $w_t\in\cU(A_n)$ for the unitary $\wt{w}$ evaluated at each $t\in[0,1]$. 
	We may assume that $w_0=1$ by considering $w_0^*w_t$. 
	We note $\alpha=\alpha_1$ and $\alpha_m=\id$, put $w:=w_1$, 
	and for general $t\geq 0$, define 
	$w_t:=w\alpha(w)\alpha^2(w)\cdots\alpha^{\lfloor t\rfloor-1}(w)\alpha^{\lfloor t\rfloor}(w_{t-\lfloor t\rfloor})$. 
	By definition, $w_t\alpha_t(p)w_t^*=p$ and $w_{h+t}=w_h\alpha_h(w_t)$ for $t\geq 0$ and $h\in\bZ_{\geq 0}$. 
	
	Then we have a right $(\bZ/m\bZ,\omega_m^{nk})$-action $(\beta,\fv)$ on $pA_n p$ defined by for $i,j,=0,1,\cdots,m-1$ and $a\in A_n$, 
	\begin{align*}
		\beta_i(pap) &:=  pw_i\alpha_i(a)w_i^*p = p(\Ad w\alpha)^i(a)p, 
		\\
		\fv_{i,j} &:= pw_{\bra i+j\ket_m} u^{\lfloor\frac{i+j}{m}\rfloor}w_{i+j}^* p = p u^{\lfloor\frac{i+j}{m}\rfloor} w_{m}^{-\lfloor\frac{i+j}{m}\rfloor} . 
	\end{align*}
	We take paths $\wt{v}_{+}\in\cU(C[0,1]\otimes pA_np)$, $\wt{v}_{-}\in\cU(C[0,1]\otimes (1-p)A_n(1-p))$, and $\wt{v}:=\wt{v}_{+}\oplus\wt{v}_{-}\in \cU(C[0,1]\otimes A_n)$ with $\wt{v}(0)=1$ and $\wt{v}(1)=uw_m^*$. 
	This is possible because $pA_n p$ and $(1-p)A_n(1-p)$ are $\cO_{\infty}$-stable by the choice $p=p_0\otimes 1_{\cO_{\infty}}$ and thus $\K_1$-injective thanks to \cite[Theorem 3]{Jiang1997:nonstable}, \cite[Theorem 4.8]{Carrion-Gabe-Schafhauser-Tikuisis-White2023:classifyingI}. 
	Then we have the well-defined norm-continuous map 
	\begin{align*}
		\wt{V}\colon
		[0,\infty)\times [0,1] &\to \cU(pA_n p),
		\\
		(s,t) &\mapsto e^{-2\pi\sqrt{-1} \frac{nkt}{m} s} w_{s+tm}\alpha_s(w_{tm}^*\wt{v}(t)^*)w_s^* \wt{v}(t) p 
	\end{align*}
	where the output is a well-defined unitary because $w_{s+tm}\alpha_s(w_{tm}^*\wt{v}(t)^*)w_s^* \wt{v}(t)$ commutes with $p$. 
	We check for $s\geq 0$, $h\in\bZ_{\geq 0}$, and $t\in[0,1]$, 
	\begin{align}\label{eq:thm:cyclicCuntz}\begin{aligned}
		\wt{V}(s,0) 
		&= 
		w_{s}\alpha_s(1)w_s^* 1 p 
		= p , 
		\\
		\wt{V}(s,1) 
		&=
		e^{-2\pi\sqrt{-1}\frac{nk}{m} s} w_{s+m}\alpha_s(w_{m}^*w_mu^*)w_s^* uw_m^* p 
		\\&= 
		e^{-2\pi\sqrt{-1}\frac{nk}{m} s} w_{s+m}w_s^*w_m^* \alpha_s(u^*)u p 
		=
		p , 
		\\
		\wt{V}(h,t) 
		&= 
		e^{-2\pi\sqrt{-1} \frac{nkt}{m} h} w_{h+tm}\alpha_h(w_{tm}^*\wt{v}(t)^*)w_h^* \wt{v}(t) p 
		\\&= e^{-2\pi\sqrt{-1} \frac{nkt}{m} h} w_{h}\alpha_h(\wt{v}(t)^*)w_h^* \wt{v}(t) p ,
	\end{aligned}\end{align}
	and in particular $\wt{V}(0,t) = p$.
	Now we apply \cref{thm:tensorKirch} to get a right $(\bZ/m\bZ,\omega_m^{nk})$-action on $\cO_{n+1}$ denoted by $(\gamma,\fw)$ and an equivariant unital injective $*$-homomorphism $\iota\colon pA_np\to \cO_{n+1}$ as $p$ generates the group $\K_0(pA_np)\cong \bZ/n\bZ$. 
	By \cref{eg:anomalousPimsner}, we have $\gamma_i\iota=\iota\beta_i$ and $\fw_{i,j}=\iota(\fv_{i,j})$. 
	Then using the path $\wt{\fw}_{i,j}:=\iota(\wt{v}^{\lfloor\frac{i+j}{m}\rfloor}p)$, we have for $i,j,h=0,1,\cdots,m-1$, 
	\begin{align*}
		\wt{\omega}(i,j,h) 
		:={}& \iota\bigl(
		\beta_{h}(p \wt{v}^{-\lfloor\frac{i+j}{m}\rfloor} ) 
		\wt{v}^{-\lfloor\frac{\bra i+j\ket_m+h}{m}\rfloor} 
		\wt{v}^{\lfloor\frac{i+\bra j+h\ket_m}{m}\rfloor} 
		\wt{v}^{\lfloor\frac{j+h}{m}\rfloor} p
		\bigr) 
		\\={}&
		\iota(\beta_h(p\wt{v}^{-\lfloor\frac{i+j}{m}\rfloor}) \wt{v}^{\lfloor\frac{i+j}{m}\rfloor}p)
		=
		\iota\bigl( w_h \alpha_{h}(\wt{v}^{-\lfloor\frac{i+j}{m}\rfloor}) w_h^* \wt{v}^{\lfloor\frac{i+j}{m}\rfloor}p \bigr). 
	\end{align*}
	Thanks to \cref{eq:thm:cyclicCuntz}, we see $[\wt{\omega}_{\gamma,\fw}(i,j,h)]=[e^{2\pi\sqrt{-1}\frac{nkt}{m} \lfloor\frac{i+j}{m}\rfloor h}] \in \K_0^\#(\cO_{n+1})$ via $\iota(\wt{V}(s,t)^{\lfloor\frac{i+j}{m}\rfloor})$ for $s\in[0,h]$. 
	Therefore $(\gamma,\fw)$ gives the cohomological class $k\in \bZ/m\bZ\cong \Ho^3(\bZ/m\bZ;\K_0^\#(\cO_{n+1}))$. 
\end{proof}

\begin{cor}\label{cor:cyclicCuntz}
	For $m,n\in\bZ_{\geq 1}$ and $k\in \bZ/m\bZ$, the following are equivalent. 
	\begin{itemize}
		\item
		There is a $\Hilb_{\bZ/m\bZ,\omega_m^k}$-action on $\cO_{n+1}$ such that the induced $\bZ/m\bZ$-action on $\K_0(\cO_{n+1})$ is trivial. 
		\item
		$k\in n(\bZ/m\bZ)$. 
	\end{itemize}
\end{cor}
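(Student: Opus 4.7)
The plan is to deduce the corollary as a direct consequence of \cref{thm:cyclicCuntz} combined with \cref{lem:corneranomalous}, using the special feature that $[1_{\cO_{n+1}}]$ generates $\K_0(\cO_{n+1})\cong\bZ/n\bZ$.

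For the backward direction, suppose $k\in n(\bZ/m\bZ)$. By \cref{thm:cyclicCuntz}, there exists a right $(\bZ/m\bZ,\omega_m^k)$-action $(\gamma,\fw)$ on $\cO_{n+1}$, which in particular yields a $\Hilb_{\bZ/m\bZ,\omega_m^k}$-action on $\cO_{n+1}$. As recalled in \cref{rem:fusionmodule} (1), the induced $\bZ[\bZ/m\bZ]$-module structure on $\K_0(\cO_{n+1})$ sends the class of each group element $g$ to the $\KK$-class of the bimodule $(\cO_{n+1},\gamma_g)$, which equals $(\gamma_g)_*\in\mathop{\mathrm{End}}\K_0(\cO_{n+1})$. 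Since any $*$-automorphism of $\cO_{n+1}$ fixes $[1_{\cO_{n+1}}]$ and this class generates $\K_0(\cO_{n+1})$, the induced $\bZ/m\bZ$-action on $\K_0(\cO_{n+1})$ is trivial, as required.

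For the forward direction, let $(\alpha,\fu)$ be a $\Hilb_{\bZ/m\bZ,\omega_m^k}$-action on $\cO_{n+1}$ inducing the trivial $\bZ/m\bZ$-action on $\K_0(\cO_{n+1})$. I would apply \cref{lem:corneranomalous} with $A=\cO_{n+1}$ and the projection $p=1_{\cO_{n+1}}$; the hypotheses are satisfied because $\cO_\infty$ embeds unitally into $\cO_{n+1}$, $\cspan A\cdot 1\cdot A = A$ tautologically, and the trivial $\bZ/m\bZ$-action on $\K_0(\cO_{n+1})$ fixes $[1_{\cO_{n+1}}]$. The lemma then produces a right $(\bZ/m\bZ,\omega_m^k)$-action on $\cO_{n+1}$ inducing the given $\Hilb_{\bZ/m\bZ,\omega_m^k}$-action. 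At this point, the second assertion of \cref{thm:cyclicCuntz}, which invokes \cite[Theorem 3.4]{Izumi2023:Gkernels} for the ``only if'' part, forces $k\in n(\bZ/m\bZ)$.

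There is essentially no obstacle beyond checking that the constraint $[1_A]$ being fixed by $\Gamma$ reduces in this setting to triviality of the whole $\K_0$-action, which is immediate from $\K_0(\cO_{n+1})=\bZ[1_{\cO_{n+1}}]$. No additional calculations are needed.
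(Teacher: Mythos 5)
Your proof is correct and is precisely the paper's argument: the paper proves this corollary in one line as "a direct consequence of \cref{thm:cyclicCuntz} and \cref{lem:corneranomalous}," and you have filled in the two directions exactly as intended (automorphisms fix $[1_{\cO_{n+1}}]$, which generates $\K_0$, for one direction; \cref{lem:corneranomalous} with $p=1_{\cO_{n+1}}$ plus the "only if" of \cref{thm:cyclicCuntz} for the other).
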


\begin{proof}
	This is a direct consequence of \cref{thm:cyclicCuntz} and \cref{lem:corneranomalous}. 
\end{proof}

\subsection{Bimodule case}\label{ssec:Cuntzbimod}

\begin{lem}\label{lem:cyclicprod}
	For $l,m,n\in\bZ_{\geq 1}$ and $k\in\bZ$, we have the following. 
	\begin{enumerate}
		\item
		There is a fully faithful unitary tensor functor $\Hilb_{\bZ/m\bZ,\omega_m^k}\subset\Hilb_{\bZ/mn\bZ,\omega_{mn}^k}$ 
		sending $\bC_{\bra i\ket_m}\mapsto \bC_{\bra ni\ket_{mn}}$ for $i\in\bZ$. 
		\item
		If $m$ and $n$ are coprime, there is a unitary monoidal equivalence $\Hilb_{\bZ/m\bZ,\omega_m^k}\boxtimes\Hilb_{\bZ/n\bZ,\omega_n^k}\simeq \Hilb_{\bZ/mn\bZ,\omega_{mn}^k}$ sending $\bC_{\bra i\ket_m}\boxtimes\bC_{\bra j\ket_n}\mapsto \bC_{\bra ni+mj\ket_{mn}}$ for $i,j \in\bZ$. 
	\end{enumerate}
\end{lem}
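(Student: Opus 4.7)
The plan for (1) is to define the $*$-functor $F_1 \colon \Hilb_{\bZ/m\bZ,\omega_m^k}\to\Hilb_{\bZ/mn\bZ,\omega_{mn}^k}$ on objects by $\bC_{\bra i\ket_m}\mapsto \bC_{\bra ni\ket_{mn}}$ (extended additively) and to equip it with trivial natural unitaries $\fu\equiv 1$. Since $i\mapsto ni$ is an injective group homomorphism $\bZ/m\bZ\to\bZ/mn\bZ$, the functor will automatically be fully faithful on the underlying C*-categories. The pentagon axiom for $F_1$ reduces to the identity $\omega_{mn}^k(ni,nj,nh)=\omega_m^k(i,j,h)$ for all $i,j,h\in\{0,1,\ldots,m-1\}$, which I will verify by the direct floor-division computation $\lfloor n(i+j)/(mn)\rfloor=\lfloor(i+j)/m\rfloor$ (valid since $n(i+j)\geq mn$ exactly when $i+j\geq m$); the explicit formula for $\omega_{(-)}^k$ then yields the equality immediately.

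For (2), I will first observe that $\phi\colon \bZ/m\bZ\oplus\bZ/n\bZ\to\bZ/mn\bZ$, $(i,j)\mapsto \bra ni+mj\ket_{mn}$, is a group isomorphism under $\gcd(m,n)=1$: if $ni+mj\equiv 0\pmod{mn}$, reducing modulo $m$ gives $ni\equiv 0\pmod m$, whence $i\equiv 0$ as $\gcd(m,n)=1$, and similarly $j\equiv 0$; the cardinalities match. Applying the construction of (1) to $\phi$ and $\omega_{mn}^k$ produces a unitary monoidal equivalence $\Hilb_{\bZ/m\bZ\oplus\bZ/n\bZ,\phi^*\omega_{mn}^k}\simeq\Hilb_{\bZ/mn\bZ,\omega_{mn}^k}$ sending $\bC_{(i,j)}\mapsto\bC_{\phi(i,j)}$. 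On the other hand, by the definition of the Deligne tensor product for pointed unitary tensor categories, $\Hilb_{\bZ/m\bZ,\omega_m^k}\boxtimes \Hilb_{\bZ/n\bZ,\omega_n^k}$ is canonically unitarily monoidally equivalent to $\Hilb_{\bZ/m\bZ\oplus\bZ/n\bZ,\omega_m^k\times\omega_n^k}$, where $(\omega_m^k\times\omega_n^k)((i_1,j_1),(i_2,j_2),(i_3,j_3)):=\omega_m^k(i_1,i_2,i_3)\omega_n^k(j_1,j_2,j_3)$. Using the fact that $\Hilb_{G,\omega}$ depends on $[\omega]\in\Ho^3(G;\bT)$ only up to unitary monoidal equivalence (as recorded in \cref{eg_Hilb_Gamma}), the problem reduces to showing $[\phi^*\omega_{mn}^k]=[\omega_m^k\times\omega_n^k]$ in $\Ho^3(\bZ/m\bZ\oplus\bZ/n\bZ;\bT)$.

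The cohomological verification will use the K\"unneth formula: since $\gcd(m,n)=1$, one has $\bZ/m\bZ\otimes_\bZ\bZ/n\bZ=\mathrm{Tor}^\bZ_1(\bZ/m\bZ,\bZ/n\bZ)=0$, so the cross terms vanish and $\Ho^3(\bZ/m\bZ\oplus\bZ/n\bZ;\bT)\cong\Ho^3(\bZ/m\bZ;\bT)\oplus\Ho^3(\bZ/n\bZ;\bT)$, with the isomorphism realized by restriction to each factor. Under this decomposition, $[\omega_m^k\times\omega_n^k]$ corresponds to $([\omega_m^k],[\omega_n^k])$ by construction. Restricting $\phi$ to $\bZ/m\bZ\times\{0\}$ yields $i\mapsto \bra ni\ket_{mn}$, along which $\omega_{mn}^k$ pulls back to $\omega_m^k$ by the computation in (1); the analogous restriction to $\{0\}\times\bZ/n\bZ$ yields $j\mapsto \bra mj\ket_{mn}$ and pulls back to $\omega_n^k$ by the same argument with the roles of $m$ and $n$ swapped. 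Thus $[\phi^*\omega_{mn}^k]$ and $[\omega_m^k\times\omega_n^k]$ agree componentwise in the K\"unneth decomposition. The main obstacle is this final bookkeeping step --- tracking the K\"unneth identification carefully and confirming the restriction behaviour --- but no essential difficulty arises once (1) has been established.
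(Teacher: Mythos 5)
Your proof of (1) coincides with the paper's: both define the functor on objects by $\bC_{\bra i\ket_m}\mapsto\bC_{\bra ni\ket_{mn}}$ with trivial coherence unitaries and reduce the tensor-functor axiom to the identity $\omega_{mn}^k(ni,nj,nh)=\omega_m^k(i,j,h)$, verified by the floor computation $\lfloor n(i+j)/(mn)\rfloor=\lfloor(i+j)/m\rfloor$.

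For (2) you take a noticeably different, but correct, route. The paper first asserts that for a uniquely determined $l$ there is a unitary monoidal equivalence of the Deligne product onto $\Hilb_{\bZ/mn\bZ,\omega_{mn}^{l}}$ with the prescribed object assignment (implicitly using that the product is pointed over $\bZ/mn\bZ$ and that every class in $\Ho^3(\bZ/mn\bZ;\bT)$ is some $[\omega_{mn}^{l}]$), and then pins down $l\equiv k$ by restricting to the two tensor factors, applying (1), and concluding via the Chinese remainder theorem. You instead identify the Deligne product explicitly as $\Hilb_{\bZ/m\bZ\oplus\bZ/n\bZ,\,\omega_m^k\times\omega_n^k}$, transport $\omega_{mn}^k$ along the explicit isomorphism $\phi(i,j)=\bra ni+mj\ket_{mn}$, and compare the two $3$-cocycles on the product group through the identification $\Ho^3(\bZ/m\bZ\oplus\bZ/n\bZ;\bT)\cong\Ho^3(\bZ/m\bZ;\bT)\oplus\Ho^3(\bZ/n\bZ;\bT)$ realized by the two restriction maps; this identification is indeed valid here because coprimality kills the K\"unneth cross term $\Ho^2(\bZ/m\bZ;\bZ)\otimes\Ho^2(\bZ/n\bZ;\bZ)$ (and the Tor terms), which is exactly what rules out a class that restricts trivially to both factors without being trivial. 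The underlying mechanism --- a degree-$3$ class on the product of two coprime cyclic groups is determined by its restrictions to the factors --- is the same in both arguments; your version avoids quoting the classification of pointed categories over $\bZ/mn\bZ$ and instead does the bookkeeping on the product-group side, at the cost of invoking the K\"unneth formula explicitly. The restricted-cocycle computations you use ($\phi^*\omega_{mn}^k$ pulls back to $\omega_m^k$ on $\bZ/m\bZ\times\{0\}$ and to $\omega_n^k$ on $\{0\}\times\bZ/n\bZ$) are exactly the identity established in (1), with the roles of $m$ and $n$ exchanged for the second factor, so no gap remains.
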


\begin{proof}
	First, note that for discrete groups $\Gamma,\Lambda$, a group homomorphism $\phi\colon \Gamma\to\Lambda$, and a $3$-cocycle $\omega\in \mathrm{Z}^3(\Lambda;\bT)$, we have a unitary tensor functor $\Hilb_{\Gamma,\omega\circ\phi^{\times3}}\to \Hilb_{\Lambda,\omega}$ sending $\bC_{g}\mapsto \bC_{\phi(g)}$. 
	Thus we see (1) by comparing 
	for $i,j,h=0,1,\cdots m-1$, 
	\begin{align*}
		&
		\exp\Bigl( 2\pi\sqrt{-1} \frac{k}{m} \Bigl\lfloor\frac{i+j}{m}\Bigr\rfloor h \Bigr) 
		=
		\exp\Bigl( 2\pi\sqrt{-1} \frac{k}{mn} \Bigl\lfloor\frac{ni+nj}{mn}\Bigr\rfloor nh \Bigr) . 
	\end{align*}
	
	If $m$ and $n$ are coprime, for uniquely determined $l\in\bZ/mn\bZ$ there is a unitary monoidal equivalence $\Hilb_{\bZ/m\bZ,\omega_m^k}\boxtimes\Hilb_{\bZ/n\bZ,\omega_n^k}\simeq \Hilb_{\bZ/mn\bZ,\omega_{mn}^l}$ sending $\bC_{\bra i\ket_m}\boxtimes\bC_{\bra j\ket_n}\mapsto \bC_{\bra ni+mj\ket_{mn}}$. 
	Since its restrictions to $\Hilb_{\bZ/m\bZ,\omega_m^k}\boxtimes\mathbbm{1}$ yields a fully faithful unitary functor into $\Hilb_{\bZ/mn\bZ,\omega_{mn}^k}$ by (1), it must hold that $k\equiv l \mod m$. Similarly we can see $k\equiv l \mod n$ and thus modulo $mn$, which implies (2). 
\end{proof}

\begin{eg}\label{eg:cyclicCuntzbimod}
	Let $l,n\in\bZ_{\geq 1}$ and $k\in\bZ$. 
	The $*$-automorphism $\alpha\colon C(\bZ/n\bZ) \ni f \mapsto [h\mapsto f(1+h)] \in C(\bZ/n\bZ)$ with $C(\bZ/n\bZ)\ni u\colon h\mapsto e^{-2\pi\sqrt{-1}\frac{k}{n}h}$ gives a right $(\bZ/ln\bZ,\omega_{ln}^{lk})$-action on $C(\bZ/n\bZ)$. 
	
	Moreover, $\lambda:=\alpha$ with the constants 
	\begin{align*}
		\bv_j := e^{-2\pi\sqrt{-1}\frac{k}{ln^2}j}1_{C(\bZ/n\bZ)} \in \cU(C(\bZ/n\bZ))
	\end{align*}
	for $j=0,1,\cdots,ln-1$ is a well-defined $(\bZ/ln\bZ,\omega_{ln}^{lk})$-$*$-endomorphism on $(C(\bZ/n\bZ),\alpha^i,u^{\lfloor \frac{i+j}{ln} \rfloor})$ as we check for $i,j=0,1,\cdots,ln-1$, 
	\begin{align*}
		&
		u^{\lfloor\frac{i+j}{ln}\rfloor} \alpha_j(\bv_{i})\bv_{j} 
		=
		u^{\lfloor\frac{i+j}{ln}\rfloor} \bv_{\bra i+j\ket_{ln}} e^{ -2\pi\sqrt{-1} \frac{k}{ln^2} ln\lfloor\frac{i+j}{ln}\rfloor }
		=
		\bv_{\bra i+j\ket_{ln}} \lambda(u^{\lfloor\frac{i+j}{ln}\rfloor}) .
	\end{align*}
\end{eg}

\begin{thm}\label{thm:cyclicCuntzbimod}
	Let $m,n\in \bZ_{\geq 1}$ and $k\in \bZ$. 
	We express $m=\prod_{i=1}^{h}p_i^{r_i}$ and $n=\prod_{i=1}^{h}p_i^{s_i}$ using mutually different prime numbers $p_1,\cdots,p_h$, and indices $r_1,\cdots,r_h,s_1,\cdots,s_h\in\bZ_{\geq 0}$ with $h\in\bZ_{\geq 0}$. 
	Then a $\Hilb_{\bZ/m\bZ,\omega_m^{k}}$-action on $\cO_{n+1}$ exists if and only if for every $i=1,\cdots,h$, at least one of the following holds. 
	\begin{itemize}
		\item
		$k \in p_i^{\min \{r_i, s_i\}}\bZ$. 
		\item
		$k \in p_i^{r_i-s_i+1}\bZ$ and $p_i\neq 2$. 
		\item
		$k \in p_i^{r_i-s_i+2}\bZ$ and $p_i=2$. 
	\end{itemize}
\end{thm}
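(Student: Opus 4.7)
The plan is first to reformulate the theorem's prime-by-prime condition in a more uniform way. A routine valuation-by-valuation calculation shows that the stated condition is equivalent to $k \in n(\bZ/m\bZ) + a(\bZ/m\bZ)$, the condition already announced in \cref{main:4}(2), where $l$ is the radical of $m$ and $a,b\in\bN$ are coprime with $a/b=2lm/n$; the extra $+1$ for odd $p_i$ and $+2$ for $p_i=2$ in the statement are exactly $v_{p_i}(a)$, where the factor of $2$ reflects the $2$ in $2lm$. Having made this translation, the proof splits into sufficiency (constructing the action) and necessity (the Izumi-type obstruction).

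For sufficiency, the plan is to handle the two summands $n(\bZ/m\bZ)$ and $a(\bZ/m\bZ)$ separately, using \cref{lem:cyclicprod}(2) to glue actions prime-by-prime. When $k$ lies in $n(\bZ/m\bZ)$, \cref{thm:cyclicCuntz} directly produces a right $(\bZ/m\bZ,\omega_m^k)$-action by $*$-automorphisms on $\cO_{n+1}$, which is a fortiori a $\Hilb_{\bZ/m\bZ,\omega_m^k}$-action. For $k$ in $a(\bZ/m\bZ)$, I would take the $\Hilb_{\bZ/ln\bZ,\omega_{ln}^{lk}}$-structure on $C(\bZ/n\bZ)$ from \cref{eg:cyclicCuntzbimod} together with its bimodule endomorphism $\lambda$, restrict along the embedding $\Hilb_{\bZ/m\bZ,\omega_m^k}\subset \Hilb_{\bZ/ln\bZ,\omega_{ln}^{lk}}$ furnished by \cref{lem:cyclicprod}(1), and build a suitable Pimsner algebra so that its $\bZ[\bZ/m\bZ]$-module $\K_0$ is that of $\cO_{n+1}$ with the desired non-trivial multiplier $c\in(\bZ/n\bZ)^*$. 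Applying \cref{thm:tensorKirch} transfers the $\Hilb$-action to a unital Kirchberg algebra in the UCT class, and Kirchberg–Phillips then identifies this Kirchberg algebra with $\cO_{n+1}$, after adjusting the unit class by the standard corner/Morita trick.

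For necessity, start with a putative $\Hilb_{\bZ/m\bZ,\omega_m^k}$-action $(\alpha,\fu)$ on $\cO_{n+1}$ and extract the $\bZ[\bZ/m\bZ]$-module structure on $\K_0(\cO_{n+1})=\bZ/n\bZ$, which is multiplication by some $c\in(\bZ/n\bZ)^*$ satisfying $c^m\equiv 1\bmod n$. Choose a projection $p$ whose class generates the $c$-fixed subgroup, so that $d:=\gcd(c-1,n)$ is the order of $[p]$ in $\bZ/n\bZ$, and apply \cref{lem:corneranomalous} to obtain a right $(\bZ/m\bZ,\omega_m^k)$-action by $*$-automorphisms on the corner $B:=p\cO_{n+1}p$. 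Then \cite[Theorem~3.4]{Izumi2023:Gkernels} forces $[\omega_m^k]$ to lie in the image of $\Ho^3(\bZ/m\bZ;\K_0^\#(B))\to\Ho^3(\bZ/m\bZ;\bT)$ under $\mathrm{ev}_{1*}$. A direct computation of $\K_0^\#(B) = (\bZ/n\bZ\oplus\bR)/\bZ(n/d,-1)$ gives a split short exact sequence $0\to\bR/d\bZ\to\K_0^\#(B)\to\bZ/(n/d)\bZ\to 0$ in which $\mathrm{ev}_1$ is multiplication by $d$ on the $\bT$-factor and embeds $\bZ/(n/d)\bZ$ as $(n/d)$-th roots of unity in $\bT$. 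Consequently the image of $\mathrm{ev}_{1*}$ is generated in $\bZ/m\bZ$ by $\gcd(d,m)$ together with $m/\gcd(m,n/d)$; optimizing over the possible $c$ using the cyclic structure of $(\bZ/p^{s_i}\bZ)^*$ and the description of its pro-$p$ $m$-th roots, and handling the prime $2$ with its extra power separately, yields exactly the divisibility condition of the theorem.

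The principal obstacle is the sufficiency direction, specifically identifying the Pimsner bimodule built from \cref{eg:cyclicCuntzbimod} that produces a $\Hilb$-C*-algebra equivariantly $\KK$-equivalent to $\cO_{n+1}$ with the precise multiplier $c\in(\bZ/n\bZ)^*$ required to saturate $k\in a(\bZ/m\bZ)$ prime-by-prime, together with the final adjustment ensuring the Kirchberg algebra produced by \cref{thm:tensorKirch} is $\cO_{n+1}$ and not merely a Morita equivalent Kirchberg algebra. On the necessity side the delicate point is correctly tracking the extra factor of $2$ at the prime $2$ (explaining the shift from $r_i-s_i+1$ to $r_i-s_i+2$), which enters through the $2$ in $2lm$ and, cohomologically, through the nontriviality of the splitting of $\K_0^\#(B)$ at $p=2$.
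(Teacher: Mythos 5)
Your skeleton matches the paper's: reduce prime-by-prime via \cref{lem:cyclicprod} and UHF-absorption, use \cref{thm:cyclicCuntz} when the $\K_0$-action can be taken trivial, use \cref{eg:cyclicCuntzbimod} plus a Pimsner/mapping-cone construction and \cref{thm:tensorKirch} otherwise, and use \cref{lem:corneranomalous} plus Izumi's obstruction for necessity. However, both directions have genuine gaps. On sufficiency, the step you flag as "the principal obstacle" is in fact the mathematical core of the proof, and it is not just a matter of "identifying the Pimsner bimodule": the paper must choose an integer lift $g$ of a generator of the order-$p^{s-\sigma}$ cyclic subgroup of $(\bZ/p^s\bZ)^{\times}$ so that $d:=(1-g^{p^{s-\sigma}})/p^s$ is an integer \emph{coprime to} $p$; this requires explicit binomial estimates on $(g_1\pm p)^{p^{s-1}}$ (with a separate case at $p=2$), and only then does the two-term resolution $0\to\bZ[d^{-1}][\bZ/p^{s-\sigma}\bZ]\xrightarrow{Y-g}\bZ[d^{-1}][\bZ/p^{s-\sigma}\bZ]\to\bZ/p^s\bZ\to0$ exist after tensoring \cref{eg:cyclicCuntzbimod} with $\bM_{d^\infty}$. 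Without this lemma the construction does not produce $\K_0\cong\bZ/p^s\bZ$ with the required multiplier. Also, your dichotomy "when $k\in n(\bZ/m\bZ)$ \dots for $k\in a(\bZ/m\bZ)$ \dots" does not cover $k$ lying in the sum but in neither summand (e.g.\ $m=p^2q^2$, $n=pq^2$, $k=pq$); the correct organization is a choice of one of the two constructions \emph{at each prime}, glued by \cref{lem:cyclicprod} (2).

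On necessity, your route differs from the paper's and, as stated, gives a strictly weaker conclusion. Test it on $m=p^{10}$, $n=p^3$, $p$ odd: any $c$ in the image of $\bZ/m\bZ\to(\bZ/p^3\bZ)^{\times}$ has order dividing $p^2$, so $c=1+up$ and $d=\gcd(c-1,n)=p$; your claimed image of $\mathrm{ev}_{1*}$, generated by $\gcd(d,m)=p$ and $m/\gcd(m,n/d)=p^8$, would only force $p\mid k$, whereas the theorem requires $p^3\mid k$. So either the formula for the image of $\Ho^3(\bZ/m\bZ;\K_0^\#(B))\to\Ho^3(\bZ/m\bZ;\bT)$ with \emph{nontrivial} coefficient module is wrong, or the corner obstruction alone is not sharp as you compute it. The paper avoids this entirely: it restricts the action to the subgroup $p^{s-\sigma}\bZ/p^r\bZ\cong\bZ/p^{\min\{r,\,r-s+\sigma\}}\bZ$, which acts trivially on $\K_0$ (after tensoring with an auxiliary $\bZ/2^r\bZ$-action realizing $-1$ to kill the $\{\pm1\}$ factor when $p=2$), identifies the restricted cocycle via \cref{lem:cyclicprod} (1), and applies the trivial-coefficient obstruction \cref{cor:cyclicCuntz} to get the sharp bound $k\in p^s\bZ+p^{\min\{r,\,r-s+\sigma\}}\bZ$. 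You would need to either adopt this reduction or substantially redo the cohomology computation with nontrivial coefficients.
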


Here, the existence of a $\Hilb_{\bZ/m\bZ,\omega_m^{k}}$-action on $\cO_{n+1}$ is equivalent to the existence of a $(\bZ/m\bZ,\omega_m^{k})$-action on $\cO_{n+1}^{\st}$ by \cref{lem:corneranomalous}. 
Again, note that we can arrange these $(\bZ/m\bZ,\omega)$-actions to be outer by \cref{eg:anomalousPimsner}. 

\begin{proof}
	Suppose that $\Hilb_{\bZ/m\bZ,\omega_m^{k}}$-action on $\cO_{n+1}$ exists. 
	Then for all $i=1,\cdots,h$, the full unitary tensor subcategory  $\Hilb_{\bZ/p_i^{r_i}\bZ,\omega_{p_i^{r_i}}^{k}}\subset \Hilb_{\bZ/m\bZ,\omega_m^{k}}$ in view of \cref{lem:cyclicprod} (1) can act on $\cO_{n+1}\otimes\bM_{\prod_{j\neq i}p_j^{\infty}}\cong \cO_{p_i^{s_i}+1}$. 
	Conversely, if a $\Hilb_{\bZ/p_i^{r_i}\bZ,\omega_{p_i^{r_i}}^{k}}$-action on $\cO_{p_i^{s_i}+1}$ exists for all $i=1,\cdots,h$, 
	then 
	$\Hilb_{\bZ/m\bZ,\omega_{m}^{k}}$ admits an action on $\cO_{p_i^{s_i}+1}\otimes\bigotimes_{j\neq i}\bM_{p_j^{\infty}}\cong \cO_{p_i^{s_i}+1}$ with the aid of \cref{lem:cyclicprod} (2) 
	because $\Hilb_{\bZ/p_j^{r_j}\bZ,\omega_{p_j^{r_j}}^{k}}$ for $j\neq i$ admits an action on $\bM_{p_j^{\infty}}$ thanks to \cite[Theorem C]{Evington-GironPacheco2023:anomalous}. 
	Hence $\Hilb_{\bZ/m\bZ,\omega_{m}^{k}}$ can act on $\bigoplus_{i=0}^{h}\cO_{p_i^{s_i}+1}$ and thus on $\cO_{n+1}$ by \cref{thm:tensorKirch} with the aid of the Kirchberg--Phillips theorem. 
	Thus it suffices to show the case when $m=p^r$ and $n=p^s$ for $r,s\in\bZ_{\geq 0}$ and a prime $p$. 
	We put $\sigma:=1$ if $p\neq 2$ and $\sigma:=2$ if $p=2$.

	First, suppose that there is a $\Hilb_{\bZ/p^{r}\bZ,\omega_{p^{r}}^k}$-action $(\alpha,\fu)$ on $\cO_{p^{s}+1}$. 
	It suffices to show a contradiction by assuming $k \not\in p^{\min\{ r,s,r-s+\sigma \}}\bZ$. 
	Then $r,s,r-s+\sigma\geq 1$, and $\alpha$ induces a group homomorphism 
	\begin{align*}
		f_0\colon \bZ/p^{r}\bZ\to \Aut(\K_0(\cO_{p^{s}+1}))
		\cong (\bZ/p^{s}\bZ)^{\times}. 
	\end{align*}
	If $p^s$ is $2$ or odd, there is a primitive root $g\in(\bZ/p^s\bZ)^{\times}$. 
	If $p^s\in4\bZ$, there is $g\in(\bZ/p^s\bZ)^{\times}$ such that $(\bZ/p^s\bZ)^{\times}=\bra g\ket_{\text{group}}\times\{\pm 1\}$. 
	For such $g$, we may assume that $\alpha$ induces a group homomorphism 
	\begin{align*}
		f\colon \bZ/p^{r}\bZ\to \bra g\ket_{\text{group}} \cong \bZ/p^{\max\{s-\sigma,0\}}(p-1)\bZ . 
	\end{align*}
	Indeed, since otherwise $p^s\in4\bZ$, we may tensor a $\bZ/2^r\bZ$-action on $\cO_\infty^{\st}$ such that $1\in\bZ/2^r\bZ$ acts on $\bZ\cong\K_0(\cO_\infty^{\st})$ by $-1$, which exists thanks to \cite{Katsura2008:construction}, to obtain an action as claimed. 
	Then $\Ker f=\bZ/2^r\bZ$ if $p=2$ and $s=1$, and $\Ker f\supset p^{s-\sigma}\bZ/p^{r}\bZ$ otherwise. Thus we obtain a $\Hilb_{\bZ/p^{\min\{ r , r-s+\sigma \}}\bZ,\omega_{p^{\min\{ r , r-s+\sigma \}}}^k}$-action on $\cO_{p^s+1}$ fixing the $\K_0$-group via \cref{lem:cyclicprod} (1), which forces $k\in p^{s}\bZ + p^{\min\{ r , r-s+\sigma \}}\bZ$ by \cite[Theorem 3.4]{Izumi2023:Gkernels} (see also \cref{cor:cyclicCuntz}). 
	This contradicts the assumption on $k$ as desired. 
	
	Conversely, we are going to show that $\Hilb_{\bZ/p^{r}\bZ,\omega_{p^{r}}^k}$ can act on $\cO_{p^{s}+1}$ if $k \in p^{\min\{ r,s,r-s+\sigma \}}\bZ$. 
	It suffices to show the case of $r\geq s-\sigma$ because if so, in particular, there is a $\Hilb_{\bZ/p^{s-\sigma}\bZ,\omega_{p^{s-\sigma}}^k}$-action on $\cO_{p^{s}+1}$ for all $k\in\bZ$, which restricts to a $\Hilb_{\bZ/p^r\bZ,\omega_{p^r}^k}$-action on $\cO_{p^s+1}$ for any $r\leq s-\sigma$ by \cref{lem:cyclicprod} (1). 
	By \cref{cor:cyclicCuntz}, $\Hilb_{\bZ/p^{r}\bZ,\omega_{p^{r}}^k}$ can act on $\cO_{p^{s}+1}$ if $k\in p^{\min \{r,s\}}\bZ$. 
	From now on, we consider the case when $k\in p^{ r-s+\sigma }\bZ \setminus p^{\min\{r,s\}}\bZ$ and $r\geq s-\sigma$. 
	Then $s\geq 3$ if $p=2$, and $s\geq 2$ otherwise. 
	We can take $g\in \bZ$ with $g\leq -2$ such that $d:=(1-g^{s-\sigma})/p^s$ is an integer coprime to $p$. 
	Indeed, for $g_1\in\bZ$ with $g_1\leq -p-2$ giving an element in $(\bZ/p^s\bZ)^\times$ of order $p^{s-\sigma}$, at least one of $g_1$ or $g_1\pm p$ satisfies the properties of $g$ by the following calculations. 
	When $p$ is odd, then 
	\begin{align*}
		(g_1+ p)^{p^{s-1}} 
		={}& 
		g_1^{p^{s-1}}+ p^{s}g_1^{p^{s-1}-1} 
		+ \sum_{j=2}^{p^{s-1}} \frac{p^{s-1}}{j}\binom{p^{s-1}-1}{j-1}p^j g_1^{p^{s-1}-j} 
		\\\in{}& 
		g_1^{p^{s-1}}+ p^{s}g_1^{p^{s-1}-1} + p^{s+1}\bZ , 
	\end{align*}
	and when $p=2$ and $s= 3$, then by choosing $\tau\in\{\pm1\}$ with $1+\tau g_1\in 2\bZ \setminus 4\bZ$, 
	\begin{align*}
		&
		(g_1+\tau 2)^{2^{s-2}} 
		= 
		g_1^{2}+\tau 4g_1 
		+ 4 
		=
		g_1^{2^{s-2}}+ 8\frac{\tau g_1 +1}{2} , 
	\end{align*}
	whereas when $p=2$ and $s\geq 4$, then by choosing $\tau\in\{\pm 1\}$ with $2^{s-2}-1+\tau g_1\in 4\bZ$, 
	\begin{align*}
		(g_1+\tau 2)^{2^{s-2}} 
		={}&
		g_1^{2^{s-2}}
		+ \tau 2^{s-1}g_1^{2^{s-2}-1} 
		+ \frac{2^{s-2}(2^{s-2}-1)}{2} 2^2 g_1^{2^{s-2}-2} 
		\\&+ 
		\frac{2^{s-2}}{4} \binom{2^{s-2}-1}{3} 2^4 g_1^{2^{s-2}-4} 
		+ \sum_{\substack{3\leq j\leq p^{s-1} \\ j\neq 4}} \frac{2^{s-2}}{j}\binom{2^{s-2}-1}{j-1} \tau^j2^j g_1^{2^{s-2}-j} 
		\\\in{}& 
		g_1^{2^{s-2}}+ 2^{s} 3^{-1} (2^{s-2}-1)(2^{s-3}-1)(2^{s-2}-3) g_1^{2^{s-2}-4} + 2^{s+1}\bZ . 
	\end{align*}
	
	Here, the order of $g$ in $(\bZ/p^s\bZ)^{\times}$ must be $p^{s-\sigma}$ since otherwise for some $l\in\bZ$ and $j\in\bZ_{\geq 1}$ it holds that $g^{p^{s-\sigma}} = (1+p^sl)^{p}=1+p^{s+1}\bZ$, which contradicts $d=(1-g^{p^{s-\sigma}})p^{-s}\in\bZ\setminus p\bZ$. 
	Then we have the well-defined surjective ring homomorphism 
	\begin{align*}
		\bZ[d^{-1}][\bZ/p^{s-\sigma}\bZ] = \bZ[d^{-1}][Y]/(Y^{p^{s-\sigma}}-1) \ni Y^j \mapsto g^j \in \bZ/p^s\bZ . 
	\end{align*}
	Its kernel is 
	\begin{align*}
		&
		(Y-g)\bZ[d^{-1}][\bZ/p^{s-\sigma}\bZ] + p^s\bZ[d^{-1}][\bZ/p^{s-\sigma}\bZ] 
		=
		(Y-g)\bZ[d^{-1}][\bZ/p^{s-\sigma}\bZ] , 
	\end{align*}
	because 
	\begin{align*}
		p^s &= d^{-1}(Y^{p^{s-\sigma}}-g^{p^{s-\sigma}}) 
		\in (Y-g)\bZ[d^{-1}][\bZ/p^{s-\sigma}\bZ]. 
	\end{align*}
	The $\bZ[d^{-1}][\bZ/p^{s-\sigma}\bZ]$-linear map 
	\begin{align*}
		\bZ[d^{-1}][\bZ/p^{s-\sigma}\bZ]\ni f \mapsto (Y-g)f\in (Y-g)\bZ[d^{-1}][\bZ/p^{s-\sigma}\bZ]
	\end{align*}
	is injective because any element in its kernel must be of the form 
	$\sum_{j=0}^{p^{s-\sigma}-1} a_jY^j$ for $a_j\in\bZ[d^{-1}]$ such that $a_{j}=ga_{j+1}$ for all $j$ and thus $a_j=g^{p^{s-\sigma}}a_j$, which implies $a_j=0$ by $|g|\geq 2$. 
	Hence, we obtain the short exact sequence of right $\bZ[d^{-1}][\bZ/p^{s-\sigma}\bZ]$-modules 
	\begin{align}\label{eq:thm:cyclicCuntzbimod}
		0\to \bZ[d^{-1}][\bZ/p^{s-\sigma}\bZ] \xto{Y-g} \bZ[d^{-1}][\bZ/p^{s-\sigma}\bZ] 
		\to \bZ/p^s\bZ\to 0. 
	\end{align}
	Via the ring homomorphism $\bZ[\bZ/p^{r}\bZ]\ni \bZ[Y]/(Y^{p^{r}}-1)\bZ[Y]\ni Y \mapsto Y\in \bZ[\bZ/p^{s-\sigma}\bZ]$, which is surjective by the assumption $r\geq s-\sigma$, we may regard \cref{eq:thm:cyclicCuntzbimod} as an exact sequence of $\bZ[d^{-1}][\bZ/p^{r}\bZ]$-modules. 
	
	Thanks to $k\in p^{r-s+\sigma}\bZ$, it follows from \cref{eg:cyclicCuntzbimod} tensored with $\bM_{d^\infty}$ that there is a right $(\bZ/p^r\bZ,\omega_{p^r}^{k})$-action on $A:=\bM_{d^\infty}\otimes C(\bZ/p^{s-\sigma}\bZ)$ and a $(\bZ/p^r\bZ,\omega_{p^r}^{k})$-$*$-endomorphism $(\id_{\bM_{d^\infty}}\otimes\lambda,1\otimes\bv)$ on $A$ such that its action on $\K_0(A)\cong \bZ[d^{-1}][\bZ/p^{s-\sigma}\bZ]$ equals the multiplication by $Y$. 
	Since the map on $\K_0$-groups induced by the proper $(\bZ/p^r\bZ,\omega_{p^r}^{k})$-$(A,A)$-correspondence $(\id_{\bM_{d^\infty}}\otimes\lambda)\oplus \id_{A}^{\oplus -g}\colon A\to \bM_{1-g}\otimes A$ fits into \cref{eq:thm:cyclicCuntzbimod}, we obtain a $\Hilb_{\bZ/p^r\bZ,\omega_{p^r}^{k}}$-action on $\cO_{p^{s}+1}$ by \cref{prop:inductiveMC} and \cref{thm:tensorKirch}. 
\end{proof}

\appendix
\section{Involutory operators over a ring of integers}\label{sec:numbertheory}
We separate some number-theoretic ingredients from the proof of \cref{thm:Temperley-Lieb-Jones}. We refer to \cite[Chapter I]{Neukirch:book} for some fundamental facts about number theory. 

\begin{rem}\label{notn:algnumber}
	In this section, we keep in mind the following situation. 
	\begin{enumerate}
		\item
		Let $K$ be a finite Galois extension of $\bQ$ and $R=\cO_K$ be the ring of integers of $K$, i.e., the set of $a\in K$ such that there is a monic polynomial $f\in\bZ[X]$ with $f(a)=0$. 
		Then we let $G:=\mathop{\mathrm{Gal}}(K/\bQ)$ denote its Galois group, which is the same as the ring automorphism groups $\Aut(K)=\Aut(R)$ of $K$ and $R$. 
		Note that $R$ has global dimension $1$ as it is a Dedekind domain. 
		In particular, any $R$-submodule of a projective $R$-module is projective by Schanuel's lemma. 
		\item
		The ideal $2R\subset R$ uniquely decomposes $2R=\prod_{i=1}^{n}\fp_i^{m_i}$ into mutually different prime ideals $\fp_1,\cdots,\fp_n\subset R$ with some $m_i\in \bZ_{\geq 1}$. 
		We assume that $K/\bQ$ is unramified at $2$, i.e., $m_i=1$ for all $i$. Then, $R/2R \cong \prod_{i=1}^{n}R/\fp_i$ is a finite product of finite fields $R/\fp_i$. 
		We write $e_i\in R/2R$ for the idempotent element corresponding to the unit of $R/\fp_i$. 
		\item
		The action of $G$ on the set $\{\fp_1,\cdots,\fp_n\}$ defined by $\fp_i\mapsto g(\fp_i)$ for each $g\in G$ is transitive (\cite[Proposition I.9.1]{Neukirch:book}). 
		We have an induced $G$-action on $R/2R$ by automorphisms. It follows that the action of $G$ on the set $\{ e_1,\cdots,e_n \}$ defined by $e_i\mapsto g(e_i)$ for each $g\in G$ is again transitive. 
		\item
		We use the variable $Y$ to express $R[\bZ/2\bZ]:=R[Y]/(Y^2-1)R[Y]$. 
		Then the $G$-action on $R$ extends to that on $R[\bZ/2\bZ]$ by $g(Y):=Y$ for $g\in G$. 
	\end{enumerate}
\end{rem}

\begin{eg}\label{eg:cyclotomic}
	Let $p\in\bZ_{\geq 1}$ be an odd prime and put $\zeta_p:=e^{\frac{2\pi}{p}\sqrt{-1}}$. 
	\begin{enumerate}
		\item
		Note that $\bZ[2\cos\frac{2\pi}{p}]$ is the ring of integers of the field $\bQ(2\cos\frac{2\pi}{p})$. Indeed, this follows from the fact (\cite[Proposition I.10.2]{Neukirch:book}) that $\bZ[\zeta_p]$ is the the ring of integers of $\bQ(\zeta_p)$ combined with 
		\[\bR\cap\bZ[\zeta_p] = \bZ+\sum_{k=1}^{p-2} \bZ 2\cos\frac{2k\pi}{p} = \bZ\Bigl[2\cos\frac{2\pi}{p}\Bigr] . \]
		\item
		It is well-known that $\bQ(\zeta_p)/\bQ$ is a finite Galois extension with an abelian Galois group, which is unramified at $2$ by \cite[Proposition I.10.3, Corollary I.10.4]{Neukirch:book}. 
		Thus $K:=\bQ(\zeta_p)$ satisfies the setting of \cref{notn:algnumber} and therefore so does the subfield $K:=\bQ(2\cos\frac{2\pi}{p})\subset\bQ(\zeta_p)$. 
	\end{enumerate}
\end{eg}

The goal of this section is to show the following resolution. 
We write $R_+$ and $R_-$ for the right $R[\bZ/2\bZ]$-modules defined by $R_{\pm}:=R$ on which $Y\in R[\bZ/2\bZ]$ acts by the scalar $\pm 1\in R$, respectively. 

\begin{thm}\label{thm:Z/2module}
	In the setting of \cref{notn:algnumber}, any countable right $R[\bZ/2\bZ]$-module $M$ fits into some $R[\bZ/2\bZ]$-linear exact sequence of the form 
	\begin{align}\label{eq:thm:Z/2module0}\begin{aligned}
			0&\to R_{+}^{\oplus\infty}\xto{F_3}  R_{+}^{\oplus\infty} \oplus R[\bZ/2\bZ]^{\oplus\infty} \xto{F_2} R_{+}^{\oplus\infty} \oplus R[\bZ/2\bZ]^{\oplus\infty} 
			\\&\xto{F_1} R[\bZ/2\bZ]^{\oplus\infty}\xto{F_0} M\to 0. 
	\end{aligned}\end{align}
\end{thm}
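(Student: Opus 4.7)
My plan reduces the theorem to a structural description of $R$-projective $R[\bZ/2\bZ]$-modules. First, choose any surjection $F_0 \colon R[\bZ/2\bZ]^{\oplus\infty} \twoheadrightarrow M$ and set $K_0 := \ker F_0$. Since $R$ is Dedekind, every submodule of the free $R$-module $R[\bZ/2\bZ]^{\oplus\infty}$ is $R$-projective, so $K_0$ is $R$-projective. It thus suffices to construct, for any countable $R$-projective $R[\bZ/2\bZ]$-module $N$, a three-map exact sequence
\begin{equation*}
0 \to R_+^{\oplus\infty} \to R_+^{\oplus\infty} \oplus R[\bZ/2\bZ]^{\oplus\infty} \to R_+^{\oplus\infty} \oplus R[\bZ/2\bZ]^{\oplus\infty} \to R[\bZ/2\bZ]^{\oplus\infty} \twoheadrightarrow N ,
\end{equation*}
after which concatenating with $F_0$ completes the proof.

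To achieve this, I would first classify countable $R$-projective $R[\bZ/2\bZ]$-modules. The unramified-at-2 hypothesis enters decisively here: by \cref{notn:algnumber}~(2)--(3), $R/2R = \prod_i R/\fp_i$ is semisimple and $\mathop{\mathrm{Gal}}(K/\bQ)$ acts transitively on $\{\fp_i\}$. Consequently, each 2-adic completion $R_{\fp_i}[\bZ/2\bZ]$ is a Frobenius order whose lattice category admits a Diederichsen--Reiner-style classification with only three indecomposable types, matching $R_+$, $R_-$, and $R[\bZ/2\bZ]$; Galois-transitivity then glues these local classifications globally. Away from 2 the group algebra is semisimple. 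Countable Eilenberg absorption dissolves any remaining invertible-ideal twists. The end result is an isomorphism $N \cong R_+^{\oplus A} \oplus R_-^{\oplus B} \oplus R[\bZ/2\bZ]^{\oplus C}$ for at-most-countable cardinalities $A, B, C$.

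Given the decomposition, the required resolution is built summand by summand. Each $R_-$ summand is handled by $0 \to R_+ \xto{\cdot(1+Y)} R[\bZ/2\bZ] \to R_- \to 0$, contributing one copy of $R_+$ to the leftmost term. Each $R_+$ summand uses the 2-periodic Tate resolution $0 \to R_+ \to R[\bZ/2\bZ] \xto{\cdot(1-Y)} R[\bZ/2\bZ] \to R_+ \to 0$, folded into the mixed shape. Each $R[\bZ/2\bZ]$ summand enters trivially via identity arrows. Taking the direct sum and renormalizing via the countable absorption isomorphisms $R_+^{\oplus\infty} \oplus R_+^{\oplus\kappa} \cong R_+^{\oplus\infty}$ and $R[\bZ/2\bZ]^{\oplus\infty} \oplus R[\bZ/2\bZ]^{\oplus\kappa} \cong R[\bZ/2\bZ]^{\oplus\infty}$ for countable $\kappa$ produces the prescribed form.

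The main obstacle is the structural decomposition. Showing that every countable $R$-projective $R[\bZ/2\bZ]$-module truly splits as a direct sum of the three basic types, rather than into exotic non-split extensions between them, requires controlling $\mathrm{Ext}^1$ between these types after countable absorption. The unramified-at-2 assumption should suffice, since the local completions are Frobenius orders whose indecomposable lattices are completely enumerated; but carrying out the global-to-local comparison and disposing of ideal-class issues coherently across all primes above 2 is the delicate point where the number-theoretic content is genuinely used.
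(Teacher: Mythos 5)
Your first step (choose $F_0$ and note that $K_0=\Ker F_0$ is $R$-projective) and your final assembly (splice in $0\to R_+\xto{\cdot(1+Y)}R[\bZ/2\bZ]\to R_-\to 0$ and absorb) agree with the paper. The gap is the structural claim in the middle: the asserted isomorphism $N\cong R_+^{\oplus A}\oplus R_-^{\oplus B}\oplus R[\bZ/2\bZ]^{\oplus C}$ for an arbitrary countable $R$-projective $R[\bZ/2\bZ]$-module $N$ is false whenever $2R$ splits into $n\geq 2$ primes. Concretely, $N:=\{(a,b)\in R_+\oplus R_-\mid a\equiv b\bmod \fp_1\}$ is an $R$-projective $R[\bZ/2\bZ]$-lattice whose completion at $\fp_1$ is free of rank one over $R_{\fp_1}[\bZ/2\bZ]$ while its completion at $\fp_2$ is $R_{\fp_2,+}\oplus R_{\fp_2,-}$; since any finite direct sum of the three standard lattices has the \emph{same} local multiplicities at every prime above $2$, this $N$ is not such a direct sum. (The paper records essentially this obstruction in the remark following \cref{lem:BCKlem3.2G}, and states explicitly before \cref{lem:BKPlem2.2} that the direct structural route only works when $2R$ is prime.) The sentence ``Galois-transitivity then glues these local classifications globally'' is where your argument breaks: $G$ acts transitively on $\{\fp_1,\dots,\fp_n\}$, but it does not act on a given lattice $N$, so nothing forces the local types of $N$ at the various $\fp_i$ to coincide. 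Your remaining escape hatch, that countable absorption ``dissolves'' the discrepancy (i.e., that $N\oplus R_+^{\oplus\infty}\oplus R_-^{\oplus\infty}\oplus R[\bZ/2\bZ]^{\oplus\infty}$ is always standard), is exactly the hard point and you give no argument for it; Azumaya--Krull--Schmidt uniqueness is unavailable because the endomorphism rings of the standard indecomposables (e.g.\ $\mathrm{End}(R_+)=R$) are not local, and the obvious additive invariants are all killed by cardinal arithmetic, so neither a proof nor a disproof is immediate.

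The paper's workaround is to prove a strictly weaker decomposition and still extract the resolution. Galois transitivity is used in the opposite direction from what you propose: one replaces $N$ by the amplification $N[G]=\bigoplus_{g\in G}N[g]$, which by construction has Galois-symmetric local behaviour, and \cref{lem:BCKlem3.2G} (whose proof is where the transitivity and the unramifiedness at $2$ are actually spent, equalizing the $R/\fp_i$-ranks across $i$) yields \cref{lem:BKPthm2.8}: only $N[G]\oplus R[\bZ/2\bZ]^{\oplus\infty}$, not $N$, is shown to be of the form $P_+\oplus P_-\oplus R[\bZ/2\bZ]^{\oplus\infty}$. Since $K_0$ is an $R[\bZ/2\bZ]$-linear direct summand of $K_0[G]$, one still gets a two-step presentation of $K_0$ by sums of $R_+$, $R_-$, $R[\bZ/2\bZ]$, at the cost of one extra syzygy; this is why \cref{eq:thm:Z/2module0} has four maps, whereas your (unproven) classification would give a resolution one step shorter. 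To repair your proof you must either establish the stabilized classification for $N$ itself or adopt the amplification device.
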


If $2R\subset R$ is a prime ideal, this is a consequence of the structural results on torsion-free $R$-modules similar to \cite[Theorem 2.8]{Benson-Kumjian-Phillips2003:symmetries} and \cite[Theorem 3.4]{Butler-Campbell-Kovacs2004:infinite}. 
In that case, we can actually get a resolution a bit shorter than \cref{eq:thm:Z/2module0}. 
However, often $2\bZ[2\cos\frac{2\pi}{p}]\subset \bZ[2\cos\frac{2\pi}{p}]$ is not a prime ideal, which makes some of the arguments above invalid. Fortunately, thanks to the presence of the Galois group, we can still obtain a statement (\cref{lem:BKPthm2.8}) that is enough for our purpose. 

We begin with recalling the following lemma, which is a special case of \cite[Theorem 6.1]{Benson-Goodearl2000:periodic}. 

\begin{lem}[{Benson--Goodearl~\cite{Benson-Goodearl2000:periodic}}]
	\label{lem:BKPlem2.2}
	Let $R$ be an integral domain and $M$ be a right $R[\bZ/2\bZ]$-module that is projective as an $R$-module. If $M/2M$ is projective as an $(R/2R)[\bZ/2\bZ]$-module, then $M$ is projective as an $R[\bZ/2\bZ]$-module. 
\end{lem}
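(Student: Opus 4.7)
The plan is to invoke Higman's criterion for relative projectivity of modules over finite group algebras, which characterizes $R[\bZ/2\bZ]$-projectivity of an $R$-projective module $M$ by the existence of an $R$-linear endomorphism $\varphi$ of $M$ satisfying the trace identity $\varphi + \sigma\varphi\sigma = \id_M$, where $\sigma$ is the generator of $\bZ/2\bZ$ acting on $M$. The task is therefore to construct such a $\varphi$ from the given hypotheses. This criterion holds in complete generality, with no finiteness assumption on $M$ or Noetherianity on $R$, so its applicability is not an issue.

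First I would apply Higman's criterion to $M/2M$ over $R/2R$: the hypothesis gives $(R/2R)[\bZ/2\bZ]$-projectivity, and restriction of scalars gives $R/2R$-projectivity, so the criterion yields an $R/2R$-linear $\bar\varphi \in \mathrm{End}_{R/2R}(M/2M)$ with $\bar\varphi + \bar\sigma\bar\varphi\bar\sigma = \id_{M/2M}$. Next, using that $\mathrm{Hom}_R(M,-)$ is exact by $R$-projectivity of $M$, I would lift $\bar\varphi$ to some $\varphi \in \mathrm{End}_R(M)$ via the resulting surjection $\mathrm{End}_R(M) \twoheadrightarrow \Hom_R(M,M/2M) = \mathrm{End}_{R/2R}(M/2M)$. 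Since $R$ is an integral domain and $M$ is $R$-projective, $M$ is $2$-torsion-free, so pointwise division by $2$ produces a unique $R$-linear $\psi \in \mathrm{End}_R(M)$ with $\varphi + \sigma\varphi\sigma = \id_M + 2\psi$.

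The final step is a short verification: conjugating the identity above by $\sigma$ leaves the left-hand side unchanged but turns $\id_M + 2\psi$ into $\id_M + 2\sigma\psi\sigma$, whence $2\psi = 2\sigma\psi\sigma$ and thus $\psi = \sigma\psi\sigma$ again by $2$-torsion-freeness. Setting $\varphi' := \varphi - \psi$, one then computes $\varphi' + \sigma\varphi'\sigma = (\varphi + \sigma\varphi\sigma) - (\psi + \sigma\psi\sigma) = (\id_M + 2\psi) - 2\psi = \id_M$, so Higman's criterion concludes that $M$ is $R[\bZ/2\bZ]$-projective.

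The main obstacle is really the combination of two lifting phenomena at once: lifting the algebraic witness $\bar\varphi$ of projectivity from the quotient, and then upgrading it from satisfying the trace identity only modulo $2$ to satisfying it on the nose. The $R$-projectivity hypothesis handles the first (exactness of $\mathrm{Hom}_R(M,-)$), while the integral-domain hypothesis handles the second via $2$-torsion-freeness, which is used both to define $\psi$ and to cancel the $2$ in deriving $\sigma\psi\sigma = \psi$. Neither use requires finiteness of $M$, so the argument works at the level of generality claimed.
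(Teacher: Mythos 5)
Your argument is correct and is exactly the route the paper indicates for this lemma: Higman's trace criterion applied to $M/2M$, a lift of the witness $\bar\varphi$ using $R$-projectivity of $M$, and a correction term obtained by dividing by $2$ in the torsion-free module $M$. (The only degenerate case is $\mathrm{char}\,R=2$, where division by $2$ is unavailable; but there $2M=0$, so the hypothesis literally coincides with the conclusion and nothing needs proving.)
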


Indeed, this is a consequence of the fact that the $R[\bZ/2\bZ]$-module $M$ is projective if and only if there is an $R$-linear endomorphism $\phi$ on $M$ such that $x=\phi(x)+\phi(xY)Y$ for all $x\in M$ (see \cite[Theorem 1]{Higman1954:modules}, for example) with the aid of the $R[\bZ/2\bZ]$-linear isomorphism $M\ni x\mapsto 2x\in 2M$. 
Using this lemma with the aid of \cref{notn:algnumber} (2), we can show the following lemma in the totally same way as \cite[Lemma 2.5]{Benson-Kumjian-Phillips2003:symmetries}. 
When a right $R[\bZ/2\bZ]$-module $M$ is torsion-free as an $R$-module, $M\otimes_{R} K$ is a well-defined right $K[\bZ/2\bZ]$-module containing $M=M\otimes_{R} 1$ as an $R[\bZ/2\bZ]$-submodule. 
\begin{lem}[cf.~{Benson--Kumjian--Phillips~\cite{Benson-Kumjian-Phillips2003:symmetries}}]\label{lem:BKPlem2.5}
	In the setting of \cref{notn:algnumber}, let $M$ be a right $R[\bZ/2\bZ]$-module that is projective as an $R$-module. 
	Suppose that $M\frac{1+Y}{2}\cap M=M(1+Y)$ and $M\frac{1-Y}{2}\cap M=M(1-Y)$ inside $M\otimes_RK$. 
	Then $M$ is projective as an $R[\bZ/2\bZ]$-module. 
\end{lem}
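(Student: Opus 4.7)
The plan is to invoke \cref{lem:BKPlem2.2}, which reduces the statement to showing that $M/2M$ is projective over $(R/2R)[\bZ/2\bZ]$. By the decomposition $R/2R \cong \prod_{i=1}^n R/\fp_i$ from \cref{notn:algnumber} (2), we get a ring isomorphism $(R/2R)[\bZ/2\bZ] \cong \prod_i k_i[Y]/(Y^2-1)$ with $k_i := R/\fp_i$ a finite field of characteristic $2$, together with a compatible decomposition $M/2M = \bigoplus_i e_i(M/2M)$. Thus it suffices to show that each $e_i(M/2M)$ is free over $k_i[Y]/(Y^2-1)$. Since $Y^2 - 1 = (Y+1)^2$ in characteristic $2$, this ring equals $k_i[Z]/Z^2$ with $Z := Y+1$, and a module $N$ over $k_i[Z]/Z^2$ is free if and only if $ZN = \ker(Z\cdot\colon N \to N)$—one lifts any $k_i$-basis of $N/ZN$ to $N$ and uses this equality to verify linear independence. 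Assembling over $i$, the problem reduces to showing that multiplication by $1+Y$ on $M/2M$ has image equal to kernel.

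The inclusion $(1+Y)(M/2M) \subseteq \ker(1+Y)$ is automatic since $(1+Y)^2 = 2(1+Y)$ annihilates $M/2M$. For the reverse inclusion, take $\bar{x} \in M/2M$ with $(1+Y)\bar{x} = 0$ and lift it to $x \in M$ with $x(1+Y) = 2z$ for some $z \in M$. Then $z = x \cdot \tfrac{1+Y}{2}$ inside $M \otimes_R K$ lies in $M \cdot \tfrac{1+Y}{2} \cap M$, which by the first hypothesis equals $M(1+Y)$. Writing $z = w(1+Y)$ yields $(x - 2w)(1+Y) = 0$ in $M$, and replacing $x$ by $x' := x - 2w$ we may assume $x'(1+Y) = 0$, i.e., $x'Y = -x'$.

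Now the second hypothesis enters: from $x'Y = -x'$ we compute $x'(1-Y) = 2x' \in M(1-Y)$, so the $(1-Y)$-version of the hypothesis gives $x' = u(1-Y)$ for some $u \in M$. Since $(1-Y) + (1+Y) = 2$ in $R[\bZ/2\bZ]$, we have $u(1-Y) \equiv -u(1+Y) \pmod{2M}$, whence $\bar{x} \equiv (-u)(1+Y) \pmod{2M}$ lies in $(1+Y)(M/2M)$, as desired.

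The subtle point—not so much an obstacle as a feature to exploit—is that although the idempotents $(1\pm Y)/2$ exist only after inverting $2$, the two separate hypotheses combine effectively because $1-Y$ and $-(1+Y)$ become congruent modulo $2R[\bZ/2\bZ]$. This is precisely what is needed to control the characteristic-$2$ local ring $k_i[Z]/Z^2$ (in which $1+Y$ and $1-Y$ coincide) by conditions phrased over $R$; no further difficulty is anticipated beyond correctly bookkeeping this degeneracy.
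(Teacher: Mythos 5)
Your proof is correct and follows essentially the same route the paper takes: the paper defers to the argument of \cite{Benson-Kumjian-Phillips2003:symmetries} via \cref{lem:BKPlem2.2}, which is exactly your reduction to showing that $M/2M$ is projective over $(R/2R)[\bZ/2\bZ]\cong\prod_i (R/\fp_i)[Z]/Z^2$, followed by the image-equals-kernel criterion for $1+Y$ verified from the two intersection hypotheses. Your write-up supplies the details the paper leaves to the citation, and they check out (including the use of $R$-torsion-freeness of $M$ to pass to $M\otimes_R K$ and the congruence $1-Y\equiv -(1+Y)\bmod 2$).
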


%

The following lemma is the replacement of \cite[Lemma 2.7]{Benson-Kumjian-Phillips2003:symmetries} or \cite[Lemma 3.2]{Butler-Campbell-Kovacs2004:infinite} in  our setting. 
For a unital commutative ring $S$ acted on by a finite group $\Gamma$ as ring automorphisms and a right $S$-module $N$, we define a right $S$-module $N[g]$ for each $g\in\Gamma$ by $N[g]:=N$ as an abelian group, where we formally indicate an element in $N[g]$ by $x[g]$ for $x\in N$, and $(x[g]) a:=(x g(a)) [g]$ for $x\in N$ and $a\in S$. 
Moreover, we let $N[\Gamma]:=\bigoplus_{g\in \Gamma}N[g]$. 

\begin{lem}\label{lem:BCKlem3.2G}
	In the setting of \cref{notn:algnumber}, let $M$ be a free right $R$-module and $N\subset M$ be an $R$-submodule such that $2 M\subset N\subset M$. 
	Then there are projective $R$-submodules $L_0,L_1\subset M[G]$ such that $M[G]=L_0\oplus L_1$ and $N[G]=2L_0\oplus L_1$. 
\end{lem}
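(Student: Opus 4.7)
The plan is to reduce modulo $2$, exploit the semisimplicity of $R/2R$ together with the transitivity of the $G$-action on primes above $2$, and then lift back to $R$. The first step is to pass to $\overline{M}[G]:=M[G]/2M[G]$, which is a free $R/2R$-module since $M[G]$ is $R$-free. The submodule $\overline{N}[G]:=N[G]/2M[G]$ is well-defined thanks to $2M[G]\subset N[G]$, and since $R/2R\cong\prod_{i=1}^{n}R/\fp_i$ is semisimple, $\overline{N}[G]$ admits an $R/2R$-linear complement $\overline{C}[G]$ in $\overline{M}[G]$.

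The key observation is that both $\overline{N}[G]$ and $\overline{C}[G]$ are in fact \emph{free} $R/2R$-modules. For each $j$, the $e_j$-component of the twisted summand $\overline{N}[g]$ coincides with $e_{\pi_g(j)}\overline{N}$ as an abelian group, where $\pi_g$ denotes the permutation of $\{1,\dots,n\}$ induced by $g\in G$. Summing over $g\in G$ and invoking the transitivity from \cref{notn:algnumber}(3) together with the orbit--stabilizer theorem, the $e_j$-dimension of $\overline{N}[G]$ equals $(|G|/n)\sum_{i}\dim_{R/\fp_i}(e_i\overline{N})$, which is \emph{independent} of $j$. Hence $\overline{N}[G]$ is $R/2R$-free, and the same averaging argument yields freeness of $\overline{C}[G]$. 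Choosing $R/2R$-bases of $\overline{N}[G]$ and $\overline{C}[G]$ and lifting their elements to $N[G]$ and $M[G]$ respectively, I define $L_1$ and $L_0$ as their $R$-spans. Krull's intersection theorem $\bigcap_{k}2^{k}R=0$ combined with torsion-freeness of $M[G]$ yields that each $L_i$ is $R$-free on the chosen generators with $L_i\cap 2M[G]=2L_i$; iterated descent then gives $L_0\cap L_1\subset\bigcap_k 2^k M[G]=0$; and projectivity of $L_0,L_1$ is automatic since $R$ is hereditary.

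The main obstacle is verifying $M[G]=L_0\oplus L_1$ \emph{exactly}, not merely modulo $2M[G]$: naive lifts only yield $L_0+L_1+2M[G]=M[G]$. To overcome this, one would choose the lifts coherently, for instance by using the $R$-linear ``Galois-diagonal'' map $\Delta\colon M\to M[G]$, $x\mapsto\sum_{g}g(x)[g]$, together with Chinese-Remainder-type adjustments in $2M[G]$, whose feasibility rests on the Galois orbits of the idempotents $e_i$ covering each prime component above $2$. This is the global analogue of the toy rank-one calculation where a pair $(a,b)$ with $a\in\fp_1\setminus\fp_2$, $b\in\fp_2\setminus\fp_1$ and $b-a\in R^{\times}$ witnesses the decomposition $R\oplus R=L_0\oplus L_1$; the unramifiedness of $2$ in $R$ is what makes such determinant-type conditions satisfiable. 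Once $M[G]=L_0\oplus L_1$ is established, the equality $N[G]=2L_0\oplus L_1$ follows formally from $L_1\subset N[G]$ and $L_0\cap N[G]=2L_0$ (the latter because the image of $L_0$ in $\overline{M}[G]$ meets $\overline{N}[G]$ trivially).
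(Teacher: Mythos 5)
Your first half is sound and matches the paper's key idea: after amplifying by $[G]$, the transitivity of the $G$-action on $\{\fp_1,\dots,\fp_n\}$ from \cref{notn:algnumber} (3) equalizes the $R/\fp_j$-ranks of the components, so $N[G]/2M[G]$ and a complement of it in $M[G]/2M[G]$ are \emph{free} $R/2R$-modules; this is exactly the paper's computation that $\bigoplus_{g\in G}\wt{q}(y[g])(R/2R)$ has rank $\frac{\#G}{n}n_y$ independently of $i$. The formal deductions at the end ($L_i\cap 2M[G]=2L_i$, $L_0\cap L_1\subset\bigcap_k 2^kM[G]=0$, and $N[G]=2L_0\oplus L_1$ granted $M[G]=L_0\oplus L_1$) are also fine.

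However, the step you yourself flag as ``the main obstacle'' is a genuine, unresolved gap: lifting an $R/2R$-basis of $M[G]/2M[G]$ to $M[G]$ only gives $L_0+L_1+2M[G]=M[G]$, and this does \emph{not} imply $L_0+L_1=M[G]$. Nakayama is unavailable because $2$ does not lie in the Jacobson radical of the Dedekind domain $R$ and $M$ need not be finitely generated; already for $R=\bZ$ and $M=\bZ$, the element $3$ reduces to a basis of $\bZ/2\bZ$ but generates a proper submodule. Your proposed repair is only a gesture: the ``Galois-diagonal'' map $x\mapsto\sum_g g(x)[g]$ is not defined on an abstract free $R$-module $M$ (there is no semilinear $G$-action on $M$), and the ``Chinese-Remainder-type adjustments'' are not specified. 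The paper closes precisely this gap by never leaving the category of honest $R$-bases: it starts from a free basis $A$ of $M$, modifies it only by unipotent column operations (subtracting correction terms $y_j(x)$, $w(z)$ lying in the span of other basis vectors, and replacing $y[g]$ by $\wt{y}_g$ within the span of the $y[h]$), so that the chosen generating sets of $L_0$ and $L_1$ together constitute an actual free $R$-basis of $M[G]$, making $M[G]=L_0\oplus L_1$ automatic. Your argument would need an analogous basis-tracking device to be complete.
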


Here, the amplifications $M[G]$ and $N[G]$ are essential. For example, let $M=R$ and $N=\fp_1$ and suppose $n\geq 2$. 
Note that $R/N= R/\fp_1$ and $N/2R\cong \bigoplus_{i\neq 1}R/\fp_i$, which are projective as modules over $R/2R\cong \bigoplus_{i}R/\fp_i$ but not free. 
Although $N$ satisfies $2R\subset N\subset R$, there are no projective $R$-submodules $I_0,I_1\subset R$ with $R=I_0\oplus I_1$ and $N=2I_0\oplus I_1$. Indeed, if there are such $I_0$ and $I_1$, then $I_0/2I_0\cong R/N\neq 0\neq N/2R \cong I_1/2I_1$, which implies that there are non-zero elements $x_0\in I_0$ and $x_1\in I_1$, but we get a contradiction as $\dim_\bQ \bQ\otimes_\bZ R \geq \dim_\bQ \bQ\otimes_\bZ (x_0R\oplus x_1R)= 2\dim_\bQ \bQ\otimes_\bZ R$. 
To avoid such a situation, we need to average the ranks of quotient $R/\fp_i$-modules, which is possible thanks to the transitivity of the $G$-action on $\{\fp_1,\cdots,\fp_n\}$. 

\begin{proof}
	We write $q\colon M\to M/N$ and $\wt{q}\colon M[G]\to M[G]/N[G]\cong (M/N)[G]$ for the quotient maps. 
	Since $2M[g]\subset N[g]$ for $g\in G$, we see that $M[g]/N[g]$ is an $R/2R$-module with the canonical $R/2R$-linear isomorphism $M[g]/N[g]\cong (M/N)[g]$. In particular, $(M[g]/N[g])e_i$ is an $R/\fp_i$-vector space for each $i=1,\cdots,n$. 
	
	Let $A_0\subset M$ be an $R$-linear free basis. 
	For each $i=1,\cdots,n$, there is a subset $B_{i,0}\subset A_0$ such that $B_{i,0}\ni y\mapsto q(y)e_i \in (M/N)e_i$ is injective and $q(B_{i,0})e_i$ forms an $R/\fp_i$-linear basis of $(M/N)e_i$. 
	
	Inductively on $j=1,\cdots,n$, suppose we have an $R$-linear basis $A_{j-1}$ of $M$ and subsets $B_{i,j-1}\subset A_{j-1}$ such that $( q(y)e_i \mid y\in B_{i,{j-1}} )$ is an $R/\fp_i$-linear basis of $(M/N)e_i$ for $i=1,\cdots,n$ and $q(A_{j-1}\setminus B_{i,{j-1}})e_i=0$ for $i=1,\cdots,j-1$. 
	Then for each $x\in A_{j-1}\setminus B_{j,j-1}$, there is $y_j(x)\in \bigoplus_{y\in B_{j,j-1}} y\bigl(\prod_{i\neq j}\fp_i\bigr)$ such that 
	\begin{align*}
		q(x-y_j(x))e_i=\left\{ \begin{array}{ll}q(x)e_i &(\text{if }i\neq j) \\ 0&(\text{if }i=j)
		\end{array}\right. 
	\end{align*}
	by using $d_j\in R$ with $d_j\mod 2R = e_j$. 
	We define $x_j(x):=x-y_j(x)\in M$ for $x\in A_j\setminus B_{j,j-1}$ and $x_j(x):=x$ for $x\in B_{j,j-1}$. 
	Then $A_j:=x_j(A_{j-1})$ is an $R$-linear basis of $M$. When we let $B_{i,j}:=x_j(B_{i,j-1})\subset A_j$, then $( q(y)e_i \mid y\in B_{i,j} )$ is an $R/\fp_i$-linear basis for all $i=1,\cdots,n$, and 
	$q(A_j\setminus B_{i,j})e_i=0$ for all $i=1,\cdots,j$. 
	
	Then $A:=A_n$ is an $R$-linear free basis of $M$. 
	When we let $B:=\bigcup_{i=1}^{n}B_{i,n}\subset A$, then we have that $B\ni y\mapsto q(y)\in (M/N)\setminus\{0\}$ is injective and that $M/N=\bigoplus_{y\in B} q(y)(R/2R)$. 
	Here, we warn that $q(y)(R/2R)$ might be smaller than $R/2R$. 
	
	When we put $I_y:=\{ i=1,\cdots, n \mid q(y)e_i\neq 0 \}$ and $n_y:=\# I_y \in \{ 1,\cdots, n \}$ for each $y\in B$, it holds that $\bigoplus_{g\in G}\wt{q}(y[g])(R/2R)$ is a free $R/2R$-module, whose rank is 
	\begin{align*}
		\#\{ g\in G \mid q(y)g(e_i)\neq 0 \} = \frac{\# G}{n} n_y, 
	\end{align*}
	independently of $i=1,\cdots,n$, thanks to \cref{notn:algnumber} (3).  
	For $y\in B$, when we fix a subset $G_y\subset G$ with $\# G_y =\frac{\# G}{n} n_y$, then for all $g\in G_y$, we can take $\wt{y}_g\in M[G]$ with $\wt{y}_g-y[g]\in \sum_{h\in G\setminus G_y} y[h] \bigl(\prod_{i\in I_y}g^{-1}(\fp_i)\bigr)$ such that $(\wt{q}(\wt{y}_g) \mid g\in G_y)$ is an $R/2R$-linear free basis of $\bigoplus_{g\in G}\wt{q}(y[g])(R/2R)$ by using $d_i\in R$ with $d_i\mod 2R=e_i$. 
	We let $\wt{B}:=\{ \wt{y}_g \mid y\in B, g\in G_y \}$ and 
	\begin{align*}
		\wt{A}:= \Bigl(\Bigl(\bigcup_{g\in G}A[g]\Bigr) \setminus \{ y[g] \mid y\in B, g\in G_y \}\Bigr) \cup \wt{B} . 
	\end{align*}
	Then $\wt{A}$ is an $R$-linear free basis of $M[G]$, and $(\wt{q}(w) \mid w\in\wt{B})$ is an $R/2R$-linear free basis of $(M/N)[G]$. 
	
	Finally, for $z\in \wt{A}\setminus\wt{B}$, there is $w(z)\in \bigoplus_{w\in \wt{B}} wR$ such that $\wt{q}(z-w(z))=0$. Thus if we let $L_1:=\bigoplus_{z\in \wt{A}\setminus\wt{B}}(z-w(z))R$ and $L_0:=\bigoplus_{w\in\wt{B}}wR$, 
	we see that $L_0\oplus L_1=M[G]$ and $2L_0\oplus L_1=\Ker\wt{q}=N[G]$ as desired. 
\end{proof}

Now we can show a variant of \cite[Theorem 2.8]{Benson-Kumjian-Phillips2003:symmetries}. 
For a right $R[\bZ/2\bZ]$-module $M$ and $g\in G$, we define the $R[\bZ/2\bZ]$-module $M[g]$ by $M[g]$ as an $R$-module and $x[g]Y:=xg(Y)[g]=xY[g]$ for $x\in X$, taking \cref{notn:algnumber} (4) into account. 
Note that $R_{\pm}[g]\cong R_{\pm}$ and $R[\bZ/2\bZ][g]\cong R[\bZ/2\bZ]$ as $R[\bZ/2\bZ]$-modules. 

\begin{thm}\label{lem:BKPthm2.8}
	In the setting of \cref{notn:algnumber}, let $M$ be a countable right $R[\bZ/2\bZ]$-module that is projective as an $R$-module. 
	Then, we have an $R[\bZ/2\bZ]$-linear isomorphism 
	\begin{align*}
		M[G]\oplus R[\bZ/2\bZ]^{\oplus\infty} \cong P_+\oplus P_-\oplus R[\bZ/2\bZ]^{\oplus\infty} , 
	\end{align*}
	for some countable right $R[\bZ/2\bZ]$-modules $P_+$ and $P_-$ that are projective as $R$-modules such that $Y\in R[\bZ/2\bZ]$ acts on $P_+$ and $P_-$ by the scalars $+1$ and $-1\in R$, respectively. 
\end{thm}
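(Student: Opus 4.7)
The plan is to adapt the strategy of \cite[Theorem 2.8]{Benson-Kumjian-Phillips2003:symmetries}, which handled the case where $2R$ is maximal in $R$, to the more general unramified situation; the $G$-amplification of \cref{lem:BCKlem3.2G} will be used precisely to repair the failure of $R/2R$ to be a field. I will first reduce to the case where $M$ is free as an $R$-module: since $R[\bZ/2\bZ] \cong R \oplus R$ as an $R$-module, the $R$-module $\widehat M := M \oplus R[\bZ/2\bZ]^{\oplus\infty}$ is a countably generated projective $R$-module with infinite free summands, hence $R$-free by \cref{rem:absorption}; and because the $R[\bZ/2\bZ]^{\oplus\infty}$ factor appears on both sides of the target isomorphism, it is harmless to prove the statement for $\widehat M$.

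Assuming $M$ is $R$-free, I set $P_+ := M(1+Y)$ and $P_- := M(1-Y)$. These are $R[\bZ/2\bZ]$-submodules of $M$ on which $Y$ acts by the scalars $+1$ and $-1$, respectively, and each is $R$-projective as an $R$-submodule of the $R$-free module $M$ over the Dedekind ring $R$. Writing $N := P_+ \oplus P_- \subset M$ (the sum is direct because $M$ is $R$-torsion-free and $(1+Y)(1-Y)=0$), the identity $2x = x(1+Y) + x(1-Y)$ shows $2M \subset N$. Applying \cref{lem:BCKlem3.2G} to $(M,N)$ produces $R$-free submodules $L_0, L_1 \subset M[G]$ with $M[G] = L_0 \oplus L_1$ and $N[G] = 2L_0 \oplus L_1$, giving the $R$-linear identification $M[G]/N[G] \cong L_0/2L_0$; the essential feature is that the Galois averaging forces uniform $F_i$-dimensions of $(M/N)[G] \otimes_R F_i$ across all primes $\fp_i \supset 2R$.

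To promote this $R$-level decomposition to an $R[\bZ/2\bZ]$-statement, I will use the short exact sequence of $R[\bZ/2\bZ]$-modules
\[
0 \to R_+ \oplus R_- \xrightarrow{1 \mapsto (1+Y, 1-Y)} R[\bZ/2\bZ] \xrightarrow{r+sY \mapsto (r+s) \bmod 2R} R/2R \to 0,
\]
where $R/2R$ carries the trivial $Y$-action. Choosing a surjection $\pi \colon R[\bZ/2\bZ]^{\oplus J} \to M[G]/N[G]$ of $R[\bZ/2\bZ]$-modules (for some countable $J$) by component-wise applying this sequence and forming the pullback $X := R[\bZ/2\bZ]^{\oplus J} \times_{M[G]/N[G]} M[G]$ yields two short exact sequences: one which splits because $R[\bZ/2\bZ]^{\oplus J}$ is projective, exhibiting $X \cong P_+[G] \oplus P_-[G] \oplus R[\bZ/2\bZ]^{\oplus J}$; and another of the form $0 \to R_+^{\oplus J} \oplus R_-^{\oplus J} \to X \to M[G] \to 0$. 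The desired identification then follows by comparing these two presentations of $X$ through a Schanuel-type argument, after adding $R[\bZ/2\bZ]^{\oplus\infty}$ to both sides and invoking \cref{lem:BKPlem2.5} to see that an auxiliary module built from the Lemma~3.2G-decomposition becomes $R[\bZ/2\bZ]$-projective; the output modules will be $R$-projective enlargements of $P_\pm[G]$ that absorb the stray $R_\pm^{\oplus J}$ summands.

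The main obstacle is this last reconciliation step: the second short exact sequence above genuinely does not split in general (since $M[G]$ is typically not $R[\bZ/2\bZ]$-projective, as witnessed by the nonvanishing residue parts $M^{Y=\pm 1}/M(1\pm Y)$), so one must use the explicit $R$-level splitting from \cref{lem:BCKlem3.2G} together with the uniformity across the primes $\fp_i$ supplied by the $G$-amplification to verify, via \cref{lem:BKPlem2.5}, that the obstruction is absorbed into the free $R[\bZ/2\bZ]^{\oplus\infty}$ summand.
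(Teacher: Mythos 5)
Your reduction to the $R$-free case and your inventory of tools (\cref{lem:BCKlem3.2G} with the $G$-amplification, \cref{lem:BKPlem2.5}, \cref{rem:absorption}) match the paper's, but the argument diverges at the decisive point and the step you yourself flag as ``the main obstacle'' is a genuine, unresolved gap. The first problem is \emph{where} you apply \cref{lem:BCKlem3.2G}. Feeding it the pair $2M\subset N=M(1+Y)\oplus M(1-Y)\subset M$ produces only an $R$-linear splitting $M[G]=L_0\oplus L_1$: since $Y$ does not act by a scalar on $M$, the summands $L_0,L_1$ are not $Y$-invariant, and the only $R[\bZ/2\bZ]$-level output you extract is that $(M/N)[G]$ is $R/2R$-free. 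The paper instead applies \cref{lem:BCKlem3.2G} twice, once for each sign $\tau$, to the chain $L(1+\tau Y)\subset L\frac{1+\tau Y}{2}\cap L\subset L\frac{1+\tau Y}{2}$ inside $L\otimes_RK$ with $L:=M\oplus R[\bZ/2\bZ]^{\oplus\infty}$; there $Y$ acts by the scalar $\tau$, so the resulting splittings $\wt{L}\frac{1+\tau Y}{2}=L_{\tau,0}\oplus L_{\tau,1}$ and $\wt{L}\frac{1+\tau Y}{2}\cap\wt{L}=2L_{\tau,0}\oplus L_{\tau,1}$ are automatically $R[\bZ/2\bZ]$-linear, and one obtains an honest internal decomposition $\wt{L}=P_+\oplus P_-\oplus P_0$ with $P_\pm:=L_{\pm1,1}$ and $P_0:=\wt{L}\cap(L_{+1,0}\oplus L_{-1,0})$. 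Note the correct $P_\pm$ sit strictly between $L(1\pm Y)$ and $L\frac{1\pm Y}{2}$; your candidates $M(1\pm Y)$ are too small, as you implicitly concede by speaking of ``enlargements'' without constructing them.

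The second problem is that the concluding reconciliation cannot be carried out formally. In your sequence $0\to R_+^{\oplus J}\oplus R_-^{\oplus J}\to X\to M[G]\to 0$, neither the kernel nor the middle term is $R[\bZ/2\bZ]$-projective ($R_+$ fails the Higman criterion recalled after \cref{lem:BKPlem2.2}, since it would force $2$ to be invertible in $R$), so no Schanuel-type comparison applies; and the sequence genuinely does not split. Worse, passing to the isomorphism types of its three terms discards the extension class, which is precisely where the discrepancy between $M(1+Y)[G]\oplus M(1-Y)[G]$ and the correct $P_+\oplus P_-$ is recorded, so the two presentations of $X$ do not by themselves determine $M[G]$ up to the stable equivalence you need. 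What is missing is the identification of the concrete module to which \cref{lem:BKPlem2.5} is applied and the verification of its hypothesis: in the paper that module is $P_0$ above, and the required identity $P_0\frac{1+\tau Y}{2}\cap P_0=P_0(1+\tau Y)$ is checked by an explicit element chase through the decompositions $L_{\tau,0}\oplus L_{\tau,1}$, after which $P_0$ is absorbed into $R[\bZ/2\bZ]^{\oplus\infty}$ by \cref{rem:absorption}. Without an analogue of this step, your argument does not close.
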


\begin{proof}
	We put $L:=M\oplus R[\bZ/2\bZ]^{\oplus\infty}$ and $\wt{L}:=L[G]\cong M[G]\oplus R[\bZ/2\bZ]^{\oplus\infty}$. Then since $R[\bZ/2\bZ]\frac{1\pm Y}{2}\cong R$ as $R$-modules, we see that $L\frac{1\pm Y}{2}\subset L\otimes_R K$ is free as an $R$-module by \cref{rem:absorption} and \cref{notn:algnumber} (1). 
	For $\tau\in\{\pm 1\}$, we apply \cref{lem:BCKlem3.2G} to the inclusions $L(1+\tau Y)\subset L\frac{1+\tau Y}{2}\cap L \subset L\frac{1+\tau Y}{2}$ and obtain $R$-submodules $L_{\tau,0},L_{\tau,1}\subset \bigoplus_{g\in G} L\frac{1+\tau Y}{2}[g]=\wt{L}\frac{1+\tau Y}{2}$ such that $\wt{L} \frac{1+\tau Y}{2} = L_{\tau,0}\oplus L_{\tau,1}$ and that 
	\begin{align*}
		\wt{L}\frac{1+\tau Y}{2}\cap\wt{L}
		=
		\bigoplus_{g\in G} \Bigl(L\frac{1+\tau Y}{2}\cap L\Bigr)[g] 
		= 
		2L_{\tau,0}\oplus L_{\tau,1} . 
	\end{align*}
	Since $Y$ acts on $\wt{L}\frac{1+\tau Y}{2}$ by the scalar $\tau\in R$, the $R$-submodules $L_{\tau,0}$ and $L_{\tau,1}$ are $R[\bZ/2\bZ]$-submodules. 
	Since $0=\wt{L}\frac{1+Y}{2}\cap \wt{L}\frac{1-Y}{2}$ by the idempotence of $\frac{1+\tau Y}{2}$, when we put $P_+:=L_{+1,1}$, $P_-:=L_{-1,1}$, and $P_0:=\wt{L}\cap (L_{+1,0}\oplus L_{-1,0})$, they satisfy $\wt{L}=P_+\oplus P_-\oplus P_0$ as an $R[\bZ/2\bZ]$-module and are projective as $R$-modules since $\wt{L}$ is a free $R$-module. 
	
	To see that $P_0$ is a projective $R[\bZ/2\bZ]$-module, it suffices to show $P_0\frac{1+\tau Y}{2}\cap P_0=P_0(1+\tau Y)$ for $\tau\in\{\pm1\}$ by \cref{lem:BKPlem2.5}. 
	For any $w\in P_0\frac{1+\tau Y}{2}\cap P_0\subset \wt{L}\frac{1+\tau Y}{2}\cap \wt{L}=2L_{\tau,0}\oplus L_{\tau,1}$, there are $x\in L_{\tau,0}$ and $y\in L_{\tau,1}$ such that $w=2x+y$. Since $P_0\ni w-2x=y \in L_{\tau,1}\subset P_+\oplus P_-$, we see $y=0$. 
	There are $a\in \wt{L}$, $z\in L_{-\tau,0}$, and $b\in L_{-\tau,1}$ such that $x=a\frac{1+\tau Y}{2}$ and $a\frac{1-\tau Y}{2}=z+b$. 
	Then $L_{+1,0}\oplus L_{-1,0}\ni x+z = a-b \in \wt{L}$, which implies $x+z\in P_0$. 
	Thus $w=2x=(x+z)(1+\tau Y)\in P_0(1+\tau Y)$. 
	This shows $P_0\frac{1+\tau Y}{2}\cap P_0 \subset P_0(1+\tau Y)$, while the converse inclusion is obvious. 
	
	Finally, by adding $R[\bZ/2\bZ]^{\oplus\infty}$ again if necessary, we obtain the desired result by \cref{rem:absorption} for $P_0$. 
\end{proof}

\begin{proof}[Proof of \cref{thm:Z/2module}]
	There is an $R[\bZ/2\bZ]$-linear surjection $f_0\colon R[\bZ/2\bZ]^{\oplus\infty}\to M$, and $\Ker f_0$ is a countable projective $R$-module by \cref{notn:algnumber} (1). 
	Then, by \cref{lem:BKPthm2.8} and \cref{rem:absorption}, we have $R_+^{\oplus\infty}\oplus R_-^{\oplus\infty}\oplus (\Ker f_0)[G] \oplus R[\bZ/2\bZ]^{\oplus\infty} \cong R_+^{\oplus\infty}\oplus R_-^{\oplus\infty}\oplus R[\bZ/2\bZ]^{\oplus\infty}$. 
	By adding $(R_+\oplus R_-\oplus (\Ker f_0)[G]\oplus R[\bZ/2\bZ])^{\oplus\infty}$, we obtain an exact sequence of $R[\bZ/2\bZ]$-modules of the form 
	\begin{align*}\begin{aligned}
			0&\to R_+^{\oplus\infty}\oplus R_-^{\oplus\infty}\oplus R[\bZ/2\bZ]^{\oplus\infty} 
			\xto{f_2} 
			R_+^{\oplus\infty}\oplus R_-^{\oplus\infty}\oplus R[\bZ/2\bZ]^{\oplus\infty} 
			\xto{f_1} \Ker f_0 \to 0 . 
	\end{aligned}\end{align*}
	Note that we have the $R[\bZ/2\bZ]$-linear exact sequence 
	\begin{align*}
		0\to R_+\xto{\iota} R[\bZ/2\bZ]\xto{p} R_-\to 0 , 
	\end{align*}
	where $\iota(x):=x(1+Y)$ and $p(y+zY)=y-z$ for $x,y,z\in R$. 
	We can take an $R[\bZ/2\bZ]$-linear endomorphism $h$ on $R_+^{\oplus\infty}\oplus R[\bZ/2\bZ]^{\oplus\infty}\oplus R[\bZ/2\bZ]^{\oplus\infty}$ such that 
	\begin{align}\label{eq:thm:Z/2module1}
		(\id\oplus p^{\oplus\infty}\oplus\id)h=f_2(\id\oplus p^{\oplus\infty}\oplus\id)
	\end{align}
	by using the free basis of $R[\bZ/2\bZ]^{\oplus\infty}$ and letting $h|_{R_+^{\oplus\infty}}:=f_2|_{R_+^{\oplus\infty}}$, which makes sense thanks to the non-existence of a non-zero $R[\bZ/2\bZ]$-linear map $R_+\to R_-$. 
	Then we can lift the sequence above to obtain an $R[\bZ/2\bZ]$-linear exact sequence 
	\begin{align}\label{eq:thm:Z/2module2}\begin{aligned}
			0 &\to L\xto{j} R_+^{\oplus\infty}\oplus R[\bZ/2\bZ]^{\oplus\infty}\oplus R[\bZ/2\bZ]^{\oplus\infty} \oplus R_+^{\oplus\infty}
			\\&
			\xto{h\oplus(0,\iota^{\oplus\infty},0)} 
			R_+^{\oplus\infty} \oplus R[\bZ/2\bZ]^{\oplus\infty} \oplus R[\bZ/2\bZ]^{\oplus\infty} 
			\\&
			\xto{f_1 \circ(\id\oplus p^{\oplus\infty}\oplus\id)} R[\bZ/2\bZ]^{\oplus\infty} 
			\xto{f_0} M \to 0 , 
	\end{aligned}\end{align}
	for some $R[\bZ/2\bZ]$-module $L$. 
	
	Suppose that there is $x\in L$ such that $xY\neq x$. Then $y:=xY-x$ satisfies $0\neq y= -yY$, and thus $j(y)$ is contained in the component $(R[\bZ/2\bZ]^{\oplus\infty})^{\oplus2}$, which implies $j(y)\in\Ker h$. 
	It follows from \cref{eq:thm:Z/2module1} and the injectivity of $f_2$ that $j(y)\in\Ker(p^{\oplus\infty}\oplus\id)=\iota(R_+)^{\oplus\infty}\oplus0$, which yields a contradiction as $j(y)Y=j(y)\neq -j(y)=j(yY)$. Thus $Y$ acts on $L$ by the scalar $1$, and since $L$ is an $R$-submodule of a free $R$-module and hence projective by \cref{notn:algnumber} (1), we obtain the desired exact sequence by adding $R_+^{\oplus\infty}\xto{\id}R_+^{\oplus\infty}$ to \cref{eq:thm:Z/2module2} at $j$ with the aid of \cref{rem:absorption}. 
\end{proof}

\providecommand{\bysame}{\leavevmode\hbox to3em{\hrulefill}\thinspace}
\providecommand{\MR}{\relax\ifhmode\unskip\space\fi MR }
\providecommand{\MRhref}[2]{%
	\href{http://www.ams.org/mathscinet-getitem?mr=#1}{#2}
}
\providecommand{\href}[2]{#2}

\end{document}